\documentclass[11pt,reqno]{amsart}
\usepackage{amsmath, amssymb, amsthm}
\usepackage{url}
\usepackage[breaklinks]{hyperref}
\usepackage[czech, english]{babel}

\setlength{\textheight}{220mm} \setlength{\textwidth}{155mm}
\setlength{\oddsidemargin}{1.25mm}
\setlength{\evensidemargin}{1.25mm} \setlength{\topmargin}{0mm}

\parskip .04in

 \renewcommand{\a}{\alpha}
\renewcommand{\b}{\beta}

\newcommand{\g}{\gamma}
\newcommand{\G}{\Gamma}
\renewcommand{\l}{\lambda}

\renewcommand{\(}{\left\(}
\renewcommand{\)}{\right\)}
\renewcommand{\[}{\left\[}
\renewcommand{\]}{\right\]}

\numberwithin{equation}{section}
 \theoremstyle{plain}
\newtheorem{theorem}{Theorem}[section]
\newtheorem{lemma}[theorem]{Lemma}
\newtheorem{remark}[]{Remark}

\newtheorem{corollary}[theorem]{Corollary}

\newcommand{\Ppsum}{\mathop{{\;\,{\sum}^{\prime \prime}}}}

   \makeatletter
\def\proof{\@ifnextchar[{\@oproof}{\@nproof}}
\def\@oproof[#1][#2]{\trivlist\item[\hskip\labelsep\textit{#2 Proof of\
#1.}~]\ignorespaces}
\def\@nproof{\trivlist\item[\hskip\labelsep\textit{Proof.}~]\ignorespaces}

\makeatother

\begin{document}
\title[Generalized Lambert series and Raabe's integral]{Generalized Lambert series, Raabe's integral and a two-parameter generalization of Ramanujan's formula for $\zeta(2m+1)$} 

\author{Atul Dixit, Rajat Gupta, Rahul Kumar and Bibekananda Maji}\thanks{2010 \textit{Mathematics Subject Classification.} Primary 11M06; Secondary 11J81.\\
\textit{Keywords and phrases.} Lambert series, odd zeta values, Raabe's integral, transcendence, generalized Kummer's formula}
\address{Discipline of Mathematics, Indian Institute of Technology Gandhinagar, Palaj, Gandhinagar 382355, Gujarat, India} 
\email{adixit@iitgn.ac.in, rajat\_gupta@iitgn.ac.in, rahul.kumr@iitgn.ac.in,\newline bibekananda.maji@iitgn.ac.in}
\begin{abstract}
A comprehensive study of the generalized Lambert series $\displaystyle\sum_{n=1}^{\infty}\frac{n^{N-2h}\textup{exp}(-an^{N}x)}{1-\textup{exp}(-n^{N}x)},\newline 0<a\leq 1,\ x>0$, $N\in\mathbb{N}$ and $h\in\mathbb{Z}$, is undertaken. Two of the general transformations of this series that we obtain here lead to two-parameter generalizations of Ramanujan's famous formula for $\zeta(2m+1)$, $m>0$ and the transformation formula for $\log\eta(z)$. Numerous important special cases of our transformations are derived. An identity relating $\zeta(2N+1), \zeta(4N+1),\cdots, \zeta(2Nm+1)$ is obtained for $N$ odd and $m\in\mathbb{N}$. Certain transcendence results of Zudilin- and Rivoal-type are obtained for odd zeta values and generalized Lambert series. A criterion for transcendence of $\zeta(2m+1)$ and a Zudilin-type result on irrationality of Euler's constant $\g$ are also given. New results analogous to those of Ramanujan and Klusch for $N$ even, and a transcendence result involving $\zeta\left(2m+1-\frac{1}{N}\right)$, are obtained. 
\end{abstract}
\maketitle
\section{Introduction}\label{intro}
 In his address to the American Mathematical Society on September 5, 1941 \cite{rademacheraddress}, Hans Rademacher writes ``\dots the impression may have prevailed that analytic number theory deals foremost with asymptotic expressions for arithmetical functions. This view, however, overlooks another side of analytic number theory, which I may indicate by the words ``identities'', ``group-theoretical arguments'', ``structural considerations''. This line of research is not yet so widely known ; it may very well be that methods of its type will lead to the ``deeper'' results, will reveal the sources of some of the results of the first direction of approach.''

Indeed, the developments that have taken place, since Rademacher's time, in the theory of partitions, theory of modular forms, mock modular forms and harmonic Maass forms \cite{hmfmmf}, to name a few, prove that his assessment of the impact of this other side of analytic number theory was correct. In the present paper, we offer the reader new examples further corroborating Rademacher's claim, namely, we derive some identities which lead to important results on transcendence of certain values and, at the same time, hint connections with the modular world. 

In \cite[Theorem 1.1]{dixitmaji1}, a transformation of the series $\displaystyle\sum_{n=1}^{\infty}\frac{n^{N-2h}}{e^{n^{N}x}-1}$ was obtained for any positive integer $N$ and any integer $h$. Ramanujan, by the way, explicitly wrote down this exact same series on page $332$ of his Lost Notebook \cite{lnb} but he did not give any transformation for it. Kanemitsu, Tanigawa and Yoshimoto \cite{ktyhr} were the first to obtain a transformation of this series, however, they considered the case $0<h\leq N/2$ only. In fact, in \cite[Theorem 1.1]{dixitmaji1}, it was observed that working out the transformation in the remaining two cases, that is $h>N/2$ and $h\leq 0$, in the case when $N$ is an odd positive integer, enables us to decode valuable information in that when $N=1$, together they give, as a special case, Ramanujan's following famous formula for $\zeta(2m+1), m\neq 0$ \cite[p.~173, Ch. 14, Entry 21(i)]{ramnote}, \cite[pp.~319-320, formula (28)]{lnb}, \cite[pp.~275-276]{bcbramsecnote}:

 For $\a, \b>0$ with $\a\b=\pi^2$ and $m\in\mathbb{Z}, m\neq 0$,
\begin{align}\label{zetaodd}
\a^{-m}\left\{\frac{1}{2}\zeta(2m+1)+\sum_{n=1}^{\infty}\frac{n^{-2m-1}}{e^{2\a n}-1}\right\}&=(-\b)^{-m}\left\{\frac{1}{2}\zeta(2m+1)+\sum_{n=1}^{\infty}\frac{n^{-2m-1}}{e^{2\b n}-1}\right\}\nonumber\\
&\quad-2^{2m}\sum_{j=0}^{m+1}\frac{(-1)^jB_{2j}B_{2m+2-2j}}{(2j)!(2m+2-2j)!}\a^{m+1-j}\b^j,
\end{align}
where for $j\geq 0$, $B_{j}$ is the Bernoulli number, which is the special case $a=1$ of the Bernoulli polynomial $B_{j}(a)$ defined by $\sum_{j=0}^{\infty}\frac{B_j(a) z^j}{j!}=\frac{ze^{az}}{e^{z}-1}$, $0<a\leq 1, |z|<2\pi$. (For references in the literature on Ramanujan's formula, we refer the reader to a recent paper \cite{berndtstraubzeta}.)

Not only this, when $N\geq 1$ is an odd positive integer, the aforementioned two cases $h>N/2$ and $h\leq 0$ also give, as a special case, an elegant generalization of Ramanujan's formula \cite[Theorem 1.2]{dixitmaji1} given below.

Let $N$ be an odd positive integer and $\a,\b>0$ such that $\a\b^{N}=\pi^{N+1}$. Then for any non-zero integer $m$,
{\allowdisplaybreaks\begin{align}\label{zetageneqn}
&\a^{-\frac{2Nm}{N+1}}\left(\frac{1}{2}\zeta(2Nm+1)+\sum_{n=1}^{\infty}\frac{n^{-2Nm-1}}{\textup{exp}\left((2n)^{N}\a\right)-1}\right)\nonumber\\
&=\left(-\b^{\frac{2N}{N+1}}\right)^{-m}\frac{2^{2m(N-1)}}{N}\Bigg(\frac{1}{2}\zeta(2m+1)+(-1)^{\frac{N+3}{2}}\sum_{j=\frac{-(N-1)}{2}}^{\frac{N-1}{2}}(-1)^{j}\sum_{n=1}^{\infty}\frac{n^{-2m-1}}{\textup{exp}\left((2n)^{\frac{1}{N}}\b e^{\frac{i\pi j}{N}}\right)-1}\Bigg)\nonumber\\
&\quad+(-1)^{m+\frac{N+3}{2}}2^{2Nm}\sum_{j=0}^{\left\lfloor\frac{N+1}{2N}+m\right\rfloor}\frac{(-1)^jB_{2j}B_{N+1+2N(m-j)}}{(2j)!(N+1+2N(m-j))!}\a^{\frac{2j}{N+1}}\b^{N+\frac{2N^2(m-j)}{N+1}}.
\end{align}}
In \cite[Theorem 2.1]{ktyacta}, Kanemitsu, Tanigawa and Yoshimoto also studied the more general series 
\begin{equation}\label{gls}
\sum_{n=1}^{\infty}n^{N-2h}\frac{\textup{exp}(-an^{N}x)}{1-\textup{exp}(-n^{N}x)}
\end{equation}
and obtained a transformation for it when $0<a\leq 1$, $h\geq N/2$ and $N$ even \footnote{In the statement of this theorem in \cite{ktyacta}, the only condition given on $a$ is that it be positive, but it should really be $0<a\leq 1$, for, when $a>1$, one has to slightly modify the expression involving the Hurwitz zeta function. See Remark \ref{ag1} of the current paper. Also, the version of this transformation given there includes an additional parameter $\ell$, however, it is easily seen to be equivalent to the condition $N$ even and $h\geq N/2$ in conjunction with the series in \eqref{gls}.}. In the same paper, the trio also obtained a similar result for multiple Hurwitz zeta function \cite[Theorem 4.1]{ktyacta}.

In the current paper, we derive a transformation for the series in \eqref{gls} for \emph{any} positive integer $N$. This transformation can be conceived of as a formula for the Hurwitz zeta function $\zeta\left(\frac{N-2h+1}{N},a\right)$. In the case when $N$ is even and $h\geq N/2$, our result, though different in appearance, is equivalent to that of Kanemitsu, Tanigawa and Yoshimoto \cite[Theorem 2.1]{ktyacta}. However, we extend it to include the case $h<N/2$ too. Also, in the special case $a=1$ of the above series that was considered in \cite{dixitmaji1}, it was demonstrated that one obtains more interesting results when $N$ is odd. Here too, the same phenomenon is observed for $0<a\leq 1$ in general. 
A transformation of the above series for $N$ odd and $h\geq 0$ is derived for the first time in this paper. It not only involves the generalized Lambert series with coefficients as trigonometric functions but also contains a new construct, which is an infinite series consisting of $\psi(z)$, the logarithmic derivative of the gamma function $\G(z)$, and a logarithm. Two of the main theorems of our paper, namely Theorems \ref{dgkmgen} and \ref{dgkmord2}, which give the transformation for the series in \eqref{gls} for any positive integer $N$ and $h\geq N/2$ are presented below. The nice thing about them is that they are totally explicit, and the expression other than the residual terms, that is $S(x, a)$ (see Equations \eqref{sodda} and \eqref{sevena} below), is written in the form where one of the inner expressions involve only $\cos(2\pi n a)$ and the other, only $\sin(2\pi n a)$. This allows us to easily recover, under certain conditions, the results in \cite{dixitmaji1} as corollaries since when $a=1$, the expression involving $\sin(2\pi n a)$ simply vanishes. Such an expression is also reminiscent of Hurwitz's formula \cite[p.~72]{dav}, namely, for Re$(s)<0$ and $0<a\leq 1$,
\begin{equation}\label{hformula}
\zeta(s, a)=\frac{2\G(1-s)}{(2\pi)^{1-s}}\left(\sin\left(\frac{\pi s}{2}\right)\sum_{n=1}^{\infty}\frac{\cos(2\pi n a)}{n^{1-s}}+\cos\left(\frac{\pi s}{2}\right)\sum_{n=1}^{\infty}\frac{\sin(2\pi n a)}{n^{1-s}}\right).
\end{equation}
It is also valid for Re$(s)<1$ provided $a\neq 1$. Indeed, Hurwitz's formula will play an important role in the proofs of our theorems. 

We now state the first main result of our paper.
\begin{theorem}\label{dgkmgen}
Let $N$ be a positive integer and $h$ be an integer such that $h\geq N/2$. Let $x>0$ and $0<a\leq 1$. Let $A_{N,j}(y):=\pi\left(2\pi y\right)^{\frac{1}{N}}e^{\frac{i\pi j}{N}}$. If $\displaystyle\frac{N-2h+1}{N}\neq-2\left\lfloor\frac{h}{N}-\frac{1}{2}\right\rfloor$, then
\begin{align}\label{dgkmgeneqn}
\sum_{n=1}^{\infty}n^{N-2h}\frac{\textup{exp}(-an^{N}x)}{1-\textup{exp}(-n^{N}x)}=P(x,a)+S(x,a),
\end{align}
where
{\allowdisplaybreaks\begin{align}\label{pxa}
P(x,a)&:=-\left(a-\frac{1}{2}\right)\zeta(-N+2h)+\frac{\zeta(2h)}{x}+\frac{1}{N}\G\left(\frac{N-2h+1}{N}\right)\zeta\left(\frac{N-2h+1}{N},a\right)x^{-\frac{(N-2h+1)}{N}}\nonumber\\
&\qquad-\sum_{j=1}^{\left\lfloor\frac{h}{N}-\frac{1}{2}\right\rfloor}\frac{B_{2j+1}(a)}{(2j+1)!}\zeta\left(2h-(2j+1)N\right)x^{2j}\nonumber\\
&\qquad+\frac{(-1)^{h+1}}{2}(2\pi)^{2h}\sum_{j=1}^{\left\lfloor\frac{h}{N}\right\rfloor}\left(\frac{-1}{4\pi^2}\right)^{jN}\frac{B_{2j}(a)B_{2h-2jN}}{(2j)!(2h-2jN)!}x^{2j-1},
\end{align}}
and
\begin{align}\label{sodda}
S(x,a)&:=\frac{(-1)^{h+1}}{N}\left(\frac{2\pi}{x}\right)^{\frac{N-2h+1}{N}}\sum_{j=-\frac{(N-1)}{2}}^{\frac{(N-1)}{2}}e^{\frac{i\pi(1-2h)j}{N}}\Bigg\{\sum_{n=1}^{\infty}\frac{\cos(2\pi na)}{n^{\frac{2h-1}{N}}\left(\textup{exp}\left(2A_{N,j}\left(\frac{n}{x}\right)\right)-1\right)}\nonumber\\
&\quad+\frac{(-1)^{j+\frac{N+1}{2}}}{\pi}\sum_{n=1}^{\infty}\frac{\sin(2\pi na)}{n^{\frac{2h-1}{N}}}\left\{\log\left(\tfrac{1}{\pi}A_{N,j}\left(\tfrac{n}{x}\right)\right)-\tfrac{1}{2}\left(\psi\left(\tfrac{i}{\pi}A_{N,j}\left(\tfrac{n}{x}\right)\right)+\psi\left(-\tfrac{i}{\pi}A_{N,j}\left(\tfrac{n}{x}\right)\right)\right)\right\}\Bigg\}
\end{align}
for $N$ odd, and 
\begin{align}\label{sevena}
S(x,a)&:=\frac{(-1)^{h+1}}{N}\left(\frac{2\pi}{x}\right)^{\frac{N-2h+1}{N}}\sum_{j=-\frac{N}{2}}^{\frac{N}{2}-1}e^{\frac{i\pi(1-2h)\left(j+\frac{1}{2}\right)}{N}}\sum_{n=1}^{\infty}\frac{\cos(2\pi na)+i(-1)^{j+\frac{N}{2}+1}\sin(2\pi na)}{n^{\frac{2h-1}{N}}\left(\textup{exp}\left(2A_{N,j+\frac{1}{2}}\left(\frac{n}{x}\right)\right)-1\right)}
\end{align}
for $N$ even.
\end{theorem}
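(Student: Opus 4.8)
The plan is to obtain \eqref{dgkmgeneqn} from a Mellin--Barnes representation of its left-hand side, in the spirit of (and generalizing) the treatment of the case $a=1$ in \cite{dixitmaji1}. Expanding $\frac{\exp(-an^{N}x)}{1-\exp(-n^{N}x)}=\sum_{k\geq0}e^{-(a+k)n^{N}x}$, using the inverse Mellin transform $e^{-y}=\frac1{2\pi i}\int_{(c)}\Gamma(s)y^{-s}\,ds$ ($c,y>0$), and interchanging the two summations with the integral — permissible because $h\geq N/2$ makes $n^{N-2h}\leq1$, so everything converges absolutely for $c$ large — one gets, for $c>\max\{1,(N-2h+1)/N\}$,
\begin{equation*}
\sum_{n=1}^{\infty}n^{N-2h}\frac{\exp(-an^{N}x)}{1-\exp(-n^{N}x)}=\frac1{2\pi i}\int_{(c)}\Gamma(s)\,\zeta(s,a)\,\zeta(Ns+2h-N)\,x^{-s}\,ds.
\end{equation*}

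Next I would move the contour to $\Re(s)=-\lambda$, where $\lambda>0$ is large enough that (i) every pole contributing to $P(x,a)$ has been crossed and (ii) the Riemann functional equation for $\zeta(Ns+2h-N)$ is applicable on the new line; Stirling's formula for $\Gamma$ together with the standard convexity bounds for $\zeta$ and $\zeta(\cdot,a)$ make the horizontal segments negligible. The integrand has a simple pole at $s=1$ (from $\zeta(s,a)$; residue $\zeta(2h)/x$), a simple pole at $s=\frac{N+1-2h}{N}$ (from $\zeta(Ns+2h-N)$; residue $\frac1N\Gamma(\frac{N-2h+1}{N})\zeta(\frac{N-2h+1}{N},a)x^{-(N-2h+1)/N}$), and simple poles at $s=0,-1,-2,\dots$ from $\Gamma(s)$. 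The hypothesis $\frac{N-2h+1}{N}\neq-2\lfloor\frac hN-\frac12\rfloor$ is precisely what keeps the $\zeta(Ns+2h-N)$-pole off the set of $\Gamma$-poles (such a collision can occur only for $N$ odd). Evaluating the $\Gamma$-residues by $\Res_{s=-k}\Gamma(s)=\frac{(-1)^k}{k!}$, $\zeta(-k,a)=-\frac{B_{k+1}(a)}{k+1}$, using the vanishing of $\zeta$ at negative even integers, and rewriting the odd-index ones via $\zeta(2m)=\frac{(-1)^{m+1}(2\pi)^{2m}B_{2m}}{2(2m)!}$ ($m\geq0$), one checks that the collected residues reassemble exactly into $P(x,a)$, the ranges of summation there falling out of this bookkeeping.

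It remains to identify the residual integral on $\Re(s)=-\lambda$ with $S(x,a)$. On that line I would substitute Hurwitz's formula \eqref{hformula} for $\zeta(s,a)$ and the functional equation for $\zeta(Ns+2h-N)$, and simplify the resulting products of gamma factors via the reflection formula, so that $\zeta(s,a)$ contributes a $\cos(2\pi na)$-Dirichlet series divided by $\cos(\pi s/2)$ plus a $\sin(2\pi na)$-series divided by $\sin(\pi s/2)$, while $\zeta(Ns+2h-N)$ contributes $\sin\!\big(\tfrac{\pi(Ns+2h-N)}{2}\big)\Gamma\big(1-(Ns+2h-N)\big)$ times a Dirichlet series in a fresh variable $m$. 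For fixed $n,m$ one is then left with an integral of the shape $\frac1{2\pi i}\int\big(\sec\text{ or }\csc\text{ of }\tfrac{\pi s}{2}\big)\,\sin(\tfrac{\pi u}{2})\,\Gamma(1-u)\,Z^{s}\,ds$, where $u:=Ns+2h-N$ and $Z$ is a monomial in $2\pi,n,m,x^{-1}$ arranged so that $Z^{1/N}$ reproduces the argument of the exponentials in \eqref{sodda}--\eqref{sevena}. Pushing this contour to the right — where $\Gamma(1-u)$ decays super-exponentially and defeats the growth of $Z^{s}$ — and summing residues, the poles of $\Gamma(1-u)$ at odd integers, recombined against the Taylor expansion of a cosine and with the $2N$-periodic factor $1/\cos(\pi s/2)$ (resp.\ $1/\sin(\pi s/2)$) resolved through the $2N$-th roots of unity $e^{i\pi j/N}$, produce the inner $j$-sums; after the $m$-summation collapses a geometric series this becomes $\sum_{n}\cos(2\pi na)\,n^{-(2h-1)/N}(\exp(2A_{N,j}(n/x))-1)^{-1}$ and, for $N$ even, its $\sin$-counterpart, with the prefactor $(2\pi/x)^{(N-2h+1)/N}$ coming from the residue nearest $u=1$.

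The main obstacle is the parity split. For $N$ even one verifies that $\sin(\tfrac{\pi u}{2})$ vanishes at every pole of $1/\sin(\pi s/2)$, so all poles stay simple and the $\cos$- and $\sin$-series fuse into the single complex exponential $\cos(2\pi na)+i(-1)^{j+N/2+1}\sin(2\pi na)$ of \eqref{sevena}. For $N$ odd, by contrast, $\sin(\tfrac{\pi u}{2})$ is nonzero at those poles while $\Gamma(1-u)$ has a pole there, so genuine \emph{double} poles arise; their residues involve $\frac{d}{ds}\big(\Gamma(1-u)Z^{s}\big)$, and it is exactly this differentiation that brings in $\psi=\Gamma'/\Gamma$ and the logarithm $\frac{d}{ds}Z^{s}=Z^{s}\log Z$, producing the construct $\log(\tfrac1\pi A_{N,j}(\tfrac nx))-\tfrac12\big(\psi(\tfrac i\pi A_{N,j}(\tfrac nx))+\psi(-\tfrac i\pi A_{N,j}(\tfrac nx))\big)$ in \eqref{sodda} (this combination can also be recognised through Binet's integral $\log w-\tfrac12(\psi(iw)+\psi(-iw))=2\int_0^{\infty}\frac{t\,dt}{(t^2-w^2)(e^{2\pi t}-1)}$). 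Carrying out this double-pole computation so that the $\psi$ and $\log$ terms come out in the stated symmetric form, while keeping the root-of-unity bookkeeping and the many gamma evaluations consistent across the two parities, is the technical heart of the proof; the remaining analytic points — the interchanges, the decay of horizontal segments, and the absolute convergence of $S(x,a)$ (which uses $|\arg e^{i\pi j/N}|<\pi/2$ throughout the relevant range of $j$) — are routine.
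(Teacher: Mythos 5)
Your setup and your computation of $P(x,a)$ follow the paper's proof: the same Mellin--Barnes representation with integrand $\Gamma(s)\zeta(s,a)\zeta(Ns-(N-2h))x^{-s}$, the same catalogue of poles, and the same role for the hypothesis $\frac{N-2h+1}{N}\neq-2\lfloor\frac hN-\frac12\rfloor$. One caveat there: for $N$ odd the factor $\zeta(Ns-(N-2h))$ is evaluated at \emph{odd} integers when $s=-2j$, so it does not vanish and \emph{every} negative even integer is a genuine pole of the integrand; consequently the shifted line cannot simply be taken ``large enough'' but must sit in the precise window $\frac{2h}{N}-1<r<\frac{2h+1}{N}-1$ used in the paper, or you collect extra residues $-\frac{B_{2j+1}(a)}{(2j+1)!}\zeta(2h-(2j+1)N)x^{2j}$ with $j>\lfloor\frac hN-\frac12\rfloor$ that are absent from $P(x,a)$.

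The genuine gap is in your treatment of $S(x,a)$ for $N$ odd. Your parity analysis of where double poles occur is correct and mirrors the paper's (the paper reaches the same dichotomy through Lemma \ref{cheby}, which reduces the inner integral to one against $\Gamma(s_1)$ for $N$ even and against $\Gamma(s_1)/\tan(\pi s_1/2)$ for $N$ odd). But the residue at a double pole, computed for a fixed term $m$ of the Dirichlet series, yields $\psi$ evaluated at \emph{positive integers} together with $\log X^{*}_{m,n,j}$; assembling these over the residue index $k$ gives, for each $m$, the series $\sum_{k}\frac{(X^{*}_{m,n,j})^{2k}}{(2k)!}\bigl(\psi(2k+1)-\log X^{*}_{m,n,j}\bigr)$, and it is the subsequent summation over $m$ (with $X^{*}_{m,n,j}$ proportional to $m$) that must produce $\psi\bigl(\pm\tfrac{i}{\pi}A_{N,j}(\tfrac nx)\bigr)$ --- a quantity with no $m$ left in it. That double sum cannot be interchanged or collapsed termwise; its closed-form evaluation is exactly the content of the paper's Theorem \ref{raabesum} (an infinite sum of Raabe integrals), whose proof requires Guinand's generalization of the Poisson summation formula together with the auxiliary Lemmas \ref{lemsix} and \ref{zerodoubleintegral}, because the classical Poisson formula is inapplicable and naive term rearrangement leads to divergent expressions. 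Your parenthetical appeal to Binet's integral names the right target but does not supply this step: passing from the $m$-indexed residue data to the Binet/Guinand closed form is the technical heart of the theorem, and the proposal leaves it unproved.
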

\begin{remark}
Note that the above theorem does not hold for $N=1$.
\end{remark}
\begin{remark}\label{bc}
The appearance of $\zeta\left(\frac{N-2h+1}{N},a\right)$ in Theorem \textup{\ref{dgkmgen}} implies that this result can be conceived of as a formula for the Hurwitz zeta function at rational arguments, namely $\zeta\left(\frac{b}{c},a\right)$, when $b$ is odd and $c$ is a positive even integer, or when $b$ is even and $c$ is a positive odd integer. The former case when $b$ is a negative odd integer and $c$ is a positive even integer was established in \cite{ktyacta} as discussed earlier.
\end{remark}
 \begin{remark}\label{ag1}
When $a>1$, one can still obtain a representation for $\zeta\left(\frac{b}{c},a\right)$. We consider two cases depending upon whether $a$ is an integer or not. If $a>1$ is not an integer, we apply Theorem \textup{\ref{dgkmgen}} with $a$ replaced by its fractional part $\{a\}$ and then using the fact that $\zeta(s, \{a\})=\zeta(s, a)+\sum_{\ell=1}^{\lfloor a\rfloor}\left(\ell+\{a\}-1\right)^{-s}$. The above identity can be easily seen to be true for \textup{Re}$(s)>1$ first, and then for all complex $s$ by analytic continuation. Now if $a>1$ is an integer, we can use Theorem \textup{\ref{dgkmgen}} with $a$ \emph{there} to be $1$, and then the identity $\zeta(s)=\zeta(s, a)+\sum_{\ell=1}^{a-1}\ell^{-s}$. This identity can be also first proved for \textup{Re}$(s)>1$ and then extended to all complex $s$ by analytic continuation.
\end{remark}

The above theorem is proved by representing the series on the left side of \eqref{dgkmgeneqn} as a line integral and then doing a careful analysis of it using contour integration. An important ingredient in the proof is a new identity which gives a closed-form expression for an infinite sum whose summand is Raabe's integral $\mathfrak{R}(y, w)$. For Re$(w)>0$ and $y>0$, the latter is given by \cite[p.~144]{htf2}
\begin{equation}\label{raabe}
\mathfrak{R}(y, w):=\displaystyle\int_{0}^{\infty}\frac{t\cos(yt)}{t^2+w^2}\, \mathrm{d}t. 
\end{equation}
The aforementioned identity on infinite series of Raabe's integrals which is interesting in itself, and to the best of our knowledge is new, is now given.
\begin{theorem}\label{raabesum}
Let $u\in\mathbb{C}$ be fixed such that \textup{Re}$(u)>0$. Then, 
\begin{equation}\label{raabesumeqn}
\sum_{m=1}^{\infty}\int_{0}^{\infty}\frac{t\cos(t)}{t^2+m^2u^2}\, \mathrm{d}t=\frac{1}{2}\left\{\log\left(\frac{u}{2\pi}\right)-\frac{1}{2}\left(\psi\left(\frac{iu}{2\pi}\right)+\psi\left(\frac{-iu}{2\pi}\right)\right)\right\}.
\end{equation}
\end{theorem}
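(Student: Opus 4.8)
The plan is to prove \eqref{raabesumeqn} first for real $u>0$ and then extend to $\Re(u)>0$ by analytic continuation. Both sides are holomorphic on $\Re(u)>0$: the right-hand side obviously so (the arguments $\pm iu/(2\pi)$ of $\psi$ stay away from $0,-1,-2,\dots$), and for the left-hand side one checks that the $m$-th term is $O_u(m^{-2})$ — e.g.\ two integrations by parts in $t$ express it as $-\frac{1}{m^2u^2}-\int_0^\infty k_m''(t)\cos t\,\mathrm{d}t$, where $k_m(t):=t/(t^2+m^2u^2)$ — so that the series converges locally uniformly. Fixing $u>0$, the strategy is to interchange the summation with the integration using the partial-fraction (cotangent) expansion of $\sum_m(t^2+m^2u^2)^{-1}$, and then to identify the resulting integral through Binet's integral formula for $\psi$. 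The one genuinely delicate point is the interchange, since $\int_0^\infty k_m(t)\cos t\,\mathrm{d}t$ converges only conditionally at infinity; I would resolve it by truncating and integrating by parts twice, which replaces the slowly-decaying kernel $k_m$ by $k_m''$, whose $L^1$-norms are summable.

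Concretely, summing the two-fold integration by parts over $1\le m\le M$ (the boundary terms at $\infty$ vanish since $k_m,k_m'\to0$; at $0$ one has $k_m'(0)=1/(m^2u^2)$) gives
\begin{equation*}
\sum_{m=1}^{M}\int_0^\infty\frac{t\cos t}{t^2+m^2u^2}\,\mathrm{d}t=-\frac{1}{u^2}\sum_{m=1}^{M}\frac{1}{m^2}-\int_0^\infty\Bigl(\sum_{m=1}^{M}k_m''(t)\Bigr)\cos t\,\mathrm{d}t.
\end{equation*}
Since $k_m''(t)=(mu)^{-3}\kappa''(t/(mu))$ with $\kappa(s)=s/(s^2+1)$, we have $\|k_m''\|_{L^1(0,\infty)}=\|\kappa''\|_1/(m^2u^2)$, so $\sum_{m\le M}k_m''$ converges in $L^1(0,\infty)$ as $M\to\infty$; its limit is $\phi''$, where — using $\sum_{m\ge1}(t^2+m^2u^2)^{-1}=\frac{\pi}{2tu}\coth(\frac{\pi t}{u})-\frac{1}{2t^2}$ together with $\coth x=1+2/(e^{2x}-1)$ —
\begin{equation*}
\sum_{m\ge1}k_m(t)=\frac{\pi}{2u}\coth\!\Bigl(\frac{\pi t}{u}\Bigr)-\frac{1}{2t}=\frac{\pi}{2u}+\phi(t),\qquad\phi(t):=\frac{\pi}{u(e^{2\pi t/u}-1)}-\frac{1}{2t}.
\end{equation*}
Here $\phi$ is smooth on $[0,\infty)$ (the singularities at the origin cancel), $\phi(t)\to0$ as $t\to\infty$, and $\phi'(0)=\pi^2/(6u^2)$. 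Passing to the limit $M\to\infty$ (using $\sum m^{-2}=\zeta(2)$ and $|\cos t|\le1$) and then integrating by parts twice in the reverse direction, the two contributions of $\pi^2/(6u^2)$ cancel and one obtains
\begin{equation*}
\sum_{m=1}^{\infty}\int_0^\infty\frac{t\cos t}{t^2+m^2u^2}\,\mathrm{d}t=\int_0^\infty\phi(t)\cos t\,\mathrm{d}t.
\end{equation*}

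It remains to evaluate this last integral. Substituting $v=2\pi t/u$ and writing $a:=u/(2\pi)>0$ turns it into $\tfrac12\int_0^\infty\bigl(\tfrac{1}{e^v-1}-\tfrac{1}{v}\bigr)\cos(av)\,\mathrm{d}v=\tfrac14\bigl(h(ia)+h(-ia)\bigr)$, where $h(z):=\int_0^\infty\bigl(\tfrac{1}{e^v-1}-\tfrac{1}{v}\bigr)e^{-zv}\,\mathrm{d}v$. By Binet's first formula, in the form $\int_0^\infty\bigl(\tfrac{1}{e^v-1}-\tfrac{1}{v}+\tfrac12\bigr)e^{-zv}\,\mathrm{d}v=\log z-\tfrac{1}{2z}-\psi(z)$ for $\Re(z)>0$, we get $h(z)=\log z-\tfrac1z-\psi(z)$; the integral defining $h$ converges (only conditionally, its integrand being $\sim-v^{-1}e^{-zv}$ at $\infty$) and is continuous on $\{\Re(z)\ge0\}\setminus\{0\}$, and the right-hand side is continuous there as well, so the identity persists at $z=\pm ia$. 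Adding, the terms $\mp(ia)^{-1}$ cancel and $\log(ia)+\log(-ia)=2\log a$ (principal branch, $a>0$), hence $h(ia)+h(-ia)=2\log a-\psi(ia)-\psi(-ia)$. Therefore the left side of \eqref{raabesumeqn} equals $\tfrac12\log a-\tfrac14\bigl(\psi(ia)+\psi(-ia)\bigr)=\tfrac12\bigl\{\log\tfrac{u}{2\pi}-\tfrac12\bigl(\psi(\tfrac{iu}{2\pi})+\psi(\tfrac{-iu}{2\pi})\bigr)\bigr\}$, and analytic continuation removes the restriction $u>0$.

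The main obstacle is precisely this interchange of $\sum$ and $\int$ together with the passage $M\to\infty$, all carried out around conditionally convergent oscillatory integrals: the double integration by parts (which replaces $k_m$ by the absolutely- and summably-integrable $k_m''$) is what makes it rigorous, and an analogous, milder, issue is the validity of Binet's formula on the boundary line $\Re(z)=0$. Everything else reduces to the classical cotangent partial-fraction expansion and Binet's integral representation.
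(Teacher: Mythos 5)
Your proof is correct, but it follows a genuinely different route from the paper's. The paper refuses to interchange $\sum_m$ and $\int_0^\infty$ (noting that the naive interchange produces the divergent integral $\frac{\pi}{2u}\int_0^\infty\cos t\,\mathrm{d}t$) and instead invokes Guinand's generalization of the Poisson summation formula with $f(v)=\frac{v}{2(u^2+v^2)}$; this requires the separate Lemma asserting $\int_0^\infty\mathfrak{R}(y,w)\,\mathrm{d}y=0$ (itself proved via a dominated-convergence/Frullani computation, with the Raabe-integral asymptotics of Section 3 needed to justify integrability), and it finishes by quoting the known evaluation $\lim_{M\to\infty}\bigl(\sum_{n\le M}\tfrac{n}{u^2+n^2}-\log M\bigr)=-\tfrac12\bigl(\psi(iu)+\psi(-iu)\bigr)$ from the first author's earlier work on the Laplace transform of $\psi$. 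You instead make the interchange itself rigorous: the double integration by parts replaces the conditionally convergent kernels $k_m$ by the summably $L^1$ kernels $k_m''$, the cotangent partial-fraction expansion identifies the limit, and the divergent constant $\frac{\pi}{2u}$ is seen to drop out exactly (the two $\pi^2/(6u^2)$ contributions cancelling), after which Binet's first formula, pushed to the imaginary axis by a continuity argument, closes the computation. What each approach buys: the paper's is shorter once Guinand's theorem and the digamma series are taken as black boxes, and the auxiliary Lemma on $\int_0^\infty\mathfrak{R}(y,w)\,\mathrm{d}y$ has independent interest there; yours is more self-contained and elementary, needs none of the Raabe-integral machinery of Section 3, and explains structurally \emph{why} the "forbidden" interchange nonetheless yields the right answer after subtracting the constant term of $\sum_m k_m$. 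The only points to write out fully in a final version are the locally uniform (in $u$) bound on $\|k_m''\|_{L^1}$ for complex $u$ with $\Re(u)>0$, and the uniform convergence of the tail of $h(z)$ up to the punctured imaginary axis; both are routine.
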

The series on the left-hand side of this result is not amenable to a straightforward evaluation and hence to obtain the result we had to use Guinand's generalization of the Poisson summation formula \cite[Theorem 1]{apg1}. Note that interchanging the order of summation and integration leads to a divergent integral. 
It is interesting to note that while Raabe's integral itself is evaluable in terms of, either the exponential integral function \cite[p.~144, Equation (13)]{htf2}, \cite[p.~428, Formula \textbf{3.723.5}]{grn} or, equivalently, $\textup{Shi}(x)$ and $\textup{Chi}(x)$ functions \cite[p.~895, Formulas \textbf{8.221.1, 8.221.2}]{grn}, which are not-so-common special functions, the infinite sum of Raabe integrals can be expressed in terms of well-known functions, namely, the digamma function $\psi(z)$, and $\log(z)$, which is an elementary function.

A complement of Theorem \ref{dgkmgen} is stated next.
\begin{theorem}\label{dgkmord2}
Let $\g$ denote Euler's constant. Let $0<a\leq 1$. Let $N$ be an odd positive integer. Let $h$ be an integer such that $h>N/2$. Let $A_{N,j}(y)$ be defined as in Theorem \textup{\ref{dgkmgen}}. If $\displaystyle\tfrac{N-2h+1}{N}=-2\left\lfloor\tfrac{h}{N}-\tfrac{1}{2}\right\rfloor\neq 0$, then
{\allowdisplaybreaks\begin{align}\label{psxa}
&\sum_{n=1}^{\infty}n^{N-2h}\frac{\textup{exp}(-an^{N}x)}{1-\textup{exp}(-n^{N}x)}\nonumber\\
&=-\left(a-\tfrac{1}{2}\right)\zeta(-N+2h)-\g \frac{B_{2\left\lfloor\frac{h}{N}-\frac{1}{2}\right\rfloor+1}(a)}{\left(2\left\lfloor\tfrac{h}{N}-\tfrac{1}{2}\right\rfloor\right)!}\hspace{0.5mm}x^{2\left\lfloor\frac{h}{N}-\frac{1}{2}\right\rfloor}+\frac{(-1)^{\left\lfloor\frac{h}{N}-\frac{1}{2}\right\rfloor}}{2N}\left(\frac{2\pi}{x}\right)^{\frac{N-2h+1}{N}}\sum_{n=1}^{\infty}\frac{\cos(2\pi na)}{n^{\frac{2h-1}{N}}}\nonumber\\
&\quad-\sum_{j=1}^{\left\lfloor\frac{h}{N}-\frac{1}{2}\right\rfloor-1}\frac{B_{2j+1}(a)}{(2j+1)!}\zeta\left(2h-(2j+1)N\right)x^{2j}+\frac{(-1)^{h+1}(2\pi)^{2h}}{2}\sum_{j=0}^{\left\lfloor\frac{h}{N}\right\rfloor}\left(\tfrac{-1}{4\pi^2}\right)^{jN}\frac{B_{2j}(a)B_{2h-2jN}}{(2j)!(2h-2jN)!}x^{2j-1}\nonumber\\
&\quad+\frac{(-1)^{h+1}}{N}\left(\frac{2\pi}{x}\right)^{\frac{N-2h+1}{N}}\sum_{j=-\frac{(N-1)}{2}}^{\frac{(N-1)}{2}}(-1)^j\Bigg\{\sum_{n=1}^{\infty}\frac{\cos(2\pi na)}{n^{\frac{2h-1}{N}}\left(\textup{exp}\left(2A_{N,j}\left(\frac{n}{x}\right)\right)-1\right)}\nonumber\\
&\qquad+\frac{(-1)^{j+\frac{N+3}{2}}}{2\pi}\sum_{n=1}^{\infty}\frac{\sin(2\pi na)}{n^{\frac{2h-1}{N}}}\left(\psi\left(\tfrac{i}{\pi}A_{N,j}\left(\tfrac{n}{x}\right)\right)+\psi\left(-\tfrac{i}{\pi}A_{N,j}\left(\tfrac{n}{x}\right)\right)\right)\Bigg\}.
\end{align}}
\end{theorem}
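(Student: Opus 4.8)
\emph{Proof strategy.} The skeleton of the argument is the same as that of Theorem \textup{\ref{dgkmgen}}. Expanding $\frac{\exp(-an^{N}x)}{1-\exp(-n^{N}x)}=\sum_{k\ge 0}e^{-(a+k)n^{N}x}$ and using $\int_{0}^{\infty}t^{s-1}e^{-\lambda t}\,\mathrm{d}t=\Gamma(s)\lambda^{-s}$, one gets, for $\mathrm{Re}(s)=c$ large enough,
\begin{equation*}
\sum_{n=1}^{\infty}n^{N-2h}\frac{\exp(-an^{N}x)}{1-\exp(-n^{N}x)}=\frac{1}{2\pi i}\int_{(c)}\Gamma(s)\,\zeta(s,a)\,\zeta(Ns-N+2h)\,x^{-s}\,\mathrm{d}s,
\end{equation*}
and one then moves the contour to the left, collects the residues of the integrand, and transforms the remaining vertical integral by Hurwitz's formula \eqref{hformula} and Theorem \textup{\ref{raabesum}} (the sum of Raabe integrals in the latter being responsible for the $\log$ and $\psi$ that occur in $S$). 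Under the present hypothesis $2h-1=N(2\ell+1)$, where $\ell:=\lfloor h/N-\tfrac12\rfloor\ge 1$, the residues at $s=1$ (from $\zeta(s,a)$) and at $s=0,-1,\dots,-(2\ell-1)$ (simple poles of $\Gamma$) are computed exactly as in Theorem \textup{\ref{dgkmgen}} — using $\zeta(-k,a)=-B_{k+1}(a)/(k+1)$ and $\zeta(2k)=(-1)^{k+1}(2\pi)^{2k}B_{2k}/(2(2k)!)$, the latter to display the residue at $s=1$ as the $j=0$ summand of the second finite sum of \eqref{psxa} — and the vertical integral is handled word for word as there, except that $2h-1=N(2\ell+1)$ turns the coefficient $e^{i\pi(1-2h)j/N}$ of \eqref{sodda} into $(-1)^{j}$. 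These steps account for every term of \eqref{psxa} except the $\gamma$-term and the ``half-weight'' cosine sum $\frac{(-1)^{\ell}}{2N}\big(\tfrac{2\pi}{x}\big)^{(N-2h+1)/N}\sum_{n\ge 1}n^{-(2h-1)/N}\cos(2\pi na)$, and they carry one surplus: the logarithmic term $\log\big(\tfrac1\pi A_{N,j}(n/x)\big)$, present in \eqref{sodda} but absent from \eqref{psxa}.

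The one genuine difference occurs at $s=-2\ell$. There the pole of $\zeta(Ns-N+2h)$, which sits at $s=\tfrac{N-2h+1}{N}=-2\ell$ precisely because $2h-1=N(2\ell+1)$, coincides with the simple pole of $\Gamma(s)$, so that in place of the two simple poles that in Theorem \textup{\ref{dgkmgen}} give the term $\tfrac1N\Gamma\big(\tfrac{N-2h+1}{N}\big)\zeta\big(\tfrac{N-2h+1}{N},a\big)x^{-(N-2h+1)/N}$ of \eqref{pxa} and the $j=\ell$ summand of the first finite sum of \eqref{pxa}, we now have a single double pole. Its residue I would extract from the Laurent expansions
\begin{equation*}
\Gamma(s)=\frac{1}{(2\ell)!}\Big(\frac{1}{s+2\ell}+\psi(2\ell+1)+\cdots\Big),\qquad \zeta(Ns-N+2h)=\frac{1}{N(s+2\ell)}+\gamma+\cdots,
\end{equation*}
together with $\zeta(s,a)x^{-s}=\zeta(-2\ell,a)x^{2\ell}+\big(\zeta'(-2\ell,a)-\zeta(-2\ell,a)\log x\big)x^{2\ell}(s+2\ell)+\cdots$; the residue is then a linear combination of a $\gamma$-term, a $\psi(2\ell+1)$-term, a $\log x$-term and a $\zeta'(-2\ell,a)$-term. (Equivalently, \eqref{psxa} is the limiting form of \eqref{dgkmgeneqn} as $\tfrac{N-2h+1}{N}\to-2\ell$ through values keeping $\lfloor h/N-\tfrac12\rfloor=\ell$: the two residual terms just named develop equal and opposite principal parts, and their finite parts combine into precisely this double-pole residue, all other terms of \eqref{dgkmgeneqn} — in particular the logarithmic block of $S$ — surviving the limit unchanged.)

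The substantive work, which I expect to be the only delicate point, is to show that after the vertical integral has been transformed the $\psi(2\ell+1)$-, $\log x$- and $\zeta'(-2\ell,a)$-parts of this double-pole residue cancel exactly against the surplus logarithmic terms $\log\big(\tfrac1\pi A_{N,j}(n/x)\big)=\tfrac1N\log(2\pi n/x)+\tfrac{i\pi j}{N}$, leaving only the clean $\gamma$-term and the half-weight cosine sum. To do this I would differentiate Hurwitz's formula \eqref{hformula} at the even negative integer $s=-2\ell$, where $\sin(\pi s/2)=0$ and $\cos(\pi s/2)=(-1)^{\ell}$, to express $\zeta'(-2\ell,a)$ through $\sum_{n\ge 1}n^{-2\ell-1}\cos(2\pi na)$, $\sum_{n\ge 1}n^{-2\ell-1}\sin(2\pi na)$ and $\sum_{n\ge 1}n^{-2\ell-1}\sin(2\pi na)\log n$, with $\Gamma'(2\ell+1)=(2\ell)!\,\psi(2\ell+1)$ and the Fourier expansion $B_{2\ell+1}(a)=(-1)^{\ell+1}\frac{2(2\ell+1)!}{(2\pi)^{2\ell+1}}\sum_{n\ge 1}n^{-2\ell-1}\sin(2\pi na)$ (which also supplies $\zeta(-2\ell,a)$) appearing along the way. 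Summing $\log\big(\tfrac1\pi A_{N,j}(n/x)\big)$ over the symmetric range $-\tfrac{N-1}{2}\le j\le \tfrac{N-1}{2}$ annihilates the $j$-linear part and turns the surplus into the same $\log(2\pi n/x)$-weighted sine sums; matching these against $\zeta'(-2\ell,a)$ makes the $\log x$-, $\log(2\pi)$-, harmonic-number- and $\sum n^{-2\ell-1}\sin(2\pi na)\log n$-contributions drop out, while the piece arising from differentiating $\sin(\pi s/2)$ produces exactly the half-weight cosine sum and $\gamma$ survives with the coefficient shown in \eqref{psxa}. The analytic estimates that justify the contour shift, the termwise integration and the rearrangements of the series are identical to those in the proof of Theorem \textup{\ref{dgkmgen}} and need no change.
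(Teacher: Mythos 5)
Your proposal is correct and follows essentially the same route as the paper: the same contour-integral setup, the same double-pole residue at $s=-2\ell$ (matching \eqref{R_N-2h+1/N*}), and the same cancellation of the $\psi(2\ell+1)$-, $\log x$- and $\zeta'(-2\ell,a)$-contributions against the logarithmic block of $S(x,a)$ via the Fourier expansion \eqref{fou} of $B_{2\ell+1}(a)$. The only cosmetic difference is that you propose to derive the needed expression for $\zeta'(-2\ell,a)$ by differentiating Hurwitz's formula at $s=-2\ell$, whereas the paper quotes the equivalent Koyama--Kurokawa identity \eqref{kaykay}.
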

\begin{remark}\label{equi}
An equivalent version of the above theorem, comparable in appearance to Theorem \textup{\ref{dgkmgen}}, is given in \eqref{dgkmord2p}.
\end{remark}
One difference in the hypotheses of Theorems \ref{dgkmgen} and \ref{dgkmord2}, when $N$ is an odd positive integer, is that in the first, we have $\frac{N-2h+1}{N}\neq-2\left\lfloor\frac{h}{N}-\frac{1}{2}\right\rfloor$, whereas in the second, $\frac{N-2h+1}{N}=-2\left\lfloor\frac{h}{N}-\frac{1}{2}\right\rfloor\neq0$. (The remaining case $\frac{N-2h+1}{N}=-2\left\lfloor\frac{h}{N}-\frac{1}{2}\right\rfloor=0$ is covered in Theorem \ref{dgkmord2m0} below.) Note that the equality $\frac{N-2h+1}{N}=-2\left\lfloor\frac{h}{N}-\frac{1}{2}\right\rfloor$ does not hold for any even $N$, but it may very well for some specific values of $N$ odd and $h$. Even though at a first glance, these conditions may look artificial, as will be seen in the proofs, they arise naturally while examining the poles of the integrand of the line integral representation of the series $\sum_{n=1}^{\infty}n^{N-2h}\frac{\textup{exp}(-an^{N}x)}{1-\textup{exp}(-n^{N}x)}$, for this representation has, in its contour integral representation, its integrand as $\G(s)\zeta(s, a)\zeta\left(Ns-(N-2h)\right)x^{-s}$ (see \eqref{mainequality} below). So if we now consider the poles of $\G(s)$ at $-2, -4, -6,\cdots$, they get canceled by the zeros of $\zeta(s, a)$ \emph{only} when $a=1$ or $a=\frac{1}{2}$, for then $\zeta(s, 1)=\zeta(s)$ and 
\begin{equation}\label{hzetahalf}
\zeta\left(s,\tfrac{1}{2}\right)=(2^s-1)\zeta(s),
\end{equation}
and it is well-known that $\zeta(-2m)=0$ for $m\geq 1$. However, for $0<a<1, a\neq\frac{1}{2}$, $\zeta(-2m,a), m\geq 1$, may not always be zero. 

In fact, a theorem due to Spira \cite[Theorem 3]{spira} states that if Re$(s)\leq-(4a+1+2\left\lfloor1-2a\rfloor\right)$ and $|\textup{Im}(s)|\leq 1$, then $\zeta(s, a)\neq 0$ except for trivial zeros on the negative real axis, one in each interval $(-2n-4a-1, -2n-4a+1)$, where $n\geq 1-2a$. Thus, some (or all) of the poles of $\G(s)$ at $s=-2m, m\geq 1$, may very well contribute non-zero residues towards the evaluation of the line integral. Now $h\geq N/2$ implies that $\left\lfloor\frac{h}{N}-\frac{1}{2}\right\rfloor\geq 0$. First consider $\left\lfloor\frac{h}{N}-\frac{1}{2}\right\rfloor>0$ so that $-2\left\lfloor\frac{h}{N}-\frac{1}{2}\right\rfloor$ is a legitimate pole of $\G(s)$. If, in addition, we have $\frac{N-2h+1}{N}=-2j$ for some $j\in\mathbb{N}$, then Lemma \ref{equality} below implies that $j=\left\lfloor\frac{h}{N}-\frac{1}{2}\right\rfloor$. Now since $\frac{N-2h+1}{N}$ is the pole of $\zeta(Ns-(N-2h)$, we find that this is a double order pole of the integrand. This is why $P(x, a)$ in Theorem \ref{dgkmgen} gets modified to $P^{*}(x, a)$ as can be seen in \eqref{dgkmord2p}, which is an equivalent version of Theorem \ref{dgkmord2}. 


The aforementioned fact about $\zeta(s, a)$ not always having zeros at $s=-2m, m\in\mathbb{N}$ for $0<a<1$ suggests us to write down the important differences that are present between $\zeta(s, a)$ and $\zeta(s)$. 
Unlike $\zeta(s)$, $\zeta(s, a)$, $a\neq\frac{1}{2}, 1$, has no Euler product. It is known, due to Davenport and Heilbronn \cite{davhel} in the case when $a(\neq\frac{1}{2}, 1)$ is rational or transcendental, and due to Cassels \cite{cassels} in the case when $a$ is algebraic irrational, that $\zeta(s, a)$ has infinitely many zeros in the half-plane Re$(s)>1$. Moreover, when $a(\neq\frac{1}{2}, 1)$ is rational, Voronin \cite{voronin} proved that $\zeta(s, a)$ has infinitely many zeros in the critical strip, and to the right of the critical line Re$(s)=\frac{1}{2}$. The corresponding result when $a$ is transcendental was obtained by Gonek \cite{gonek}.

We now give an equivalent version of Theorem \ref{dgkmord2}, which, for $m>0$, gives a two-parameter generalization of Ramanujan's formula for $\zeta(2m+1)$.
\begin{theorem}\label{ggram}
Let $0<a\leq 1$, let $N$ be an odd positive integer and $\a,\b>0$ such that $\a\b^{N}=\pi^{N+1}$. Then for any positive integer $m$,
{\allowdisplaybreaks\begin{align}\label{zetageneqna}
&\a^{-\frac{2Nm}{N+1}}\bigg(\left(a-\frac{1}{2}\right)\zeta(2Nm+1)+\sum_{j=1}^{m-1}\frac{B_{2j+1}(a)}{(2j+1)!}\zeta(2Nm+1-2jN)(2^N\a)^{2j}\nonumber\\
&\qquad\qquad+\sum_{n=1}^{\infty}\frac{n^{-2Nm-1}\textup{exp}\left(-a(2n)^{N}\a\right)}{1-\textup{exp}\left(-(2n)^{N}\a\right)}\bigg)\nonumber\\
&=\left(-\b^{\frac{2N}{N+1}}\right)^{-m}\frac{2^{2m(N-1)}}{N}\bigg[\frac{(-1)^{m+1}(2\pi)^{2m}B_{2m+1}(a)N \g}{(2m+1)!}+\frac{1}{2}\sum_{n=1}^{\infty}\frac{\cos(2\pi na)}{n^{2m+1}}\nonumber\\
&\quad+(-1)^{\frac{N+3}{2}}\sum_{j=\frac{-(N-1)}{2}}^{\frac{N-1}{2}}(-1)^{j}\bigg\{\sum_{n=1}^{\infty}\frac{n^{-2m-1}\cos(2\pi na)}{\textup{exp}\left((2n)^{\frac{1}{N}}\b e^{\frac{i\pi j}{N}}\right)-1}\nonumber\\
&\quad+\frac{(-1)^{j+\frac{N+3}{2}}}{2\pi}\sum_{n=1}^{\infty}\frac{\sin(2\pi na)}{n^{2m+1}}\left(\psi\left(\tfrac{i\beta}{2\pi}(2n)^{\frac{1}{N}} e^{\frac{i\pi j}{N}}\right)+\psi\left(\tfrac{-i\beta}{2\pi}(2n)^{\frac{1}{N}} e^{\frac{i\pi j}{N}}\right)\right)\bigg\}\bigg]\nonumber\\
&\quad+(-1)^{m+\frac{N+3}{2}}2^{2Nm}\sum_{j=0}^{\left\lfloor\frac{N+1}{2N}+m\right\rfloor}\frac{(-1)^jB_{2j}(a)B_{N+1+2N(m-j)}}{(2j)!(N+1+2N(m-j))!}\a^{\frac{2j}{N+1}}\b^{N+\frac{2N^2(m-j)}{N+1}}.
\end{align}}
\end{theorem}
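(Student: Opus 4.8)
The plan is to obtain Theorem~\ref{ggram} directly from Theorem~\ref{dgkmord2} by a judicious choice of the two free parameters $h$ and $x$ there, followed by an algebraic simplification using $\a\b^N=\pi^{N+1}$; since the statement is advertised as an \emph{equivalent version} of Theorem~\ref{dgkmord2}, no genuinely new analytic input should be needed.

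First I would specialize Theorem~\ref{dgkmord2} to $h=\tfrac{(2m+1)N+1}{2}$ --- an integer because $N$ is odd --- and $x=2^{N}\a$. With this choice $n^{N-2h}=n^{-2Nm-1}$ and $n^{N}x=(2n)^{N}\a$, so the left-hand side of \eqref{psxa} becomes exactly the generalized Lambert series occurring in \eqref{zetageneqna}. Before using the theorem I would verify its hypotheses: $m\ge 1$ forces $h>N/2$, while short computations give $\tfrac{h}{N}=m+\tfrac{N+1}{2N}$, hence $\left\lfloor\tfrac{h}{N}-\tfrac12\right\rfloor=m$ and $\left\lfloor\tfrac{h}{N}\right\rfloor=\left\lfloor\tfrac{N+1}{2N}+m\right\rfloor$ (the latter matching the summation bound in \eqref{zetageneqna}); consequently $\tfrac{N-2h+1}{N}=-2m=-2\left\lfloor\tfrac{h}{N}-\tfrac12\right\rfloor\ne 0$, which is precisely the equality required in Theorem~\ref{dgkmord2}.

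Next I would translate each term of \eqref{psxa} under these substitutions. The arithmetic pieces are immediate: $\zeta(-N+2h)=\zeta(2Nm+1)$, $\zeta\bigl(2h-(2j+1)N\bigr)=\zeta(2Nm+1-2jN)$, $2h-2jN=N+1+2N(m-j)$, $(2h-1)/N=2m+1$, $(N-2h+1)/N=-2m$, and $x^{2j}=(2^{N}\a)^{2j}$. For the transcendental pieces I would invoke $\a=\pi^{N+1}\b^{-N}$ to get the key identity
\begin{equation*}
A_{N,j}\!\left(\tfrac{n}{x}\right)=\tfrac{\b}{2}\,(2n)^{1/N}e^{i\pi j/N},
\end{equation*}
so that $2A_{N,j}(n/x)=\b(2n)^{1/N}e^{i\pi j/N}$ and $\tfrac{\pm i}{\pi}A_{N,j}(n/x)=\tfrac{\pm i\b}{2\pi}(2n)^{1/N}e^{i\pi j/N}$ --- precisely the exponentials and digamma arguments appearing in \eqref{zetageneqna}. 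Finally I would transpose $-(a-\tfrac12)\zeta(2Nm+1)$ and $-\sum_{j=1}^{m-1}\tfrac{B_{2j+1}(a)}{(2j+1)!}\zeta(2Nm+1-2jN)(2^{N}\a)^{2j}$ to the left, multiply the whole identity by $\a^{-2Nm/(N+1)}$, and simplify each surviving term using the homogeneity relations $\a^{2m/(N+1)}=\pi^{2m}\b^{-2Nm/(N+1)}$ and $\pi^{2N(m-j)}=\a^{2N(m-j)/(N+1)}\b^{2N^{2}(m-j)/(N+1)}$; in doing so the factor $(-1)^{m}\b^{-2Nm/(N+1)}$ collapses to $\bigl(-\b^{2N/(N+1)}\bigr)^{-m}$ and the sign $(-1)^{h+1}$ factors as $(-1)^{m}(-1)^{(N+3)/2}$, after which \eqref{zetageneqna} emerges.

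The only real labour is bookkeeping --- carrying the accumulated powers of $2$, $\pi$, $\a$, $\b$ and all the signs correctly through the substitution. I expect the most error-prone step to be the residual finite sum of Bernoulli numbers, where $(2\pi)^{2h}$, $\bigl(-\tfrac{1}{4\pi^{2}}\bigr)^{jN}$ and $x^{2j-1}$ must be consolidated and then, via $\a\b^{N}=\pi^{N+1}$, rewritten as $2^{2Nm}\a^{2j/(N+1)}\b^{N+2N^{2}(m-j)/(N+1)}$; once that term is pinned down, every other term lines up and the identity \eqref{zetageneqna} follows.
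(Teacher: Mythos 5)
Your proposal is correct and coincides with the paper's own proof: the paper likewise derives Theorem \ref{ggram} by setting $h=\frac{N+1}{2}+Nm$ (identical to your $h=\frac{(2m+1)N+1}{2}$) and $x=2^{N}\a$ in Theorem \ref{dgkmord2}, using $\a\b^{N}=\pi^{N+1}$ to rewrite the residual Bernoulli sum via the identity \eqref{e2c}, and then rearranging and multiplying by $\a^{-2Nm/(N+1)}$. All of your hypothesis checks and term-by-term translations (including $2A_{N,j}(n/x)=\b(2n)^{1/N}e^{i\pi j/N}$ and the sign factorization $(-1)^{h+1}=(-1)^{m}(-1)^{(N+3)/2}$) match the paper's computation.
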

When we let $a=1$ in the above theorem, we obtain \eqref{zetageneqn} for positive integers $m$, which, in turn, as remarked before, gives Ramanujan's formula \eqref{zetaodd} for positive integers $m$ as its special case. 

The above theorem gives, as a special case, the following beautiful formula relating $\zeta(3), \zeta(5)$,\newline
$\zeta(7), \zeta(9)$ and $\zeta(11)$.
\begin{corollary}\label{zeta311}
The following identity holds:
{\allowdisplaybreaks\begin{align*}
&\frac{277}{8257536}\frac{\zeta(3)}{\pi^3}-\frac{61}{184320}\frac{\zeta(5)}{\pi^5}+\frac{5}{1536}\frac{\zeta(7)}{\pi^7}-\frac{1}{32}\frac{\zeta(9)}{\pi^9}+\frac{1049599}{4194304}\frac{\zeta(11)}{\pi^{11}}\nonumber\\
&\quad+\frac{1315686689}{3570822807552000}-\frac{50521}{14863564800}\frac{\gamma}{\pi}\nonumber\\
&=\frac{1}{\pi^{11}}\sum_{n=1}^{\infty}\frac{e^{3\pi n/2}}{n^{11}\left(e^{2\pi n}-1\right)}+\frac{1}{2048\pi^{11}}\sum_{n=1}^{\infty}\frac{(-1)^n}{n^{11}\left(e^{4\pi n}-1\right)}+\frac{1}{2\pi^{12}}\sum_{n=1}^{\infty}\frac{\sin\left(\frac{\pi n}{2}\right)}{n^{11}}\left(\psi(in)+\psi(-in)\right).
\end{align*}}
\end{corollary}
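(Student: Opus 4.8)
The plan is to derive Corollary \ref{zeta311} by specializing Theorem \ref{ggram} to $N=1$, $m=5$ (so $2Nm+1=11$), $a=\tfrac14$, and $\alpha=\beta=\pi$; this is admissible since $\alpha\beta^{N}=\pi^{2}=\pi^{N+1}$, and it collapses the prefactors to $\alpha^{-2Nm/(N+1)}=\pi^{-5}$, $(-\beta^{2N/(N+1)})^{-m}=-\pi^{-5}$, $2^{2m(N-1)}/N=1$, while the inner $j$-sums degenerate to the single index $j=0$. First I would write the resulting identity out term by term: the Lambert series on the left becomes $\sum_{n\ge1}n^{-11}e^{-\pi n/2}/(1-e^{-2\pi n})=\sum_{n\ge1}e^{3\pi n/2}/(n^{11}(e^{2\pi n}-1))$ after clearing the denominator; the exponential $\exp((2n)^{1/N}\beta e^{i\pi j/N})$ in the finite Lambert sum on the right reduces to $e^{2\pi n}$ and the $\psi$-argument $\tfrac{i\beta}{2\pi}(2n)^{1/N}e^{i\pi j/N}$ reduces to $in$; and the argument $2\pi na$ of every trigonometric coefficient becomes $\pi n/2$.

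Next I would perform the simplifications forced by $a=\tfrac14$. Because $\cos(\pi n/2)$ vanishes for odd $n$ and equals $(-1)^{n/2}$ for even $n$, the Lambert sum $\sum_{n\ge1}\cos(\pi n/2)/(n^{11}(e^{2\pi n}-1))$ reduces to $\tfrac1{2048}\sum_{k\ge1}(-1)^{k}/(k^{11}(e^{4\pi k}-1))$, while $\tfrac12\sum_{n\ge1}\cos(\pi n/2)/n^{11}=\tfrac1{2^{12}}\sum_{k\ge1}(-1)^{k}/k^{11}=-\tfrac{1023}{4194304}\zeta(11)$ using $\eta(11)=(1-2^{-10})\zeta(11)$; this combines with the term $(a-\tfrac12)\zeta(11)=-\tfrac14\zeta(11)$ to produce the coefficient $\tfrac{1049599}{4194304}$ of $\zeta(11)$. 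The sum $\sum_{j=1}^{m-1}\tfrac{B_{2j+1}(a)}{(2j+1)!}\zeta(2Nm+1-2jN)(2^{N}\alpha)^{2j}$ becomes, for $j=1,2,3,4$, rational multiples of $\zeta(9),\zeta(7),\zeta(5),\zeta(3)$ once one inserts the values $B_3(\tfrac14)=\tfrac3{64}$, $B_5(\tfrac14)=-\tfrac{25}{1024}$, $B_7(\tfrac14)=\tfrac{427}{16384}$, $B_9(\tfrac14)=-\tfrac{12465}{262144}$ (obtained from the defining polynomials, or from $B_{2k+1}(\tfrac14)=\tfrac{2(-1)^{k+1}(2k+1)!}{(2\pi)^{2k+1}}\beta(2k+1)$ together with $\beta(2k+1)=\tfrac{(-1)^{k}E_{2k}}{4^{k+1}(2k)!}\pi^{2k+1}$); after dividing by $\pi^{11}$ these contribute the coefficients $-\tfrac1{32},\tfrac5{1536},-\tfrac{61}{184320},\tfrac{277}{8257536}$ of $\zeta(9)/\pi^{9},\zeta(7)/\pi^{7},\zeta(5)/\pi^{5},\zeta(3)/\pi^{3}$. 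Likewise the $\gamma$-term $\tfrac{(-1)^{m+1}(2\pi)^{2m}B_{2m+1}(a)N\gamma}{(2m+1)!}$ simplifies, by the same Fourier formula with $k=5$, to $\beta(11)\gamma/\pi$, and since $E_{10}=-50521$ we have $\beta(11)=\tfrac{50521}{14863564800}\pi^{11}$, which is the source of the coefficient $\tfrac{50521}{14863564800}$ of $\gamma/\pi$.

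Finally I would evaluate the residual Bernoulli expression $(-1)^{m+(N+3)/2}2^{2Nm}\sum_{j=0}^{\lfloor(N+1)/(2N)+m\rfloor}\tfrac{(-1)^{j}B_{2j}(a)B_{N+1+2N(m-j)}}{(2j)!(N+1+2N(m-j))!}\alpha^{2j/(N+1)}\beta^{N+2N^{2}(m-j)/(N+1)}$, which for our parameters equals $-1024\,\pi^{11}\sum_{j=0}^{6}\tfrac{(-1)^{j}B_{2j}(1/4)B_{12-2j}}{(2j)!(12-2j)!}$, a finite rational sum giving the constant $\tfrac{1315686689}{3570822807552000}$ after division by $\pi^{11}$. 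Dividing the whole specialized identity by $\pi^{11}$ and moving the three infinite series (the $e^{3\pi n/2}$-series, the alternating $e^{4\pi n}$-series, and the $\psi(in)+\psi(-in)$-series) to the right-hand side then yields precisely the stated formula. There is no conceptual obstacle here; the only genuine work is the bookkeeping — keeping the signs straight, tracking the powers of $2\pi$ attached to each odd zeta value, and accurately evaluating the Bernoulli polynomials at $\tfrac14$ (equivalently the Euler numbers $E_2,E_4,\dots,E_{10}$) together with the twelve-term rational sum for the constant term.
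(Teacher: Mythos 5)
Your proposal is correct and follows essentially the same route as the paper: the authors likewise set $a=\tfrac14$ in Theorem \ref{ggram} (expressing $B_{2j+1}(\tfrac14)$ via Euler numbers through $B_n(\tfrac14)=-nE_{n-1}4^{-n}-2^{-n}(1-2^{1-n})B_n$) and then specialize to $\a=\b=\pi$, $m=5$, $N=1$. The only difference is cosmetic — you substitute the numerical values $B_3(\tfrac14),\dots,B_9(\tfrac14)$ directly rather than first writing the general $a=\tfrac14$ identity in terms of $E_{2j}$ — and your coefficient computations check out.
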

It should be noted that there are formulas of other type linking $\zeta(3), \cdots, \zeta(2m+1)$ discovered, for example, by Wilton \cite{wiltonmess}, by Srivastava \cite{srivastava} (see also the references therein), and by Kanemitsu, Tanigawa and Yoshimoto \cite{ktyham}. For details, refer to \cite{ktyham}. However, the advantage of Theorem \ref{ggram} lies in the fact that one can vary $N$ over the set of odd positive integers, and hence it allows us to obtain a relation between odd zeta values $\zeta(2N+1), \zeta(4N+1), \zeta(6N+1), \cdots, \zeta(2Nm+1)$. We refer the reader to Table \ref{t4}.

Note that it is widely believed \cite[Conjecture 27]{waldschmidt} that for any $n  \in \mathbb{N}$, and any non-zero polynomial $P \in \mathbb{Q}[x_0, x_1, \cdots, x_n]$, $P(\pi, \zeta(3), \zeta(5), \cdots, \zeta(2n+1)) \neq 0 $, that is, $\pi$ and all odd zeta values are algebraically independent over $\mathbb{Q}$. This conjecture, if true, would imply, in particular, that all odd zeta values are transcendental. While this is not known as of yet for even a single odd zeta value $\zeta(2m+1), m>0$, Ap\'{e}ry \cite{apery1}, \cite{apery2} surprisingly proved that $\zeta(3)$ is irrational. Also, Rivoal \cite{rivoal}, and Ball and Rivoal \cite{ballrivoal} have proved that there exist infinitely many odd zeta values which are irrational. However, one does not know which out of these odd zeta values (except $\zeta(3)$) are irrational. Currently the best result in this direction is due to Zudilin \cite{zudilin} which says that at least one of $\zeta(5), \zeta(7), \zeta(9)$ or $\zeta(11)$ is irrational.

\noindent
We now deduce a new formula for $\zeta(2m+1)$ by letting $a=1/2$ in Theorem \ref{ggram}.
\begin{theorem}\label{ggramhalf}
Let $N$ be an odd positive integer and $\a,\b>0$ such that $\a\b^{N}=\pi^{N+1}$. Then for any positive integer $m$,
{\allowdisplaybreaks\begin{align}\label{zetagenhalfa}
&\a^{-\frac{2Nm}{N+1}}\sum_{n=1}^{\infty}\frac{n^{-2Nm-1}\textup{exp}\left(-\tfrac{1}{2}(2n)^{N}\a\right)}{1-\textup{exp}\left(-(2n)^{N}\a\right)}\nonumber\\
&=\left(-\b^{\frac{2N}{N+1}}\right)^{-m}\frac{2^{2m(N-1)}}{N}\bigg[\frac{(2^{-2m}-1)}{2}\zeta(2m+1)\nonumber\\
&\qquad\qquad\qquad\qquad\quad\qquad+(-1)^{\frac{N+3}{2}}\sum_{j=\frac{-(N-1)}{2}}^{\frac{N-1}{2}}(-1)^{j}\sum_{n=1}^{\infty}\frac{(-1)^nn^{-2m-1}}{\textup{exp}\left((2n)^{\frac{1}{N}}\b e^{\frac{i\pi j}{N}}\right)-1}\bigg]\nonumber\\
&\quad+(-1)^{m+\frac{N+3}{2}}2^{2Nm}\sum_{j=0}^{\left\lfloor\frac{N+1}{2N}+m\right\rfloor}\frac{(-1)^j(2^{1-2j}-1)B_{2j}B_{N+1+2N(m-j)}}{(2j)!(N+1+2N(m-j))!}\a^{\frac{2j}{N+1}}\b^{N+\frac{2N^2(m-j)}{N+1}}.
\end{align}}
\end{theorem}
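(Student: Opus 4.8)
The plan is to read off Theorem~\ref{ggramhalf} as the specialization $a=\tfrac12$ of Theorem~\ref{ggram}; this is legitimate since $\tfrac12$ lies in the admissible range $0<a\le 1$. Everything then reduces to simplifying each ingredient of \eqref{zetageneqna} at $a=\tfrac12$, using two elementary facts: the midpoint values $B_n(\tfrac12)=(2^{1-n}-1)B_n$ of the Bernoulli polynomials, and $\cos(\pi n)=(-1)^n$, $\sin(\pi n)=0$.

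First I would discard every term that vanishes. Since $B_{2j+1}=0$ for $j\ge 1$ and $B_1=-\tfrac12$, we get $B_{2j+1}(\tfrac12)=(2^{-2j}-1)B_{2j+1}=0$ for \emph{all} $j\ge 0$. Hence, on the left side of \eqref{zetageneqna}, the term $(a-\tfrac12)\zeta(2Nm+1)$ and the finite sum $\sum_{j=1}^{m-1}\tfrac{B_{2j+1}(a)}{(2j+1)!}\zeta(2Nm+1-2jN)(2^N\a)^{2j}$ both disappear, leaving only the Lambert-type series; and on the right side the term $\tfrac{(-1)^{m+1}(2\pi)^{2m}B_{2m+1}(a)N\g}{(2m+1)!}$ drops out as well. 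This is exactly what one anticipates from the discussion around \eqref{hzetahalf}: at $a=\tfrac12$ the pole of $\G(s)$ at $s=-2m$ is cancelled by the trivial zero of $\zeta(s,\tfrac12)=(2^s-1)\zeta(s)$, so no double pole---and hence no $\g$---can survive.

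Next I would handle the trigonometric factors. With $a=\tfrac12$, every $\cos(2\pi na)$ becomes $(-1)^n$ and every $\sin(2\pi na)$ becomes $0$; in particular the entire block containing $\sum_{n=1}^{\infty}\tfrac{\sin(2\pi na)}{n^{2m+1}}\bigl(\psi(\cdot)+\psi(\cdot)\bigr)$ vanishes, and the cosine Lambert series over the $N$th roots of unity turn into the alternating series $\sum_{n=1}^{\infty}\tfrac{(-1)^n n^{-2m-1}}{\exp((2n)^{1/N}\b e^{i\pi j/N})-1}$. The lone term $\tfrac12\sum_{n=1}^{\infty}\tfrac{\cos(2\pi na)}{n^{2m+1}}$ becomes $\tfrac12\sum_{n=1}^{\infty}\tfrac{(-1)^n}{n^{2m+1}}=\tfrac{2^{-2m}-1}{2}\zeta(2m+1)$, by $\sum_{n\ge 1}(-1)^n n^{-s}=(2^{1-s}-1)\zeta(s)$. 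Finally, in the residual Bernoulli sum I would replace $B_{2j}(a)$ by $B_{2j}(\tfrac12)=(2^{1-2j}-1)B_{2j}$, which produces the factor $(2^{1-2j}-1)$ in the last line of \eqref{zetagenhalfa}. Assembling the surviving pieces, while keeping the overall factor $\tfrac{2^{2m(N-1)}}{N}$ in front of the bracket, yields precisely \eqref{zetagenhalfa}.

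There is no serious analytic obstacle here---the statement is a clean corollary of Theorem~\ref{ggram}---and the only step demanding real care is the bookkeeping: confirming that the $\g$-term and all the $\sin$-contributions vanish identically (not merely numerically), that the index ranges $1\le j\le m-1$ and $0\le j\le\lfloor\tfrac{N+1}{2N}+m\rfloor$ are untouched by the specialization, and that the powers of $2$, $\a$, and $\b$ still match after pulling out $\tfrac{2^{2m(N-1)}}{N}$. Alternatively, one could derive the same identity directly from Theorem~\ref{dgkmord2} (equivalently \eqref{dgkmord2p}) by taking $a=\tfrac12$, choosing $h$ so that $N-2h=-2Nm-1$ (an integer since $N$ is odd), and then performing the substitution $x\mapsto 2^N\a$ together with $\a\b^N=\pi^{N+1}$, which converts $2A_{N,j}(n/x)$ into $(2n)^{1/N}\b e^{i\pi j/N}$; the two routes are interchangeable.
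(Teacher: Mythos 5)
Your proposal is correct and is essentially identical to the paper's own proof: the authors also set $a=\tfrac12$ in Theorem \ref{ggram} and simplify using $B_j\left(\tfrac12\right)=(2^{1-j}-1)B_j$, the vanishing of $B_{2j+1}\left(\tfrac12\right)$, and $\sum_{n=1}^{\infty}(-1)^n n^{-2m-1}=(2^{-2m}-1)\zeta(2m+1)$. Your bookkeeping of which terms survive (the $\g$-term and all $\sin(2\pi na)$ blocks vanish, $\cos(2\pi na)$ becomes $(-1)^n$) matches what the paper leaves implicit.
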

Theorem \ref{ggramhalf} gives the following Zudilin-type result on transcendence of certain constants.
\begin{corollary}\label{transzetaodd}
Let $m$ be a positive integer and $N$ be a positive odd integer. Then at least one of
\begin{equation*}
\zeta(2m+1),\hspace{3mm} \sum_{n=1}^{\infty}\frac{\textup{exp}\left(-\tfrac{1}{2}(2n)^{N}\pi\right)}{n^{2Nm+1}\left(1-\textup{exp}\left(-(2n)^{N}\pi\right)\right)},\hspace{3mm} \sum_{n=1}^{\infty}\frac{(-1)^n}{n^{2m+1}}\textup{Re}\bigg\{\frac{1}{\textup{exp}\left((2n)^{\frac{1}{N}}\pi e^{\frac{i\pi j}{N}}\right)-1}\bigg\},
\end{equation*}
where $j$ takes every value from $0$ to $\frac{N-1}{2}$, is transcendental.
\end{corollary}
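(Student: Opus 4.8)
The plan is to deduce Corollary \ref{transzetaodd} from Theorem \ref{ggramhalf} by specializing the free parameters and then invoking a classical transcendence theorem of Nesterenko. First I would set $\a=\b=\pi$, which is the choice consistent with the constraint $\a\b^{N}=\pi^{N+1}$. With this substitution the left-hand side of \eqref{zetagenhalfa} becomes (up to the factor $\pi^{-2Nm/(N+1)}$, which is a nonzero power of $\pi$) the first displayed Lambert-type series in the corollary, while the bracketed term on the right contains $\zeta(2m+1)$ together with the sums $\sum_{n=1}^{\infty}(-1)^{n}n^{-2m-1}\bigl(\exp((2n)^{1/N}\pi e^{i\pi j/N})-1\bigr)^{-1}$ for $j$ ranging over $-\tfrac{N-1}{2},\dots,\tfrac{N-1}{2}$. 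The key simplification is that the $j$ and $-j$ terms are complex conjugates of one another (since $e^{-i\pi j/N}=\overline{e^{i\pi j/N}}$ and the coefficients $(-1)^{n}n^{-2m-1}$ are real), so summing them pairs up into $2\,\mathrm{Re}\{\cdots\}$, and the $j=0$ term is already real; this is exactly how the $\mathrm{Re}$ appears in the statement and how the range of $j$ collapses to $0,1,\dots,\tfrac{N-1}{2}$.

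Next I would record that every explicit constant appearing in \eqref{zetagenhalfa} after this specialization lies in $\overline{\mathbb{Q}}\cdot\pi^{r}$ for suitable rational powers $r$: the residual finite sum on the last line is a $\overline{\mathbb{Q}}$-linear combination of $\a^{2j/(N+1)}\b^{N+2N^{2}(m-j)/(N+1)}=\pi^{N+2j/(N+1)+2N^{2}(m-j)/(N+1)}$ — a rational power of $\pi$ with algebraic (indeed rational) coefficient, since the Bernoulli numbers and the factor $(2^{1-2j}-1)$ are rational — and the prefactors $\a^{-2Nm/(N+1)}$, $(-\b^{2N/(N+1)})^{-m}2^{2m(N-1)}/N$ are likewise algebraic multiples of rational powers of $\pi$. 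Therefore, after clearing these, the identity asserts that a fixed nonzero $\overline{\mathbb{Q}}[\pi^{\pm 1/(N+1)}]$-linear combination of the three quantities in the corollary — namely $\zeta(2m+1)$, the first Lambert series, and the numbers $\sum_{n}(-1)^{n}n^{-2m-1}\mathrm{Re}\{(\exp((2n)^{1/N}\pi e^{i\pi j/N})-1)^{-1}\}$ — equals zero, with at least the coefficient of the first Lambert series and of $\zeta(2m+1)$ nonzero.

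The transcendence input is Nesterenko's theorem that $\pi$ is algebraically independent of $e^{\pi}$ (equivalently, that $\pi$ is transcendental, which more than suffices here); what I actually need is just that $\pi^{1/(N+1)}$ is transcendental, so that the coefficients in the linear relation, being algebraic combinations of powers of $\pi$, cannot all vanish and in fact the relation is nontrivial over $\overline{\mathbb{Q}}$. Then the argument is the standard one: if all three quantities in the corollary were algebraic, the left side of the cleared identity would be an algebraic number expressed as a $\overline{\mathbb{Q}}$-linear combination of distinct rational powers of $\pi$ with not-all-zero coefficients, forcing an algebraic dependence relation on $\pi$ of a shape that contradicts the transcendence of $\pi$ (one must check that the powers of $\pi$ occurring genuinely differ, or equivalently collect terms and verify the resulting polynomial in $\pi^{1/(N+1)}$ is nonzero). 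Hence at least one of the three is transcendental.

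The main obstacle I anticipate is the bookkeeping in the last step: verifying that after moving everything to one side and collecting powers of $\pi^{1/(N+1)}$, the coefficient polynomial is not identically zero, i.e.\ that the purely ``rational-power-of-$\pi$'' terms cannot conspire to cancel the $\zeta(2m+1)$ term and the Lambert-series term simultaneously if those were algebraic. This reduces to checking that the exponent $-2Nm/(N+1)$ attached to the Lambert series, the exponent attached to $\zeta(2m+1)$ (which carries an extra $\b^{2N/(N+1)}$-type factor, hence $-2Nm/(N+1)$ shifted), and the exponents $N+2j/(N+1)+2N^{2}(m-j)/(N+1)$ from the residual sum are not all equal — a finite, elementary verification — after which the transcendence of $\pi$ closes the argument; the conjugate-pairing and the parameter specialization $\a=\b=\pi$ are routine by comparison.
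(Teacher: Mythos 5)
Your proposal is correct and follows essentially the same route as the paper: specialize Theorem \ref{ggramhalf} at $\a=\b=\pi$, pair the $j$ and $-j$ terms into real parts, and observe that the residual Bernoulli sum becomes a nonzero polynomial in $\pi$ with rational coefficients, so the transcendence of $\pi$ (Nesterenko is not needed) forces one of the listed quantities to be transcendental. The paper streamlines your final bookkeeping worry by first multiplying the identity through by $\a^{2Nm/(N+1)}$, which turns all the exponents of $\pi$ into positive integers before setting $\a=\b=\pi$.
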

For an odd positive integer $m$, Lerch's formula \cite{lerch} is given by
\begin{align*}
\zeta(2m+1)+2\sum_{n=1}^{\infty}\frac{1}{n^{2m+1}(e^{2\pi n}-1)}=\pi^{2m+1}2^{2m}\sum_{j=0}^{m+1}\frac{(-1)^{j+1}B_{2j}B_{2m+2-2j}}{(2j)!(2m+2-2j)!}.
\end{align*}
It is a special case of Ramanujan's formula \eqref{zetaodd}. Lerch's formula implies that at least one of $\zeta(2m+1)$ and $\sum_{n=1}^{\infty}\frac{1}{n^{2m+1}(e^{2\pi n}-1)}$ is transcendental \cite{gmr}. However, such information cannot be inferred from \eqref{zetaodd} when $m$ is even. The result in Corollary \ref{transzetaodd}, on the other hand, is valid irrespective of the parity of $m$.
If we now fix an odd positive integer $N$ and vary $m$ over the set of natural numbers, we obtain the following Rivoal-type result.
\begin{corollary}\label{rtype}
Let $N$ be any fixed odd positive integer. Then the set
\begin{align*}
&\bigcup_{m=1}^{\infty}\bigg\{\zeta(2m+1),\hspace{1mm} \sum_{n=1}^{\infty}\frac{\textup{exp}\left(-\tfrac{1}{2}(2n)^{N}\pi\right)}{n^{2Nm+1}\left(1-\textup{exp}\left(-(2n)^{N}\pi\right)\right)},\hspace{1mm} \sum_{n=1}^{\infty}\frac{(-1)^n}{n^{2m+1}}\textup{Re}\left(\frac{1}{\textup{exp}((2n)^{\frac{1}{N}}\pi e^{\frac{i\pi j}{N}})-1}\right):\nonumber\\
 &\qquad\quad j=0\,\,\textup{to}\,\,\frac{N-1}{2}\bigg\},
\end{align*}
contains infinitely many transcendental numbers.
\end{corollary}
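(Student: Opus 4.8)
The plan is to argue by contradiction, combining the ``at least one is transcendental'' conclusion of Corollary~\ref{transzetaodd} with two applications of the pigeonhole principle. For each integer $m\geq 1$, let $\mathcal{T}_m$ be the finite set of (manifestly real) numbers consisting of $\zeta(2m+1)$, the Lambert-type series $b_m:=\sum_{n\geq 1}\exp(-\tfrac12(2n)^N\pi)\,n^{-2Nm-1}/(1-\exp(-(2n)^N\pi))$, and, for each $j=0,1,\dots,\tfrac{N-1}{2}$, the quantity $c_{m,j}:=\sum_{n\geq 1}(-1)^n n^{-2m-1}\,\textup{Re}\bigl(1/(\exp((2n)^{1/N}\pi e^{i\pi j/N})-1)\bigr)$; there are $\tfrac{N+5}{2}$ of them. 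Suppose, for contradiction, that $\bigcup_{m\geq 1}\mathcal{T}_m$ contained only finitely many transcendental numbers $t_1,\dots,t_k$. By Corollary~\ref{transzetaodd} each $\mathcal{T}_m$ contains a transcendental, necessarily one of $t_1,\dots,t_k$; since there are infinitely many $m$ but only $k$ candidates, some fixed transcendental $\theta$ lies in $\mathcal{T}_m$ for every $m$ in an infinite set $M\subseteq\mathbb{N}$.

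I would then pin $\theta$ down to a single slot. For each $m\in M$, $\theta$ equals one of the $\tfrac{N+5}{2}$ listed quantities, so a second pigeonhole step produces a fixed slot, namely the one holding $\zeta(2m+1)$, or the one holding $b_m$, or the one holding $c_{m,j_0}$ for a single fixed $j_0$, whose value equals $\theta$ for infinitely many $m$. The contradiction will then follow by showing that, as a function of $m$, each of these three sequences is eventually strictly monotone, hence eventually injective, and so equals the fixed value $\theta$ for at most finitely many $m$. For $\zeta(2m+1)$ this is immediate, since $\zeta$ is strictly decreasing on $(1,\infty)$. For $b_m$ it is also immediate: the $n=1$ summand does not depend on $m$, while every summand with $n\geq 2$ is positive and strictly decreasing in $m$, so $(b_m)$ strictly decreases. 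For $c_{m,j_0}$, write $r_n:=\textup{Re}\bigl(1/(\exp((2n)^{1/N}\pi e^{i\pi j_0/N})-1)\bigr)$, so that $c_{m,j_0}=-r_1+\sum_{n\geq 2}(-1)^n r_n n^{-2m-1}$; if $n_0$ is the least $n\geq 2$ with $r_n\neq 0$, then $c_{m,j_0}+r_1=(-1)^{n_0}r_{n_0}\,n_0^{-2m-1}\bigl(1+o(1)\bigr)$ as $m\to\infty$, so $(c_{m,j_0})$ is eventually strictly monotone (and converges to $-r_1$).

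The one step here that is not pure bookkeeping, and which I expect to be the main obstacle, is to guarantee that $n_0$ exists, i.e.\ that $(r_n)_{n\geq 2}$ does not vanish identically; if it did, $c_{m,j_0}$ would be the constant $-r_1$ and the argument would collapse. For $j_0=0$ there is nothing to prove, since $r_n=1/(\exp((2n)^{1/N}\pi)-1)>0$. For $1\leq j_0\leq\tfrac{N-1}{2}$ (which forces $N\geq 3$ and $0<\pi j_0/N<\pi/2$), one has $r_n=0$ exactly when $e^{t\cos(\pi j_0/N)}\cos(t\sin(\pi j_0/N))=1$ at $t=(2n)^{1/N}\pi$, and I would rule this out for all $n$ by a spacing argument. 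Since $1/N<1$, the points $(2n)^{1/N}\pi$ become arbitrarily closely spaced, $(2n+2)^{1/N}\pi-(2n)^{1/N}\pi\to 0$; on the other hand, an elementary analysis of the equation $\cos(t\sin(\pi j_0/N))=e^{-t\cos(\pi j_0/N)}$ shows that its positive solutions $t$, once large, cluster near the numbers $(2\ell+1)\pi/(2\sin(\pi j_0/N))$, $\ell\in\mathbb{N}$, and hence have consecutive gaps bounded below by a fixed positive constant. Therefore no two sufficiently large consecutive points $(2n)^{1/N}\pi$ can both be solutions, so $r_n\neq 0$ for infinitely many $n$ and $n_0$ exists. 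Once the three monotonicity facts are established, the two pigeonhole reductions deliver the desired contradiction; everything besides this last exclusion is elementary.
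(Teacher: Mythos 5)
Your proof is correct, and it is in fact more complete than the paper's own treatment. The paper disposes of this corollary in one sentence ("If we now fix an odd positive integer $N$ and vary $m$ over the set of natural numbers, we obtain the following Rivoal-type result"), i.e.\ it simply applies Corollary \ref{transzetaodd} once for each $m$ and declares the union to contain infinitely many transcendentals. That argument, read literally, only shows that each $\mathcal{T}_m$ contains \emph{some} transcendental number; it does not rule out the (a priori conceivable) possibility that the same transcendental value recurs in every $\mathcal{T}_m$, in which case the union, as a set of real numbers, could contain only finitely many transcendentals. Your double pigeonhole reduction, followed by the eventual strict monotonicity of each of the three candidate sequences in $m$, is exactly what is needed to close this. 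The only genuinely non-routine ingredient you add is the verification that the coefficient sequence $r_n=\textup{Re}\bigl(1/(\exp((2n)^{1/N}\pi e^{i\pi j_0/N})-1)\bigr)$ does not vanish for all $n\geq 2$; your reduction of $r_n=0$ to $e^{t\cos(\pi j_0/N)}\cos(t\sin(\pi j_0/N))=1$ at $t=(2n)^{1/N}\pi$ is right, and the spacing argument (solutions of that equation eventually have gaps bounded below by roughly $\pi/\sin(\pi j_0/N)$, while the sample points $(2n)^{1/N}\pi$ have gaps tending to $0$ because $N\geq 3$ when $j_0\geq 1$) correctly forces infinitely many $n$ with $r_n\neq 0$, so the dominant-term asymptotic $c_{m,j_0}+r_1=(-1)^{n_0}r_{n_0}n_0^{-2m-1}(1+o(1))$ is justified (the tail bound uses only $\sum_{n>n_0}|r_n|<\infty$, which holds since $|r_n|$ decays like $\exp(-(2n)^{1/N}\pi\cos(\pi j_0/N))$). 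In short: same key lemma as the paper (Corollary \ref{transzetaodd}), but your route supplies the distinctness argument the paper leaves implicit, at the cost of the extra non-vanishing analysis.
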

If we now fix $m$ and vary $N=2\ell+1$, $\ell\in\mathbb{N}\cup\{0\}$, in Corollary \ref{transzetaodd}, we obtain the following criterion for the transcendence of $\zeta(2m+1)$:
\begin{corollary}\label{criterionzeta}
If the set 
\begin{align*}
&\bigcup_{\ell=0}^{\infty}\bigg\{\sum_{n=1}^{\infty}\frac{\textup{exp}\left(-\tfrac{\pi}{2}(2n)^{2\ell+1}\right)}{n^{2m(2\ell+1)+1}\left(1-\textup{exp}\left(-\pi(2n)^{2\ell+1}\right)\right)},\hspace{3mm} \sum_{n=1}^{\infty}\frac{(-1)^n}{n^{2m+1}}\textup{Re}\bigg(\frac{1}{\textup{exp}\big((2n)^{\frac{1}{2\ell+1}}\pi e^{\frac{i\pi j}{2\ell+1}}\big)-1}\bigg)\nonumber\\
&\qquad: j=0\,\,\textup{to}\,\, \ell\bigg\}
\end{align*}
has only finitely many transcendental numbers, then $\zeta(2m+1)$ must be transcendental.
\end{corollary}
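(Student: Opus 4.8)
The plan is to derive Corollary \ref{criterionzeta} as a straightforward contrapositive consequence of Corollary \ref{transzetaodd}, so the work is purely logical rather than analytic. First I would fix the positive integer $m$ once and for all. For each $\ell \in \mathbb{N}\cup\{0\}$, apply Corollary \ref{transzetaodd} with the odd positive integer $N = 2\ell+1$. That corollary asserts that at least one of the three quantities
\begin{equation*}
\zeta(2m+1),\quad \sum_{n=1}^{\infty}\frac{\textup{exp}\left(-\tfrac{1}{2}(2n)^{2\ell+1}\pi\right)}{n^{2m(2\ell+1)+1}\left(1-\textup{exp}\left(-(2n)^{2\ell+1}\pi\right)\right)},\quad \sum_{n=1}^{\infty}\frac{(-1)^n}{n^{2m+1}}\textup{Re}\left(\frac{1}{\textup{exp}\big((2n)^{\frac{1}{2\ell+1}}\pi e^{\frac{i\pi j}{2\ell+1}}\big)-1}\right)
\end{equation*}
(the last for some $j$ between $0$ and $\ell$) is transcendental.

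Now I would argue by contraposition. Suppose the displayed set in the statement of Corollary \ref{criterionzeta} --- call it $T_m$, consisting of the latter two families of numbers as $\ell$ ranges over $\mathbb{N}\cup\{0\}$ and $j$ over $0$ to $\ell$ --- contains only finitely many transcendental numbers. Then there exists some $L$ such that for all $\ell \geq L$, none of the associated numbers in $T_m$ indexed by that $\ell$ is transcendental; that is, every such number is algebraic. Fix any $\ell \geq L$ and invoke Corollary \ref{transzetaodd} with $N = 2\ell+1$: one of the three quantities above is transcendental, but the second and all the third-type quantities (over $j = 0$ to $\ell$) are algebraic by our assumption. Hence the transcendental one must be $\zeta(2m+1)$ itself, which gives the desired conclusion.

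The argument is essentially mechanical; the only point requiring a moment's care is the bookkeeping of quantifiers. Corollary \ref{transzetaodd} is a statement ``at least one of finitely many numbers is transcendental,'' valid for each fixed $(m,N)$; one must be sure that when $N = 2\ell+1$ with $\ell$ large, we have genuinely placed all the non-$\zeta(2m+1)$ candidates inside the set $T_m$, so that their being algebraic forces $\zeta(2m+1)$ transcendental. The matching is immediate upon comparing the summands, since the series in Corollary \ref{transzetaodd} with $N=2\ell+1$ are exactly those appearing in $T_m$ indexed by $\ell$, and the union over $\ell$ in $T_m$ absorbs every relevant index. There is no analytic obstacle here at all; the entire content has already been established in Theorem \ref{ggramhalf} and Corollary \ref{transzetaodd}, and this corollary is merely a reformulation obtained by fixing $m$, letting $N$ vary, and taking the logical contrapositive.
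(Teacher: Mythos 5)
Your proposal is correct and is essentially identical to the paper's own proof: the paper derives Corollary \ref{criterionzeta} precisely by fixing $m$, specializing Corollary \ref{transzetaodd} to $N=2\ell+1$, and taking the contrapositive (it states in Section \ref{ord2} that this corollary was ``already proved in the introduction'' by exactly that one-line argument). The only caveat is that your intermediate claim that some $L$ exists with \emph{all} numbers indexed by $\ell\geq L$ algebraic tacitly assumes each value occurs for only finitely many indices $(\ell,j)$; this is harmless here (and in any case you only need a single such $\ell$), and the paper itself glosses over the same point.
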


The transformation for the series $\sum_{n=1}^{\infty}n^{N-2h}\frac{\textup{exp}(-an^{N}x)}{1-\textup{exp}(-n^{N}x)}$ for $N$ odd and $\frac{N-2h+1}{N}=-2\left\lfloor\frac{h}{N}-\frac{1}{2}\right\rfloor=0$, that is, $h=\frac{N+1}{2}$, is given in the theorem below.
\begin{theorem}\label{dgkmord2m0}
Let $0<a\leq 1$ and $N$ be an odd positive integer. Let $A_{N,j}(y)$ be defined as in Theorem \textup{\ref{dgkmgen}}. Then
{\allowdisplaybreaks\begin{align}\label{limalim}
&\sum_{n=1}^{\infty}\frac{\textup{exp}(-an^{N}x)}{n(1-\textup{exp}(-n^{N}x))}\nonumber\\
&=\frac{\zeta(N+1)}{x}+\frac{1}{N}\left(\left(\tfrac{1}{2}-a\right)\left((N-1)\gamma-\log x\right)+\log\G(a)-\tfrac{1}{2}\log 2\pi\right)\nonumber\\
&\quad+(-1)^{\frac{N+3}{2}}2^{N}\pi^{N+1}\sum_{j=1}^{\left\lfloor\frac{N+1}{2N}\right\rfloor}\left(\frac{-1}{4\pi^2}\right)^{jN}\frac{B_{2j}(a)B_{N+1-2jN}}{(2j)!(N+1-2jN)!}x^{2j-1}\nonumber\\
&\quad+\frac{(-1)^{\frac{N+3}{2}}}{N}\sum_{j=-\frac{(N-1)}{2}}^{\frac{(N-1)}{2}}(-1)^j\bigg\{\sum_{n=1}^{\infty}\frac{\cos(2\pi na)}{n\left(\textup{exp}\left(2A_{N,j}\left(\frac{n}{x}\right)\right)-1\right)}\nonumber\\
&\quad+\frac{(-1)^{j+\frac{N+1}{2}}}{\pi}\sum_{n=1}^{\infty}\frac{\sin(2\pi na)}{n}\left\{\log\left(\tfrac{1}{\pi}A_{N,j}\left(\tfrac{n}{x}\right)\right)-\tfrac{1}{2}\left(\psi\left(\tfrac{i}{\pi}A_{N,j}\left(\tfrac{n}{x}\right)\right)+\psi\left(-\tfrac{i}{\pi}A_{N,j}\left(\tfrac{n}{x}\right)\right)\right)\right\}\bigg\}.
\end{align}}
Equivalently, if  $\a,\b>0$ such that $\a\b^{N}=\pi^{N+1}$, then
{\allowdisplaybreaks\begin{align}\label{limab}
&\sum_{n=1}^{\infty}\frac{\textup{exp}(-a(2n)^{N}\a)}{n(1-\textup{exp}(-(2n)^{N}\a))}-\frac{1}{N}(-1)^{\frac{N+3}{2}}\sum_{j=-\frac{(N-1)}{2}}^{\frac{N-1}{2}}(-1)^{j}\bigg(\sum_{n=1}^{\infty}\frac{\cos(2\pi na)}{n\left(\textup{exp}\left((2n)^{\frac{1}{N}}\b e^{\frac{i\pi j}{N}}\right)-1\right)}\nonumber\\
&+\frac{(-1)^{j+\frac{N+1}{2}}}{\pi}\sum_{n=1}^{\infty}\frac{\sin(2\pi na)}{n}\left\{\log\left(\tfrac{\beta}{2\pi}(2n)^{\frac{1}{N}} e^{\frac{i\pi j}{N}}\right)-\frac{1}{2}\left(\psi\left(\tfrac{i\beta}{2\pi}(2n)^{\frac{1}{N}} e^{\frac{i\pi j}{N}}\right)+\psi\left(\tfrac{-i\beta}{2\pi}(2n)^{\frac{1}{N}} e^{\frac{i\pi j}{N}}\right)\right)\right\}\bigg)\nonumber\\
&=\frac{1}{N}\left((a-1)\log(2\pi)+\log\G(a)\right)+\left(a-\frac{1}{2}\right)\left\{\frac{(N-1)(\log 2-\g)}{N}+\frac{\log(\a/\b)}{N+1}\right\}\nonumber\\
&\quad+(-1)^{\frac{N+3}{2}}\sum_{j=0}^{\left\lfloor\frac{N+1}{2N}\right\rfloor}\frac{(-1)^jB_{2j}(a)B_{N+1-2Nj}}{(2j)!(N+1-2Nj)!}\a^{\frac{2j}{N+1}}\b^{N-\frac{2N^2j}{N+1}}.
\end{align}}
\end{theorem}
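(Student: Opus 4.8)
The plan is to mirror, essentially verbatim, the contour-integral proof of Theorem \ref{dgkmgen}, the single new feature being a coalescence of two simple poles into a double pole which occurs precisely when $h=\tfrac{N+1}{2}$. Writing $\tfrac{\exp(-an^Nx)}{1-\exp(-n^Nx)}=\sum_{k\ge 0}\exp(-(k+a)n^Nx)$ and inserting the Mellin transform $e^{-y}=\tfrac{1}{2\pi i}\int_{(c)}\G(s)y^{-s}\,ds$ ($c>0$), one represents the left side of \eqref{limalim} as
\begin{equation*}
\sum_{n=1}^{\infty}\frac{\exp(-an^Nx)}{n(1-\exp(-n^Nx))}=\frac{1}{2\pi i}\int_{(c)}\G(s)\,\zeta(s,a)\,\zeta(Ns+1)\,x^{-s}\,ds,\qquad c>1,
\end{equation*}
which is the instance $N-2h=-1$ of the line integral underlying Theorem \ref{dgkmgen}. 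I would then move the line to $\Re(s)=\l$, with $\l\in(-1,0)$ if $N\ge 3$ and $\l\in(-2,-1)$ if $N=1$; Stirling bounds on $\G(s)\zeta(s,a)\zeta(Ns+1)$ on horizontal segments make the shift legitimate. The decisive point is that when $h=\tfrac{N+1}{2}$ the pole of $\zeta(Ns+1)$ at $s=0$ coincides with the pole of $\G(s)$ at $s=0$.

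Thus the two simple poles at $s=0$ merge into a double pole, whose residue replaces the (here formally infinite) pair of terms $-(a-\tfrac12)\zeta(-N+2h)$ and $\tfrac1N\G\!\left(\tfrac{N-2h+1}{N}\right)\zeta\!\left(\tfrac{N-2h+1}{N},a\right)x^{-(N-2h+1)/N}$ of $P(x,a)$ in \eqref{pxa}. Using the Laurent expansions $\G(s)=\tfrac1s-\g+O(s)$, $\zeta(Ns+1)=\tfrac1{Ns}+\g+O(s)$, $\zeta(s,a)=(\tfrac12-a)+\bigl(\log\G(a)-\tfrac12\log 2\pi\bigr)s+O(s^2)$ — the last being Lerch's formula $\zeta'(0,a)=\log\G(a)-\tfrac12\log 2\pi$ — together with $x^{-s}=1-s\log x+O(s^2)$, the coefficient of $1/s$ in the product works out to $\tfrac1N\bigl\{(\tfrac12-a)\bigl((N-1)\g-\log x\bigr)+\log\G(a)-\tfrac12\log 2\pi\bigr\}$, which is exactly the second line of \eqref{limalim}. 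The simple pole of $\zeta(s,a)$ at $s=1$ contributes $\zeta(N+1)/x$. For $N=1$ only, the simple pole of $\G(s)$ at $s=-1$ contributes $-\zeta(-1,a)\zeta(0)\,x=-\tfrac14B_2(a)x$, which is precisely the $j=1$ term of the finite sum in \eqref{limalim}; for $N\ge 3$ one stops at $\Re(s)\in(-1,0)$ and collects no further pole, in agreement with $\bigl\lfloor\tfrac{N+1}{2N}\bigr\rfloor=0$ (and $\bigl\lfloor\tfrac hN-\tfrac12\bigr\rfloor=0$ for all $N$, so the other Bernoulli sum in \eqref{pxa} is empty as well), since $\zeta$ vanishes at the negative even integers $N+1-2jN$.

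It then remains to transform the integral over $\Re(s)=\l$ into $S(x,a)$, following the argument in the proof of Theorem \ref{dgkmgen}: apply Hurwitz's formula \eqref{hformula} to $\zeta(s,a)$ (valid since $\l<0$), which separates the integrand into a part carrying $\sum_n n^{s-1}\cos(2\pi na)$ with coefficient $\sin\tfrac{\pi s}{2}$ and a part carrying $\sum_n n^{s-1}\sin(2\pi na)$ with coefficient $\cos\tfrac{\pi s}{2}$; the reflection formula collapses $\G(s)\G(1-s)\sin\tfrac{\pi s}{2}=\tfrac{\pi}{2}\sec\tfrac{\pi s}{2}$ and $\G(s)\G(1-s)\cos\tfrac{\pi s}{2}=\tfrac{\pi}{2}\csc\tfrac{\pi s}{2}$. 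After interchanging $\sum_n$ with the integral (legitimate on the shifted line by absolute convergence, although the corresponding interchange in the \emph{original} Mellin--Barnes integral produces a divergent integral, as noted after Theorem \ref{raabesum}), one evaluates the $\cos(2\pi na)$-integrals by a residue computation whose degree-$N$ factor $\zeta(Ns+1)$ splits the outcome over the $N$ sectors $j=-\tfrac{N-1}{2},\dots,\tfrac{N-1}{2}$, each sector yielding the Lambert-type summand $\bigl(\exp(2A_{N,j}(n/x))-1\bigr)^{-1}$ (the weights $e^{i\pi(1-2h)j/N}$ reducing to $(-1)^j$ and $(2\pi/x)^{(N-2h+1)/N}$ to $1$ because $h=\tfrac{N+1}{2}$, so that $n^{(2h-1)/N}=n$). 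The $\sin(2\pi na)$-integrals, after the same steps, reduce for each $n$ and $j$ to an infinite sum of Raabe's integrals $\sum_{m\ge 1}\int_0^\infty\tfrac{t\cos t}{t^2+m^2(2A_{N,j}(n/x))^2}\,dt$; Theorem \ref{raabesum}, applied with $u=2A_{N,j}(n/x)$ (permissible since $|\arg e^{i\pi j/N}|<\pi/2$, so $\Re(u)>0$), evaluates this as $\tfrac12\bigl\{\log\bigl(\tfrac1\pi A_{N,j}(\tfrac nx)\bigr)-\tfrac12\bigl(\psi(\tfrac i\pi A_{N,j}(\tfrac nx))+\psi(-\tfrac i\pi A_{N,j}(\tfrac nx))\bigr)\bigr\}$, the $\log$-and-$\psi$ part of $S(x,a)$. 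Collecting everything gives \eqref{limalim}. For the equivalent form \eqref{limab} one substitutes $x=2^N\a$ and uses $\a\b^N=\pi^{N+1}$: a short check gives $2A_{N,j}(n/(2^N\a))=(2n)^{1/N}\b\,e^{i\pi j/N}$ and $\tfrac1\pi A_{N,j}(n/(2^N\a))=\tfrac{\b}{2\pi}(2n)^{1/N}e^{i\pi j/N}$, while Euler's formula for the even value $\zeta(N+1)$ turns $\zeta(N+1)/x$ into the $j=0$ term $(-1)^{(N+3)/2}\tfrac{B_{N+1}}{(N+1)!}\b^N$ of the Bernoulli sum, and the scattered $\log x$ and $\log\bigl(\tfrac1\pi A_{N,j}(n/x)\bigr)$ terms recombine by means of $\sum_{n\ge 1}\tfrac{\sin(2\pi na)}{n}=\pi(\tfrac12-a)$ for $0<a<1$.

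I expect three points to need the most care. First, the residue at the double pole $s=0$: one must keep all three Laurent expansions to the right order simultaneously and apply Lerch's formula correctly; this is exactly what distinguishes Theorem \ref{dgkmord2m0} from Theorem \ref{dgkmgen}. Second, justifying the contour shift and especially the interchange of summation and integration after Hurwitz's formula, given that the naive interchange is divergent. Third, the bookkeeping: identifying the transformed sine integrals as honest infinite sums of Raabe's integrals so that Theorem \ref{raabesum} applies, and then reassembling the $\log x$, $\log\G(a)$ and $\g$ contributions that are split between the double-pole residue and $S(x,a)$.
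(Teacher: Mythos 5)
Your proposal is correct and follows essentially the same route as the paper: the paper obtains \eqref{limalim} by specializing the intermediate identity \eqref{dgkmord2p} (itself built from the contour-integral machinery of Theorems \ref{dgkmgen} and \ref{dgkmord2}) at $h=\tfrac{N+1}{2}$, with the double-pole residue \eqref{R_N-2h+1/N*} now sitting at $s=0$ and simplified via $B_1(a)=a-\tfrac12$, $\psi(1)=-\gamma$ and Lerch's formula, while your direct Laurent-expansion computation of that residue and your re-derivation of $S(x,a)$ via Hurwitz's formula and Theorem \ref{raabesum} reproduce exactly the same steps. Your identification of the second line of \eqref{limalim} as the coefficient of $1/s$ in $\Gamma(s)\zeta(s,a)\zeta(Ns+1)x^{-s}$, and your accounting of the $s=-1$ pole for $N=1$ versus the empty Bernoulli sum for $N\geq 3$, both check out.
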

\begin{remark}
The transformation in \eqref{limalim} can be conceived of as a formula for $\log\G(a),$ $0<a\leq 1$. A representation for $\log\G(a)$ for $a>1$ can then be obtained by replacing $a$ by its fractional part $\{a\}$ in \eqref{limalim} and then making use of the fact that $\log\G(\{a\})=\log\G(a)-\sum_{\ell=1}^{\lfloor a\rfloor}\log(a-\ell)$, which, in turn, can be proved using the functional equation \eqref{feg}.
\end{remark} 
When $a=1$ in \eqref{limab}, one recovers Corollary 1.6 from \cite{dixitmaji1}. Further, if we let $N=1$, one obtains the well-known transformation formula for the logarithm of the Dedekind eta-function \cite[Ch. 14, Sec. 8, Cor. (ii) and Ch. 16, Entry 27(iii)]{ramnote}, \cite[p.~256]{bcbramsecnote}, \cite[p.~43]{bcbramthinote}, \cite[p.~320, Formula (29)]{lnb}:
\begin{align}\label{logdede}
\sum_{n=1}^{\infty}\frac{1}{n(e^{2n\a}-1)}-\sum_{n=1}^{\infty}\frac{1}{n(e^{2n\b}-1)}=\frac{\b-\a}{12}+\frac{1}{4}\log\left(\frac{\a}{\b}\right),
\end{align}
Note that the Dedekind eta-function $\eta(z)$ is defined for $z\in\mathbb{H}$ (upper half plane) by $\eta(z):=e^{2\pi iz/24}\prod_{n=1}^{\infty}(1-e^{2\pi inz})$, and satisfies the transformation formula \cite[p.~48]{apostol2} $\eta\left(-\frac{1}{z}\right)=\sqrt{-iz}\eta(z)$, which is equivalent to \eqref{logdede}. Thus, \eqref{limab} is a two-parameter generalization of the transformation formula for $\log\eta(z)$. 

For $0<a<1$, a vastly simplified version of Theorem \ref{dgkmord2m0} given below can be obtained.
\begin{corollary}\label{dgkmord2m0kum}
Let $0<a<1$ and $N$ be an odd positive integer. Then
{\allowdisplaybreaks\begin{align}\label{limalimsim}
&\sum_{n=1}^{\infty}\frac{\textup{exp}(-an^{N}x)}{n(1-\textup{exp}(-n^{N}x))}\nonumber\\
&=\g\left(\frac{1}{2}-a\right)-\frac{\log\left(2\sin(\pi a)\right)}{2N}+(-1)^{\frac{N+3}{2}}2^{N}\pi^{N+1}\sum_{j=0}^{\left\lfloor\frac{N+1}{2N}\right\rfloor}\left(\frac{-1}{4\pi^2}\right)^{jN}\frac{B_{2j}(a)B_{N+1-2jN}}{(2j)!(N+1-2jN)!}x^{2j-1}\nonumber\\
&\quad+\frac{(-1)^{\frac{N+3}{2}}}{N}\sum_{j=-\frac{(N-1)}{2}}^{\frac{(N-1)}{2}}(-1)^j\bigg\{\sum_{n=1}^{\infty}\frac{\cos(2\pi na)}{n\left(\textup{exp}\left(2\pi\left(\frac{2\pi n}{x}\right)^{\frac{1}{N}}e^{\frac{i\pi j}{N}}\right)-1\right)}\nonumber\\
&\quad+\frac{(-1)^{j+\frac{N+3}{2}}}{2\pi}\sum_{n=1}^{\infty}\frac{\sin(2\pi na)}{n}\left(\psi\left(i\left(\tfrac{2\pi n}{x}\right)^{\frac{1}{N}}e^{\frac{i\pi j}{N}}\right)+\psi\left(-i\left(\tfrac{2\pi n}{x}\right)^{\frac{1}{N}}e^{\frac{i\pi j}{N}}\right)\right)\bigg\}.
\end{align}}
\end{corollary}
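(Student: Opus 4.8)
The plan is to derive the corollary directly from Theorem~\ref{dgkmord2m0} by restricting to $0<a<1$ and then simplifying the non-series part of the right-hand side of \eqref{limalim}. The two classical inputs are the sawtooth Fourier series $\sum_{n=1}^{\infty}\frac{\sin(2\pi na)}{n}=\pi\left(\frac{1}{2}-a\right)$ for $0<a<1$, and Kummer's Fourier expansion of the log-gamma function,
$$\log\Gamma(a)=\frac{1}{2}\log\frac{\pi}{\sin(\pi a)}+(\gamma+\log 2\pi)\left(\frac{1}{2}-a\right)+\frac{1}{\pi}\sum_{n=1}^{\infty}\frac{\log n}{n}\sin(2\pi na),\qquad 0<a<1,$$
which the paper's ``generalized Kummer's formula'' presumably subsumes but is in any case classical. (Note that these expansions, and the term $\log(2\sin(\pi a))$ in \eqref{limalimsim}, force the strict range $0<a<1$, which is why the corollary excludes $a=1$.)

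First I would observe that the term $\zeta(N+1)/x$ appearing in \eqref{limalim} is precisely the $j=0$ summand of the finite sum $(-1)^{\frac{N+3}{2}}2^{N}\pi^{N+1}\sum_{j}\left(\frac{-1}{4\pi^{2}}\right)^{jN}\frac{B_{2j}(a)B_{N+1-2jN}}{(2j)!(N+1-2jN)!}x^{2j-1}$. Indeed, since $B_{0}(a)=1$, that summand equals $(-1)^{\frac{N+3}{2}}2^{N}\pi^{N+1}\frac{B_{N+1}}{(N+1)!}x^{-1}$, and Euler's formula $\zeta(N+1)=\frac{(-1)^{(N+1)/2+1}(2\pi)^{N+1}B_{N+1}}{2\,(N+1)!}$ (valid because $N+1$ is even) together with $(-1)^{\frac{N+3}{2}}=(-1)^{(N+1)/2+1}$ identifies it with $\zeta(N+1)/x$. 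Hence extending the range of that sum to begin at $j=0$ absorbs $\zeta(N+1)/x$ and produces the first line of \eqref{limalimsim}.

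Next I would isolate, inside the $S$-part of \eqref{limalim}, the logarithmic contribution
$$\frac{(-1)^{\frac{N+3}{2}}}{N}\sum_{j=-\frac{N-1}{2}}^{\frac{N-1}{2}}(-1)^{j}\,\frac{(-1)^{j+\frac{N+1}{2}}}{\pi}\sum_{n=1}^{\infty}\frac{\sin(2\pi na)}{n}\log\!\left(\tfrac{1}{\pi}A_{N,j}\!\left(\tfrac{n}{x}\right)\right),$$
which converges by Dirichlet's test ($\tfrac{\log n}{n}\downarrow 0$, partial sums of $\sin(2\pi na)$ bounded) and may therefore be separated from its $-\tfrac{1}{2}(\psi+\psi)$ partner. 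Since $(-1)^{j}(-1)^{j+\frac{N+1}{2}}=(-1)^{(N+1)/2}$ does not depend on $j$, and since $\tfrac{1}{\pi}A_{N,j}(n/x)=\left(\tfrac{2\pi n}{x}\right)^{1/N}e^{i\pi j/N}$ gives $\log\!\left(\tfrac{1}{\pi}A_{N,j}(n/x)\right)=\tfrac{1}{N}\log\tfrac{2\pi n}{x}+\tfrac{i\pi j}{N}$, summing over the $N$ symmetric indices $j$ uses $\sum_{j}1=N$ and $\sum_{j}j=0$ to annihilate the $\tfrac{i\pi j}{N}$ piece and collapse the $j$-sum. With $(-1)^{\frac{N+3}{2}}(-1)^{(N+1)/2}=(-1)^{N+2}=-1$ (as $N$ is odd), the logarithmic contribution reduces to $-\tfrac{1}{N\pi}\sum_{n\ge 1}\frac{\sin(2\pi na)}{n}\log\tfrac{2\pi n}{x}$; splitting $\log\tfrac{2\pi n}{x}=\log(2\pi)-\log x+\log n$ and applying the sawtooth series to the first two terms and Kummer's series to $\sum_{n}\frac{\sin(2\pi na)}{n}\log n$ evaluates it in closed form.

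Finally I would add this closed form to the remaining elementary terms $\tfrac{1}{N}\left(\left(\tfrac12-a\right)\!\left((N-1)\gamma-\log x\right)+\log\Gamma(a)-\tfrac12\log 2\pi\right)$ of \eqref{limalim}. This last step is pure bookkeeping: the $\log x$ terms cancel, the $\log\Gamma(a)$ terms cancel, the $\gamma$ terms add up to $\gamma\left(\tfrac12-a\right)$, and the constants add up to $\tfrac{1}{2N}\log\tfrac{\pi}{2\pi\sin(\pi a)}=-\tfrac{1}{2N}\log(2\sin(\pi a))$. The $\cos(2\pi na)$-series and the surviving $-\tfrac12(\psi+\psi)$-series are simply rewritten via $2A_{N,j}(n/x)=2\pi\left(\tfrac{2\pi n}{x}\right)^{1/N}e^{i\pi j/N}$, $\tfrac{i}{\pi}A_{N,j}(n/x)=i\left(\tfrac{2\pi n}{x}\right)^{1/N}e^{i\pi j/N}$, and $\tfrac{(-1)^{j+(N+1)/2}}{\pi}\cdot\left(-\tfrac12\right)=\tfrac{(-1)^{j+(N+3)/2}}{2\pi}$, and one lands exactly on \eqref{limalimsim}. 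The only point needing any care is the justification for separating the conditionally convergent $\log$- and $\psi$-subseries of the $S$-part, which the Dirichlet-test observation above supplies; everything else is routine simplification rather than a genuine obstacle.
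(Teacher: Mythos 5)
Your proof is correct, and it follows a genuinely different (though closely related) route from the paper's. The paper obtains Corollary \ref{dgkmord2m0kum} by formally setting $h=\frac{N+1}{2}$ in the \emph{already simplified} form \eqref{psxa} of Theorem \ref{dgkmord2} (with the understanding that the term $-\left(a-\frac12\right)\zeta(-N+2h)$, which would be $\zeta(1)$, is dropped), after which the only classical input needed is the cosine series $\sum_{n\geq1}\frac{\cos(2\pi na)}{n}=-\tfrac12\log\left(2(1-\cos(2\pi a))\right)$; the Kummer-type information has by then already been absorbed into \eqref{psxa} via the Koyama--Kurokawa formula \eqref{kaykay}. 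You instead start from Theorem \ref{dgkmord2m0} (equation \eqref{limalim}), which is rigorously established at $h=\frac{N+1}{2}$ with no caveat about a divergent term, and undo the $\log\Gamma(a)$ by feeding in Kummer's Fourier expansion together with the sawtooth series; in effect you re-perform, in this special case, the \eqref{kaykay}-at-$k=0$ computation that the paper carries out one level up. Your bookkeeping checks out: the $j=0$ Bernoulli summand does equal $\zeta(N+1)/x$ by Euler's formula, the $j$-sum of the logarithmic piece collapses correctly (the coefficient $(-1)^{(N+3)/2}(-1)^{2j+(N+1)/2}=(-1)^{N+2}=-1$ is $j$-independent and $\sum_j j=0$ kills the $\tfrac{i\pi j}{N}$ contribution), and the $\log x$, $\log\Gamma(a)$, $\gamma$ and constant terms recombine to $\gamma\left(\tfrac12-a\right)-\tfrac{1}{2N}\log(2\sin(\pi a))$ exactly as claimed. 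Your Dirichlet-test justification for splitting the conditionally convergent $\log$- and $\psi$-subseries is the one genuinely non-routine point and you supply it. What your approach buys is that it rests only on the cleanly stated Theorem \ref{dgkmord2m0}; what the paper's buys is brevity, since \eqref{psxa} was engineered precisely so that this corollary drops out with a single cosine-series evaluation.
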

The additional parameter $a$ allows us to obtain new analogues of \eqref{logdede}, for example, the following two results. 
\begin{corollary}\label{limitingahalfN1}
For $\a, \b>0$ such that $\a\b=\pi^2$,
\begin{align*}
\sum_{n=1}^{\infty}\frac{e^{n\a}}{n\left(e^{2n\a}-1\right)}-\sum_{n=1}^{\infty}\frac{(-1)^n}{n\left(e^{2n\b}-1\right)}=-\frac{1}{2}\log 2+\frac{\a+2\b}{24}.
\end{align*}
\end{corollary}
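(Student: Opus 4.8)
The plan is to specialize Theorem~\ref{dgkmord2m0}, taken in the equivalent $(\alpha,\beta)$-form \eqref{limab}, to $N=1$ and $a=\tfrac12$. Since $N=1$ is an odd positive integer, \eqref{limab} is applicable, and for $N=1$ the hypothesis $\alpha\beta^{N}=\pi^{N+1}$ becomes $\alpha\beta=\pi^2$, exactly as in the corollary. Moreover, for $N=1$ the index range $-\tfrac{(N-1)}{2}\le j\le\tfrac{(N-1)}{2}$ in every inner sum collapses to the single term $j=0$, and one has $(-1)^{(N+3)/2}=1$, $(-1)^{j+(N+1)/2}=-1$, $(2n)^{1/N}=2n$, $e^{i\pi j/N}=1$.

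First I would simplify the left-hand side. With $a=\tfrac12$ we have $\cos(2\pi na)=\cos(\pi n)=(-1)^n$ and, crucially, $\sin(2\pi na)=\sin(\pi n)=0$, so the entire inner sum containing $\log$ and the two digamma values vanishes identically. Hence the left side of \eqref{limab} reduces to
\begin{equation*}
\sum_{n=1}^{\infty}\frac{e^{-n\alpha}}{n(1-e^{-2n\alpha})}-\sum_{n=1}^{\infty}\frac{(-1)^n}{n(e^{2n\beta}-1)},
\end{equation*}
and multiplying the numerator and denominator of the first summand by $e^{2n\alpha}$ turns it into $\dfrac{e^{n\alpha}}{e^{2n\alpha}-1}$, giving precisely the left side of the corollary.

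Next I would evaluate the right-hand side of \eqref{limab} at $N=1$, $a=\tfrac12$. The middle term carries the factor $a-\tfrac12=0$ and disappears, while $\tfrac1N\bigl((a-1)\log(2\pi)+\log\Gamma(a)\bigr)=-\tfrac12\log(2\pi)+\log\sqrt{\pi}=-\tfrac12\log2$. For the Bernoulli sum, $\bigl\lfloor\tfrac{N+1}{2N}\bigr\rfloor=1$, so only $j=0,1$ contribute; using $B_0=1$, $B_2=\tfrac16$, $B_0(\tfrac12)=1$, $B_2(\tfrac12)=-\tfrac1{12}$, and noting that $\alpha^{2j/(N+1)}\beta^{\,N-2N^2 j/(N+1)}$ equals $\beta$ for $j=0$ and $\alpha$ for $j=1$, the $j=0$ term is $\tfrac{\beta}{12}$ and the $j=1$ term is $\tfrac{\alpha}{24}$. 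Summing, the right side equals $-\tfrac12\log2+\tfrac{\beta}{12}+\tfrac{\alpha}{24}=-\tfrac12\log2+\tfrac{\alpha+2\beta}{24}$, which is the claimed identity.

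There is no substantive obstacle in this argument; it is a direct substitution into an already-established transformation. The only place that needs attention is the bookkeeping of the sign factors $(-1)^{(N+3)/2}$, $(-1)^{j+(N+1)/2}$ and of the fractional exponents of $\alpha$ and $\beta$ at $N=1$, since an error there would propagate into the final rational multiples of $\alpha$ and $\beta$.
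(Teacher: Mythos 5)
Your proof is correct and takes essentially the same approach as the paper: both obtain the identity by setting $N=1$, $a=\tfrac12$ in the $(\alpha,\beta)$-form of the two-parameter $\log\eta$ transformation of Theorem \ref{dgkmord2m0}. The paper substitutes into the simplified variant \eqref{limabsim} (coming from Corollary \ref{dgkmord2m0kum}) rather than directly into \eqref{limab}, but since $\sin(\pi n)=0$ kills the digamma--logarithm series and $\log\Gamma(\tfrac12)=\tfrac12\log\pi$ reproduces the $-\tfrac12\log 2$, the two computations coincide term by term.
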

An equivalent form of this identity is
\begin{equation*}
\sqrt{2}e^{\frac{\a}{24}}\prod_{n=0}^{\infty}\left(1-e^{-(2n+1)\a}\right)=e^{-\frac{\b}{12}}\prod_{n=0}^{\infty}\left(1+e^{-2n\b}\right),
\end{equation*}
which draws similarity with the aforementioned transformation formula for $\eta(z)$. 
\begin{corollary}\label{nesterenkotype}
Let $\a,\b>0$ be such that $\a\b=\pi^2$. Then
\begin{align}\label{gammalogirr}
&-\frac{\g}{4}+\sum_{n=1}^{\infty}\frac{e^{3 n\a/2}}{n(e^{2n\a}-1)}-\frac{1}{2}\sum_{n=1}^{\infty}\frac{(-1)^n}{n\left(e^{4n\b}-1\right)}+\frac{1}{2\pi}\sum_{n=1}^{\infty}\frac{(-1)^n}{2n-1}\left(\psi\left(\tfrac{i\b}{\pi}(2n-1)\right)+\psi\left(-\tfrac{i\b}{\pi}(2n-1)\right)\right)\nonumber\\
&=-\frac{1}{4}\log 2+\frac{\a+8\b}{96}.
\end{align}
In particular,
\begin{align}\label{gammalogirrpi}
&-\frac{\g}{4}+\sum_{n=1}^{\infty}\frac{e^{3 n\pi/2}}{n(e^{2n\pi}-1)}-\frac{1}{2}\sum_{n=1}^{\infty}\frac{(-1)^n}{n\left(e^{4n\pi}-1\right)}+\frac{1}{2\pi}\sum_{n=1}^{\infty}\frac{(-1)^n}{2n-1}\left(\psi\left(i(2n-1)\right)+\psi\left(-i(2n-1)\right)\right)\nonumber\\
&=-\frac{1}{4}\log 2+\frac{3\pi}{32}.
\end{align}
\end{corollary}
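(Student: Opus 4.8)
The plan is to derive both \eqref{gammalogirr} and \eqref{gammalogirrpi} as the single specialization $N=1$, $a=\tfrac14$, $x=2\a$ of Corollary \textup{\ref{dgkmord2m0kum}}. This route is convenient because the analytically delicate part of the $N=1$ situation---which, starting instead from \eqref{limab}, would force one to recognize $\sum_{n=1}^{\infty}\tfrac{(-1)^{n+1}\log(2n-1)}{2n-1}=-L'(1,\chi_{-4})$ (with $\chi_{-4}$ the non-principal character modulo $4$) and then to invoke the classical evaluation $L'(1,\chi_{-4})=\tfrac{\pi}{4}\bigl(\g+2\log 2+3\log\pi-4\log\G(\tfrac14)\bigr)$, i.e.\ Kummer's Fourier series for $\log\G$ at $\tfrac14$---has already been absorbed into the elementary term $-\tfrac{1}{2N}\log(2\sin\pi a)$ of Corollary \textup{\ref{dgkmord2m0kum}}. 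Once the specialization is in place the argument is bookkeeping.

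First I would put $N=1$, $a=\tfrac14$, $x=2\a$ into \eqref{limalimsim}. On the left side $\tfrac{\textup{exp}(-an^{N}x)}{n(1-\textup{exp}(-n^{N}x))}=\tfrac{e^{-n\a/2}}{n(1-e^{-2n\a})}=\tfrac{e^{3n\a/2}}{n(e^{2n\a}-1)}$, the series appearing in \eqref{gammalogirr}. On the right side $\g\bigl(\tfrac12-a\bigr)=\tfrac{\g}{4}$ and $-\tfrac{1}{2N}\log(2\sin\pi a)=-\tfrac12\log\bigl(2\sin\tfrac{\pi}{4}\bigr)=-\tfrac14\log 2$, and after moving the $\g$-term to the other side these become the constants $-\tfrac{\g}{4}$ and $-\tfrac14\log 2$ of the statement. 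In the finite Bernoulli sum only $j=0,1$ occur since $\lfloor\tfrac{N+1}{2N}\rfloor=1$; with $B_0=1$, $B_2=\tfrac16$, $B_2(\tfrac14)=-\tfrac1{48}$ and $\a\b=\pi^2$ (so $\pi^2/(12\a)=\b/12$), this sum collapses to $\tfrac{\b}{12}+\tfrac{\a}{96}=\tfrac{\a+8\b}{96}$.

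Next I would deal with the double series over $j$ in \eqref{limalimsim}. For $N=1$ only $j=0$ contributes, the scalar prefactors become $\tfrac{(-1)^{(N+3)/2}}{N}=1$ and $\tfrac{(-1)^{j+(N+3)/2}}{2\pi}=\tfrac{1}{2\pi}$, and with $x=2\a$ and $\a\b=\pi^2$ the arguments simplify to $2\pi\bigl(\tfrac{2\pi n}{x}\bigr)^{1/N}=2n\b$ and $i\bigl(\tfrac{2\pi n}{x}\bigr)^{1/N}=\tfrac{in\b}{\pi}$, while $2\pi n a=\tfrac{\pi n}{2}$. The key move is the parity split: $\cos(\tfrac{\pi n}{2})$ vanishes for odd $n$ and equals $(-1)^{n/2}$ for even $n$, so the cosine series becomes $\tfrac12\sum_{n=1}^{\infty}\tfrac{(-1)^n}{n(e^{4n\b}-1)}$; and $\sin(\tfrac{\pi n}{2})$ vanishes for even $n$ and equals $(-1)^{(n-1)/2}$ for odd $n$, so writing $n=2k-1$ (hence $(-1)^{(n-1)/2}=(-1)^{k+1}$) the sine--digamma series becomes $\sum_{k=1}^{\infty}\tfrac{(-1)^{k+1}}{2k-1}\bigl(\psi(\tfrac{i\b(2k-1)}{\pi})+\psi(-\tfrac{i\b(2k-1)}{\pi})\bigr)$. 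Substituting these back and moving all the series to the left-hand side gives \eqref{gammalogirr} exactly; the case $\a=\b=\pi$ (legitimate since $\pi\cdot\pi=\pi^2$), for which $\tfrac{\a+8\b}{96}=\tfrac{9\pi}{96}=\tfrac{3\pi}{32}$ and $\tfrac{i\b(2n-1)}{\pi}=i(2n-1)$, then yields \eqref{gammalogirrpi}.

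I do not expect a real obstacle here; the only points requiring attention are the Bernoulli-polynomial value $B_2(\tfrac14)=-\tfrac1{48}$ and the sign bookkeeping across the parity split of the $\cos(\tfrac{\pi n}{2})$ and $\sin(\tfrac{\pi n}{2})$ series. Had one instead begun from the unsimplified \eqref{limab} with $N=1$, $a=\tfrac14$, the genuine difficulty would have been precisely the Kummer / $L'(1,\chi_{-4})$ evaluation recalled in the first paragraph, which is needed to cancel the term $\log\G(\tfrac14)$ against the leftover logarithmic pieces; passing through Corollary \textup{\ref{dgkmord2m0kum}} clears that hurdle in advance.
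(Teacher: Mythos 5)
Your proposal is correct and follows essentially the same route as the paper: the paper's proof is precisely "let $a=1/4$, $N=1$ in \eqref{limabsim} and simplify," and \eqref{limabsim} is just \eqref{limalimsim} rewritten with $x=2^N\a$, so your specialization of Corollary \ref{dgkmord2m0kum} is the same computation. All your bookkeeping (the value $B_2(\tfrac14)=-\tfrac1{48}$, the collapse of the Bernoulli sum to $\tfrac{\a+8\b}{96}$, and the parity split of the $\cos(\tfrac{\pi n}{2})$ and $\sin(\tfrac{\pi n}{2})$ series) checks out.
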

Equation \eqref{gammalogirr} readily gives the following results on Euler's constant.
\begin{corollary}\label{irreuler1}
Let $\a,\b>0$ such that $\a\b=\pi^2$. If $\a, \b$ and $\log 2$ are linearly independent over $\mathbb{Q}$, at least one of
\begin{align*}
\g, \hspace{1mm}\sum_{n=1}^{\infty}\frac{e^{3 n\a/2}}{n(e^{2n\a}-1)},\hspace{1mm}\sum_{n=1}^{\infty}\frac{(-1)^n}{n\left(e^{4n\b}-1\right)},\text{and}\hspace{1mm}\frac{1}{2\pi}\sum_{n=1}^{\infty}\frac{(-1)^n}{2n-1}\left(\psi\left(\tfrac{i\b}{\pi}(2n-1)\right)+\psi\left(-\tfrac{i\b}{\pi}(2n-1)\right)\right)
\end{align*}
is irrational. 
\end{corollary}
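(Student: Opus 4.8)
The plan is to obtain Corollary \ref{irreuler1} as an immediate consequence of the identity \eqref{gammalogirr} established in Corollary \ref{nesterenkotype} (which in turn rests on Theorem \ref{dgkmord2m0}); I take \eqref{gammalogirr} as given. First I would abbreviate the three infinite series occurring on the left-hand side of \eqref{gammalogirr} by $S_1:=\sum_{n=1}^{\infty}\frac{e^{3 n\a/2}}{n(e^{2n\a}-1)}$, $S_2:=\sum_{n=1}^{\infty}\frac{(-1)^n}{n(e^{4n\b}-1)}$, and $S_3:=\frac{1}{2\pi}\sum_{n=1}^{\infty}\frac{(-1)^n}{2n-1}\left(\psi\left(\tfrac{i\b}{\pi}(2n-1)\right)+\psi\left(-\tfrac{i\b}{\pi}(2n-1)\right)\right)$, so that \eqref{gammalogirr} takes the compact shape $-\tfrac14\g+S_1-\tfrac12 S_2+S_3=-\tfrac14\log 2+\tfrac{1}{96}(\a+8\b)$. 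Note that $\g,S_1,S_2,S_3$ are precisely the four numbers named in the corollary.

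Next I would argue by contradiction. Assume that all four of $\g,S_1,S_2,S_3$ are rational. Then the left-hand side of \eqref{gammalogirr}, being the fixed $\mathbb{Q}$-linear combination $-\tfrac14\g+S_1-\tfrac12 S_2+S_3$ of rational numbers, equals some $q\in\mathbb{Q}$. Substituting this into \eqref{gammalogirr} and clearing denominators yields $\a+8\b-24\log 2=96q$. Since the coefficients $1,8,-24$ of $\a,\b,\log 2$ are nonzero rationals, this is a nontrivial $\mathbb{Q}$-linear dependence relation among $\a,\b,\log 2$ (together with the constant $1$), contradicting the hypothesis that $\a,\b,\log 2$ are linearly independent over $\mathbb{Q}$. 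Hence at least one of $\g,S_1,S_2,S_3$ must be irrational, which is the assertion.

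The single ingredient bearing all the weight here is the identity \eqref{gammalogirr} itself, which already packages the analytic content (the contour analysis behind Theorem \ref{dgkmord2m0}, the specialization yielding Corollary \ref{nesterenkotype}, and the explicit evaluation of the relevant $\psi$- and $\log$-terms); granting it, the deduction above is routine, being the familiar principle that an exact real identity whose right-hand side is a nontrivial rational combination of $\mathbb{Q}$-linearly independent numbers cannot have all of its left-hand constituents simultaneously rational. So I anticipate no genuine obstacle in this last step. As a remark, I would also note that feeding the specialization $\a=\b=\pi$, i.e. \eqref{gammalogirrpi}, into the same argument produces an analogous irrationality statement involving $\g$ and $\pi$, though I would not include it here.
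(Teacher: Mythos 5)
Your argument is correct and is essentially the paper's own proof: the authors likewise deduce the corollary directly from \eqref{gammalogirr} by observing that, under the linear independence hypothesis, the right-hand side $-\tfrac14\log 2+\tfrac{1}{96}(\a+8\b)$ is irrational, so the $\mathbb{Q}$-linear combination $-\tfrac14\g+S_1-\tfrac12 S_2+S_3$ cannot have all four constituents rational. Your parenthetical caveat that the relation $\a+8\b-24\log 2=96q$ really involves the constant $1$ as well is a fair point of precision, but the paper's proof relies on exactly the same reading of the hypothesis, so there is no divergence in method.
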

\begin{corollary}\label{irreuler2}
At least one of the numbers
\begin{align*}
\g, \hspace{1mm}\sum_{n=1}^{\infty}\frac{e^{3 n\pi/2}}{n(e^{2n\pi}-1)},\hspace{1mm}\sum_{n=1}^{\infty}\frac{(-1)^n}{n\left(e^{4n\pi}-1\right)},\text{and}\hspace{1mm}\frac{1}{2\pi}\sum_{n=1}^{\infty}\frac{(-1)^n}{2n-1}\left(\psi\left(i(2n-1)\right)+\psi\left(-i(2n-1)\right)\right)
\end{align*}
is irrational.
\end{corollary}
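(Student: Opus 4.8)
The plan is to deduce Corollary~\ref{irreuler2} directly from identity \eqref{gammalogirrpi} in Corollary~\ref{nesterenkotype}, rather than as the $\a=\b=\pi$ case of Corollary~\ref{irreuler1}, since the hypothesis of the latter (linear independence of $\a$, $\b$, $\log 2$ over $\mathbb{Q}$) becomes vacuous when $\a=\b$. I would argue by contradiction. Suppose that all four numbers
\begin{align*}
\g,\qquad \sum_{n=1}^{\infty}\frac{e^{3n\pi/2}}{n(e^{2n\pi}-1)},\qquad \sum_{n=1}^{\infty}\frac{(-1)^n}{n(e^{4n\pi}-1)},\qquad \frac{1}{2\pi}\sum_{n=1}^{\infty}\frac{(-1)^n}{2n-1}\left(\psi(i(2n-1))+\psi(-i(2n-1))\right)
\end{align*}
are rational. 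The left-hand side of \eqref{gammalogirrpi} is exactly the $\mathbb{Q}$-linear combination of these four numbers with coefficients $-\tfrac14$, $1$, $-\tfrac12$, $1$, hence under the assumption it is a rational number; comparing with the right-hand side of \eqref{gammalogirrpi} forces $\tfrac{3\pi}{32}-\tfrac14\log 2\in\mathbb{Q}$, i.e.\ a nontrivial $\mathbb{Q}$-linear relation among $1$, $\pi$ and $\log 2$.

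To conclude, I would appeal to the linear independence of $1$, $\pi$ and $\log 2$ over $\mathbb{Q}$, which is a classical consequence of Baker's theorem on linear forms in logarithms. Indeed, $\log(-1)=i\pi$ and $\log 2$ are logarithms of nonzero algebraic numbers that are linearly independent over $\mathbb{Q}$ (one is purely imaginary and nonzero, the other real and nonzero), so Baker's theorem gives that $1$, $i\pi$ and $\log 2$ are linearly independent over $\overline{\mathbb{Q}}$; multiplying a putative relation $a\pi+b\log 2+c=0$ with $a,b,c\in\mathbb{Q}$ by $i$ then exhibits a vanishing $\overline{\mathbb{Q}}$-combination of $1$, $i\pi$, $\log 2$ with coefficients $ic$, $a$, $ib$, whence $a=b=c=0$. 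Since $\tfrac{3\pi}{32}-\tfrac14\log 2\in\mathbb{Q}$ provides such a relation with leading coefficient $\tfrac{3}{32}\neq0$, we have a contradiction, and therefore at least one of the four displayed numbers is irrational, as asserted.

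The only genuinely nontrivial step, and the place where the weight of the argument lies, is this transcendence input. It is worth noting that the weaker Gelfond--Schneider theorem does not suffice: it only yields the transcendence of $e^{\pi}$, whereas a hypothetical relation $3\pi-8\log 2=q\in\mathbb{Q}$ merely rearranges to $e^{3\pi}=256\,e^{q}$, which is perfectly consistent with $e^{3\pi}$ and $e^{q}$ each being transcendental. It is precisely the linear-forms-in-logarithms strength of Baker's theorem that excludes a $\mathbb{Q}$-linear relation among $1$, $\pi$ and $\log 2$; everything else in the proof is the bookkeeping indicated above.
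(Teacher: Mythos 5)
Your proof is correct and follows essentially the same route as the paper, which simply deduces the corollary from \eqref{gammalogirrpi} by invoking the linear independence of $\pi$ and $\log 2$ over $\mathbb{Q}$. You are somewhat more careful than the paper in noting that the independence actually needed is that of $1$, $\pi$ and $\log 2$ over $\mathbb{Q}$ (a rational value of $\tfrac{3\pi}{32}-\tfrac14\log 2$ must be excluded, not merely a vanishing one) and in supplying its source, Baker's theorem; this is a worthwhile clarification but not a different argument.
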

So far we have discussed transformations of the series $\sum_{n=1}^{\infty}n^{N-2h}\frac{\textup{exp}(-an^{N}x)}{1-\textup{exp}(-n^{N}x)}$ for $h\geq N/2$. Our aim is to now consider the case when $h<N/2$. When $N$ is even, we are able to transform the series for any integer value of $h<N/2$. However, when $N$ is odd, we succeed in obtaining a transformation only for $0\leq h<N/2$ as the series consisting of $\sin(2\pi n a)$, logarithm and digamma functions in the summand does not converge for $h<0$.
\begin{theorem}\label{dgkmgenh0Nby2}
Let $N$ be a positive integer and $h$ be a positive integer such that $0\leq h<N/2$. Let $x>0$ and $0<a\leq 1$.

\textup{(i)} Let $N$ be odd and $S(x, a)$ be defined as in \eqref{sodda}. If $g(N, h, a)$ is defined by
\begin{equation}\label{gnha}
g(N, h, a):=\begin{cases}
-\frac{1}{2}\zeta(-N+2h),\hspace{2mm}\text{if}\hspace{2mm} a=1,\\
\hspace{9mm}0,\hspace{15mm}\text{if}\hspace{2mm}0<a<1. 
\end{cases}
\end{equation}
then
\begin{align}\label{oddh0Nby2}
\sum_{n=1}^{\infty}n^{N-2h}\frac{\textup{exp}(-an^{N}x)}{1-\textup{exp}(-n^{N}x)}&=\frac{\zeta(2h)}{x}+\frac{1}{N}\G\left(\frac{N-2h+1}{N}\right)\zeta\left(\frac{N-2h+1}{N},a\right)x^{-\frac{(N-2h+1)}{N}}\nonumber\\
&\quad+S(x,a)+g(N, h, a).
\end{align}

\textup{(ii)} If $N$ is even and $S(x, a)$ is defined as in \eqref{sevena}, then
\begin{align}\label{evenh0Nby2}
\sum_{n=1}^{\infty}n^{N-2h}\frac{\textup{exp}(-an^{N}x)}{1-\textup{exp}(-n^{N}x)}&=\frac{\zeta(2h)}{x}+\frac{1}{N}\G\left(\frac{N-2h+1}{N}\right)\zeta\left(\frac{N-2h+1}{N},a\right)x^{-\frac{(N-2h+1)}{N}}\nonumber\\
&\quad+S(x,a).
\end{align}
In addition, \eqref{evenh0Nby2} holds also when $h<0$.
\end{theorem}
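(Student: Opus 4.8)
The plan is to begin with the Mellin--Barnes representation of the series (see \eqref{mainequality}): for $c$ sufficiently large,
\begin{equation*}
\sum_{n=1}^{\infty}n^{N-2h}\frac{\exp(-an^{N}x)}{1-\exp(-n^{N}x)}=\frac{1}{2\pi i}\int_{(c)}\Gamma(s)\,\zeta(s,a)\,\zeta\bigl(Ns-(N-2h)\bigr)\,x^{-s}\,\mathrm{d}s,
\end{equation*}
then to move the line of integration to the left, collect the residues, and transform the residual integral by Hurwitz's formula \eqref{hformula} --- together with Theorem \ref{raabesum} when $N$ is odd --- exactly as in the proof of Theorem \ref{dgkmgen}. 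The structural novelty is that the hypothesis $h<N/2$ makes the finite sums constituting $P(x,a)$ in Theorem \ref{dgkmgen} empty (here $\lfloor h/N-\tfrac12\rfloor\le-1$ and $\lfloor h/N\rfloor=0$), so essentially no residual terms remain.

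The pole $s=1$ of $\zeta(s,a)$ contributes $\zeta(2h)/x$, and the pole $s=(N-2h+1)/N$ of $\zeta\bigl(Ns-(N-2h)\bigr)$ --- which lies in $(0,\infty)$ because $h<N/2$ --- contributes $\tfrac1N\Gamma\bigl(\tfrac{N-2h+1}{N}\bigr)\zeta\bigl(\tfrac{N-2h+1}{N},a\bigr)x^{-(N-2h+1)/N}$. What remains is to examine the poles $s=0,-1,-2,\dots$ of $\Gamma(s)$. Since $\zeta$ has simple trivial zeros at the negative even integers, the $\Gamma$-pole at $s=-k$ is cancelled whenever $Ns-(N-2h)=-(k+1)N+2h$ is a negative even integer $\le-2$, and a parity count now decides everything. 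If $N$ is even, $-(k+1)N+2h$ is even for every $k\ge0$ and, since $h<N/2$, is $\le-2$; hence \emph{no} residue survives at $s=0,-1,-2,\dots$, which proves part \textup{(ii)} with no correction term --- and, the argument never using $h\ge0$, shows that \eqref{evenh0Nby2} persists for $h<0$. If $N$ is odd, the same cancellation disposes of the $\Gamma$-poles at $s=-1,-3,-5,\dots$, leaving only the poles at $s=0,-2,-4,\dots$, where $\zeta\bigl(Ns-(N-2h)\bigr)$ takes nonzero values at negative odd integers.

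For $N$ odd, the treatment of these remaining poles --- and hence the value of $g(N,h,a)$ --- is governed by the range of validity of \eqref{hformula}. If $0<a<1$, then \eqref{hformula} holds on all of $\Re(s)<1$, so one moves the contour only to a line $\Re(s)=\delta$ with $0<\delta<\min\{1,(N-2h+1)/N\}$; none of $s=0,-2,-4,\dots$ is crossed, no further residue arises, and applying \eqref{hformula} on $\Re(s)=\delta$ and then processing the resulting integral term by term --- verbatim as in the proof of Theorem \ref{dgkmgen} --- produces $S(x,a)$ of \eqref{sodda}, so $g(N,h,a)=0$. If $a=1$, then \eqref{hformula} requires $\Re(s)<0$, so the contour is pushed past $s=0$, picking up the residue $\zeta(0)\,\zeta(-N+2h)=-\tfrac12\zeta(-N+2h)$, while the poles at $s=-2,-4,\dots$ are annihilated by the trivial zeros $\zeta(-2j)=0$ of $\zeta(s,1)=\zeta(s)$; nothing else contributes, and the residual integral on $\Re(s)<0$ gives $S(x,1)$ (whose $\sin(2\pi na)$-terms vanish identically). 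This is precisely $g(N,h,a)=-\tfrac12\zeta(-N+2h)$, i.e.\ \eqref{gnha}.

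Two points require care. First, the contour shifts must be justified: the horizontal segments are bounded away using Stirling's formula for $\Gamma$ together with convexity bounds for $\zeta$ and $\zeta(\cdot,a)$, and --- because for $N$ odd the integrand grows too fast to let the contour be sent to $-\infty$ --- one falls back on the finite-contour-plus-residue device used in the proof of Theorem \ref{dgkmgen}. Second, one must identify the residual integral with $S(x,a)$: after inserting \eqref{hformula} and using $\Gamma(s)\Gamma(1-s)=\pi/\sin\pi s$ (followed, for $N$ odd, by the functional equation of $\zeta$), the term-by-term evaluation of the resulting integrals is word for word that in the proof of Theorem \ref{dgkmgen}, the infinite series of Raabe integrals that emerges being summed by Theorem \ref{raabesum} to give the $\log$-and-$\psi$ construct in \eqref{sodda}; the only genuinely new verification is that starting the residual integral from the line $\Re(s)=\delta$ (resp.\ a line just left of $s=0$) instead of from further left changes nothing, which holds because, by the two preceding paragraphs, no poles lie between the two lines. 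Finally, one records the convergence of the series on the right: the $\cos(2\pi na)$- and $\bigl(\cos(2\pi na)+i(\cdots)\sin(2\pi na)\bigr)$-sums converge absolutely for every $h$ thanks to the exponential decay of the factors $1/\bigl(\exp(2A_{N,j}(n/x))-1\bigr)$ --- so part \textup{(ii)} indeed holds for all $h<N/2$ --- while for $N$ odd the $\sin(2\pi na)$-sum in \eqref{sodda} converges only conditionally, and only when $h\ge0$, which is exactly why that restriction is imposed in part \textup{(i)}. The genuinely delicate step is the $a=1$ versus $0<a<1$ dichotomy; the rest is an adaptation of the proof of Theorem \ref{dgkmgen}.
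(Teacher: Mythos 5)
Your overall skeleton is right: the Mellin--Barnes representation \eqref{mainequality}, the residues at $s=1$ and $s=\frac{N-2h+1}{N}$, the emptiness of the two finite sums of $P(x,a)$ when $h<N/2$, the parity analysis showing that for $N$ even every $\Gamma$-pole at $s=-k$ (including $k=0$) is killed by a trivial zero of $\zeta(Ns-(N-2h))$ so that part (ii) extends to $h<0$, and the observation that for $N$ odd the convergence of the $\sin(2\pi na)$-series in \eqref{sodda} is what forces $h\ge0$. The $a=1$ branch is also fine, since there $J_2\equiv0$ and everything converges absolutely. The problem is the branch that carries the actual content of $g(N,h,a)$, namely $N$ odd and $0<a<1$. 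You propose to stop the contour at $\Re(s)=\delta>0$, so that $s=0$ is never crossed, and then to process the residual integral ``verbatim as in the proof of Theorem \ref{dgkmgen}.'' But that verbatim processing is justified in the paper by \emph{absolute} convergence of the double series coming from \eqref{gamhur} and from $\zeta(s_1)=\sum_m m^{-s_1}$ on the line $\Re(s)=1+r>1$; on your line the transformed variable sits at $\Re(s)=1-\delta<1$, where $\sum_n\sin(2\pi na)n^{-s}$ and $\sum_n\cos(2\pi na)n^{-s}$ converge only conditionally, so the interchange of $\sum_n$ with the integral over the infinite vertical line is no longer licensed and would need a genuinely new argument. (There is a secondary imprecision: with $0<\delta<\min\{1,\frac{N-2h+1}{N}\}$ the variable $s_1=Ns-2h+1$ generally has $\Re(s_1)>2$, outside the strip where Lemma \ref{MellTransOfCOT} applies, so you would in any case have to shift past the poles of $\Gamma(s_1)/\tan(\tfrac{\pi s_1}{2})$ at $s_1=2,4,\dots$ unless you further restrict $\delta$ to $(\frac{N-2h-1}{N},\frac{N-2h}{N})$.)

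The paper's proof avoids both issues by never leaving the region of absolute convergence, and its mechanism for \eqref{gnha} is quite different from yours. It always shifts to $\Re(s)=-r$ with $0<r<\tfrac1N$, so the residue $R_0=-\bigl(a-\tfrac12\bigr)\zeta(2h-N)$ from \eqref{R_0} is picked up for \emph{every} $a\in(0,1]$; the transformed integral then lives on $\Re(s)=1+r$, the interchange is legitimate, and the subsequent shift in the $s_1$-plane down to $(0,2)$ crosses the poles of $\Gamma(s_1)/\tan(\tfrac{\pi s_1}{2})$ at $s_1=2,4,\dots,N-2h+1$, contributing the terms $T(N,h,x,j)$. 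The identity $\sum_{n=1}^{\infty}\frac{\sin(2\pi na)}{n}=-\pi\bigl(a-\tfrac12\bigr)$ shows that the $k=\frac{N-2h+1}{2}$ piece of this contribution equals $\bigl(a-\tfrac12\bigr)\zeta(2h-N)$, exactly cancelling $R_0$, while the remaining pieces vanish via the trigonometric identity \eqref{overj} (whose denominator is nonzero precisely because $h\ge0$). For $a=1$ the whole $T$-contribution is multiplied by $\sin(2\pi na)=0$, so $R_0=-\tfrac12\zeta(2h-N)$ survives: that cancellation-versus-survival is the true origin of the dichotomy in \eqref{gnha}, not the range of validity of Hurwitz's formula \eqref{hformula}. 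Your route would give the correct answer if the interchange on the critical-strip line could be justified, but as written that step is a gap, and it is exactly the step the paper's detour through $R_0$ and $T(N,h,x,j)$ is designed to avoid.
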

\begin{remark}
The method described in Remark \textup{\ref{ag1}} for extending the formula in Theorem \textup{\ref{dgkmgen}} to $a>1$ applies to the above theorem as well.
\end{remark}
\begin{remark}
Note that the right-hand side of \eqref{evenh0Nby2} is exactly the same as that of \eqref{dgkmgeneqn} since for $h<N/2$, $N$ even, the term $-\left(a-\tfrac{1}{2}\right)\zeta(-N+2h)$ as well as the two finite sums in $P(x, a)$ vanish.
\end{remark}

Kanemitsu, Tanigawa and Yoshimoto \cite{ktyhr} have obtained the above result for $a=1$.

We now give a special case of part (i) of the above theorem.
\begin{corollary}\label{hzerospl}
Let $0<a\leq 1$. Let $g(N, h, a)$ be defined in \eqref{gnha}. 
If $\a$ and $\b$ are two positive numbers such that $\a\b=\pi^2$, then
\begin{align}\label{ramgennn}
&\a\sum_{n=1}^{\infty}\frac{ne^{2n\a(1-a)}}{e^{2n\a}-1}+\b\sum_{n=1}^{\infty}\frac{n\cos(2\pi n a)}{e^{2n\b}-1}\nonumber\\
&=\a\hspace{0.5mm} g(1, 0, a)+\frac{\psi'(a)}{4\a}-\frac{1}{4}+\frac{\b}{\pi}\sum_{n=1}^{\infty}n\sin(2\pi na)\left\{\log\left(\frac{n\b}{\pi}\right)-\frac{1}{2}\left(\psi\left(\frac{in\b}{\pi}\right)+\psi\left(\frac{-in\b}{\pi}\right)\right)\right\}.
\end{align}
\end{corollary}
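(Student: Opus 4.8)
The plan is to obtain Corollary~\ref{hzerospl} as the special case $N=1$, $h=0$ of part~(i) of Theorem~\ref{dgkmgenh0Nby2} (these values satisfy $0\le h<N/2$), followed by the substitution $x=2\a$. First I would record the ingredients on the right-hand side of \eqref{oddh0Nby2} in this case: since $\zeta(2h)=\zeta(0)=-\tfrac12$, $\tfrac{N-2h+1}{N}=2$, $\Gamma(2)=1$ and $\zeta(2,a)=\sum_{k=0}^{\infty}(k+a)^{-2}=\psi'(a)$, while $x^{-(N-2h+1)/N}=x^{-2}$, the part of \eqref{oddh0Nby2} other than $S(x,a)$ reduces to $-\dfrac{1}{2x}+\dfrac{\psi'(a)}{x^{2}}+g(1,0,a)$, with $g(1,0,a)$ as in \eqref{gnha}.

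Next I would unwind $S(x,a)$ from \eqref{sodda} for $N=1$, $h=0$. The index $j$ then runs over the single value $j=0$, for which $e^{i\pi(1-2h)j/N}=1$ and $(-1)^{j+(N+1)/2}=-1$; moreover $A_{1,0}(y)=\pi(2\pi y)=2\pi^{2}y$, $(-1)^{h+1}=-1$, and $n^{(2h-1)/N}=n^{-1}$. Plugging these in,
\begin{equation*}
S(x,a)=-\frac{4\pi^{2}}{x^{2}}\sum_{n=1}^{\infty}\frac{n\cos(2\pi na)}{e^{4\pi^{2}n/x}-1}
+\frac{4\pi}{x^{2}}\sum_{n=1}^{\infty}n\sin(2\pi na)\left\{\log\!\left(\frac{2\pi n}{x}\right)-\frac12\left(\psi\!\left(\frac{2\pi in}{x}\right)+\psi\!\left(-\frac{2\pi in}{x}\right)\right)\right\}.
\end{equation*}

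Finally I would set $x=2\a$, so that the left side of \eqref{oddh0Nby2} becomes $\sum_{n=1}^{\infty}ne^{-2an\a}/(1-e^{-2n\a})=\sum_{n=1}^{\infty}ne^{2n\a(1-a)}/(e^{2n\a}-1)$, and then use $\a\b=\pi^{2}$ to rewrite $4\pi^{2}n/x=2n\b$, $2\pi n/x=n\b/\pi$, $2\pi in/x=in\b/\pi$, as well as $4\pi^{2}/x^{2}=\b/\a$ and $4\pi/x^{2}=\pi/\a^{2}$. Multiplying the resulting identity through by $\a$, transferring the $\cos(2\pi na)$-series to the left-hand side, and invoking $\pi/\a=\b/\pi$ once more in the $\sin(2\pi na)$-series, one arrives exactly at \eqref{ramgennn}.

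There is no genuine obstacle here; the argument is a direct specialization, and the only thing that needs care is the bookkeeping of powers of $\pi$, $\a$ and $\b$ through $\a\b=\pi^{2}$, together with the identification $\zeta(2,a)=\psi'(a)$. One should also observe that the $\sin(2\pi na)$-series converges: from $\psi(w)=\log w-\tfrac1{2w}-\tfrac1{12w^{2}}+O(w^{-4})$ one finds that the bracketed quantity $\log z-\tfrac12(\psi(iz)+\psi(-iz))$ is $O(z^{-2})$ as $z\to\infty$, so the general term of the series is $O(1/n)$ and convergence follows from the cancellation in $\sum_{n}\sin(2\pi na)/n$ (and the whole series vanishes term-by-term when $a=1$).
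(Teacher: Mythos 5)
Your proposal is correct and coincides with the paper's own proof, which likewise specializes part (i) of Theorem \ref{dgkmgenh0Nby2} to $N=1$, $h=0$, identifies $\zeta(2,a)=\psi'(a)$, and sets $x=2\a$ with $\a\b=\pi^2$. Your bookkeeping of $S(x,a)$ and of the powers of $\pi$, $\a$, $\b$ checks out, and the convergence remark, while not in the paper, is a valid sanity check.
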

When $a=1$, \eqref{ramgennn} gives a result of Schl\"{o}milch \cite{schlomilch}, rediscovered by Ramanujan \cite[Ch. 14, Sec. 8, Cor. (i)]{ramnote}, \cite[p.~318, formula (23)]{lnb}:
\begin{equation*}
\a\sum_{n=1}^{\infty}\frac{n}{e^{2n\a}-1}+\b\sum_{n=1}^{\infty}\frac{n}{e^{2n\b}-1}=\frac{\a+\b}{24}-\frac{1}{4}.
\end{equation*}
Let $q=e^{2\pi iz}, z\in\mathbb{H}$. Then the analytic continuation of the above formula for Re$(\a)>0$, Re$(\b)>0$ is equivalent to the transformation formula for the Eisenstein series $E_2(z):=1-24\sum_{n=1}^{\infty}\frac{nq^n}{1-q^{n}}$, namely, $E_2\left(\frac{-1}{z}\right)=z^2E_2(z)+\frac{6z}{\pi i}$.

Two new corollaries of \eqref{ramgennn} are now given.
\begin{corollary}\label{befkon}
\begin{equation}\label{befkong}
\sum_{n=1}^{\infty}\frac{ne^{n\pi}}{e^{2\pi n}-1}+\sum_{n=1}^{\infty}\frac{n(-1)^n}{e^{2\pi n}-1}=\frac{1}{8}-\frac{1}{4\pi}.
\end{equation}
\end{corollary}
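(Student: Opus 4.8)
The plan is to obtain Corollary \ref{befkon} as the special case $\a = \b = \pi$, $a = \tfrac{1}{2}$ of Corollary \ref{hzerospl}, that is, of the identity \eqref{ramgennn}. First I would check admissibility: $0 < \tfrac{1}{2} \leq 1$ and $\a\b = \pi\cdot\pi = \pi^2$, so \eqref{ramgennn} applies with these values.

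Next I would simplify each piece of \eqref{ramgennn} under this substitution. On the left-hand side, $e^{2n\a(1-a)} = e^{2n\pi\cdot\frac{1}{2}} = e^{n\pi}$ and $\cos(2\pi n a) = \cos(\pi n) = (-1)^n$, so the left side becomes $\pi\sum_{n=1}^{\infty}\frac{ne^{n\pi}}{e^{2n\pi}-1} + \pi\sum_{n=1}^{\infty}\frac{n(-1)^n}{e^{2n\pi}-1}$. On the right-hand side, $g(1,0,\tfrac{1}{2}) = 0$ by \eqref{gnha} since $0 < \tfrac{1}{2} < 1$, and the entire last sum vanishes term by term because $\sin(2\pi n a) = \sin(\pi n) = 0$ for every $n \in \mathbb{N}$ (so no convergence subtlety arises there). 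It then remains only to evaluate $\psi'(\tfrac{1}{2})$: from $\psi'(z) = \sum_{k=0}^{\infty}(z+k)^{-2}$ one gets $\psi'(\tfrac{1}{2}) = 4\sum_{k=0}^{\infty}(2k+1)^{-2} = \tfrac{\pi^2}{2}$, whence the right side reduces to $\dfrac{\psi'(1/2)}{4\pi} - \dfrac{1}{4} = \dfrac{\pi}{8} - \dfrac{1}{4}$.

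Equating the two sides and dividing through by $\pi$ then yields exactly $\sum_{n=1}^{\infty}\frac{ne^{n\pi}}{e^{2n\pi}-1} + \sum_{n=1}^{\infty}\frac{n(-1)^n}{e^{2n\pi}-1} = \tfrac{1}{8} - \tfrac{1}{4\pi}$, which is \eqref{befkong}. There is essentially no obstacle here, as the corollary is a direct specialization of \eqref{ramgennn}; the only inputs that need a word of justification are the admissibility of the parameter choice $\a=\b=\pi$, $a=\tfrac12$, the term-by-term vanishing of the digamma/logarithm sum at $a=\tfrac{1}{2}$, and the classical value $\psi'(\tfrac{1}{2}) = \pi^2/2$.
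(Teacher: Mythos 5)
Your proposal is correct and follows exactly the paper's route: the authors likewise obtain \eqref{befkong} by setting $a=\tfrac12$, $\a=\b=\pi$ in Corollary \ref{hzerospl} and invoking $\psi'(\tfrac12)=\pi^2/2$. Your added remarks on admissibility, the vanishing of the $\sin(2\pi na)$ sum, and $g(1,0,\tfrac12)=0$ are all accurate and merely make explicit what the paper leaves implicit.
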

Note that \eqref{befkong} is an analogue of the following famous result, first proved by Schl\"{o}milch \cite{schlomilch} (see \cite[p.~159]{berndtrocky} for more references): 
\begin{equation*}
\sum_{n=1}^{\infty}\frac{n}{e^{2n\pi}-1}=\frac{1}{24}-\frac{1}{8\pi}.
\end{equation*}
\begin{corollary}\label{catalan}
If $G$ denotes Catalan's constant, then
\begin{align*}
&\sum_{n=1}^{\infty}\frac{ne^{\frac{3n\pi}{2}}}{e^{2n\pi}-1}+2\sum_{n=1}^{\infty}\frac{n(-1)^{n}}{e^{4n\pi}-1}\nonumber\\
&=\frac{2G}{\pi^2}+\frac{1}{4}\left(1-\frac{1}{\pi}\right)+\frac{1}{\pi}\sum_{n=1}^{\infty}(-1)^{n-1}(2n-1)\left\{\log(2n-1)-\frac{1}{2}\left(\psi(i(2n-1))+\psi(-i(2n-1))\right)\right\}.
\end{align*}
\end{corollary}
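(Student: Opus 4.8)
The plan is to obtain Corollary \ref{catalan} as a single specialization of identity \eqref{ramgennn} in Corollary \ref{hzerospl}, taking the free parameters to be $\a=\b=\pi$ (which satisfies $\a\b=\pi^2$) and $a=\tfrac14$, followed by an unwinding of the resulting trigonometric coefficients. Since $0<\tfrac14<1$, the definition \eqref{gnha} forces $g(1,0,\tfrac14)=0$, so the term $\a\,g(1,0,a)$ on the right of \eqref{ramgennn} drops out completely; this is precisely why one picks $a$ strictly inside $(0,1)$ here rather than the value $a=1$ that recovers Schl\"omilch's formula.

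Next I would reduce the finite-order trigonometric factors. With $a=\tfrac14$ one has $\cos(2\pi na)=\cos(\pi n/2)$, which vanishes for odd $n$ and equals $(-1)^{n/2}$ for even $n$; writing $n=2k$ converts $\b\sum_{n\ge1}\frac{n\cos(2\pi na)}{e^{2n\b}-1}$ into $2\pi\sum_{k\ge1}\frac{k(-1)^k}{e^{4k\pi}-1}$. Dually, $\sin(2\pi na)=\sin(\pi n/2)$ vanishes for even $n$ and equals $(-1)^{k-1}$ at $n=2k-1$, so with $\b=\pi$ the $\psi$--log series on the right of \eqref{ramgennn} collapses to a sum over odd integers, namely $\sum_{k\ge1}(-1)^{k-1}(2k-1)\bigl\{\log(2k-1)-\tfrac12(\psi(i(2k-1))+\psi(-i(2k-1)))\bigr\}$. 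The left-hand side of \eqref{ramgennn} similarly becomes $\pi\bigl(\sum_{n\ge1}\frac{ne^{3n\pi/2}}{e^{2n\pi}-1}+2\sum_{k\ge1}\frac{k(-1)^k}{e^{4k\pi}-1}\bigr)$, so dividing the whole identity through by $\pi$ produces exactly the left-hand side of the corollary together with the stated $\psi$--log term carrying the prefactor $1/\pi$.

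It then remains to identify the leftover arithmetic constant $\frac{\psi'(1/4)}{4\pi^2}-\frac1{4\pi}$ coming from $\frac{\psi'(a)}{4\a}-\frac14$ after the division by $\pi$. The one genuinely non-mechanical ingredient is the classical trigamma evaluation $\psi'\!\bigl(\tfrac14\bigr)=\pi^2+8G$, which I would derive from $\psi'(\tfrac14)=16\sum_{n\ge0}(4n+1)^{-2}$ by splitting $\sum_{n\ge0}(2n+1)^{-2}=\pi^2/8$ and $\sum_{n\ge0}(-1)^n(2n+1)^{-2}=G$ into their residue classes $1$ and $3\pmod 4$ and adding. Substituting this gives $\frac{\psi'(1/4)}{4\pi^2}=\frac14+\frac{2G}{\pi^2}$, and collecting the constants yields precisely $\frac{2G}{\pi^2}+\frac14\bigl(1-\frac1\pi\bigr)$, which finishes the proof. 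Apart from recognizing that the trigamma value at $1/4$ is what brings in Catalan's constant, the remaining work is purely bookkeeping of the parities of $\sin(\pi n/2)$ and $\cos(\pi n/2)$, the reindexings $n=2k$ and $n=2k-1$, and the single overall factor of $\pi$; the only real care needed is to keep those signs and that factor straight.
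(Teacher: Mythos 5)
Your proposal is correct and matches the paper's own proof: the paper likewise obtains the corollary by setting $a=1/4$ and $\a=\b=\pi$ in Corollary \ref{hzerospl} and invoking $\psi'(1/4)=8G+\pi^2$ (which it simply cites from the NIST handbook rather than rederiving from the series for $G$ and $\pi^2/8$). The parity bookkeeping for $\cos(\pi n/2)$ and $\sin(\pi n/2)$ and the final collection of constants all check out.
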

A counterpart of Theorem \ref{ggram}, which is just a reformulation of Theorem \ref{dgkmgen} for $N$ even, is now given in terms of $\a$ and $\b$.
\begin{theorem}\label{nevena}
Let $N$ be an even positive integer and $m$ be any integer. Let $0<a\leq 1$. For any $\a,\b>0$ satisfying $\a\b^N=\pi^{N+1}$,
\begin{align}\label{neveneqna}
&\a^{-\left(\frac{2Nm-1}{N+1}\right)}\bigg(\left(a-\tfrac{1}{2}\right)\zeta(2Nm)+\sum_{j=1}^{m}\frac{B_{2j+1}(a)}{(2j+1)!}\zeta(2N(m-j))(2^N\a)^{2j}+\sum_{n=1}^{\infty}\frac{n^{-2Nm}\textup{exp}\left(-a(2n)^{N}\a\right)}{1-\exp{\left(-(2n)^{N}\a\right)}}\bigg)\nonumber\\
&=\b^{-\left(\frac{2Nm-1}{N+1}\right)}\frac{2^{2Nm-1}}{N}\Bigg\{\pi^{-\left(\frac{1-2Nm}{N}\right)}\G\left(\frac{1-2Nm}{N}\right)\zeta\left(\frac{1-2Nm}{N},a\right)\nonumber\\
&\quad-2(-1)^{\frac{N}{2}+m}2^{\frac{1-2Nm}{N}}\sum_{j=0}^{\frac{N}{2}-1}(-1)^j\Bigg[\sum_{n=1}^{\infty}\frac{\cos(2\pi na)}{n^{2m+1-\frac{1}{N}}}\textup{Im}\Bigg(\frac{e^{\frac{i\pi(2j+1)}{2N}}}{\exp{\left((2n)^{\frac{1}{N}}\b e^{\frac{i\pi(2j+1)}{2N}}\right)}-1}\Bigg)\nonumber\\
&\qquad\qquad\qquad\qquad\qquad\qquad+(-1)^{j+\frac{N}{2}+1}\sum_{n=1}^{\infty}\frac{\sin(2\pi na)}{n^{2m+1-\frac{1}{N}}}\textup{Re}\Bigg(\frac{e^{\frac{i\pi(2j+1)}{2N}}}{\exp{\left((2n)^{\frac{1}{N}}\b e^{\frac{i\pi(2j+1)}{2N}}\right)}-1}\Bigg)\Bigg]\Bigg\}\nonumber\\
&\quad+(-1)^{\frac{N}{2}+1}2^{2Nm-1}\sum_{j=0}^{m}\frac{B_{2j}(a)B_{(2m+1-2j)N}}{(2j)!((2m+1-2j)N)!}\a^{\frac{2j}{N+1}}\b^{N+\frac{2N^2(m-j)-N}{N+1}}.
\end{align}
\end{theorem}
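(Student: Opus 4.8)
The plan is to deduce \eqref{neveneqna} as a reformulation of transformations already proved: Theorem~\ref{dgkmgen} for $N$ even when $m\geq 0$, and part (ii) of Theorem~\ref{dgkmgenh0Nby2} in the form valid for $h<0$ when $m<0$. The key substitution is
\begin{equation*}
h=\frac{N(2m+1)}{2},\qquad x=2^{N}\a ,
\end{equation*}
which is admissible precisely because $N$ is even, so $h\in\mathbb{Z}$, with $h\geq N/2$ when $m\geq 0$ and $h<0$ when $m<0$; together these exhaust every integer $m$ (with no gap and no overlap). For $m\geq 0$ the second hypothesis of Theorem~\ref{dgkmgen} holds automatically, since $\tfrac{N-2h+1}{N}+2\lfloor\tfrac hN-\tfrac12\rfloor=\tfrac{1-2Nm}{N}+2m=\tfrac1N\neq 0$ for every even $N$ (in particular at $m=0$, $h=N/2$). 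Everywhere one uses $\a\b^{N}=\pi^{N+1}$, i.e. $\a=\pi^{N+1}\b^{-N}$, to trade powers of $\a$ for powers of $\b$ and $\pi$; note that under the substitution $\tfrac{N-2h+1}{N}=\tfrac{1-2Nm}{N}$, $\lfloor\tfrac hN-\tfrac12\rfloor=\lfloor\tfrac hN\rfloor=m$, $2h-(2j+1)N=2N(m-j)$, and $2h-2jN=N(2m+1-2j)$.

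First I would treat $m\geq 0$ by substituting into \eqref{dgkmgeneqn}. In $P(x,a)$ the term $-(a-\tfrac12)\zeta(-N+2h)$ becomes $-(a-\tfrac12)\zeta(2Nm)$ and the first finite sum becomes $-\sum_{j=1}^{m}\tfrac{B_{2j+1}(a)}{(2j+1)!}\zeta(2N(m-j))(2^{N}\a)^{2j}$; transposing both to the left side and multiplying through by $\a^{-(2Nm-1)/(N+1)}$ gives exactly the left side of \eqref{neveneqna}, and it remains to recognize the surviving pieces on the right. The Hurwitz-zeta term of $P$ contributes $x^{-(N-2h+1)/N}=2^{2Nm-1}\a^{(2Nm-1)/N}$, and $\a^{-(2Nm-1)/(N+1)}\a^{(2Nm-1)/N}=\a^{(2Nm-1)/(N(N+1))}=\pi^{(2Nm-1)/N}\b^{-(2Nm-1)/(N+1)}$, reproducing the term $\pi^{-(1-2Nm)/N}\G(\tfrac{1-2Nm}{N})\zeta(\tfrac{1-2Nm}{N},a)$ together with the prefactor $\b^{-(2Nm-1)/(N+1)}\tfrac{2^{2Nm-1}}{N}$. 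The term $\zeta(2h)/x=\zeta(N(2m+1))/(2^{N}\a)$, after multiplication and use of $\a=\pi^{N+1}\b^{-N}$, equals $\tfrac{\zeta(N(2m+1))}{2^{N}\pi^{N(2m+1)}}\b^{N+(2N^{2}m-N)/(N+1)}$, and invoking $\zeta(2k)=\tfrac{(-1)^{k+1}(2\pi)^{2k}B_{2k}}{2(2k)!}$ with $2k=N(2m+1)$ (simplifying the sign, using $N$ even) identifies it with the $j=0$ term of the last sum $\sum_{j=0}^{m}$ in \eqref{neveneqna}; the terms $j=1,\dots,m$ of that sum arise likewise from the second finite sum of $P$ by expanding $(\tfrac{-1}{4\pi^{2}})^{jN}$, collecting the powers of $2$ and $\pi$ (which cancel to $(-1)^{N/2+1}2^{2Nm-1}$), and rewriting $\a^{2j-1-(2Nm-1)/(N+1)}$ as $\a^{2j/(N+1)}\b^{N+(2N^{2}(m-j)-N)/(N+1)}$ up to a power of $\pi$ that cancels.

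The hard part will be the reorganization of $S(x,a)$ in \eqref{sevena}. Using $\a\b^{N}=\pi^{N+1}$ one first checks $2A_{N,j+\frac12}(n/(2^{N}\a))=\b(2n)^{1/N}e^{i\pi(2j+1)/(2N)}$, so the exponentials become exactly those in \eqref{neveneqna}; also $\tfrac{2h-1}{N}=2m+1-\tfrac1N$, $(-1)^{h+1}=(-1)^{N/2+1}$, and $e^{i\pi(1-2h)(j+\frac12)/N}=-i(-1)^{j+m}e^{i\pi(2j+1)/(2N)}$. The crucial move is to pair, inside the sum over $j$ from $-N/2$ to $N/2-1$, the index $j$ with $j'=-j-1$: this is an involution of the range that carries $\{0,\dots,N/2-1\}$ bijectively onto $\{-N/2,\dots,-1\}$. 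Since $(2n)^{1/N}\b$ is real and $e^{i\pi(2j'+1)/(2N)}=\overline{e^{i\pi(2j+1)/(2N)}}$, the quantity $H_{j}:=\frac{e^{i\pi(2j+1)/(2N)}}{\exp((2n)^{1/N}\b e^{i\pi(2j+1)/(2N)})-1}$ is sent to $\overline{H_{j}}$ under $j\mapsto j'$, while $(-1)^{j'}=-(-1)^{j}$ and $(-1)^{j'+N/2+1}=-(-1)^{j+N/2+1}$. Adding the $j$ and $j'$ contributions and using the elementary identity $(c+i\epsilon s)H-(c-i\epsilon s)\overline{H}=2i\bigl(c\,\textup{Im}\,H+\epsilon s\,\textup{Re}\,H\bigr)$ with $c=\cos(2\pi na)$, $s=\sin(2\pi na)$, $\epsilon=(-1)^{j+N/2+1}$, $H=H_{j}$, the $\cos(2\pi na)$-parts get paired with $\textup{Im}\,H_{j}$ and the $\sin(2\pi na)$-parts with $\textup{Re}\,H_{j}$; after collecting the scalar factors one obtains precisely $\b^{-(2Nm-1)/(N+1)}\tfrac{2^{2Nm-1}}{N}$ times $-2(-1)^{N/2+m}2^{(1-2Nm)/N}\sum_{j=0}^{N/2-1}(-1)^{j}[\cdots]$, the bracket being the one in \eqref{neveneqna}.

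Finally I would dispose of $m<0$ using part (ii) of Theorem~\ref{dgkmgenh0Nby2} for $h<0$: there the right side consists only of $\zeta(2h)/x$, the Hurwitz-zeta term, and $S(x,a)$, and since $2Nm$ and $2h=N(2m+1)$ are negative even integers we get $\zeta(2Nm)=\zeta(2h)=0$ and the finite Bernoulli sums vanish, matching the empty ranges $\sum_{j=1}^{m}$ and $\sum_{j=0}^{m}$ in \eqref{neveneqna}; the Hurwitz-zeta term and $S(x,a)$ are handled verbatim as above. I expect no analytic difficulty anywhere: the only real obstacle is the bookkeeping, namely faithfully tracking all the powers of $2$, $\pi$, $\a$, $\b$ and the signs $(-1)^{N/2}$, $(-1)^{m}$ through the substitution, the conjugate-pairing of the $j$-sum, and the identification of $\zeta(2h)/x$ with the $j=0$ Bernoulli term. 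No input beyond Theorems~\ref{dgkmgen} and \ref{dgkmgenh0Nby2} is needed.
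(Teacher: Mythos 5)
Your proposal is correct and follows essentially the same route as the paper: the same substitution $h=\tfrac{N}{2}+Nm$, $x=2^{N}\a$ into Theorem \ref{dgkmgen} for $m\geq 0$ (and Theorem \ref{dgkmgenh0Nby2}(ii) for $m<0$), the same identification of $\zeta(2h)/x$ with the $j=0$ Bernoulli term via Euler's formula, and the same conjugate-pairing of the $j$-sum, which the paper carries out by splitting $\sum_{j=-N/2}^{N/2-1}$ and replacing $j$ by $-1-j$ in the lower half. The sign and exponent bookkeeping you indicate (e.g. $e^{i\pi(1-2h)(j+\frac12)/N}=-i(-1)^{j+m}e^{i\pi(2j+1)/(2N)}$ and the $\a\to\b$ conversions) agrees with the paper's \eqref{alpha beta}, \eqref{alpha beta polynomial} and \eqref{him2}.
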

When $a=1$, we recover Theorem 1.10 from \cite{dixitmaji1}, which itself is a generalization of Wigert's formula \cite[pp.~8-9, Equation (5)]{wig}, \cite[Equation (1.2)]{dixitmaji1}.

When $a=1/2$, Theorem \ref{nevena} gives the following result on transcendence.
\begin{corollary}\label{transzetaeven}
Let $N$ be a positive even integer and $m$ be any integer. Then at least one of the numbers
\begin{equation*}
\zeta\left(2m+1-\frac{1}{N}\right), \sum_{n=1}^{\infty}\frac{n^{-2Nm}\textup{exp}\left(-\frac{1}{2}(2n)^{N}\pi\right)}{1-\exp{\left(-(2n)^{N}\pi\right)}},\hspace{2mm}\text{and}\hspace{2mm} \sum_{n=1}^{\infty}\frac{(-1)^n}{n^{2m+1-\frac{1}{N}}}\textup{Im}\Bigg(\frac{e^{\frac{i\pi(2j+1)}{2N}}}{\exp{\left((2n)^{\frac{1}{N}}\pi e^{\frac{i\pi(2j+1)}{2N}}\right)}-1}\Bigg),
\end{equation*}
where $j$ takes every value between $0$ and $\frac{N}{2}-1$, is transcendental.
\end{corollary}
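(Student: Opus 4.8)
The plan is to specialize Theorem~\ref{nevena} to $a=\tfrac12$ and $\alpha=\beta=\pi$ (admissible, since $\pi\cdot\pi^{N}=\pi^{N+1}$) and then to run a transcendence argument resting on the transcendence of $\pi$. Several simplifications happen simultaneously: the factor $a-\tfrac12$ vanishes and $B_{2j+1}(\tfrac12)=0$ for every $j\ge1$, so both $\left(a-\tfrac12\right)\zeta(2Nm)$ and the finite sum $\sum_{j=1}^{m}\frac{B_{2j+1}(a)}{(2j+1)!}\zeta(2N(m-j))(2^{N}\alpha)^{2j}$ drop out of the left side; $\cos(2\pi n\cdot\tfrac12)=(-1)^{n}$ converts the cosine sums on the right into the alternating sums in the statement; $\sin(2\pi n\cdot\tfrac12)=0$ annihilates every $\mathrm{Re}$-sum on the right; and, by~\eqref{hzetahalf}, $\zeta\!\left(\frac{1-2Nm}{N},\tfrac12\right)=\bigl(2^{(1-2Nm)/N}-1\bigr)\zeta\!\left(\frac{1-2Nm}{N}\right)$. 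After this, the right side of~\eqref{neveneqna} involves $\zeta$ only through $\zeta\!\left(\tfrac1N-2m\right)$, through the $N/2$ sums over $n$ indexed by $0\le j\le\tfrac N2-1$ that appear in the corollary, and through an explicit finite sum of Bernoulli numbers times powers of $\pi$.

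Next I would identify $\zeta\!\left(\tfrac1N-2m\right)$ with $\zeta\!\left(2m+1-\tfrac1N\right)$. With $s=\tfrac1N-2m$ the functional equation $\zeta(s)=2^{s}\pi^{s-1}\sin(\pi s/2)\,\Gamma(1-s)\,\zeta(1-s)$ together with $\Gamma(s)\Gamma(1-s)=\pi/\sin(\pi s)$ allows one to rewrite the block $\pi^{-(1-2Nm)/N}\,\Gamma\!\left(\tfrac1N-2m\right)\zeta\!\left(\tfrac1N-2m\right)$ occurring in~\eqref{neveneqna}; the crucial point is that all powers of $\pi$ and the factor $\Gamma\!\left(2m+1-\tfrac1N\right)$ cancel, leaving a coefficient formed only from $2^{1/N}$, $(-1)^{m}$ and $\cos\!\bigl(\pi/(2N)\bigr)$, hence a nonzero algebraic number. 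Consequently, after dividing the whole identity by $\pi^{-(2Nm-1)/(N+1)}$, Theorem~\ref{nevena} collapses to an identity of the form
\begin{equation*}
T_{1}=c_{1}\,\zeta\!\left(2m+1-\tfrac1N\right)+\sum_{j=0}^{N/2-1}c_{2,j}\,S_{j}+R,
\end{equation*}
where $T_{1}=\sum_{n=1}^{\infty}\frac{n^{-2Nm}\exp\!\left(-\tfrac12(2n)^{N}\pi\right)}{1-\exp\!\left(-(2n)^{N}\pi\right)}$, each $S_{j}$ is the $j$-th sum in the statement, the $c_{1},c_{2,j}$ are nonzero algebraic numbers, and $R$ is the surviving Bernoulli sum, a polynomial in $\pi$ with rational coefficients.

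Finally comes the transcendence step. Assuming $m\ge1$ (the only substantive case), one computes that $R=(-1)^{N/2+1}2^{2Nm-1}\sum_{j=0}^{m}\frac{B_{2j}(1/2)\,B_{(2m+1-2j)N}}{(2j)!\,((2m+1-2j)N)!}\,\pi^{e_{j}}$ with $e_{j}=2Nm+(N-1)(1-2j)$, and I would verify that the exponents $e_{j}$ are pairwise distinct positive integers (the map $j\mapsto e_{j}$ is strictly decreasing, with minimal value $e_{m}=2m+N-1>0$) and that no coefficient vanishes, since $B_{2j}(1/2)=(2^{1-2j}-1)B_{2j}\neq0$ and $B_{(2m+1-2j)N}\neq0$ because $(2m+1-2j)N$ is a positive even integer. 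Hence $R$ is a nonconstant polynomial in $\pi$ with rational coefficients; were $R$ algebraic, $\pi$ would satisfy a nonzero polynomial with algebraic coefficients, contradicting Lindemann's theorem. Therefore, if $\zeta\!\left(2m+1-\tfrac1N\right)$, $T_{1}$ and all of $S_{0},\dots,S_{N/2-1}$ were simultaneously algebraic, the displayed identity would force $R$ to be algebraic --- impossible --- so at least one of these numbers is transcendental. The main obstacle is precisely this last bookkeeping: confirming that every coefficient of $R$ is nonzero and that the exponents of $\pi$ are positive and distinct, for only then is a spurious cancellation collapsing $R$ to an algebraic number ruled out; by contrast the specialization and the functional-equation manipulation are routine.
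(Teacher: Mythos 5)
Your proposal follows essentially the same route as the paper's proof: specialize Theorem \ref{nevena} to $a=\tfrac12$ and $\alpha=\beta=\pi$, use $B_{2j+1}(\tfrac12)=0$ and $\sin(\pi n)=0$ to kill the extraneous terms, convert the Hurwitz zeta block to an algebraic multiple of $\zeta\left(2m+1-\tfrac1N\right)$ via \eqref{hzetahalf} and the functional equation (producing the algebraic factor $\cos\left(\pi/(2N)\right)$), and conclude from the transcendence of the surviving non-zero rational polynomial in $\pi$. Your explicit check that the exponents $e_j$ are positive and pairwise distinct and that every coefficient is non-zero is in fact more careful than the paper's one-line assertion; the only caveat is your restriction to $m\geq 1$ --- the case $m=0$ goes through identically (a single term with exponent $N-1>0$), while for $m<0$ the Bernoulli sum is empty and the argument gives nothing, a gap that is equally present in the paper's own proof despite the corollary being stated for any integer $m$.
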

The interesting result we now give is reminiscent of the corrected version of Klusch's formula given by Kanemitsu, Tanigawa and Yoshimoto in \cite[Proposition 1.1]{ktyacta} but is actually very different in nature from the latter.
\begin{theorem}\label{klusch}
Let $\a, \b$ be positive numbers such that $\a\b=4\pi^3$. For $0<a<1$, we have
\begin{align}\label{kluscheqn}
\sum_{n=1}^{\infty}\frac{\exp{(-an^2\a)}}{1-\exp{(-n^2\a)}}=\frac{1}{2}\left(a-\frac{1}{2}\right)+\frac{\pi^2}{6\a}&+\frac{\sqrt{\pi}}{2\sqrt{\a}}\bigg\{\sum_{n=1}^{\infty}\frac{\cos(2\pi na)}{\sqrt{n}}\left(\frac{\sinh(\sqrt{n\b})-\sin(\sqrt{n\b})}{\cosh(\sqrt{n\b})-\cos(\sqrt{n\b})}\right)\nonumber\\
&\quad+\sum_{n=1}^{\infty}\frac{\sin(2\pi na)}{\sqrt{n}}\left(\frac{\sinh(\sqrt{n\b})+\sin(\sqrt{n\b})}{\cosh(\sqrt{n\b})-\cos(\sqrt{n\b})}\right)\bigg\}.
\end{align}
In particular, when $a=1/2$,
\begin{align}\label{kluscheqnhalf}
\sum_{n=1}^{\infty}\textup{cosech}\left(\frac{n^2\a}{2}\right)=\frac{\pi^2}{3\a}+\sqrt{\frac{\pi}{\a}}\sum_{n=1}^{\infty}\frac{(-1)^n}{\sqrt{n}}\left(\frac{\sinh(\sqrt{n\b})-\sin(\sqrt{n\b})}{\cosh(\sqrt{n\b})-\cos(\sqrt{n\b})}\right).
\end{align}
\end{theorem}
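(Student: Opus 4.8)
The plan is to derive Theorem~\ref{klusch} as the special case $N=2$, $h=1$, $x=\a$ of Theorem~\ref{dgkmgen}. For these values one has $h=N/2$, so the hypothesis $h\geq N/2$ is met; furthermore $\tfrac{N-2h+1}{N}=\tfrac12$ while $-2\lfloor\tfrac{h}{N}-\tfrac12\rfloor=-2\lfloor 0\rfloor=0$, so the non-degeneracy condition of Theorem~\ref{dgkmgen} holds, and $S(x,a)$ is the even-$N$ expression \eqref{sevena}. The left side of \eqref{dgkmgeneqn} then becomes precisely $\sum_{n=1}^{\infty}\tfrac{\exp(-an^{2}\a)}{1-\exp(-n^{2}\a)}$, so everything reduces to identifying $P(\a,a)+S(\a,a)$ with the right side of \eqref{kluscheqn}.

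First I would unpack $P(\a,a)$ from \eqref{pxa}. Since $-N+2h=0$ and $\zeta(0)=-\tfrac12$, the first term equals $\tfrac12\big(a-\tfrac12\big)$; since $2h=2$, the second term is $\zeta(2)/\a=\pi^{2}/(6\a)$; and both finite sums in \eqref{pxa} are empty because $\lfloor\tfrac{h}{N}-\tfrac12\rfloor=\lfloor\tfrac{h}{N}\rfloor=0$. The only remaining piece of $P(\a,a)$ is $\tfrac1N\G\!\left(\tfrac{N-2h+1}{N}\right)\zeta\!\left(\tfrac{N-2h+1}{N},a\right)x^{-(N-2h+1)/N}=\tfrac{\sqrt\pi}{2\sqrt\a}\,\zeta\!\left(\tfrac12,a\right)$, and here I would substitute Hurwitz's formula \eqref{hformula} at $s=\tfrac12$. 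This is legitimate exactly because $\Re(s)<1$ and $a\neq 1$, which is why Theorem~\ref{klusch} is stated for $0<a<1$ rather than $0<a\leq 1$. It gives $\zeta\!\left(\tfrac12,a\right)=\sum_{n=1}^{\infty}\tfrac{\cos(2\pi na)+\sin(2\pi na)}{\sqrt n}$, the two series converging conditionally by Dirichlet's test.

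Next I would simplify $S(\a,a)$ from \eqref{sevena}. For $N=2$ the index $j$ runs only over $\{-1,0\}$, and since $a$ and $\a$ are real, the $j=-1$ summand is the complex conjugate of the $j=0$ summand, so $S(\a,a)=2\,\Re\,(\text{$j=0$ term})$. Using the constraint $\a\b=4\pi^{3}$ one checks $2A_{2,1/2}(n/\a)=2\pi\sqrt{2\pi n/\a}\,e^{i\pi/4}=\sqrt{n\b}\,(1+i)$, so that $\exp\!\big(2A_{2,1/2}(n/\a)\big)=e^{\sqrt{n\b}}\big(\cos\sqrt{n\b}+i\sin\sqrt{n\b}\big)$, whence $S(\a,a)=\sqrt{\tfrac{2\pi}{\a}}\sum_{n=1}^{\infty}\tfrac{1}{\sqrt n}\,\Re\!\left(\frac{e^{2\pi ina-i\pi/4}}{e^{\sqrt{n\b}}(\cos\sqrt{n\b}+i\sin\sqrt{n\b})-1}\right)$. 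Writing $\sqrt{2\pi/\a}=2\sqrt2\cdot\tfrac{\sqrt\pi}{2\sqrt\a}$, the Hurwitz contribution to $P(\a,a)$ and the series $S(\a,a)$ acquire the common prefactor $\tfrac{\sqrt\pi}{2\sqrt\a}$, and what remains is the purely elementary identity: with $\theta:=\sqrt{n\b}$ and $\phi:=2\pi na$,
\begin{equation*}
\cos\phi+\sin\phi+2\sqrt2\,\Re\!\left(\frac{e^{i\phi-i\pi/4}}{e^{\theta}e^{i\theta}-1}\right)=\frac{(\sinh\theta-\sin\theta)\cos\phi+(\sinh\theta+\sin\theta)\sin\phi}{\cosh\theta-\cos\theta},
\end{equation*}
which I would prove by writing $\tfrac{1}{e^{\theta}e^{i\theta}-1}=\tfrac{\overline{e^{\theta}e^{i\theta}-1}}{|e^{\theta}e^{i\theta}-1|^{2}}$ and using $|e^{\theta}e^{i\theta}-1|^{2}=2e^{\theta}(\cosh\theta-\cos\theta)$ together with $\cosh\theta-e^{-\theta}=\sinh\theta$. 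Dividing through by $\sqrt n$ and summing (each of the two resulting series converges since the hyperbolic quotients differ from $1$ by $O(e^{-\sqrt{n\b}})$, again using $0<a<1$) yields \eqref{kluscheqn}.

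The main obstacle is not conceptual but rather the bookkeeping in the last step: correctly propagating the powers of $2$, $\pi$ and $\sqrt\a$ through the $j\in\{-1,0\}$ collapse and through Hurwitz's formula, and then establishing the trigonometric identity that converts the complex-exponential quotient into the stated hyperbolic quotients. Finally, for \eqref{kluscheqnhalf}, I would put $a=\tfrac12$: then $\cos(2\pi na)=(-1)^{n}$, $\sin(2\pi na)=0$, and $\tfrac{\exp(-n^{2}\a/2)}{1-\exp(-n^{2}\a)}=\big(e^{n^{2}\a/2}-e^{-n^{2}\a/2}\big)^{-1}=\tfrac12\,\textup{cosech}(n^{2}\a/2)$, so \eqref{kluscheqn} collapses directly to \eqref{kluscheqnhalf}.
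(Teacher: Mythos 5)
Your proposal is correct and follows essentially the same route as the paper: specialize Theorem \ref{dgkmgen} to $h=N/2$ with $N=2$, expand $\zeta(\tfrac12,a)$ via Hurwitz's formula \eqref{hformula} (valid since $0<a<1$), and convert the complex-exponential Lambert-type sum into the hyperbolic quotients using $\a\b=4\pi^3$. The only cosmetic difference is that the paper routes the final trigonometric conversion through its Lemmas \ref{ktywigl} and \ref{ktywigl2}, whereas you verify the equivalent identity directly from $|e^{\theta}e^{i\theta}-1|^{2}=2e^{\theta}(\cosh\theta-\cos\theta)$; both computations check out.
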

This paper is organized as follows. In Section \ref{prelim}, we collect preliminary results to be used in the sequel. Section \ref{secraabe} is devoted to finding new properties of the Raabe integral $\mathfrak{R}(y,w)$ and to proving Theorem \ref{raabesum}. 
In Section \ref{hgNby2}, we prove Theorem \ref{dgkmgen} and also obtain, as its special case, Theorem 2.1 of Kanemitsu, Tanigawa and Yoshimoto from \cite{ktyacta}. We derive Theorem \ref{dgkmord2} and its special case Theorem \ref{ggram} in Section \ref{ord2}. The special cases of these theorems when $a$ takes values such as $\frac{1}{2}, \frac{1}{4}$ etc. are given in two separate sub-sections of this section. These include Corollary \ref{zeta311}, Theorem \ref{ggramhalf} and Corollaries \ref{transzetaodd}-\ref{criterionzeta}. Section \ref{limiting} is devoted to the proof of Theorem \ref{dgkmord2m0} and to the proofs of Corollaries \ref{dgkmord2m0kum}-\ref{irreuler2} that result from it. Theorem \ref{dgkmgenh0Nby2} and its Corollaries \ref{hzerospl}-\ref{catalan} are proved in Section \ref{0hNby2}. We prove Theorem \ref{nevena}, Corollary \ref{transzetaeven} and Theorem \ref{klusch} in Section \ref{nevenasec}. We end the paper with some concluding remarks in Section \ref{con}. The numerical verification of each of Theorems \ref{dgkmgen}, \ref{dgkmord2} and \ref{dgkmord2m0} is done in Tables \ref{t1}, \ref{t2} and \ref{t3} respectively.  

\section{Preliminaries}\label{prelim}
The functional equation, the reflection formula (along with a
variant), and Legendre's duplication formula for the Gamma function
$\G(s)$ are given by
{\allowdisplaybreaks\begin{align}
 \G(s+1)&=s\G(s),\label{feg}\\
 \G(s)\G(1-s)&=\frac{\pi}{\sin(\pi s)}\hspace{4mm}(s\notin\mathbb{Z}),\\
 \G\left(\frac{1}{2}+s\right)\G\left(\frac{1}{2}-s\right)&=\frac{\pi}{\cos(\pi s)}\hspace{4mm}(s-\tfrac{1}{2}\notin\mathbb{Z}),\\
 \G(s)\G\left(s+\frac{1}{2}\right)&=\frac{\sqrt{\pi}}{2^{2s-1}}\G(2s).\label{dup}
\end{align}}%
The inverse Mellin transform of the gamma function for $c=$ Re$(s)>0$ and Re$(y)>0$ is well-known:
\begin{align}\label{gammamelli}
\frac{1}{2\pi i}\int_{(c)}\G(s)y^{-s}\, \mathrm{d}s=e^{-y}.
\end{align}
Here, and throughout the sequel, we use $\int_{(c)}$ to denote $\int_{c-i\infty}^{c+i\infty}$. Stirling's formula on a vertical strip states that if $s=\sigma+it$, then for $a\leq\sigma\leq b$ and $|t|\geq 1$,
\begin{equation}\label{strivert}
|\Gamma(s)|=(2\pi)^{\tfrac{1}{2}}|t|^{\sigma-\tfrac{1}{2}}e^{-\tfrac{1}{2}\pi |t|}\left(1+O\left(\frac{1}{|t|}\right)\right)
\end{equation}
as $t\to\infty$. The digamma function $\psi(z)$ satisfies the functional equation \cite[p.~54]{temme}
\begin{align}\label{psifunc}
\psi(z+1)=\psi(z)+\frac{1}{z}.
\end{align}
From \cite[p.~259, formula 6.3.18]{as}, for $|\arg$ $z|<\pi$, as
$z\to\infty$,
\begin{equation}\label{psiasymp}
\psi(z) \sim \log z-\frac{1}{2z}-\frac{1}{12z^{2}}+\frac{1}{120z^{4}}-\frac{1}{252z^{6}}+\cdots.
\end{equation}
Throughout the paper, we use, without mention, Euler's formula \cite[p.~5, Equation (1.14)]{temme}
\begin{equation}\label{ef}
\zeta(2 m ) = (-1)^{m +1} \frac{(2\pi)^{2 m}B_{2 m }}{2 (2 m)!}.
\end{equation}
The functional equation of $\zeta(s)$ in the asymmetric form is given by \cite[p.~259]{apostol-1998a}
\begin{equation}\label{zetafe1}
\zeta(s)=2^s\pi^{s-1}\Gamma(1-s)\zeta(1-s)\sin\left(\tfrac{1}{2}\pi s\right).
\end{equation}
We now state a generalization of Poisson's summation formula due to Guinand \cite[Theorem 1]{apg1} which is crucial in the proof of Theorem \ref{raabesum}.
\begin{theorem}\label{Guind-pois}
\textit{If $f(x)$ is an integral, $f(x)$ tends to zero as $x\rightarrow\infty$, and $xf'(x)$ belongs to $L^p(0,\infty)$, for some p, $1<p\leq 2$, then}
\begin{align*}
\lim_{M\rightarrow\infty}\left(\sum_{m=1}^M f(m)-\int_0^M f(v)\, \mathrm{d}v\right)=\lim_{M\rightarrow\infty}\left(\sum_{m=1}^M g(m)-\int_0^M g(v)\, \mathrm{d}v\right),
\end{align*}
where
\begin{align*}
g( x )=2\int_0^{\rightarrow \infty}f(t)\cos(2\pi x t)\, \mathrm{d}t.
\end{align*}
\end{theorem}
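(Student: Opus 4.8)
The plan is to prove this regularized Poisson-type identity by marrying first-order Euler--Maclaurin summation to the self-reciprocity of the cosine transform, the essential subtlety being that the ``naive'' Poisson identity one is tempted to write down has individually divergent constituents. First I would set up the analytic framework. Since $f$ is an integral (hence absolutely continuous), $f(x)\to0$, and $vf'(v)\in L^p(0,\infty)$ for some $1<p\le2$, Titchmarsh's $L^p$ theory of Fourier transforms together with the Hausdorff--Young inequality shows that $g(x)=2\int_0^\infty f(t)\cos(2\pi xt)\,\mathrm{d}t$ is well defined, that $g(x)\to0$ as $x\to\infty$, that $vg'(v)\in L^{p'}$ with $p'$ the conjugate exponent, and, crucially, that the transform is reciprocal, i.e. $f(x)=2\int_0^\infty g(t)\cos(2\pi xt)\,\mathrm{d}t$. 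This symmetry between $f$ and $g$ is what will ultimately force the two regularized discrepancies to agree.

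Next I would apply first-order Euler--Maclaurin summation. Writing $\overline{B}_1(v)=\{v\}-\tfrac12$ for the sawtooth (first periodic Bernoulli function), one has for every $M$
\begin{equation*}
\sum_{m=1}^{M} f(m)-\int_0^M f(v)\,\mathrm{d}v=\frac{f(M)-f(0)}{2}+\int_0^M \overline{B}_1(v)f'(v)\,\mathrm{d}v,
\end{equation*}
and the same with $g$ in place of $f$. Letting $M\to\infty$ and using $f(M),g(M)\to0$, the two limits appearing in the theorem become $-\tfrac12 f(0)+\int_0^\infty \overline{B}_1 f'$ and $-\tfrac12 g(0)+\int_0^\infty \overline{B}_1 g'$; convergence of these integrals comes from the mean-zero cancellation of $\overline{B}_1$ over each period combined with the $L^p$ decay of $vf'(v)$. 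Expanding $\overline{B}_1(v)=-\sum_{k\ge1}\sin(2\pi kv)/(\pi k)$ and integrating termwise by parts (the boundary contributions vanish since $f\to0$ and $\sin(2\pi k\cdot0)=0$) turns $\int_0^\infty \overline{B}_1 f'$ formally into $\sum_{k\ge1}g(k)$ and, symmetrically, $\int_0^\infty \overline{B}_1 g'$ into $\sum_{k\ge1}f(k)$. The desired equality of the two sides is thus seen to be equivalent to $\tfrac12 f(0)+\sum_{k\ge1}f(k)=\tfrac12 g(0)+\sum_{k\ge1}g(k)$, which is nothing but ordinary Poisson summation for even functions.

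However --- and this is the whole point of the theorem --- under the stated hypotheses the series $\sum_k f(k)$, $\sum_k g(k)$ and the values $f(0),g(0)$ may each diverge (typically when $\int_0^\infty f$ diverges), so one is \emph{not} entitled to split the identity in this fashion. Instead I would establish $F_\infty:=\lim_M\big(\sum_{m\le M}f(m)-\int_0^M f\big)=G_\infty$ by a density argument that never separates the convergent discrepancy into divergent halves. For $f$ in a convenient dense class, say even Schwartz functions restricted to $(0,\infty)$, every sum and integral converges absolutely; then $\int_0^\infty f=\tfrac12 g(0)$ and $\int_0^\infty g=\tfrac12 f(0)$ by reciprocity, and the classical Poisson summation formula gives $F_\infty=G_\infty$ directly. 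I would then approximate a general $f$ by such $f_n$ with $f_n\to f$ and $vf_n'\to vf'$ in $L^p$, invoke Hausdorff--Young to obtain the attendant convergence $g_n\to g$ in $L^{p'}$, and pass to the limit in $F_\infty(f_n)=G_\infty(f_n)$.

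The main obstacle will be this final continuity step: proving that the two regularized functionals $f\mapsto F_\infty(f)$ and $f\mapsto G_\infty(f)$ are continuous in the $vf'\in L^p$ topology, i.e. that the Euler--Maclaurin remainders $\int_0^\infty \overline{B}_1(f_n'-f')\,\mathrm{d}v$ and their $g$-analogues tend to $0$, in spite of the merely conditional convergence of the sawtooth's Fourier series. This is precisely where the restriction $1<p\le2$ is indispensable: Hausdorff--Young then controls $g_n-g$ quantitatively and legitimizes the interchange of summation and integration, whereas for $p>2$ the cosine transform need not be a function at all and the entire scheme breaks down.
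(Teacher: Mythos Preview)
The paper does not prove this theorem at all. Theorem~\ref{Guind-pois} is simply quoted in the Preliminaries section as ``a generalization of Poisson's summation formula due to Guinand \cite[Theorem 1]{apg1}'' and is then used as a black box in the proof of Theorem~\ref{raabesum}. There is therefore nothing to compare your argument against within this paper; you are attempting to supply a proof for a cited result whose demonstration lives in Guinand's original article.

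As to your sketch on its own terms: the architecture (Euler--Maclaurin with the sawtooth, reciprocity of the cosine transform, approximation by a dense class where classical Poisson applies, then a continuity argument in the $vf'\in L^p$ topology via Hausdorff--Young) is the right shape and is close in spirit to how Guinand himself proceeds. You have correctly identified the crux: the limiting/continuity step that justifies passing from the dense class to the general $f$ without separating the discrepancy into individually divergent pieces. In your write-up this step is named but not carried out, so as it stands the proposal is an outline rather than a proof. If you want to complete it, you will need a uniform estimate of the form $\big|\int_0^\infty \overline{B}_1(v)\,h'(v)\,\mathrm{d}v\big|\le C\,\|v h'(v)\|_{L^p}$ (or an equivalent bound via partial sums of the sawtooth's Fourier series together with Hausdorff--Young on the $g$-side), which is exactly what the restriction $1<p\le 2$ buys you.
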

\section{Some results on Raabe's integral}\label{secraabe}
The left side of \eqref{raabesumeqn} is an infinite series whose summands are the Raabe integrals defined in \eqref{raabe}. In order to prove Theorem \ref{raabesum} one cannot interchange the order of summation and integration in this series since that leads to a divergent integral. A version of the classical Poisson summation formula \cite[pp.~60-61]{titchfou} states that if $f(t)$ is continuous and of bounded variation on $[0,\infty)$, and if $\int_{0}^{\infty}f(t)\, \mathrm{d}t$ exists, then
\begin{equation*}
\frac{1}{2}f(0)+\sum_{m=1}^{\infty}f(m)=\int_{0}^{\infty}f(t)\, \mathrm{d}t+2\sum_{m=1}^{\infty}\int_{0}^{\infty}f(t)\cos(2\pi mt)\, \mathrm{d}t.
\end{equation*}
The desired series of which we would like to obtain a closed form is the one on the right side of the above equation with $f(t)=\frac{4\pi^2t}{4\pi^2t^2+u^2}$, as can be easily seen by a simple change of variable. Unfortunately this formula is also inapplicable towards proving Theorem \ref{raabesum} because the hypothesis that $\int_{0}^{\infty}f(t)\, \mathrm{d}t$ be convergent is not satisfied. The idea is to use Guinand's generalization of Poisson's summation formula, that is, Theorem \ref{Guind-pois}.

However, before using Theorem \ref{Guind-pois}, it is imperative to obtain some results on Raabe's integral. 
We begin with the following identity which readily depicts the asymptotic behavior of the Raabe integral for positive small values of $y$.
\begin{lemma}\label{lemsix}
For $y>0$ and \textup{Re}$(w)>0$, the following identity holds:
\begin{equation}\label{six}
\mathfrak{R}(y, w)=\sum_{k=0}^{\infty}\frac{(wy)^{2k}}{(2k)!}\left(\psi(2k+1)-\log(wy)\right).
\end{equation}
In particular, as $y\to 0^{+}$,
\begin{equation}\label{rabasysmall}
\mathfrak{R}(y, w)\sim-\g-\log(wy).
\end{equation}
\end{lemma}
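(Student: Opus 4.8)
The plan is to evaluate the integral $\mathfrak{R}(y,w)=\int_{0}^{\infty}\frac{t\cos(yt)}{t^2+w^2}\,\mathrm{d}t$ by first writing $\cos(yt)=\sum_{k=0}^{\infty}\frac{(-1)^k(yt)^{2k}}{(2k)!}$ and reducing the problem to the family of integrals $\int_{0}^{\infty}\frac{t^{2k+1}}{t^2+w^2}\,\mathrm{d}t$. These do not converge individually, so the naive term-by-term approach fails; instead I would regularize by inserting a factor $t^{s-1}$ (or equivalently work with the Mellin transform of $\frac{t\cos(yt)}{t^2+w^2}$ in $y$) and analytically continue. Concretely, for $0<\mathrm{Re}(s)<2$ one has the standard Mellin-type evaluation $\int_{0}^{\infty}\frac{t^{s}}{t^2+w^2}\,\mathrm{d}t=\frac{\pi w^{s-1}}{2\sin(\pi s/2)}$ (valid for $\mathrm{Re}(w)>0$ with the principal branch), and differentiating such relations in $s$ produces the logarithmic terms $\log(wy)$ that appear in \eqref{six}. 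The combinatorial factor $\psi(2k+1)$ will emerge from the expansion of $1/\sin$ or $\Gamma$-quotients near the relevant poles.

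Alternatively, and perhaps more cleanly, I would start from the known closed form of $\mathfrak{R}(y,w)$ in terms of the exponential integral (cited in the excerpt from \cite{htf2}, \cite{grn}): namely $\mathfrak{R}(y,w)=-\tfrac{1}{2}\left(e^{wy}\mathrm{Ei}(-wy)+e^{-wy}\mathrm{Ei}(wy)\right)$ up to the standard normalization, i.e.\ $\mathfrak{R}(y,w)=-\mathrm{Chi}(wy)\cosh(\cdots)$-type expression. Then I would invoke the series expansion $\mathrm{Ei}(x)=\g+\log|x|+\sum_{k\geq 1}\frac{x^k}{k\cdot k!}$, combine the $e^{wy}\mathrm{Ei}(-wy)$ and $e^{-wy}\mathrm{Ei}(wy)$ pieces, and collect terms by powers of $(wy)^{2k}$. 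The odd powers cancel by the symmetry of the combination, and the coefficient of $(wy)^{2k}/(2k)!$ works out to $\psi(2k+1)-\log(wy)$ after using the identity $\psi(2k+1)=-\g+\sum_{j=1}^{2k}\frac{1}{j}=-\g+H_{2k}$ together with the standard partial-fraction rearrangement that arises when multiplying the two power series $e^{\pm wy}$ and $\mathrm{Ei}(\mp wy)$.

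The second assertion, \eqref{rabasysmall}, is then immediate: from \eqref{six}, the $k=0$ term is $\psi(1)-\log(wy)=-\g-\log(wy)$, and every term with $k\geq 1$ contributes $O\left((wy)^{2}\log(wy)\right)\to 0$ as $y\to 0^{+}$ (for fixed $w$ with $\mathrm{Re}(w)>0$). So $\mathfrak{R}(y,w)=-\g-\log(wy)+O(y^2\log y)$, which gives the stated asymptotic.

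The main obstacle I anticipate is bookkeeping rather than conceptual: justifying the interchange of summation needed to pass from the exponential-integral closed form (or from the regularized Mellin representation) to the rearranged double series, and verifying that the coefficients genuinely collapse to $\psi(2k+1)-\log(wy)$ rather than some more complicated harmonic-number combination. This amounts to checking the Cauchy-product identity $\sum_{i+j=2k}\frac{(\pm1)^i}{i!}\cdot\frac{1}{j\cdot j!}\cdot(\text{sign})=\frac{1}{(2k)!}\left(-\g-\psi(2k+1)+\g\right)$-type relation carefully, and confirming absolute convergence of the relevant series on the region $\mathrm{Re}(w)>0$, $y>0$ so that all rearrangements are legitimate. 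Once that is in hand, the rest is routine.
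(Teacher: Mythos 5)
Your proposal is correct in substance, and the two routes you sketch both have counterparts in the paper, which gives two proofs of this lemma. Your first (Mellin/regularization) sketch is essentially the paper's second proof: there one evaluates $\int_0^\infty \frac{\cos(yt)}{w^2+t^2}\,t^{s-1}\,\mathrm{d}t$ from a tabulated ${}_1F_2$ formula and takes a careful limit $s\to 2$, with the $\log(wy)$ and $\psi(2k+1)$ terms emerging exactly where you predicted, namely from the expansions of $\Gamma(2z)$, $\mathrm{cosec}(\pi z)$ and the Pochhammer symbols near the double pole. Your second route diverges from the paper's first proof at one step: after writing $\mathfrak{R}(y,w)=-\tfrac12\bigl(e^{-wy}\overline{\textup{Ei}}(wy)+e^{wy}\textup{Ei}(-wy)\bigr)$, the paper does not multiply out the series for $e^{\pm wy}$ and $\textup{Ei}(\mp wy)$; it instead passes to logarithmic integrals and invokes the Dixon--Ferrar identity $e^{x}\textup{li}(e^{-x})+e^{-x}\textup{li}(e^{x})=\pi^{3/2}K_{/\frac{1}{2}}(x)$ together with the known series for $K_{/\nu}$, so the harmonic-number bookkeeping is outsourced to a known special-function expansion. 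Your direct Cauchy product does work: the odd powers cancel by the symmetry you note, and the ``bookkeeping'' you flag reduces precisely to the classical identity $\sum_{j=1}^{n}(-1)^{j-1}\binom{n}{j}\frac{1}{j}=H_{n}=\psi(n+1)+\gamma$ with $n=2k$, which you would need to state and prove (induction, or integrating $(1-(1-t)^{n})/t$); what this buys you is independence from the Dixon--Ferrar reference, at the cost of an extra combinatorial lemma. Two small points to tighten: the $\textup{Ei}$ closed form is a priori a statement for real $w>0$, so you still need the analytic-continuation remark (both sides of \eqref{six} are analytic in $\mathrm{Re}(w)>0$) to reach the full range claimed; and the signs in your sketched Cauchy-product relation are not yet right as written, so that verification is genuinely to be carried out rather than waved at. Your deduction of \eqref{rabasysmall} from \eqref{six} is the same as the paper's and is fine.
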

\begin{proof}
First let $w>0$. From \cite[p.~428, Formula \textbf{3.723.5}]{grn},
\begin{equation}\label{one}
\mathfrak{R}(y, w)=-\frac{1}{2}\left(e^{-wy}\overline{\textup{Ei}}(wy)+e^{wy}\textup{Ei}(-wy)\right),
\end{equation}
where $\textup{Ei}(x)$ is the exponential integral defined for $x>0$ by \cite[p.~1]{je} $\textup{Ei}(-x)=-\int_{x}^{\infty}e^{-t}/t\, \mathrm{d}t$. Thus the exponential integral function is related to logarithmic integral $\textup{li}(x)=\int_{0}^{x}\frac{\mathrm{d}t}{\log(t)}$ by
\begin{equation}\label{two}
\textup{Ei}(-x)=\textup{li}(e^{-x}).
\end{equation}
Also \cite[p.~3]{je},
\begin{equation}\label{three}
\overline{\textup{Ei}}(x)=\textup{li}(e^{x}).
\end{equation}
Thus from \eqref{one}-\eqref{three}, we see that
\begin{equation}\label{four}
\mathfrak{R}(y, w)=-\frac{1}{2}\left(e^{-wy}\textup{li}\left(e^{wy}\right)+e^{wy}\textup{li}\left(e^{-wy}\right)\right).
\end{equation}
Now Dixon and Ferrar \cite[p.~165, Equation (5.4)]{dixfer3} have proved that
\begin{equation}\label{five}
e^{x}\textup{li}(e^{-x})+e^{-x}\textup{li}(e^x)=\pi^{3/2}K_{/\frac{1}{2}}(x),
\end{equation}
where \cite[Equation (3.12)]{dixfer1}
\begin{equation}\label{klnu}
K_{/\nu}(z)=\frac{1}{\pi}\sum_{k=0}^{\infty}\frac{(z/2)^{2k}}{\G(k+1)\G(k+\nu+1)}\left\{2\log(z/2)-\psi(k+1)-\psi(k+\nu+1)\right\}.
\end{equation}
Now let $\nu=1/2$ in \eqref{klnu}, use \eqref{dup} and then combine the resulting identity with \eqref{four} and \eqref{five} to obtain \eqref{six} for $w>0$. Since both sides of \eqref{six} are analytic for Re$(w)>0$, we obtain \eqref{six} in this region by the principle of analytic continuation. To prove \eqref{rabasysmall}, divide both sides of \eqref{six} by the first term of the right-hand side and note that $\psi(1)=-\g$ as well as $\lim_{y\to 0^{+}}(\psi(2k+1)-\log(wy))/(-\g-\log(wy))=0$ for $k\geq 1$.
\end{proof}

\textbf{Second proof.} For $ 0 < $ Re$(s) < 1+ 2 $Re$(\nu) $,  we have \cite[p.~43, Formula (5.10)]{ober}
\begin{align*}
\int_{0}^\infty  \frac{ \cos( y t  )}{(w^2 + t^2)^{ \nu}} t^{s-1} \mathrm{d}t & = \frac{w^{s- 2 \nu}}{2} B\Big( \frac{s}{2}, \nu - \frac{s}{2} \Big){}_1F_2\left(\frac{s}{2}; 1 - \nu + \frac{s}{2}, \frac{1}{2}; \frac{w^2 y^2}{4}   \right) \\
& \quad+ \frac{\sqrt{\pi}}{2} \Big(\frac{y}{2} \Big)^{ 2 \nu -s} \frac{\Gamma\left(\frac{s}{2}- \nu  \right)}{\Gamma\left(\frac{1}{2} + \nu - \frac{s}{2}  \right) } {}_1F_2\left(\nu; \frac{1}{2} + \nu - \frac{s}{2} , 1 + \nu - \frac{s}{2} ; \frac{w^2 y^2}{4}   \right),
\end{align*}
where $B(z_1,z_2):=\frac{\G(z_1)\G(z_2)}{\G(z_1+z_2)}$ is Euler's beta function. Let $\nu=1$ so that for $0<$Re$(s)<3$, we have \footnote{There is a typo in this formula stated in \cite[p.~43, Formula 5.8]{ober} in that $b^{-z}$ should be replaced by $b^{2-z}$.}
\begin{align}\label{Oberhettinger 5.8p}
\int_{0}^\infty  \frac{ \cos( y t  )}{w^2 + t^2} t^{s-1} \mathrm{d}t & = \frac{\sqrt{\pi}}{2 } \Big( \frac{y}{2}\Big)^{2-s} \frac{\Gamma(\frac{s}{2} - 1 )}{\Gamma( \frac{3}{2} - \frac{s}{2} )} {}_1 F_2\left(1; \frac{3}{2} - \frac{s}{2}, 2 - \frac{s}{2}; \frac{ w^2 y^2}{4} \right)\nonumber\\
&\quad+ \frac{1}{2}\pi w^{s-2}\mathrm{cosec}\Big( \frac{\pi s}{2} \Big) \cosh( w y).
\end{align}
Note that the following expansions are valid as $z\to 0$:
\begin{align}
\Gamma(z) & = \frac{1}{z} - \gamma + \frac{1}{2}\Big( \gamma^2 + \frac{\pi^2}{6} \Big)z + O(z^2),\label{asymptotic of gamma} \\
\mathrm{cosec}(z) & = \frac{1}{z} + \frac{z}{6} + \frac{7}{360} z^3 + O(z^5),  \label{cosec z expansion}\\
(a + z)_n & = (a)_n \Big( 1+ \{ \psi(a + n) - \psi(a) \}z + O(z^2)  \Big),  \label{assymptotic expansion of Pochhammer symbol}
\end{align}
where $(a)_n:=a(a+1)\cdots(a+n-1)$ is the Pochhammer symbol. Note that \eqref{assymptotic expansion of Pochhammer symbol} implies
\begin{align}\label{asymptotic}
\frac{1}{ (1-2z)_{2k} } = \frac{1}{(2k)!} \left( 1 + 2z \Big( \psi(2k+1) -\psi(1) \Big) + O(z^2) \right).
\end{align}
as $z\to 0$. Since ${}_1 F_2\left(1; \frac{1}{2} - z, 1 -z ; \frac{ w^2 y^2}{4} \right)=\sum_{ k=0}^{\infty} \frac{1}{(1-2z)_{2k}} (wy)^{2k}$ is uniformly convergent on $|z|\leq r_1<1$, employing \eqref{asymptotic} leads to 
\begin{align}\label{Final expansion of 1F2}
{}_1 F_2\left(1; \frac{1}{2} - z, 1 -z ; \frac{ w^2 y^2}{4} \right) = \sum_{ k=0}^{\infty} \frac{(wy)^{2k}}{(2k)!} \left( 1 + 2z \Big( \psi(2k+1) -\psi(1) \Big) + O(z^2) \right). 
\end{align}
Letting $s=2z+2$ in the second step below, and then invoking \eqref{asymptotic of gamma}, \eqref{cosec z expansion} and \eqref{Final expansion of 1F2}, we see that
{\allowdisplaybreaks\begin{align*}
&\int_{0}^\infty  \frac{ t\cos( y t  )}{w^2 + t^2}\mathrm{d}t\nonumber\\
&= \lim_{s\to 2}\Bigg\{\frac{\sqrt{\pi}}{2 } \Big( \frac{y}{2}\Big)^{2-s} \frac{\Gamma(\frac{s}{2} - 1 )}{\Gamma( \frac{3}{2} - \frac{s}{2} )} {}_1 F_2\left(1; \frac{3}{2} - \frac{s}{2}, 2 - \frac{s}{2}; \frac{ w^2 y^2}{4} \right)+\frac{1}{2}\pi w^{s-2}\mathrm{cosec}\Big( \frac{\pi s}{2} \Big) \cosh( w y)\Bigg\}\nonumber\\
&=\lim_{z\to 0}\Bigg\{e^{-2 z \log( y)}\Gamma(2z) \cos
(\pi z)  {}_1 F_2\left(1; \frac{1}{2} - z, 1 -z ; \frac{ w^2 y^2}{4} \right)-\frac{\pi}{2} e^{2 z \log(w)} \mathrm{cosec}( \pi z )   \cosh( w y) \Bigg\}\nonumber\\
&= \lim_{z\to 0} \Bigg[ \Big( 1 - 2 z \log(y) + 2 z^2 \log^2(y ) + \cdots \Big)\Big( \frac{1}{2z} - \gamma + \Big( \gamma^2 + \frac{\pi^2}{6} \Big)z + \cdots  \Big) \nonumber \\
&\qquad\qquad \times \Big( 1- \frac{(\pi z)^2}{2!} + \cdots \Big) \left( \sum_{ k=0}^{\infty} \frac{(wy)^{2k}}{(2k)!} \left( 1 + 2z \Big( \psi(2k+1) -\psi(1) \Big) + \cdots \right) \right)  \nonumber \\
&\qquad\qquad- \Big( \frac{1}{2z} + \log(w) + \Big( \frac{\pi^2  }{12} +  \log^2(w) \Big) z + \cdots   \Big) \cosh(wy)\Bigg]\nonumber\\
&=\sum_{k=0}^{\infty} \frac{(w y)^{2k}}{(2k)!} \Big(\psi(2k+1) - \log(w y) \Big).
\end{align*}}
\qed

Next, the asymptotic expansion of the Raabe integral for large values of $y$ is obtained.
\begin{lemma}
Let \textup{Re}$(w)>0$. As $y\to\infty$,
\begin{equation}\label{rabasylarge}
\mathfrak{R}(y, w)\sim-\sum_{n=1}^{\infty}\frac{(2n-1)!}{w^{2n}y^{2n}}.
\end{equation}
\end{lemma}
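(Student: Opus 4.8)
The plan is to establish \eqref{rabasylarge} by repeated integration by parts in the defining integral \eqref{raabe}. Set $f(t):=\frac{t}{t^{2}+w^{2}}$. For $w>0$ — and, as noted at the end, for any $w$ with $\textup{Re}(w)>0$ — the function $f$ is smooth on $[0,\infty)$, tends to $0$ as $t\to\infty$, and near the origin has the convergent expansion $f(t)=\sum_{k\geq 0}\frac{(-1)^{k}}{w^{2k+2}}t^{2k+1}$; in particular $f^{(2k)}(0)=0$ for every $k\geq 0$, while $f^{(2n-1)}(0)=\frac{(-1)^{n-1}(2n-1)!}{w^{2n}}$.

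First I would integrate by parts once in \eqref{raabe}: since $f(0)=0$ and $f(t)\to 0$, the boundary terms vanish and $\mathfrak{R}(y,w)=-\frac{1}{y}\int_{0}^{\infty}f'(t)\sin(yt)\,\mathrm{d}t$, which is absolutely convergent because $f'(t)=O(t^{-2})$. Iterating, each further pair of integrations by parts peels off one boundary term, built from an odd-order derivative of $f$ at $0$ — the even-order derivatives vanish there, which is exactly why the $\sin(yt)$-boundary terms never survive — and an easy induction gives, for every $N\geq 1$,
\begin{equation*}
\mathfrak{R}(y,w)=\sum_{n=1}^{N}\frac{(-1)^{n}f^{(2n-1)}(0)}{y^{2n}}+\frac{(-1)^{N}}{y^{2N}}\int_{0}^{\infty}f^{(2N)}(t)\cos(yt)\,\mathrm{d}t.
\end{equation*}
The remainder integral is $o(y^{-2N})$ by the Riemann--Lebesgue lemma (note $f^{(2N)}\in L^{1}(0,\infty)$), and even $O(y^{-2N-2})$ after one more integration by parts, so the right-hand side is a genuine asymptotic expansion. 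Substituting $f^{(2n-1)}(0)=(-1)^{n-1}(2n-1)!/w^{2n}$ and using $(-1)^{n}(-1)^{n-1}=-1$ collapses the signs and yields $\mathfrak{R}(y,w)\sim-\sum_{n\geq 1}\frac{(2n-1)!}{w^{2n}y^{2n}}$, which is \eqref{rabasylarge}.

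A shorter route reuses the machinery already in place: by \eqref{one}, $\mathfrak{R}(y,w)=-\tfrac{1}{2}\left(e^{-wy}\,\overline{\textup{Ei}}(wy)+e^{wy}\,\textup{Ei}(-wy)\right)$, and the classical large-argument expansions $\overline{\textup{Ei}}(x)\sim e^{x}\sum_{k\geq 0}\frac{k!}{x^{k+1}}$ and $\textup{Ei}(-x)\sim-e^{-x}\sum_{k\geq 0}\frac{(-1)^{k}k!}{x^{k+1}}$ — each obtained by repeatedly integrating by parts in $\int_{x}^{\infty}e^{-t}t^{-1}\,\mathrm{d}t$ — combine to give $e^{-wy}\,\overline{\textup{Ei}}(wy)+e^{wy}\,\textup{Ei}(-wy)\sim\sum_{k\geq 0}\frac{k!\,(1-(-1)^{k})}{(wy)^{k+1}}$, where the even-index terms cancel and the odd-index ones double; writing $k=2n-1$ and multiplying by $-\tfrac{1}{2}$ again recovers \eqref{rabasylarge}.

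The only care needed is minor bookkeeping. The integral in \eqref{raabe} is merely conditionally convergent, so the first integration by parts should be performed on $[0,R]$ with $R\to\infty$ taken afterwards; from the second step onward every integral in sight is absolutely convergent. The computation above is written for $w>0$, but it goes through verbatim for complex $w$ with $\textup{Re}(w)>0$, since $t^{2}+w^{2}$ then still has no zero on $[0,\infty)$ and $f$ remains smooth there with the same Taylor coefficients. I expect the only point deserving real attention — and it is still routine — to be confirming that the tail after $N$ terms is $o(y^{-2N})$, that is, that the formal series delivered by integration by parts is a legitimate Poincar\'{e} asymptotic expansion.
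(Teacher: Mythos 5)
Your argument is correct, and it reaches \eqref{rabasylarge} by a genuinely different route from the paper. The paper invokes the Fourier-transform analogue of Watson's lemma (the stationary-phase expansion \eqref{olvthm}, cited from Olver and Dai--Naylor): it expands $h(t)=t/(t^2+w^2)$ at the origin, applies \eqref{olvthm} once with $s=y$ and once with $s=-y$, and averages, so that the surviving terms are exactly the odd-index ones $b_{2n-1}(2n-1)!\cos(n\pi)y^{-2n}=-(2n-1)!w^{-2n}y^{-2n}$. Your repeated integration by parts is, in effect, a self-contained proof of that cited lemma in this special case: the vanishing of the even-order derivatives $f^{(2k)}(0)$ plays precisely the role of the cancellation between the $s=y$ and $s=-y$ contributions in the paper, and your bookkeeping of $f^{(2n-1)}(0)=(-1)^{n-1}(2n-1)!/w^{2n}$ together with the sign $(-1)^n$ from the parts formula gives the same coefficients. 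What your approach buys is elementarity and an explicit, easily verified remainder bound ($f^{(2N)}\in L^1(0,\infty)$ plus Riemann--Lebesgue, or one more integration by parts for $O(y^{-2N-2})$); what the paper's buys is brevity, since the validity of the Poincar\'e expansion is delegated to the cited references. Your secondary route through \eqref{one} and the classical expansions of the exponential integral is also sound for the values of $wy$ at hand and matches the spirit of the paper's Lemma \ref{lemsix}, though the principal-value function $\overline{\textup{Ei}}$ is most cleanly handled for positive argument, so it is right that you keep the integration-by-parts argument as the primary one for general $\textup{Re}(w)>0$.
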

\begin{proof}
Here we use the analogue of Watson's lemma for Laplace transform in the setting of Fourier transforms \cite{olver1974}, \cite[Equations (1.3), (1.4)]{dainaylor}. It says that if the form of $h(t)$ near $t=0$ is given as a series of algebraic powers, that is,
\begin{equation}\label{algh}
h(t)\sim\sum_{n=0}^{\infty}b_nt^{n+\l-1}
\end{equation} 
as $t\to 0^{+}$, then under certain restrictions on $h$ (see \cite{olver1974}, \cite[Section 2]{dainaylor} for the same),
\begin{align}\label{olvthm}
\int_{0}^{\infty}e^{ist}h(t)\, \mathrm{d}t\sim \sum_{n=0}^{\infty}b_ne^{i(n+\l)\pi/2}\G(n+\l)s^{-n-\l}
\end{align}
as $s\to\infty$.
Let $h(t)=t/(t^2+w^2)$. Then it is easy to see that $h(t)$ satisfies \eqref{algh} with $\l=1$ and
\begin{equation*}
b_n=\begin{cases}
\hspace{4mm} 0,\quad\qquad\qquad \text{for}\hspace{1mm}n\hspace{1mm}\text{even},\\
(-1)^{\frac{n-1}{2}}w^{-n-1},  \text{for}\hspace{1mm}n\hspace{1mm}\text{odd}.
\end{cases}
\end{equation*}
Now invoking \eqref{olvthm} twice, once with $s=y$ and then with $s=-y$, and then adding the resulting two identities, we arrive at
{\allowdisplaybreaks\begin{align*}
\mathfrak{R}(y, w)&\sim\frac{1}{2}\left(\sum_{n=0}^{\infty}b_ne^{i(n+1)\pi/2}\G(n+1)y^{-n-1}+\sum_{n=0}^{\infty}b_ne^{i(n+1)\pi/2}\G(n+1)(-y)^{-n-1}\right)\nonumber\\
&=\sum_{n=1}^{\infty}\frac{b_{2n-1}(2n-1)!\cos(n\pi)}{y^{2n}}=-\sum_{n=1}^{\infty}\frac{(2n-1)!}{w^{2n}y^{2n}}.
\end{align*}}
This completes the proof.
\end{proof}
We now give two proofs of a crucial lemma which is interesting in itself, and is employed in the proof of Theorem \ref{raabesum}. Each has its advantage over the other in that one is instructive and the other employs known identities on special functions. We begin with the instructive one first.
\begin{lemma}\label{zerodoubleintegral}
For $y>0$ and \textup{Re}$(u)>0$, 
\begin{align}\label{zerodoubleintegraleqn}
\int_0^\infty\int_0^\infty\frac{t\cos(2\pi yt)}{u^2+t^2} \mathrm{d}t  \mathrm{d}y=0.
\end{align}
\end{lemma}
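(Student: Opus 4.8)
The plan is to reduce the double integral to a single integral of Raabe's integral over its first argument, and then to evaluate that single integral by a two-stage limiting process that ends with an appeal to the Riemann--Lebesgue lemma. By the definition \eqref{raabe}, the inner integral is exactly $\mathfrak{R}(2\pi y,u)$, so the assertion is equivalent to $\int_0^\infty\mathfrak{R}(2\pi y,u)\,\mathrm{d}y=0$. That this improper integral converges is immediate from the two asymptotic estimates already in hand: $\mathfrak{R}(v,u)\sim-\g-\log(uv)$ as $v\to0^+$ by \eqref{rabasysmall}, which is integrable near the origin, and $\mathfrak{R}(v,u)=O(v^{-2})$ as $v\to\infty$ by \eqref{rabasylarge}. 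All of this is valid for $\textup{Re}(u)>0$, so no separate analytic-continuation step in $u$ will be needed in this proof.

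First I would fix $M>0$ and $R>0$ and interchange the order of integration over the bounded rectangle $(0,M)\times(0,R)$, which is legitimate by Fubini's theorem because $t\cos(2\pi yt)/(u^2+t^2)$ is continuous and bounded there. Using $\int_0^M\cos(2\pi yt)\,\mathrm{d}y=\sin(2\pi Mt)/(2\pi t)$, this gives
\begin{equation*}
\int_0^M\!\int_0^R\frac{t\cos(2\pi yt)}{u^2+t^2}\,\mathrm{d}t\,\mathrm{d}y=\frac{1}{2\pi}\int_0^R\frac{\sin(2\pi Mt)}{u^2+t^2}\,\mathrm{d}t.
\end{equation*}
Letting $R\to\infty$, the right-hand side tends to $\frac{1}{2\pi}\int_0^\infty\sin(2\pi Mt)/(u^2+t^2)\,\mathrm{d}t$ by absolute convergence, since $t\mapsto1/(u^2+t^2)$ lies in $L^1(0,\infty)$ when $\textup{Re}(u)>0$; and the left-hand side tends to $\int_0^M\mathfrak{R}(2\pi y,u)\,\mathrm{d}y$ by dominated convergence. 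To set up the latter I would split $\int_0^R=\int_0^1+\int_1^R$, write $t/(u^2+t^2)=1/t-u^2/\bigl(t(u^2+t^2)\bigr)$ on the second piece, and express $\int_1^R\cos(2\pi yt)/t\,\mathrm{d}t$ through the cosine-integral function; this shows that the truncated inner integrals $\int_0^R t\cos(2\pi yt)/(u^2+t^2)\,\mathrm{d}t$ are bounded on $(0,M)$, uniformly in $R\ge1$, by a constant multiple of $1+|\log y|$, which belongs to $L^1(0,M)$. Consequently $\int_0^M\mathfrak{R}(2\pi y,u)\,\mathrm{d}y=\frac{1}{2\pi}\int_0^\infty\sin(2\pi Mt)/(u^2+t^2)\,\mathrm{d}t$.

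Finally I would let $M\to\infty$: the left-hand side tends to $\int_0^\infty\mathfrak{R}(2\pi y,u)\,\mathrm{d}y$ by the convergence noted at the outset, while the right-hand side tends to $0$ by the Riemann--Lebesgue lemma applied to the integrable function $t\mapsto1/(u^2+t^2)$. This yields the claimed identity. The step demanding the most care is the dominated-convergence argument as $R\to\infty$, because of the logarithmic blow-up of the inner integral as $y\to0^+$; everything else is routine. A second, ``special-functions'' proof bypasses this point: starting from the closed form $\mathfrak{R}(v,u)=-\tfrac12\bigl(e^{-uv}\overline{\textup{Ei}}(uv)+e^{uv}\textup{Ei}(-uv)\bigr)$ established in the proof of Lemma~\ref{lemsix}, one verifies directly that $\tfrac{1}{2u}\bigl(e^{-uv}\overline{\textup{Ei}}(uv)-e^{uv}\textup{Ei}(-uv)\bigr)$ is an antiderivative of $\mathfrak{R}(v,u)$ on $(0,\infty)$ and then checks that both of its limits, as $v\to0^+$ and as $v\to\infty$, vanish (the first because $e^{-uv}-e^{uv}=O(v)$ offsets the logarithmic singularities of $\overline{\textup{Ei}}$ and $\textup{Ei}$ at the origin, the second because $e^{-uv}\overline{\textup{Ei}}(uv)$ and $e^{uv}\textup{Ei}(-uv)$ both tend to $0$), first for $u>0$ and then for $\textup{Re}(u)>0$ by analytic continuation.
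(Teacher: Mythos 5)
Your proof is correct, but both of your arguments take routes genuinely different from the paper's. The paper likewise reduces the claim to $\int_0^\infty\mathfrak{R}(y,w)\,\mathrm{d}y=0$, but instead of truncating the outer integral it inserts the smooth regularizer $e^{-y/N}$, so that Fubini applies on the whole quadrant and the regularized integral evaluates in closed form to $N\log(Nw)/(N^2w^2-1)$, which tends to $0$ as $N\to\infty$; dominated convergence (resting on \eqref{rabasysmall} and \eqref{rabasylarge}, exactly the estimates you invoke) then finishes the argument for real $u>0$, and the case $\mathrm{Re}(u)>0$ is obtained by analytic continuation. Your double-truncation version replaces the closed-form evaluation by the Dirichlet-type integral $\frac{1}{2\pi}\int_0^\infty\sin(2\pi Mt)/(u^2+t^2)\,\mathrm{d}t$ together with an appeal to Riemann--Lebesgue; the price is the more delicate dominated-convergence step in $R$, which you handle correctly with the uniform $O(1+|\log y|)$ bound coming from the cosine integral, and the reward is that the argument works directly for every $u$ with $\mathrm{Re}(u)>0$, with no separate continuation step. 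Your second, antiderivative-based argument is also sound: differentiating $\frac{1}{2u}\bigl(e^{-uv}\overline{\textup{Ei}}(uv)-e^{uv}\textup{Ei}(-uv)\bigr)$ does recover $\mathfrak{R}(v,u)$ via \eqref{one}, and both boundary limits vanish for the reasons you give, making this arguably the shortest route; it has no counterpart in the paper, whose second proof instead evaluates a Mellin transform of a Meijer $G$-function at $s=1/2$.
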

\begin{proof}
It is important to note that the above double integral does not converge absolutely and hence Fubini's theorem is inapplicable, that is, we cannot change the order of integration. First assume $u>0$. We prove \eqref{zerodoubleintegraleqn} in the form $\int_0^\infty\mathfrak{R}(y,w)\,\mathrm{d}y=0$, where $w = 2 \pi u$ and $\mathfrak{R}(y,w)$ is defined in \eqref{raabe}. First of all, \eqref{rabasysmall} and \eqref{rabasylarge} imply that this integral exists. Now let $N$ be a positive integer and consider the integral
\begin{align}\label{befleb}
I(w, N) & := \int_0^\infty e^{-\frac{y}{N}}\mathfrak{R}(y,w)    \,\mathrm{d}y.
\end{align}
With the help of Fubini's theorem, one can write
 {\allowdisplaybreaks\begin{align}\label{iwnano}
 I(w,N)  & = \int_0^\infty \frac{t}{w^2+t^2} \mathrm{d}t  \int_0^\infty  e^{-\frac{y}{N}} \cos( y t)     \,\mathrm{d}y  \nonumber\\
        & = \frac{1}{N}\int_0^\infty \frac{t}{(w^2+t^2)\left(\tfrac{1}{N^2}+t^2\right)}\,\mathrm{d}t \nonumber\\
        & = \frac{N}{1 - N^2w^2 } \int_0^\infty \left( \frac{t}{w^2+t^2} -\frac{t}{\tfrac{1}{N^2}+t^2}  \right) \mathrm{d}t \nonumber\\
        & =   \frac{N}{1 - N^2w^2 }\lim_{A \rightarrow \infty}\left\{ \int_{0}^A \frac{t}{w^2+t^2} \mathrm{d}t - \int_{0}^A \frac{t}{\tfrac{1}{N^2}+t^2} \mathrm{d}t \right\} \nonumber \\
        & =  \frac{N}{2(1 - N^2w^2) }\lim_{A \rightarrow \infty} \left\{ \Big[\log(w^2 + t^2) \Big]_{0}^A -\Big[\log(\tfrac{1}{N^2} + t^2) \Big]_{0}^A   \right\} \nonumber\\
        & = \frac{N}{2(1 - N^2w^2) }\lim_{A \rightarrow \infty} \left\{ \log\Big(\frac{w^2 + A^2}{w^2}\Big)   - \log\left(1+N^2A^2\right) \right\} \nonumber\\
        & = \frac{N}{2(1 - N^2w^2) } \lim_{A \rightarrow \infty} \left\{ \log\left(\frac{w^2 + A^2}{\tfrac{1}{N^2} + A^2}\right)   + 2 \log\left(\frac{1}{ Nw}\right) \right\} \nonumber\\
        & =  \frac{N \log\left(Nw\right) }{N^2w^2 -1 },
\end{align}}%
since $\lim_{A \rightarrow \infty}  \log\left(\frac{w^2 + A^2}{\frac{1}{N^2} + A^2}\right)=0$. Now note that $e^{-\frac{y}{N}}\mathfrak{R}(y,w)\to\mathfrak{R}(y,w)$ pointwise as $N\to\infty$ and $\left|e^{-\frac{y}{N}}\mathfrak{R}(y,w)\right|\leq\mathfrak{R}(y,w)$. Also, as mentioned before, the fact that $\mathfrak{R}(y,w)$ is integrable, as a function of $y$ from $0$ to $\infty$, is clear from \eqref{rabasysmall} and \eqref{rabasylarge}. Hence letting $N\to\infty$ on both sides of \eqref{befleb} and employing Lebesgue's dominated convergence theorem, we see that
\begin{align*}
\lim_{N\to\infty}I(w,N)&=\int_0^\infty\lim_{N\to\infty} e^{-\frac{y}{N}}\mathfrak{R}(y,w)    \,\mathrm{d}y\\
&=\int_0^\infty\mathfrak{R}(y,w)    \,\mathrm{d}y,
\end{align*}
whereas \eqref{iwnano} implies that $\lim_{N\to\infty}I(w,N)=0$. Together, these complete the proof of \eqref{zerodoubleintegraleqn} for $u>0$. Note that the left-hand side of \eqref{zerodoubleintegraleqn} is analytic for Re$(u)>0$ as can be seen using Theorem 2.3 from \cite[p.~30]{temme}. Hence by analytic continuation, the result holds for Re$(u)>0$.
\end{proof}
The second proof of \eqref{zerodoubleintegraleqn} is now given.\\

\noindent
\textbf{Second proof.} Let $y>0$ and $u\in\mathbb{C}$ with $\mathrm{Re}(u) >0$. From \eqref{Oberhettinger 5.8p}
\begin{align}\label{Oberhettinger 5.8}
\int_{0}^\infty  \frac{ \cos( y t  )}{w^2 + t^2} t^{s-1} \mathrm{d}t&=\frac{\sqrt{\pi}}{2} w^{s-2} G_{1, 3}^{2, 1} \left( \frac{ w^2 y^2}{4} \Big| \begin{matrix}
 &1 - \frac{s}{2} \\
& 0, 1 -\frac{s}{2}, \frac{1}{2}
\end{matrix}  \right),
\end{align}
where $G_{p,q}^{\,m,n} \!\left(  z  \Big| \,\begin{matrix} a_1,.., a_p \\ b_1,.., b_q \end{matrix} \;  \right)$ is the Meijer $G$-function defined by the line integral \cite[p.~415]{olver-2010a}
\begin{align}\label{defnmg}
G_{p,q}^{\,m,n} \!\left(  z  \Big| \,\begin{matrix} a_1,.., a_p \\ b_1,.., b_q \end{matrix} \;  \right) := \frac{1}{2 \pi i} \int_L \frac{\prod_{j=1}^m \Gamma(b_j - s) \prod_{j=1}^n \Gamma(1 - a_j +s) z^s  } {\prod_{j=m+1}^q \Gamma(1 - b_j + s) \prod_{j=n+1}^p \Gamma(a_j - s)}\mathrm{d}s
\end{align}
for $z\neq 0$, and $m, n, p, q\in\mathbb{Z}$ with $0\leq m \leq q \,,\, 0\leq n \leq p$ and
$ a_i - b_j \not\in \mathbb{N} $ for $1\leq i \leq p\,\,, 1\leq j \leq q$, where the path of integration $L$ separates the poles of the factors $\G(b_{j}-s)$ from those of the factors $\G(1-a_{j}+s)$. Note that in \eqref{Oberhettinger 5.8}, we employed the following theorem of Slater \cite[p.~415, Equation 16.17.2]{olver-2010a} which gives connection between Meijer $G$-function and the generalized hypergeometric function $ {}_p F_{q-1} $:
Assume $p \leq q $ and $b_j - b_k \not\in \mathbb{Z}$ for $ j \neq k, 1 \leq j, k \leq n $. Then 
\begin{align*}
G_{p,q}^{\,m,n} \!\left(  z  \Big| \,\begin{matrix} a_1,.., a_p \\ b_1,.., b_q \end{matrix} \;  \right) = \sum_{k=1}^{m} A_{p,q,k}^{m,n}(z) {}_p F_{q-1} \left( (-1)^{p-m-n} z \Big| \begin{matrix}
1+b_k - a_1,\cdots, 1+ b_k - a_p \\
1+ b_k - b_1, \cdots * \cdots, 1 + b_k - b_q 
\end{matrix} \right),
\end{align*}
where $*$ indicates that the entry $1 + b_k - b_k$ is omitted and 
$$
A_{p,q,k}^{m,n}(z) := z^{b_k}  \prod_{ l=1,  l\neq k}^{m} \Gamma(b_l - b_k ) \prod_{l=1}^n   \Gamma( 1 + b_k -a_l ) \left( \prod_{l=m}^{q-1} \Gamma(1 + b_k - b_{l+1}) \prod_{l=n}^{p-1} \Gamma(a_{l+1} - b_k)  \right)^{-1}.
$$
Note that the validity of \eqref{Oberhettinger 5.8} for $s=2$ is to be seen by taking the limit of expression on the left side in that last step as $s\to 2$ since both $\G(\frac{s}{2}-1)$ and $\textup{cosec}\left(\frac{\pi s}{2}\right)$ have simple pole at $s=2$. Thus
\begin{align*}
\int_{0}^\infty  \frac{t \cos( y t  )}{w^2 + t^2} \mathrm{d}t & = \frac{\sqrt{\pi}}{2}  G_{1, 3}^{2, 1} \left( \frac{ w^2 y^2}{4} \Big| \begin{matrix}
 &0 \\
& 0, 0, \frac{1}{2}
\end{matrix}  \right),
\end{align*}
and so
\begin{align}\label{doubleG}
\int_{0}^{\infty} \int_{0}^{\infty} \frac{t \cos(y t)}{w^2 + t^2 } \mathrm{d}t \mathrm{d}y & = \int_{0}^{\infty}  \frac{\sqrt{\pi}}{2}  G_{1, 3}^{2, 1} \left( \frac{ w^2 y^2}{4} \Big| \begin{matrix}
 &0 \\
& 0, 0, \frac{1}{2}
\end{matrix}  \right) \mathrm{d}y \nonumber \\
& = \frac{\sqrt{\pi}}{4} \int_{0}^{\infty}    G_{1, 3}^{2, 1} \left( \frac{ w^2 Y}{4} \Big| \begin{matrix}
 &0 \\
& 0, 0, \frac{1}{2}
\end{matrix}  \right) \frac{1}{\sqrt{Y}} \mathrm{d}Y .
\end{align}
Since \eqref{defnmg} implies
\begin{align*}
\int_{0}^{\infty} G_{p,q}^{\,m,n} \!\left( \Big. \eta x  \Big| \,\begin{matrix} a_1,.., a_p \\ b_1,.., b_q \end{matrix} \;  \right) x^{s-1} \mathrm{d}x = \frac{\eta^{-s} \prod_{j=1}^m \Gamma(b_j + s) \prod_{j=1}^n \Gamma(1 - a_j - s)  } {\prod_{j=m+1}^q \Gamma(1 - b_j - s) \prod_{j=n+1}^p \Gamma(a_j + s)},
\end{align*}
letting $m =2, n=1, p=1, q=3$ and $a_1=b_1=b_2=0, b_3= \frac{1}{2}$, $\eta = \frac{w^2}{4} $ and $x = Y$  in the above formula implies
\begin{align*}
\int_{0}^{\infty}    G_{1, 3}^{2, 1} \left( \frac{ w^2 Y}{4} \Big| \begin{matrix}
 &0 \\
& 0, 0, \frac{1}{2}
\end{matrix}  \right) Y^{s-1} \mathrm{d}Y =\frac{4^s \Gamma(s)\Gamma(s)\Gamma(1-s) }{w^{2s}\Gamma\Big(1-\frac{1}{2} -s \Big)}.
\end{align*}
Now let $s=1/2$ in the above equation and substitute the resultant in \eqref{doubleG} to arrive at \eqref{zerodoubleintegraleqn} since the Gamma function in the denominator on the right side has a pole at $s=1/2$ whereas the ones in the numerator are well-defined. This completes the proof.
\qed

\begin{proof}[Theorem \textup{\ref{raabesum}}][]
Let 
\begin{align}\label{gex}
g(x) = \int_{0}^\infty \frac{ v \cos( 2 \pi x v)}{ u^2 + v^2} \mathrm{d}v.  
\end{align}
That the above integral exists is clear from the fact that $f(v)=\frac{v}{2(u^2+v^2)}$ satisfies the hypotheses of Theorem \ref{Guind-pois}. To say that the two limits in Theorem \ref{Guind-pois} are equal implies, in particular, that they exist. The fact that $\int_{0}^{\infty}g(x)\, dx$ exists can be seen, in particular, from Lemma \ref{zerodoubleintegral}. Together, we conclude that $\sum_{m=1}^\infty g(m)$ is convergent. Employing Theorem \ref{Guind-pois} with $f(v)=\frac{v}{2(u^2+v^2)}$ and $g(x)$ as in \eqref{gex} and invoking Lemma \ref{zerodoubleintegral}, we see that
{\allowdisplaybreaks\begin{align}\label{usegui}
\sum_{m=1}^\infty g(m)
&=\frac{1}{2}\lim_{M \rightarrow\infty}\left(\sum_{n=1}^M\frac{n}{u^2 + n^2}-\int_0^M \frac{v}{u^2+v^2}\, dv\right) \nonumber\\
&=\frac{1}{2}\lim_{M \rightarrow\infty}\left\{\left(\sum_{n=1}^M\frac{n}{u^2 + n^2}-\log M\right)+\left(\log M-\int_0^M \frac{v}{u^2+v^2}\, dv\right)\right\} \nonumber\\
&=\frac{1}{2}\left[-\frac{1}{2}(\psi(iu)+\psi(-iu))\right]+\frac{1}{2}\lim_{M\rightarrow\infty}\left(\log M-\int_0^M \frac{v}{u^2+v^2}\, dv\right) \nonumber\\
&=\frac{1}{2}\left[-\frac{1}{2}(\psi(iu)+\psi(-iu))\right]+\frac{1}{2}\lim_{M\rightarrow\infty}\left(\log M-\frac{1}{2}\left(\log(u^2+M^2)-\log u^2\right)\right) \nonumber\\
&=\frac{1}{2}\left[-\frac{1}{2}(\psi(iu)+\psi(-iu))\right]+\frac{1}{2}\lim_{M\rightarrow\infty}\left(\log \frac{M}{\sqrt{u^2+M^2}}+\log u\right)\nonumber\\
&=\frac{1}{2}\left(\log u-\frac{1}{2}(\psi(iu)+\psi(-iu))\right),
\end{align}}%
where in the third step, we used \cite[Equation (3.8)]{dixitlap}. Theorem \ref{raabesum} now follows from \eqref{usegui} and by employing the change of variable $v=t/(2\pi m)$ and replacing $x$ by $m$ and $u$ by $u/(2\pi)$.
\end{proof}
Finally, Theorem \ref{raabesum} and Lemma \ref{lemsix} together give following beautiful closed-form evaluation of a double sum. We record it as a separate theorem for its possible applicability in other studies.
\begin{theorem}
For $u>0$,
\begin{equation*}
\sum_{m=1}^{\infty}\sum_{k=0}^{\infty}\frac{(mu)^{2k}}{(2k)!}\left(\psi(2k+1)-\log(mu)\right)=\frac{1}{2}\left\{\log\left(\frac{u}{2\pi}\right)-\frac{1}{2}\left(\psi\left(\frac{iu}{2\pi}\right)+\psi\left(\frac{-iu}{2\pi}\right)\right)\right\}.
\end{equation*}
\end{theorem}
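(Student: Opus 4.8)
The plan is to observe that the inner sum over $k$ is nothing but a Raabe integral in disguise, so that the whole double sum reduces to the series evaluated in Theorem \ref{raabesum}. No new analysis is needed beyond a scaling substitution.

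First I would fix $m\geq 1$ and apply the identity \eqref{six} of Lemma \ref{lemsix} with $y=mu>0$ and $w=1$. This shows that the inner series converges and that
\begin{equation*}
\sum_{k=0}^{\infty}\frac{(mu)^{2k}}{(2k)!}\left(\psi(2k+1)-\log(mu)\right)=\mathfrak{R}(mu,1)=\int_{0}^{\infty}\frac{t\cos(mut)}{t^2+1}\,\mathrm{d}t .
\end{equation*}
Hence the left-hand side of the theorem, read as an iterated sum (inner over $k$, then outer over $m$), equals $\sum_{m=1}^{\infty}\mathfrak{R}(mu,1)$.

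Next I would perform, in each summand, the change of variable $t\mapsto t/(mu)$ in the integral $\mathfrak{R}(mu,1)=\int_{0}^{\infty}\frac{t\cos(mut)}{t^2+1}\,\mathrm{d}t$, which yields
\begin{equation*}
\mathfrak{R}(mu,1)=\int_{0}^{\infty}\frac{t\cos(t)}{t^2+m^2u^2}\,\mathrm{d}t .
\end{equation*}
Summing over $m$ and invoking Theorem \ref{raabesum} (applicable since $u>0$, so $\textup{Re}(u)>0$) then gives
\begin{equation*}
\sum_{m=1}^{\infty}\mathfrak{R}(mu,1)=\sum_{m=1}^{\infty}\int_{0}^{\infty}\frac{t\cos(t)}{t^2+m^2u^2}\,\mathrm{d}t=\frac{1}{2}\left\{\log\left(\frac{u}{2\pi}\right)-\frac{1}{2}\left(\psi\left(\frac{iu}{2\pi}\right)+\psi\left(\frac{-iu}{2\pi}\right)\right)\right\},
\end{equation*}
which is precisely the asserted identity.

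There is no real obstacle here: the only point worth a remark is that the double sum is to be interpreted as an iterated sum, so that no interchange of the $k$- and $m$-summations is required — the $k$-sum is already in closed form by Lemma \ref{lemsix}, and the convergence of the remaining $m$-series is exactly the content (and a byproduct of the proof) of Theorem \ref{raabesum}. Thus the ``work'' is entirely front-loaded into the two earlier results, and the present statement is simply their combination after the elementary rescaling $t\mapsto t/(mu)$.
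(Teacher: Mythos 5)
Your argument is correct and is precisely the paper's intended proof: the authors state the result as an immediate consequence of Theorem \ref{raabesum} combined with Lemma \ref{lemsix}, which is exactly your combination (your rescaling $t\mapsto t/(mu)$ is equivalent to reading Lemma \ref{lemsix} with $y=1$, $w=mu$ directly). Your remark that the double sum must be read as an iterated sum, with no interchange of the $k$- and $m$-summations needed, matches the paper's own caveat that the order of summation cannot be interchanged.
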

\begin{remark}
A mere look at the double series on the left side above indicates that one cannot interchange the order of the double sum. This makes its closed-form evaluation all the more interesting.
\end{remark}

\section{Proof of the formula for Hurwitz zeta function at rational arguments}\label{hgNby2}
We begin with a lemma which gives inverse Mellin transform of $\G(s)/\tan\left(\frac{\pi s}{2}\right)$. It is an important ingredient in the proof of Theorem \ref{dgkmgen}.
\begin{lemma}\label{MellTransOfCOT}
 For $ 0< \mathrm{Re}(s)=c_1 < 2 $ and $ \mathrm{Re}(u) >0$, we have
\begin{align*}
\frac{1}{2\pi i}\int_{(c_1)}\frac{\Gamma(s)}{\tan(\frac{\pi s}{2})}u^{-s}\ \mathrm{d}s&=\frac{2}{\pi}\int_0^\infty \frac{t\cos t}{u^2+t^2}\ \mathrm{d}t.
\end{align*}
\end{lemma}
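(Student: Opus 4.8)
The plan is to evaluate the Mellin--Barnes integral on the left by shifting the contour to the right and summing the residues, then to recognize the resulting series as a known integral representation of the Raabe integral. First I would write $\cot(\pi s/2) = \cos(\pi s/2)/\sin(\pi s/2)$ and recall that $\G(s)/\tan(\pi s/2)$ has, for $\Re(s) > 0$, simple poles only at the positive even integers $s = 2n$, $n \geq 1$ (the poles of $1/\sin(\pi s/2)$), since on $\Re(s) > 0$ the factor $\G(s)$ is analytic and $\cos(\pi s/2)$ kills the odd-integer poles of $1/\sin(\pi s/2)$. The residue of $1/\sin(\pi s/2)$ at $s = 2n$ is $2(-1)^n/\pi$, so the residue of the integrand $\G(s)\cot(\pi s/2)u^{-s}$ at $s = 2n$ is $\frac{2(-1)^n}{\pi}\G(2n)\cos(\pi n)u^{-2n} = \frac{2}{\pi}(2n-1)! \, u^{-2n}$.

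Next I would justify shifting the line of integration from $\Re(s) = c_1$ to $\Re(s) = +\infty$ (through a sequence of vertical lines $\Re(s) = 2M+1$, say, avoiding the poles). Using Stirling's estimate \eqref{strivert}, $|\G(s)|$ decays like $e^{-\frac{\pi}{2}|t|}|t|^{\sigma - 1/2}$ on vertical strips, while $|\cot(\pi s/2)|$ is bounded away from the real axis and $|u^{-s}| = |u|^{-\sigma}e^{t\,\arg u}$; since $|\arg u| < \pi/2 < \pi$, the product of $|u^{-s}|$ with the exponential decay of $|\G(s)|$ still decays in $|t|$, so the horizontal pieces of the contour vanish. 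For $u > 1$ the vertical-line contributions at $\Re(s) = 2M+1$ tend to zero as $M \to \infty$ because of the factor $u^{-2M-1}$; hence, collecting residues,
\begin{align*}
\frac{1}{2\pi i}\int_{(c_1)}\frac{\G(s)}{\tan(\pi s/2)}u^{-s}\,\mathrm{d}s = \sum_{n=1}^{\infty}\frac{2}{\pi}\frac{(2n-1)!}{u^{2n}} = \frac{2}{\pi}\sum_{n=1}^{\infty}\frac{(2n-1)!}{u^{2n}}.
\end{align*}

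Finally I would identify this series with $\frac{2}{\pi}\int_0^\infty \frac{t\cos t}{u^2+t^2}\,\mathrm{d}t$. By \eqref{rabasylarge}, the right-hand side has precisely the asymptotic expansion $-\sum_{n=1}^{\infty}(2n-1)!/u^{2n}$ as $u \to \infty$; more directly, one can interchange sum and integral in $\int_0^\infty \frac{t\cos t}{u^2+t^2}\,dt$ only asymptotically, but the cleanest route is to appeal to Lemma \ref{lemsix}: the two expressions $\frac{2}{\pi}\sum_{n\ge 1}(2n-1)!\,u^{-2n}$ (interpreted as the Mellin integral, hence an actual analytic function of $u$ for $\Re(u)>0$ after analytic continuation from $u$ large) and $\frac{2}{\pi}\mathfrak{R}(1,u)$ agree, since both are the inverse Mellin transform of the same function — indeed $\mathfrak{R}(1,u) = \int_0^\infty \frac{t\cos t}{u^2+t^2}\,dt$ has Mellin transform in $s$ (in the variable $u$) computable from \eqref{Oberhettinger 5.8p} with $y = 1$, and matching it against $\frac{1}{2}\G(s)\cot(\pi s/2)$ completes the identification; the only subtlety is that the final answer should be stated as the analytic continuation in $u$ from $\Re(u)$ large to $\Re(u)>0$, both sides being analytic there.

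The main obstacle is the contour-shifting justification: one must be careful that the divergent-looking series $\sum (2n-1)!/u^{2n}$ is only an asymptotic series, so the equality of the Mellin integral with $\frac{2}{\pi}\mathfrak{R}(1,u)$ cannot be obtained by naive termwise summation. The rigorous path is to establish the identity first for $u$ in a region where the contour shift converges (or directly via the Mellin transform of $\mathfrak{R}(1,u)$ read off from \eqref{Oberhettinger 5.8p}) and then extend by analytic continuation, using the analyticity of the left-hand Mellin integral in $\Re(u)>0$ (guaranteed by Stirling's bound, as in the remark after Lemma \ref{zerodoubleintegral}).
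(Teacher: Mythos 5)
Your main route has a gap that cannot be repaired as written: the residue series you obtain by pushing the contour to the right, $\tfrac{2}{\pi}\sum_{n\geq 1}(2n-1)!\,u^{-2n}$, has zero radius of convergence in $u^{-2}$ and so diverges for \emph{every} $u$. Correspondingly, the integral over the line $\mathrm{Re}(s)=2M+1$ does not tend to zero: near the real axis $|\Gamma(s)|$ is of size $(2M)!$, which overwhelms the geometric decay of $|u|^{-(2M+1)}$, so the shifted contour cannot be closed for any $u$, however large. There is therefore no region ``where the contour shift converges'' from which to analytically continue; the series is precisely the divergent asymptotic expansion \eqref{rabasylarge} of $\mathfrak{R}(1,u)$ (note also that residues crossed in a rightward shift enter with a minus sign, consistent with the minus sign there), and asymptotic agreement alone does not determine the function --- one could add $e^{-u}$ without changing the expansion. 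Your fallback also misfires as stated: \eqref{Oberhettinger 5.8p} is the Mellin transform of $\cos(yt)/(w^2+t^2)$ in the variable $t$, not the Mellin transform of $u\mapsto\mathfrak{R}(1,u)$, so the latter cannot be ``read off'' from it. A corrected version of that idea --- compute $\int_0^\infty u^{s-1}\mathfrak{R}(1,u)\,\mathrm{d}u$ by Fubini to get $\tfrac{\pi}{2}\Gamma(s)\cot(\tfrac{\pi s}{2})$ for $0<\mathrm{Re}(s)<1$ and then invert --- does work, and is essentially the paper's argument in disguise.

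The paper's proof avoids residues entirely: it writes $\Gamma(s)\cot(\tfrac{\pi s}{2})$ as the product of the two elementary Mellin transforms $\int_0^\infty t^{s-1}\cos t\,\mathrm{d}t=\Gamma(s)\cos(\tfrac{\pi s}{2})$ (valid for $0<\mathrm{Re}(s)<1$) and $\int_0^\infty \frac{t^{s-1}}{1+t^2}\,\mathrm{d}t=\tfrac{\pi}{2}\,\mathrm{cosec}(\tfrac{\pi s}{2})$ (valid for $0<\mathrm{Re}(s)<2$), applies Parseval's formula for Mellin transforms to obtain the stated integral for $0<c_1<1$, and then extends to $0<c_1<2$ by a short, pole-free shift of the line of integration, with the horizontal pieces controlled by Stirling's bound \eqref{strivert}. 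If you want to salvage your write-up, replace the rightward residue computation by this Parseval/convolution step.
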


\begin{proof}
For $0<\textrm{Re}(s)<1$, we have
\begin{align*}
\int_0^\infty t^{s-1}\cos t\ \mathrm{d}t=\Gamma(s) \cos\Big(\frac{\pi s}{2} \Big). 
\end{align*}
and for $0<\textrm{Re}(s)<2$, we know
\begin{align*}
\ \int_0^\infty t^{s-1} \frac{1}{1 + t^2 } \mathrm{d}t = \frac{\pi }{2} \textup{cosec}\Big(\frac{\pi s}{2} \Big).
\end{align*}
One can find the first of the two Mellin transforms given above in \cite[p.~1101, Formula (3)]{grn}. The second one can be easily obtained by replacing $s$ by $s/2$ and employing a change of variable $x=t^2$ in \cite[p. 1101, Formula (6)]{grn}. Now using Parseval's formula \cite[p.~83, Equation (3.1.13)]{kp},  for $0<\mathrm{Re}(s)<1$, one can obtain
\begin{align*}
\frac{1}{2\pi i}\int_{(c_1)}\frac{\Gamma(s)}{\tan(\frac{\pi s}{2})}u^{-s}\ \mathrm{d}s&=\frac{1}{2\pi i}\int_{(c_1)}\frac{\Gamma(s)\cos(\frac{\pi s}{2})}{\sin(\frac{\pi s}{2})}u^{-s}\ \mathrm{d}s\\ \nonumber
&=\frac{1}{\pi}\int_0^\infty \frac{2\cos t}{1+\frac{u^2}{t^2}}\frac{\mathrm{d}t}{t}\\
&=\frac{2}{\pi}\int_0^\infty \frac{t\cos t}{u^2+t^2}\ \mathrm{d}t.
\end{align*}
Now one can easily extend the region of validity of the above result to $0<$ Re$(s)<2$ by noting that when shift the line of integration Re$(s)=c_1$ to, say, Re$(s)=c_2, 1\leq c_2<2,$ one does not encounter any poles of the integrand and also that the integrals over the horizontal segments tend to zero as the height $T$ tends to $\infty$.
\end{proof}

\begin{lemma}\label{equality}
Let $N$ be an odd positive integer and $h>N/2$ be a positive integer. If $\frac{N-2h+1}{N}=-2j$ for some $j\in\mathbb{N}$, then $j=\left\lfloor\frac{h}{N}-\frac{1}{2}\right\rfloor$.
\end{lemma}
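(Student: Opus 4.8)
The plan is to invert the hypothesis and read off $\left\lfloor h/N-1/2\right\rfloor$ explicitly. First I would clear denominators in $\frac{N-2h+1}{N}=-2j$ to obtain $N-2h+1=-2jN$, hence
\[
2h=(2j+1)N+1,\qquad h=\frac{(2j+1)N+1}{2}.
\]
Since $N$ is odd, $(2j+1)N$ is odd, so $(2j+1)N+1$ is even and $h$ is a genuine integer; moreover $h=\frac{(2j+1)N+1}{2}\ge\frac{3N+1}{2}>N/2$ because $j\ge1$, so the value of $h$ forced by the hypothesis is consistent with the standing assumptions of the lemma.

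Next, using this expression for $h$,
\[
\frac{h}{N}-\frac{1}{2}=\frac{(2j+1)N+1}{2N}-\frac{1}{2}=\frac{2j+1}{2}+\frac{1}{2N}-\frac{1}{2}=j+\frac{1}{2N}.
\]
Finally, since $N\ge1$ we have $0<\frac{1}{2N}\le\frac12<1$, so $j+\frac{1}{2N}$ lies strictly between the consecutive integers $j$ and $j+1$, whence $\left\lfloor\frac{h}{N}-\frac{1}{2}\right\rfloor=j$, which is exactly the asserted identity.

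There is essentially no obstacle here: the only point needing a (trivial) check is that the term $\frac{1}{2N}$ added to $j$ is a genuine fractional part, i.e.\ strictly less than $1$, which holds for every positive integer $N$ (in particular for $N=1$, where it equals $\frac12$). Thus the constraint that $\frac{N-2h+1}{N}$ equal $-2j$ for some $j\in\mathbb{N}$ pins down $h$ uniquely in terms of $j$ and $N$, and the floor identity follows immediately.
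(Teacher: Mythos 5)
Your proof is correct and follows essentially the same route as the paper: both rewrite the hypothesis as $\frac{h}{N}-\frac{1}{2}=j+\frac{1}{2N}$ and conclude by noting that $0<\frac{1}{2N}<1$ forces $\left\lfloor j+\frac{1}{2N}\right\rfloor=j$. The extra consistency check that $h>N/2$ is harmless but not needed.
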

\begin{proof}
By hypothesis, $j+\frac{1}{2N}=\frac{h}{N}-\frac{1}{2}$. Since $j$ is an integer and $\left\lfloor\frac{1}{2N}\right\rfloor=0$ for $N\geq 1$, we have $j=\left\lfloor j+\frac{1}{2N}\right\rfloor=\left\lfloor\frac{h}{N}-\frac{1}{2}\right\rfloor$.
\end{proof}
The following lemma is well-known. We give here a proof for the sake of completeness.
\begin{lemma}\label{cheby}
For any $z\in\mathbb{C}$ and $N\in\mathbb{N}$,
\begin{align}\label{ss}
\frac{\sin( N z)}{\sin (z)} = \sum_{j= -(N-1)}^{N-1}{\vphantom{\sum}}'' \exp( i j z),
\end{align}
where $\sum_{j}{\vphantom{\sum}}'' $ means the summation is over $j=-(N-1), -(N-3),\cdots, N-3, N-1$. In particular, for $N$ odd,
\begin{align}\label{cc}
\frac{ \cos(N z)}{\cos( z )}=(-1)^{\frac{N-1}{2}}\sum_{j= -(N-1)}^{N-1}{\vphantom{\sum}}'' i^j\exp( -i j z),
\end{align}
and for N even,
\begin{align}\label{sc}
\frac{ \sin(N z)}{\cos( z )}=(-1)^{\frac{N}{2}}\sum_{j= -(N-1)}^{N-1}{\vphantom{\sum}}'' i^j\exp( i j z).
\end{align}
\end{lemma}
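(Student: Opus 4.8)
The plan is to establish \eqref{ss} directly from the exponential representation of the sine, and then to deduce both \eqref{cc} and \eqref{sc} from \eqref{ss} by the single substitution $z\mapsto\frac{\pi}{2}-z$, using the parity of $N$ to evaluate $\sin\!\left(\frac{N\pi}{2}\right)$ and $\cos\!\left(\frac{N\pi}{2}\right)$.

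First I would prove \eqref{ss}. Writing $\sin(Nz)/\sin(z)=(e^{iNz}-e^{-iNz})/(e^{iz}-e^{-iz})$ and setting $w=e^{iz}$, the quotient becomes $(w^{N}-w^{-N})/(w-w^{-1})=w^{-(N-1)}(w^{2N}-1)/(w^{2}-1)=w^{-(N-1)}\sum_{k=0}^{N-1}w^{2k}=\sum_{k=0}^{N-1}w^{2k-(N-1)}$. As $k$ runs through $0,1,\dots,N-1$, the exponent $2k-(N-1)$ runs through exactly $-(N-1),-(N-3),\dots,N-3,N-1$, so this equals the doubly-primed sum $\sum_{j}{\vphantom{\sum}}''\,e^{ijz}$, which is \eqref{ss}. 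Both sides are entire functions of $z$ (the zeros of $\sin z$ are removable singularities of the quotient), so the identity is valid for all $z\in\mathbb{C}$.

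Next, replacing $z$ by $\frac{\pi}{2}-z$ in \eqref{ss}, the denominator becomes $\sin\!\left(\frac{\pi}{2}-z\right)=\cos z$, the numerator becomes $\sin\!\left(\frac{N\pi}{2}-Nz\right)=\sin\!\left(\frac{N\pi}{2}\right)\cos(Nz)-\cos\!\left(\frac{N\pi}{2}\right)\sin(Nz)$, and the right-hand side becomes $\sum_{j}{\vphantom{\sum}}''\,e^{ij(\pi/2-z)}=\sum_{j}{\vphantom{\sum}}''\,i^{j}e^{-ijz}$. For $N$ odd one has $\cos\!\left(\frac{N\pi}{2}\right)=0$ and $\sin\!\left(\frac{N\pi}{2}\right)=(-1)^{(N-1)/2}$, so the numerator is $(-1)^{(N-1)/2}\cos(Nz)$; dividing through by $(-1)^{(N-1)/2}$ (whose reciprocal is itself) yields \eqref{cc} at once. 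For $N$ even one has $\sin\!\left(\frac{N\pi}{2}\right)=0$ and $\cos\!\left(\frac{N\pi}{2}\right)=(-1)^{N/2}$, so the numerator is $-(-1)^{N/2}\sin(Nz)$, whence $\frac{\sin(Nz)}{\cos z}=(-1)^{N/2+1}\sum_{j}{\vphantom{\sum}}''\,i^{j}e^{-ijz}$; re-indexing $j\mapsto-j$ (the index set is symmetric under negation) and using that every summation index is odd when $N$ is even, so that $i^{-j}=-i^{j}$, converts this into \eqref{sc}.

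The only delicate point is the final sign bookkeeping in the even case: one must combine the factor $-(-1)^{N/2}$ with the extra minus sign coming from $i^{-j}=-i^{j}$ under $j\mapsto-j$ to land exactly on $(-1)^{N/2}$, while in the odd case the indices are even, $i^{-j}=i^{j}$, and no re-indexing is needed because \eqref{cc} is already stated with $e^{-ijz}$. None of this is substantive, and the lemma is entirely elementary.
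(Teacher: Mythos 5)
Your proof is correct and follows essentially the same route as the paper: establish \eqref{ss} by an elementary computation (you use the geometric sum in $w=e^{iz}$, the paper telescopes the cosine pairs — both one-line arguments), then obtain \eqref{cc} and \eqref{sc} by shifting $z$ by $\pi/2$. The only cosmetic difference is that the paper uses $z\mapsto\frac{\pi}{2}+z$ for the even case to avoid your re-indexing step, and your sign bookkeeping with $i^{-j}=-i^{j}$ for odd $j$ checks out.
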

\begin{proof}
First let $N$ be odd. Start from the right side of \eqref{ss} and note that
\begin{align*}
\sum_{j= -(N-1)}^{N-1}{\vphantom{\sum}}'' \exp( i j z)&=\left(e^{-i(N-1)z}+e^{i(N-1)z}\right)+\cdots+\left(e^{-i2z}+e^{i2z}\right)+1\nonumber\\
&=\frac{2\sin(z)\left(\cos((N-1)z)+\cos((N-3)z)+\cdots+\cos(2z)\right)+\sin(z)}{\sin(z)}\nonumber\\
&=\frac{\left(\sin(Nz)-\sin((N-2)z)\right)+\cdots+\left(\sin(3z)-\sin(z)\right)+\sin(z)}{\sin(z)}\nonumber\\
&=\frac{\sin( N z)}{\sin (z)}.
\end{align*}
The case when $N$ is an even positive integer can be similarly proved. Also, \eqref{cc} follows at once by replacing $z$ by $\frac{\pi}{2}-z$ in \eqref{ss} and taking $N$ to be odd whereas \eqref{sc} is obtained by replacing $z$ by $\frac{\pi}{2}+z$ in \eqref{ss} and letting $N$ to be even.
\end{proof}
We have collected now all tools necessary for proving Theorem \ref{dgkmgen}. 

\begin{proof}[Theorem \textup{\ref{dgkmgen}}][]
The hypothesis $h\geq N/2$, $N\in\mathbb{N}$, will be used several times, without mention, in the proof. It is easy to see that the series $\sum_{n=0}^\infty n^{N-2h}\frac{\exp(-a n^Nx)}{1-\exp(-n^Nx)}$ is absolutely and uniformly convergent for any $x>0, N\in\mathbb{N}$. Thus, interchanging the order of summation and integration in the first step below, we see that for Re$(s)>\max\left(\frac{N-2h+1}{N},1\right)  =1$,
{\allowdisplaybreaks\begin{align*}
\int_{0}^{\infty}x^{s-1}\sum_{n=1}^\infty n^{N-2h}\frac{\exp(-a n^Nx)}{1-\exp(-n^Nx)}\, \mathrm{d}x&=\sum_{n=1}^\infty n^{N-2h}\int_{0}^{\infty}x^{s-1}\frac{\exp(-a n^Nx)}{1-\exp(-n^Nx)}\, \mathrm{d}x\nonumber\\
&=\sum_{n=1}^{\infty}n^{N-2h-Ns}\int_{0}^{\infty}\frac{y^{s-1}e^{-ay}}{1-e^{-y}}\, \mathrm{d}y\nonumber\\
&=\G(s)\zeta(s,a)\zeta(Ns-N+2h),
\end{align*}}
where, in the second step, we employed the change of variable $y=n^{N}x$ and in the third, we used the fact \cite[p.~251, Theorem 12.2]{apostol-1998a} that for Re$(s)>1$,
\begin{align*}
\int_{0}^{\infty}\frac{y^{s-1}e^{-ay}}{1-e^{-y}}\, \mathrm{d}y=\G(s)\zeta(s, a).
\end{align*}
Thus, for $\lambda=$ Re$(s)>1$, 
\begin{align}\label{mainequality}
\sum_{n=1}^\infty n^{N-2h}\frac{\exp(-a n^Nx)}{1-\exp(-n^Nx)}=\frac{1}{2 \pi i}\int_{(\lambda)}\Gamma(s)\zeta(s,a)\zeta\left(Ns-(N-2h)\right)x^{-s} \mathrm{d}s.
\end{align}
We now obtain an alternate evaluation of the above integral by shifting the line of integration and then by using Cauchy's residue theorem. Consider the contour $\mathcal{C}$  determined by the line segments $[\lambda-iT, \lambda+iT], [\lambda+iT,-r+iT], [-r +iT, -r -iT] \ \text{and}\ [-r-iT, \lambda-iT]$, where, $r$ is a sufficiently large positive real number which is not an integer and $\frac{2h}{N}-1<r<\frac{2h+1}{N}-1$. The reason for choosing the lower and upper bounds for $r$ will be explained soon. Let 
\begin{align}\label{F(s)}
F(s):=\Gamma(s)\zeta(s,a)\zeta(Ns-(N-2h))x^{-s}
\end{align}
and let $R_a$ denote the residue of $F(s)$ at the pole $s=a$. We first find poles of $F(s)$ and residues at those poles.

(1) $F(s)$ has a pole of order one at $s=0$ since $\Gamma(s)$ has a simple pole at $s=0$. The residue $R_0$ at this pole is given by
\begin{align}\label{R_0}
R_0=\lim_{s\rightarrow 0} s F(s)=\zeta(0,a)\zeta(-N+2h)=-\left(a-\tfrac{1}{2}\right)\zeta(2h-N),
\end{align}
since \cite[p.~264, Theorem 12.13]{apostol-1998a}
$$\zeta(-n,a)=-\frac{B_{n+1}(a)}{n+1},\ n\geq 0\, \text{and}\ B_1(a)=a-\frac{1}{2}.$$

(2) Since $\zeta(s,a)$ has a simple pole at $s=1$, $F(s)$ has a simple pole at $s=1$ with residue
\begin{align}\label{R_1}
R_1=\lim_{s\rightarrow 1}(s-1)F(s)=\frac{\zeta(2h)}{x}.
\end{align}

(3) Since $\frac{N-2h+1}{N}\neq-2\left\lfloor\frac{h}{N}-\frac{1}{2}\right\rfloor$, Lemma \ref{equality} implies $\frac{N-2h+1}{N}\neq-2j$ for any $j\geq 1$. Thus, $F(s)$ has a \emph{simple} pole at $s=\frac{N-2h+1}{N}$, owing to the pole of $\zeta(s)$ at $s=1$, with the residue
\begin{align}\label{R_N-2h+1/N1}
R_{\frac{N-2h+1}{N}}=\Gamma\left(\frac{N-2h +1}{N}\right)\zeta\left(\frac{N-2h +1}{N},a\right)\frac{x^{-\frac{(N-2h+1)}{N}}}{N}.
\end{align}

(4) Consider the simple poles of $\Gamma(s)$ at $s=-2j,\ j\in\mathbb{N}$, and the trivial zeros of $\zeta(Ns-N+2h)$ at $-2k, \ k\in\mathbb{N}$. It is important to see if some of these poles of $\G(s)$ get canceled by the trivial zeros of $\zeta(Ns-N+2h)$. To that end, suppose for some positive integers $j'$ and $k'$ we have $N(-2j')-N+2h=-2k'$. Then $k'=Nj'+\frac{N}{2}-h$. This implies that if $N$ is an odd positive integer, no pole of $\G(s)$ at a negative even integer will get canceled by a trivial zero of $\zeta(Ns-N+2h)$ since $k'$ is not an integer. However, if $N$ is an even positive integer, then $k'$ can equal $Nj'+\frac{N}{2}-h$, while being a positive integer, implying $j'>\frac{h}{N}-\frac{1}{2}$, that is, among the poles of $\G(s)$ at negative even integers, only the poles $-2j$, $1\leq j\leq\left\lfloor\frac{h}{N}-\frac{1}{2}\right\rfloor$, contribute towards the evaluation of the line integral. To sum up, when $N$ is odd integer, $F(s)$ has simple poles at all negative even integers $-2j,j\geq 1$, and when $N$ is an even integer, $F(s)$ has simple poles at $-2j$, where $1\leq j\leq\left\lfloor\frac{h}{N}-\frac{1}{2}\right\rfloor$. The residue at such a pole is
\begin{align}\label{R_-2j}
R_{-2j}&=\lim_{s\rightarrow -2j}(s+2j)\ \Gamma(s)\ \zeta(s,a)\ \zeta(Ns-(N-2h))\ x^{-s}\nonumber\\
&=\frac{\zeta(-2j,a)}{(2j)!}\zeta(-2jN-N+2h)\ x^{2j}\nonumber\\
&=-\frac{B_{2j+1}(a)}{(2j+1)!}\zeta(-2jN-N+2h)\ x^{2j}.
\end{align}
At this juncture, it deems necessary to explain why we choose the real part of the shifted line of integration to be $-r$ with $r>\frac{2h}{N}-1$. The reason is, this implies $-r<-2\left\lfloor\frac{h}{N}-\frac{1}{2}\right\rfloor$, and thus all poles of $\G(s)$ at negative even integers $-2j$, where $1\leq j\leq\left\lfloor\frac{h}{N}-\frac{1}{2}\right\rfloor$, lie inside the contour, thus contributing towards the evaluation of the line integral.

(5) Arguing as in (4), it can be seen that $F(s)$ has simple poles at $s=-(2j-1)$, $1\leq j\leq\left\lfloor\frac{h}{N}\right\rfloor$, and the residue at such a pole is 
\begin{align}\label{R_-(2j-1)}
R_{-(2j-1)}&=\lim_{s\rightarrow -(2j-1)} (s+(2j-1))F(s)\nonumber\\
&=\frac{(-1)^{2j-1}}{(2j-1)!}\zeta(-(2j-1),a)\ \zeta(2h-2Nj)x^{2j-1}\nonumber\\
&=(-1)^{h+1}2^{2h-1}\pi^{2h}\left(\frac{-1}{4\pi^2}\right)^{jN}\  \frac{B_{2j}(a)\ B_{2h-2jN}}{(2j)!\ (2h-2jN)!}\ x^{2j-1}.
\end{align}
Now applying Cauchy's residue theorem, we observe that
 \begin{align*}
 \frac{1}{2\pi i}&\left[\int_{\lambda-iT}^{\lambda+iT}+\int_{\lambda+iT}^{-r+iT}+\int_{-r +iT}^{-r
 -iT}+\int_{-r-iT}^{ \lambda-iT}\right]\Gamma(s)\zeta(s,a)\zeta\left(Ns-(N-2h)\right)x^{-s}\mathrm{d}s\nonumber\\
 &=R_0+R_1+R_{\frac{N-2h+1}{N}}+\sum_{j=1}^{\left\lfloor\frac{h}{N}\right\rfloor}R_{-(2j-1)}+\sum_{j=1}^{\lfloor\frac{h}{N}-\frac{1}{2}\rfloor}R_{-2j}.
 \end{align*}
 Now let $T\to\infty$. Using Stirling's formula \eqref{strivert} for $\Gamma(s)$ and elementary bounds on the Riemann zeta function and the Hurwitz zeta function, it can be seen that the integrals along the horizontal segments $[\lambda+iT,-r+iT],[-r-iT, \lambda-iT]$ approach zero as $T\rightarrow\infty$. Hence from \eqref{R_0}-\eqref{R_-(2j-1)}, we see that
\begin{align}\label{cauchyRes2}
\sum_{n=1}^\infty n^{N-2h}\frac{\exp(-a n^Nx)}{1-\exp(-n^Nx)} = P(x, a)+J(x, a),
\end{align}  
where $P(x, a)$ is the sum of all residues of $F(s)$, defined in \eqref{pxa}, and
\begin{align}\label{Integral}
J(x, a):=\frac{1}{2\pi i}\int_{(-r)}\Gamma(s)\zeta(s,a)\zeta(Ns-(N-2h))x^{-s} \mathrm{d}s.
\end{align}
It remains to show that $J(x, a)$ agrees with $S(x, a)$ defined in \eqref{sodda} and \eqref{sevena} respectively when $N$ is odd and even. To evaluate $J(x, a)$, we first make a change of variable $s\leftrightarrow 1-s$ in \eqref{Integral} so that
\begin{align}\label{J(x)}
J(x, a)=\frac{1}{2\pi i}\int_{(1+r)}\Gamma(1-s)\zeta(1-s,a)\zeta(2h-Ns))x^{s-1}\, \mathrm{d}s.
\end{align}
Now replace $s$ by $1-s$ in \eqref{hformula}, then multiply both sides of the resulting identity by $\G(1-s)$ to obtain, for Re$(s)>1$,
\begin{align}\label{gamhur}
\Gamma(1-s)\zeta(1-s,a) &= \frac{2\Gamma(1-s)\Gamma(s)}{(2\pi)^s}\left\{\cos\left(\frac{\pi s}{2}\right)\sum_{n=1}^\infty\frac{\cos(2\pi na)}{n^s} 
 +\sin\left(\frac{\pi s}{2}\right)\sum_{n=1}^\infty\frac{\sin(2\pi na)}{n^s}\right\}\nonumber\\
&=(2\pi)^{1-s}\left\{\frac{1}{2\sin\left(\frac{\pi s}{2}\right)}\sum_{n=1}^\infty\frac{\cos(2\pi na)}{n^s} +\frac{1}{2\cos\left(\frac{\pi s}{2}\right)}\sum_{n=1}^\infty\frac{\sin(2\pi na)}{n^s}\right\},
\end{align}
where in the last step, we used the reflection formula for the gamma function and then the double angle formula for sine for simplification.

We note here that Kanemitsu, Tanigawa and Yoshimoto \cite[p.~51]{ktyacta} use a formula equivalent to \eqref{hformula}, namely, for Re$(s)<0$,
\begin{equation*}
\zeta(s, a)=\frac{\G(1-s)}{(2\pi)^{1-s}}\left(\exp{\left(\frac{-\pi i(1-s)}{2}\right)}\sum_{n=1}^{\infty}\frac{e^{2\pi ia n}}{n^{1-s}}+\exp{\left(\frac{\pi i(1-s)}{2}\right)}\sum_{n=1}^{\infty}\frac{e^{-2\pi ia n}}{n^{1-s}}\right).
\end{equation*}
However, one can see that while the above formula is useful when $N$ is an even positive integer, it is not when $N$ is an odd positive integer. In fact, employing it leads to very complicated integrals which do not seem to lead us to any concrete result. On the other hand, \eqref{hformula} works for any positive integer $N$, irrespective of its parity, as will be seen in the remainder of the proof.

Now substitute \eqref{gamhur} in \eqref{J(x)} and invoke the functional equation \eqref{zetafe1} for $\zeta(2h-Ns)$ to obtain after simplification
\begin{align}\label{jj1j2}
J(x, a)=J_1(x, a)+J_2(x, a),
\end{align}
where
\begin{align}
J_1(x, a):&=\frac{(-1)^{h+1}2^{2h+1}\pi^{2h}}{x}\frac{1}{2\pi i}\int_{(1+r)}\left(\frac{(2\pi)^{N+1}}{x}\right)^{-s}\Gamma(1-2h+Ns)\zeta(1-2h+Ns)\nonumber\\
&\hspace{5cm}\times \left\{\frac{\sin\left(\frac{N\pi s}{2}\right)}{2\sin\left(\frac{\pi s}{2}\right)}\sum_{n=1}^\infty\frac{\cos(2\pi na)}{n^s}\right\}  \mathrm{d}s,\label{j1}\\
J_2(x, a):&=\frac{(-1)^{h+1}2^{2h+1}\pi^{2h}}{x}\frac{1}{2\pi i}\int_{(1+r)}\left(\frac{(2\pi)^{N+1}}{x}\right)^{-s}\Gamma(1-2h+Ns)\zeta(1-2h+Ns)\nonumber\\
&\hspace{5cm}\times \left\{\frac{\sin\left(\frac{N\pi s}{2}\right)}{2\cos\left(\frac{\pi s}{2}\right)}\sum_{n=1}^\infty\frac{\sin(2\pi na)}{n^s}\right\}  \mathrm{d}s\label{j2}.
\end{align}
We first evaluate $J_2(x, a)$. Its evaluation depends on the parity of $N$. We first assume that $N$ is odd. Employ the change of variable 
\begin{equation}\label{s1s}
s_1=Ns-2h+1
\end{equation}
in \eqref{j2} so that $c_1:=$Re$(s_1)>1$ (since $r>\frac{2h}{N}-1$), write $\zeta(s_1)=\sum_{m=1}^{\infty}m^{-s_{1}}$, and then interchange the order of double sum and the integral, permitted because of absolute convergence, to arrive at
\begin{align}\label{j2xinter}
J_2(x, a)=\frac{(-1)^{h+1}2^{2h+1}\pi^{2h}}{Nx}\left(\frac{(2\pi)^{N+1}}{x}\right)^{\frac{1-2h}{N}}\sum_{m,n=1}^{\infty}n^{\frac{{1-2h}}{N}}\sin(2\pi na)E(X_{m,n}),
\end{align}
where
\begin{align}\label{E(X)}
E(X_{m,n}):=\frac{(-1)^{h-1}}{2\pi i}\int_{(c_1)}\frac{\Gamma(s_1)\cos(\frac{\pi s_1}{2})}{2\cos\left(\frac{\pi}{2}\left(\frac{s_1+2h-1}{N}\right)\right)}X_{m,n}^{-s_1}\ \mathrm{d}s_1,
\end{align}
with
\begin{align}\label{X_{m,n}}
X_{m,n}:=2\pi m\left(\frac{2\pi n}{x}\right)^{1/N}.
\end{align}
Using \eqref{cc} in the second step below, we find that
\begin{align}\label{finalE(X)}
E(X_{m,n})
&=\frac{-1}{2\pi i}\int_{(c_1)}\frac{\Gamma(s_1)}{2\tan(\frac{\pi s_1}{2})}\frac{\cos\left(\frac{\pi}{2}(s_1+2h-1)\right)}{\cos\left(\frac{\pi}{2}\left(\frac{s_1+2h-1}{N}\right)\right)}X_{m,n}^{-s_1}\ \mathrm{d}s_1 \nonumber\\
&=\frac{(-1)^{\frac{N+1}{2}}}{2}\frac{1}{2\pi i}\int_{(c_1)}\frac{\Gamma(s_1)}{\tan(\frac{\pi s_1}{2})}\sum_{j=-(N-1)}^{N-1}{\vphantom{\sum}}''i^je^{-\frac{ij\pi}{2}\left(\frac{s_1+2h-1}{N}\right)}X_{m,n}^{-s_1}\ \mathrm{d}s_1\nonumber\\ 
&=\frac{(-1)^{\frac{N+1}{2}}}{2}\sum_{j= -(N-1)}^{N-1}{\vphantom{\sum}}''i^je^{\frac{-ij\pi(2h-1)}{2N}}\frac{1}{2\pi i}\int_{(c_1)}\frac{\Gamma(s_1)}{\tan(\frac{\pi s_1}{2})}{X_{m,n, j}^*}^{-s_1}\ \mathrm{d}s_1,
\end{align}
where 
\begin{align}\label{xmnstar}
X_{m,n, j}^{*}:=X_{m,n}e^{ \frac{ij\pi}{2N} }.
\end{align}
Now \eqref{s1s} and the inequality $\frac{2h}{N}-1<r<\frac{2h+1}{N}-1$ along with the fact that Re$(s)=1+r$ imply that $1<$Re$(s_1)<2$. The reason why we initially chose $r<\frac{2h+1}{N}-1$ is because, we need Re$(s_1)<2$ in order to use Lemma \ref{MellTransOfCOT}. Hence invoking Lemma \ref{MellTransOfCOT} to simplify the above representation for $E(X_{m,n})$ and then substituting the resultant in \eqref{j2xinter} gives, upon simplification,
\begin{align*}
J_2(x, a)&= \frac{2}{\pi Nx}(-1)^{h+\frac{N+3}{2}}(2\pi)^{2h}\left(\frac{(2\pi)^{N+1}}{x}\right)^{\frac{1-2h}{N}}\sum_{j= -(N-1)}^{N-1}{\vphantom{\sum}}''i^je^{\frac{-ij\pi(2h-1)}{2N}}\nonumber\\
&\quad\times\sum_{n=1}^{\infty}n^{\frac{1-2h}{N}}\sin(2\pi na)\sum_{m=1}^{\infty}\int_{0}^{\infty}\frac{t\cos(t)}{{X_{m,n, j}^*}^{2}+t^2}\, \mathrm{d}t.
\end{align*}
Now note that Re$(X_{m,n, j}^*)=2\pi m\left(\frac{2\pi n}{x}\right)^{\frac{1}{N}}\cos\left(\frac{\pi j}{2N}\right)>0$ as
\begin{equation*}
-\frac{\pi}{2}<-\frac{\pi(N-1)}{2N}\leq\frac{\pi j}{2N}\leq\frac{\pi (N-1)}{2N}<\frac{\pi}{2}. 
\end{equation*}
Hence apply Theorem \ref{raabesum} and then replace $j$ by $2j$ in the second step below to deduce that
\begin{align}\label{oddj2x}
J_2(x, a)&= (-1)^{h+\frac{N+3}{2}}\frac{(2\pi)^{2h}}{\pi Nx}\left(\frac{(2\pi)^{N+1}}{x}\right)^{\frac{1-2h}{N}}\sum_{j= -(N-1)}^{N-1}{\vphantom{\sum}}''i^je^{\frac{-ij\pi(2h-1)}{2N}}\sum_{n=1}^{\infty}n^{\frac{1-2h}{N}}\sin(2\pi na)\nonumber\\
&\quad\times\left\{\log\left(\left(\tfrac{2\pi n}{x}\right)^{\frac{1}{N}}e^{\frac{i\pi j}{2N}}\right)-\frac{1}{2}\left(\psi\left(i\left(\tfrac{2\pi n}{x}\right)^{\frac{1}{N}}e^{\frac{i\pi j}{2N}}\right)+\psi\left(-i\left(\tfrac{2\pi n}{x}\right)^{\frac{1}{N}}e^{\frac{i\pi j}{2N}}\right)\right)\right\}\nonumber\\
&=\frac{(-1)^{h+\frac{N+3}{2}}}{\pi N}\left(\frac{2\pi}{x}\right)^{\frac{N-2h+1}{N}}\sum_{j=-\frac{(N-1)}{2}}^{\frac{(N-1)}{2}}(-1)^j\textup{exp}\left(\frac{i\pi(1-2h)j}{N}\right)\nonumber\\
&\quad\times\sum_{n=1}^{\infty}\frac{\sin(2\pi na)}{n^{\frac{2h-1}{N}}}\left\{\log\left(\tfrac{1}{\pi}A_{N,j}\left(\tfrac{n}{x}\right)\right)-\tfrac{1}{2}\left(\psi\left(\tfrac{i}{\pi}A_{N,j}\left(\tfrac{n}{x}\right)\right)+\psi\left(-\tfrac{i}{\pi}A_{N,j}\left(\tfrac{n}{x}\right)\right)\right)\right\},
\end{align}
where 
\begin{equation*}
A_{N,j}(y)=\pi\left(2\pi y\right)^{\frac{1}{N}}e^{\frac{i\pi j}{N}}.
\end{equation*}
This completes the evaluation of $J_2(x)$ when $N$ is odd.

Let us now consider the case when $N$ is even. Note that \eqref{j2xinter} still holds with $E(X_{m,n})$ and $X_{m,n}$ the same as defined in \eqref{E(X)} and \eqref{X_{m,n}}. But now we use \eqref{sc} and \eqref{xmnstar} in the second step below to simplify $E(X_{m,n})$ as
{\allowdisplaybreaks\begin{align}\label{exmnNeven}
E(X_{m,n})& = \frac{1}{2 \pi i} \int_{(c_1)} \frac{\Gamma(s_1) \sin\left(\frac{\pi}{2}(s_1+2h-1)\right)}{2\cos\left(\frac{\pi}{2}\left(\frac{s_1+2h-1}{N}\right)\right)} X_{m,n}^{-s_1} \mathrm{d}s_1 \nonumber\\
& = (-1)^{\frac{N}{2} }  \sum_{j= -(N-1)}^{N -1 }{\vphantom{\sum}}''  i^j  \exp\Big(\frac{i \pi j (2h-1)}{2 N} \Big)\frac{1}{2 \pi i} \int_{(c_1)}  \Gamma(s_1) {X_{m,n,-j}^{*}}^{-s_1}  \mathrm{d}s_1 \nonumber\\
&=(-1)^{\frac{N}{2} }  \sum_{j= -(N-1)}^{N -1 }{\vphantom{\sum}}''  i^j  \exp\Big(\frac{i \pi j (2h-1)}{2 N} \Big)e^{-X_{m,n,-j}^{*}},
\end{align}}
where in the last step, we used \eqref{gammamelli} since Re$(X_{m,n,-j}^{*})>0$. Replacing $j$ by $-2j-1$ in \eqref{exmnNeven} and then substituting the resultant in \eqref{j2xinter}, we deduce that
\begin{align}\label{evenj2x}
J_2(x, a)=\frac{i (-1)^{h+\frac{N}{2}}}{N} \left( \frac{2\pi}{ x}\right)^{ \frac{N-2h+1}{N}}  \sum_{j= -\frac{N}{2}}^{\frac{N}{2} -1 }  (-1)^j   e^{\frac{i \pi (2 j +1) (1-2h)}{2 N}}
\sum_{n=1}^{\infty}      \frac{n^{\frac{{1-2h}}{N}}\sin(2\pi na)}{ \exp\big( 2A_{N,j+\frac{1}{2}}\left(\frac{n}{x}\right) \big) -1 }.
\end{align}
From \eqref{oddj2x} and \eqref{evenj2x}, we obtain an expression for $J_{2}(x)$ for all positive integers $N$.

Now $J_1(x, a)$ from \eqref{j1} can be evaluated in a similar way to obtain
\begin{align}\label{j11}
J_1(x, a)=\frac{(-1)^{h+1}}{N} \left( \frac{2\pi}{ x}\right)^{ \frac{N-2h+1}{N}} \sum_{j=-\frac{(N-1)}{2}}^{\frac{(N-1)}{2}}e^{\frac{i\pi(1-2h)j}{N}}\sum_{n=1}^{\infty}\frac{\cos(2\pi na)}{n^{\frac{2h-1}{N}}\left(\textup{exp}\left(2A_{N,j}\left(\frac{n}{x}\right)\right)-1\right)}
\end{align}
for $N$ odd, whereas, for $N$ even,
\begin{align}\label{j12}
J_1(x, a)=\frac{(-1)^{h+1}}{N}\left(\frac{2\pi}{x}\right)^{\frac{N-2h+1}{N}}\sum_{j=-\frac{N}{2}}^{\frac{N}{2}-1}e^{\frac{i\pi(1-2h)\left(j+\frac{1}{2}\right)}{N}}\sum_{n=1}^{\infty}\frac{\cos(2\pi na)}{n^{\frac{2h-1}{N}}\left(\textup{exp}\left(2A_{N,j+\frac{1}{2}}\left(\frac{n}{x}\right)\right)-1\right)}.
\end{align}
In fact, the above expressions for $J_1(x)$ differ from the expression in the first equality in \cite[Equation (2.18)]{ktyhr} only in that the numerator of the summand of the infinite series in them involve $\cos(2\pi na)$, which is absent in the latter.

Finally, adding the corresponding sides of \eqref{j11} and \eqref{oddj2x} when $N$ is odd, and respectively of \eqref{j12} and \eqref{evenj2x} when $N$ is even gives expressions for $J(x)$ (see \eqref{jj1j2}). These are nothing but the expressions for $S(x, a)$ claimed in the statement of Theorem \ref{dgkmgen}. Along with \eqref{cauchyRes2}, this completes the proof of Theorem \ref{dgkmgen}.
\end{proof}

As remarked in the introduction, a special case of the above result, that is Theorem \ref{dgkmgen}, when $N$ is even, was previously obtained by Kanemitsu, Tanigawa and Yoshimoto \cite[Theorem 2.1]{ktyacta}. Before deriving their result from ours, we begin with Lemma 3.1 from \cite{dixitmaji1}.
\begin{lemma}\label{ktywigl}
For $a, u, v\in\mathbb{R}$, we have
\begin{align*}
2\textup{Re}\left(\frac{e^{iuv}}{\exp{\left(ae^{-iu}\right)}-1}\right)=\frac{\cos(a\sin(u)+uv)-e^{-a\cos(u)}\cos(uv)}{\cosh(a\cos(u))-\cos(a\sin(u))}.
\end{align*}
\end{lemma}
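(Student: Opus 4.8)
The identity is purely computational, so the plan is to expand everything through Euler's formula and rationalize the denominator. First I would write $w:=ae^{-iu}=a\cos u-ia\sin u$, so that
\[
e^{w}=e^{a\cos u}\bigl(\cos(a\sin u)-i\sin(a\sin u)\bigr),
\]
and hence, setting $A:=e^{a\cos u}\cos(a\sin u)-1$ and $B:=e^{a\cos u}\sin(a\sin u)$, we have $e^{w}-1=A-iB$. Then
\[
\frac{e^{iuv}}{\exp(ae^{-iu})-1}=\frac{e^{iuv}(A+iB)}{A^2+B^2}.
\]

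The next step is to simplify $A^2+B^2$. Expanding,
\[
A^2+B^2=e^{2a\cos u}-2e^{a\cos u}\cos(a\sin u)+1=2e^{a\cos u}\bigl(\cosh(a\cos u)-\cos(a\sin u)\bigr),
\]
where the last equality is just the definition of $\cosh$; this is the only ``trick'' in the argument, and it is what produces the denominator on the right-hand side. For the numerator, take the real part of $e^{iuv}(A+iB)=(\cos uv+i\sin uv)(A+iB)$, namely
\[
A\cos uv-B\sin uv=e^{a\cos u}\bigl(\cos(a\sin u)\cos uv-\sin(a\sin u)\sin uv\bigr)-\cos uv=e^{a\cos u}\cos(a\sin u+uv)-\cos uv,
\]
using the cosine addition formula.

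Combining the two computations gives
\[
\mathrm{Re}\!\left(\frac{e^{iuv}}{\exp(ae^{-iu})-1}\right)=\frac{e^{a\cos u}\cos(a\sin u+uv)-\cos uv}{2e^{a\cos u}\bigl(\cosh(a\cos u)-\cos(a\sin u)\bigr)};
\]
multiplying by $2$ and dividing numerator and denominator by $e^{a\cos u}$ yields exactly the claimed formula. There is no genuine obstacle here — the main point to keep in mind is simply that the identity is implicitly understood to hold whenever $\exp(ae^{-iu})\neq 1$ (equivalently, whenever the right-hand denominator is nonzero), and all steps are elementary manipulations with Euler's formula and the double-angle identities for $\sin$ and $\cos$.
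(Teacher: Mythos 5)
Your computation is correct: rationalizing $\exp(ae^{-iu})-1=A-iB$, the identity $A^2+B^2=2e^{a\cos u}\bigl(\cosh(a\cos u)-\cos(a\sin u)\bigr)$ and the cosine addition formula give exactly the stated right-hand side. The paper itself does not reprove this lemma (it is quoted as Lemma 3.1 of \cite{dixitmaji1}), and your direct verification is precisely the standard argument used there, so there is nothing to compare beyond noting that your parenthetical caveat about $\exp(ae^{-iu})\neq 1$ is the right one and that the only identities actually invoked are Euler's formula and the addition formula (not double-angle formulas, as you wrote in passing).
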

\begin{theorem}[Kanemitsu-Tanigawa-Yoshimoto \cite{ktyacta}]\label{ytk} 
Let $h' \ge 0$, $\ell \ge 0$ and $M \ge 1$ be fixed integers with $h'<M$,
and let $0<a\leq 1$ be a positive parameter. Let $x>0$. Let $A(y)=\pi(2\pi y)^{\frac{1}{2M}}$ and let
\begin{equation*}
\begin{split}
 a_j:=\cos\left(\tfrac{\pi}{2M}\bigl(\tfrac12-j\bigr)\right), b_j:=\sin\left(\tfrac{\pi}{2M}\bigl(\tfrac12-j\bigr)\right),\\
 B_j(n,h',\ell):=-2\pi a n-\frac{\pi(2h'+1)}{2M}\left(\frac12-j\right)
             -\frac{\pi(\ell-1)}{2},
\end{split}
\end{equation*}
and
\begin{align}\label{eq22}
f_j(n,h',\ell,x):=\frac{\cos(2A(\frac{n}{x})b_j+B_j(n,h',\ell))-e^{-2A(\frac{n}{x})a_j}
        \cos(B_j(n,h',\ell))}{\cosh\left(2A(\frac{n}{x})a_j\right)
        -\cos\left(2A(\frac{n}{x})b_j\right)}
\end{align}
Let 
\begin{align}\label{eq24}
P(x)&:=\zeta(2M(\ell+1)-2h')x^{-1}+\sum_{j=0}^{\ell}\frac{(-1)^j}{j!}\zeta(-j,a)
       \zeta\bigl(2M(\ell-j)-2h'\bigr)x^j\nonumber\\
			&\quad+\frac{1}{2M}\Gamma\left(-\ell+\frac{2h'+1}{2M}\right)\zeta\left(-\ell+
       \frac{2h'+1}{2M}, a\right)x^{\ell-\frac{2h'+1}{2M}}.
\end{align}
Let $\Ppsum_{j}$ mean that the summation is performed over $j$, $j=-(M-1),-(M-3),\ldots,M-3,M-1$.
Then,\\
\begin{align} \label{eq23e}
&\sum_{n=1}^{\infty} \frac{1}{n^{2M\ell-2h'}}\, 
 \frac{\exp(-a n^{2M}x)}{1-\exp(-n^{2M}x)}=P(x)+U(x, a),
\end{align}
where
\begin{align}\label{uxameven}
U(x, a):=\frac{(-1)^{h'}}{2M}\left(\frac{2\pi}{x}\right)^{-\ell+\frac{2h'+1}{2M}}\hspace{1.5mm}
\!\!\!\! \sum_{j=-\frac{M}{2}}^{\frac{M}{2}-1}\sum_{n=1}^{\infty}
f_{2j+1}(n,h',\ell,x)n^{-1-\ell+\frac{2h'+1}{2M}}
\end{align}
for $M$ even, and 
\begin{align}\label{uxamodd}
U(x, a):=\frac{(-1)^{h'}}{2M}\left(\frac{2\pi}{x}\right)^{-\ell+\frac{2h'+1}{2M}}
\!\!\!\! \sum_{j=-\frac{M-1}{2}}^{\frac{M-1}{2}}\sum_{n=1}^{\infty}
f_{2j}(n,h',\ell,x)n^{-1-\ell+\frac{2h'+1}{2M}}.
\end{align}
for $M$ odd.
\end{theorem}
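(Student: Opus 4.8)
The plan is to derive Theorem~\ref{ytk} as a specialization of Theorem~\ref{dgkmgen} (with Theorem~\ref{dgkmgenh0Nby2}(ii) covering one boundary case). First I would put $N=2M$ and $h=M(\ell+1)-h'$. Since $0\le h'<M$ this gives $h\ge M=N/2$ whenever $\ell\ge1$, while for $\ell=0$ it gives $h=M-h'$, which is $\ge N/2$ iff $h'=0$; in the leftover case $\ell=0,\ h'\ge1$ one has $0\le h<N/2$, so Theorem~\ref{dgkmgenh0Nby2}(ii) applies and, as the remark after that theorem notes, its right side \eqref{evenh0Nby2} coincides with the right side of \eqref{dgkmgeneqn}. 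Under this identification $N-2h=-(2M\ell-2h')$, so the left side of \eqref{eq23e} is literally the left side of \eqref{dgkmgeneqn}, and $\frac{N-2h+1}{N}=-\ell+\frac{2h'+1}{2M}$, matching the Hurwitz-zeta argument in \eqref{eq24}. Because $N$ is even, $\frac{N-2h+1}{N}$ is never an integer, so the hypothesis $\frac{N-2h+1}{N}\neq-2\lfloor\frac hN-\frac12\rfloor$ of Theorem~\ref{dgkmgen} holds automatically. Hence in every case the left side of \eqref{eq23e} equals $P(x,a)+S(x,a)$ with $P$ from \eqref{pxa} and $S$ from \eqref{sevena}.

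Step two is the identity $P(x,a)=P(x)$, where $P(x)$ is as in \eqref{eq24}. Both are the sum of the residues of $\Gamma(s)\zeta(s,a)\zeta(Ns-(N-2h))x^{-s}$ at $s=0,1,\frac{N-2h+1}{N}$ and at the surviving negative-integer poles; the only difference is cosmetic: Kanemitsu, Tanigawa and Yoshimoto collect the residues at $s=-j$ into $\sum_{j=0}^{\ell}\frac{(-1)^j}{j!}\zeta(-j,a)\zeta(2M(\ell-j)-2h')x^j$ (padded with terms that vanish since $\zeta(-2k)=0$ for $k\ge1$), whereas \eqref{pxa} splits them according to the parity of $j$. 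One passes between the two using $\zeta(-j,a)=-\frac{B_{j+1}(a)}{j+1}$ for all $j\ge0$ and, for the odd-$j$ terms, additionally Euler's formula \eqref{ef} to rewrite the $\zeta$-value at a positive even integer in terms of a Bernoulli number and a power of $2\pi$; the summation ranges coincide because $\frac hN-\frac12=\frac\ell2-\frac{h'}{2M}$ and $\frac hN=\frac{\ell+1}{2}-\frac{h'}{2M}$. This is entirely routine.

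The heart of the matter is matching $S(x,a)$ in \eqref{sevena} with $U(x,a)$ in \eqref{uxameven}. The substitution $j\mapsto-j-1$ is a fixed-point-free involution of the index set $\{-N/2,\dots,N/2-1\}$ that negates $j+\tfrac12$; since $A_{N,-j-1/2}(y)=\overline{A_{N,j+1/2}(y)}$, the phase $e^{i\pi(1-2h)(j+1/2)/N}$ gets conjugated and the sign $(-1)^{j+N/2+1}$ flips, so the combined contribution of each pair is $2\,\textup{Re}\!\big(e^{-i\sigma_j2\pi na}\,\mathrm{term}_j\big)$ for an appropriate sign $\sigma_j=\pm1$, where $\mathrm{term}_j=e^{i\pi(1-2h)(j+1/2)/N}/\big(\exp(2A_{N,j+1/2}(n/x))-1\big)$. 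This has exactly the form $\frac{e^{iuv}}{\exp(ae^{-iu})-1}$ occurring in Lemma~\ref{ktywigl}, with $a=2A(n/x)$, $e^{-iu}=e^{i\pi(2j+1)/(4M)}$ and $uv=-\sigma_j2\pi na+\pi(1-2h)(2j+1)/(4M)$, so each pair collapses to a real closed form of precisely the shape \eqref{eq22}. Substituting $h=M(\ell+1)-h'$ splits the $n$-independent part of $uv$ as $\frac{\pi(2h'+1)(2j+1)}{4M}-\frac{\pi(\ell+1)(2j+1)}{2}$; reducing the second summand modulo $2\pi$ (using that $2j+1$ is odd) reproduces the constants $-\frac{\pi(2h'+1)}{2M}\big(\tfrac12-j\big)$ and $-\frac{\pi(\ell-1)}{2}$ inside $B_j$ and throws off the signs $(-1)^{(\ell+1)j}$ and $(-1)^{M(\ell+1)+1}$. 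Collecting these against the prefactor $\frac{(-1)^{h+1}}{N}\big(\frac{2\pi}{x}\big)^{(N-2h+1)/N}$ on our side and $\frac{(-1)^{h'}}{2M}\big(\frac{2\pi}{x}\big)^{-\ell+(2h'+1)/(2M)}$ on theirs, and relabeling the (now $M$-term) outer sum to match $\sum_{j}f_{2j+1}$, yields \eqref{uxameven}; together with Step~2 this gives \eqref{eq23e}.

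\textbf{Expected obstacle.} Steps~1 and~2 are immediate; the work is entirely in the bookkeeping of Step~3 — confirming that the involution $j\mapsto-j-1$ halves the outer sum correctly and that the accumulated phases ($(-1)^{h+1}$, $(-1)^{j+N/2+1}$, the half-integer exponents $e^{i\pi(2j+1)(1-2h)/(4M)}$, and the sign $\sigma_j$) reassemble exactly into $a_j,b_j,B_j$ and into the factors $(-1)^{h'},(-1)^j$ of \eqref{uxameven}, including the shift of $\ell+1$ to $\ell-1$ inside $B_j$. This is purely mechanical but is the step where sign errors are most likely.
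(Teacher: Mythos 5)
Your proposal is correct and follows essentially the same route as the paper: specialize Theorem \ref{dgkmgen} with $N=2M$, $h=M(\ell+1)-h'$, identify $P(x,a)$ with $P(x)$ via $\zeta(-j,a)=-B_{j+1}(a)/(j+1)$ together with Euler's formula, and collapse $S(x,a)$ into $U(x,a)$ by pairing $j$ with $-j-1$ into complex-conjugate terms and invoking Lemma \ref{ktywigl}. The only (welcome) refinement is your explicit treatment of the boundary case $\ell=0$, $h'\geq 1$ via Theorem \ref{dgkmgenh0Nby2}(ii), which the paper's proof passes over silently.
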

\begin{proof}
Substitute $N=2M$ and $h=M-h'+M \ell$ on both sides of \eqref{dgkmgeneqn}. Then the resulting left-hand side is the same as the Lambert series in \eqref{eq23e}.  With the above substitutions,
\begin{align}\label{pxapx}
P(x,a)&=\frac{1}{2M}\Gamma\left(-\ell+\frac{2h'+1}{2M}\right)\zeta\left(-\ell+\frac{2h'+1}{2M},a\right)x^{\ell-\frac{2h'+1}{2M}}\nonumber\\
&  +\sum_{j=0}^{\left\lfloor\frac{\ell}{2}-\frac{h'}{2M}\right\rfloor}\frac{\zeta(-2j,a)}{(2j)!}\zeta(2M(\ell-2j)-2h')x^{2j}+\frac{\zeta(2M(\ell+1)-2h')}{x}\nonumber\\ 
& +\sum_{j=1}^{\left\lfloor\frac{\ell}{2}-\frac{h'}{2M}+\frac{1}{2}\right\rfloor}\frac{(-1)^{2j-1}}{(2j-1)!}\zeta(-(2j-1),a)\zeta(-4Mj+2M-2h'+2M\ell)x^{2j-1}.
\end{align}
Note that $0<h'<M\ \Rightarrow \ 0<\frac{1}{2}-\frac{h'}{2m}<\frac{1}{2}\ \Rightarrow\ \left\lfloor \frac{1}{2}-\frac{h'}{2m}\right\rfloor$\ =\ 0, and $0<h'<M\ \Rightarrow \ -\frac{1}{2}<-\frac{h'}{2M}<0\ \Rightarrow \left\lfloor-\frac{h'}{2M}\right\rfloor=-1$. Thus,
\begin{equation}\label{floor1}
\left\lfloor \frac{\ell}{2}+\frac{1}{2}-\frac{h'}{2m}\right\rfloor= \begin{cases} 
      \frac{\ell}{2} ,&  \ell \ \text{is even} \\
     \frac{\ell-1}{2} ,& \ell \ \text{is odd}, 
   \end{cases}
\end{equation}
and
\begin{equation}\label{floor2}
\left\lfloor\frac{\ell}{2}-\frac{h'}{2M}\right\rfloor= \begin{cases} 
      \frac{\ell}{2}-1 ,&  \ell \ \text{is even} \\
     \frac{\ell-1}{2} ,& \ell \ \text{is odd}. 
   \end{cases}
\end{equation}
Using \eqref{floor1} and \eqref{floor2}, we see that, irrespective of the parity of $\ell$, the two finite sums over $j$ in \eqref{pxapx} combine together to give
\begin{equation*}
\sum_{j=0}^{\ell}\frac{(-1)^j}{(j)!}\zeta(-j,a)\zeta(2M(\ell-j)-2h')x^j,
\end{equation*}
which, when combined with the other expression in \eqref{pxapx}, shows that our $P(x, a)$ equals $P(x)$, which is defined in \eqref{eq24}. 

Next, we have to show that our $S(x, a)$ from \eqref{sevena} matches with the expressions for $U(x, a)$ in \eqref{uxameven} and \eqref{uxamodd} corresponding to $M$ even and $M$ odd respectively. We only prove this in the case when $M$ is even. That for $M$ odd can be similarly proved. 

Now substituting $N=2M$ and $h=M-h'+M\ell$, with $M$ even, say $M=2k$, in \eqref{sevena} and simplifying, we see that
 \begin{align*}
S(x,a)=&\frac{(-1)^{h'+1}}{4k}\left(\frac{2\pi}{x}\right)^{-\ell+\frac{2h'+1}{4k}}\sum_{n=1}^{\infty}n^{-1-\ell+\frac{1+2h'}{4k}}\sum_{j=-2k}^{2k-1}\exp\left(\frac{i\pi(1-4k+2h'-4k\ell)\left(j+\frac{1}{2}\right)}{4k}\right)\nonumber\\
&\quad\times \frac{\cos(2\pi na)+i(-1)^{j+1}\sin(2\pi na)}{\exp\left(2\pi\left(\frac{2\pi n}{x}\right)^{\frac{1}{4k}}e^{\frac{i\pi}{4k}\left(j+\frac{1}{2}\right)}\right)-1}.
\end{align*}
Now split the sum over $j$ according to the parity of $j$ and simplify so as to obtain
\begin{align*}
S(x,a)=&\frac{(-1)^{h'+1}}{4k}\left(\frac{2\pi}{x}\right)^{-\ell+\frac{2h'+1}{4k}}\sum_{n=1}^{\infty}n^{-1-\ell+\frac{1+2h'}{4k}}\nonumber\\
&\quad\times\Bigg\{\sum_{j=-k}^{k-1}\frac{\exp\left(-i\left(2\pi na-\frac{\pi(2h'+1)}{4k}\left(\frac{4j+1}{2}\right)+\frac{(\ell+1)}{2}(4j+1)\pi\right)\right)}{\exp\left(2\pi\left(\frac{2\pi n}{x}\right)^{\frac{1}{4k}}e^{\frac{i\pi}{4k}\left(\frac{4j+1}{2}\right)}\right)-1}\nonumber\\
&\quad\qquad+\sum_{j=-k}^{k-1}\frac{\exp\left(i\left(2\pi na+\frac{\pi(2h'+1)}{4k}\left(\frac{4j+3}{2}\right)-\frac{(\ell+1)}{2}(4j+3)\pi\right)\right)}{\exp\left(2\pi\left(\frac{2\pi n}{x}\right)^{\frac{1}{4k}}e^{\frac{i\pi}{4k}\left(\frac{4j+3}{2}\right)}\right)-1}\Bigg\}.
\end{align*}
Replace $j$ by $-j-1$ in the second sum and then observe that the resulting corresponding summands of the two sums are complex conjugates of each other so that
\begin{align*}
S(x,a)=\frac{(-1)^{h'+1}}{4k}\left(\frac{2\pi}{x}\right)^{-\ell+\frac{2h'+1}{4k}}\sum_{n=1}^{\infty}n^{-1-\ell+\frac{1+2h'}{4k}}
\sum_{j=-k}^{k-1}2\textup{Re}\left(\frac{e^{iuv}}{\exp{\left(a e^{-iu}\right)-1}}\right),
\end{align*}
where $a=2A\left(\frac{n}{x}\right), u=-\frac{\pi}{4k}\left(\frac{4j+1}{2}\right)$, and $uv=-2\pi an+\frac{\pi(2h'+1)}{4k}\left(\frac{4j+1}{2}\right)-\frac{\pi(\ell+1)(4j+1)}{2}$. Using Lemma \ref{ktywigl}, the notations in the hypotheses of Theorem \ref{ytk}, \eqref{eq22} and the fact that $k=M/2$, we deduce that
\begin{align*}
S(x,a)=\frac{(-1)^{h'}}{4k}\left(\frac{2\pi}{x}\right)^{-\ell+\frac{2h'+1}{4k}}\sum_{j=-\frac{M}{2}}^{\frac{M}{2}-1}\sum_{n=1}^{\infty}f_{2j+1}(n, h', \ell, x)n^{-1-\ell+\frac{1+2h'}{4k}},
\end{align*}
which is nothing but \eqref{uxameven}. Thus we derive \eqref{eq23e} from \eqref{dgkmgeneqn}. As remarked before, \eqref{uxamodd} can be proved by a similar argument.
\end{proof}
\section{A two-parameter generalization of Ramanujan's formula for $\zeta(2m+1)$}\label{ord2}
This section is devoted to proving Theorems \ref{dgkmord2} and \ref{ggram}, which, as will be seen, are equivalent to each other. We then give interesting special cases of Theorem \ref{ggram}. Before proving Theorem \ref{dgkmord2}, we begin with a lemma.
\begin{lemma}\label{iff}
Let $N$ be an odd positive integer. If $h>\frac{N}{2}$, then $\frac{N-2h+1}{N}=-2\left\lfloor\frac{h}{N}-\frac{1}{2}\right\rfloor$ if and only if $h=\frac{N+1}{2}+Nm$, where $m\in\mathbb{N}\cup\{0\}$.
\end{lemma}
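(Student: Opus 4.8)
The plan is to prove this elementary floor-function identity by verifying each implication through a direct computation, the key structural fact being that $N$ odd forces $\tfrac{N+1}{2}$ to be an integer, so that $h=\tfrac{N+1}{2}+Nm$ is a genuine integer exactly when $m$ is.

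First I would treat the ``if'' direction. Suppose $h=\tfrac{N+1}{2}+Nm$ with $m\in\mathbb{N}\cup\{0\}$. Then $N-2h+1=N-(N+1)-2Nm+1=-2Nm$, so $\tfrac{N-2h+1}{N}=-2m$. On the other hand $\tfrac{h}{N}-\tfrac12=\tfrac{N+1}{2N}+m-\tfrac12=\tfrac{1}{2N}+m$, and since $0<\tfrac{1}{2N}\le\tfrac12<1$ for every $N\ge1$, we get $\big\lfloor\tfrac{h}{N}-\tfrac12\big\rfloor=m$. Hence $-2\big\lfloor\tfrac{h}{N}-\tfrac12\big\rfloor=-2m=\tfrac{N-2h+1}{N}$, as required.

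For the converse, assume $\tfrac{N-2h+1}{N}=-2\big\lfloor\tfrac{h}{N}-\tfrac12\big\rfloor$ and set $j:=\big\lfloor\tfrac{h}{N}-\tfrac12\big\rfloor$. Since $h>N/2$ with $N$ odd, $h$ is an integer with $h\ge\tfrac{N+1}{2}$, so $\tfrac{h}{N}-\tfrac12\ge\tfrac{1}{2N}>0$ and therefore $j\ge0$. The relation $\tfrac{N-2h+1}{N}=-2j$ rearranges to $2h=N+1+2Nj$, i.e. $h=\tfrac{N+1}{2}+Nj$, which lies in $\tfrac{N+1}{2}+N(\mathbb{N}\cup\{0\})$ because $j\ge0$; this is the asserted form with $m=j$. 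For $j\ge1$ this last conclusion also follows directly from Lemma \ref{equality}, the remaining case $j=0$ giving $h=\tfrac{N+1}{2}$.

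There is no real obstacle here; the only points that require a moment's care are that the fractional part $\tfrac{1}{2N}$ stays strictly inside $(0,1)$ for all $N\ge1$, so that the floor computation in the ``if'' direction is valid, and that the hypothesis $h>N/2$ together with the parity of $N$ guarantees $j\ge0$, so that the solved value $h=\tfrac{N+1}{2}+Nj$ genuinely corresponds to a non-negative integer $m$.
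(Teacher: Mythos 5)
Your proof is correct and follows essentially the same elementary route as the paper: the paper sets $m=\lfloor h/N-\tfrac12\rfloor$, writes $\tfrac{h}{N}-\tfrac12=m+r$, and deduces $r=\tfrac{1}{2N}$, while you solve the linear relation for $h$ directly; the two computations are equivalent. The only difference is that you spell out the "if" direction, which the paper dismisses as trivial, and you correctly isolate the two points needing care (that $\tfrac{1}{2N}\in(0,1)$ and that $h>N/2$ forces the floor to be nonnegative).
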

\begin{proof}
Let $\left\lfloor\frac{h}{N}-\frac{1}{2}\right\rfloor=m$. Since $h>N/2$, we have $m\in\mathbb{N}\cup\{0\}$. Let $\frac{h}{N}-\frac{1}{2}=m+r$, where $0\leq r<1$. Then $h=\frac{N}{2}+N(m+r)$. From the hypothesis, $N-2h+1=-2Nm$. Since the last two equations imply $r=\frac{1}{2N}$, we get $h=\frac{N+1}{2}+Nm$. The other direction is trivial. 
\end{proof}
\begin{proof}[Theorem \textup{\ref{dgkmord2}}][]
The setup for the proof of this theorem is exactly similar to that of Theorem \ref{dgkmgen}. Hence we only give details where they differ from those of the latter.
 
Note that $a\in(0,1]$ is fixed, and our integrand $F(s)$, defined in \eqref{F(s)}, is 
$F(s):=\Gamma(s)\zeta(s,a)$\newline $\zeta(Ns-(N-2h))x^{-s}$.
The poles of $\G(s)$ include negative even integers whereas $\zeta(Ns-(N-2h))$ has a simple pole at $s=\frac{N-2h+1}{N}$. Since $N$ is odd, it may happen that $\frac{N-2h+1}{N}=-2j$ for some positive integers $N$ and $j$. As explained in the introduction, Lemma \ref{equality} then implies that $j=\left\lfloor\frac{h}{N}-\frac{1}{2}\right\rfloor$. This may imply a double order pole of $F(s)$ at $s=\frac{N-2h+1}{N}=-2\left\lfloor\frac{h}{N}-\frac{1}{2}\right\rfloor$ if $\zeta\left(\frac{N-2h+1}{N}, a\right)\neq 0$, or a simple pole (or a removable singularity) if $\zeta\left(\frac{N-2h+1}{N}, a\right)=0$. 
However, even if $\zeta\left(\frac{N-2h+1}{N}, a\right)=0$, one may first calculate the residue assuming a double pole and then apply this fact, and the answer obtained would be same as that deduced by first applying $\zeta\left(\frac{N-2h+1}{N}, a\right)=0$ and then accordingly calculating the residue.

Thus the residue at $\frac{N-2h+1}{N}$ is given by
\begin{align}\label{R_N-2h+1/N*}
R_{\frac{N-2h+1}{N}}&=\lim_{s\to\frac{N-2h+1}{N}} \left(\frac{\mathrm{d}}{\mathrm{d}s} \left(s- \frac{N-2h +1}{N}\right)^2 \Gamma(s)\ \zeta(s,a)\ \zeta(Ns - (N-2h))x^{-s}\right)\nonumber\\
&= \frac{x^{2{\lfloor{\frac{h}{N} - \frac{1}{2}}\rfloor}}}{N(2{\lfloor{\frac{h}{N} - \frac{1}{2}}\rfloor})!} \Bigg\{ -\frac{B_{2\lfloor{\frac{h}{N} - \frac{1}{2}}\rfloor +1} (a)}{2\lfloor{\frac{h}{N} - \frac{1}{2}}\rfloor +1}~\left(\psi\left({2\left\lfloor{\frac{h}{N} - \frac{1}{2}}\right\rfloor} +1\right)+ N\gamma - \log x\right)\nonumber\\
&\hspace{3.5cm} + \zeta'\left(-2{\left\lfloor{\frac{h}{N} - \frac{1}{2}}\right\rfloor},a\right)\Bigg\},
\end{align}
Thus, from \eqref{R_0}, \eqref{R_1}, \eqref{R_-2j}, \eqref{R_-(2j-1)}, \eqref{R_N-2h+1/N*} and \eqref{sodda}, we see that 
\begin{align}\label{dgkmord2p}
\sum_{n=1}^{\infty}n^{N-2h}\frac{\textup{exp}(-an^{N}x)}{1-\textup{exp}(-n^{N}x)}=P^{*}(x,a)+S(x,a),
\end{align}
where
\begin{align}\label{psxabef}
P^{*}(x,a)&:=-\left(a-\frac{1}{2}\right)\zeta(-N+2h)+\frac{\zeta(2h)}{x}-\sum_{j=1}^{\left\lfloor\frac{h}{N}-\frac{1}{2}\right\rfloor-1}\frac{B_{2j+1}(a)}{(2j+1)!}\zeta\left(2h-(2j+1)N\right)x^{2j}\nonumber\\
&\quad+\frac{x^{2\left\lfloor\frac{h}{N}-\frac{1}{2}\right\rfloor}}{N\left(2\left\lfloor\tfrac{h}{N}-\tfrac{1}{2}\right\rfloor\right)!}\bigg\{-\frac{B_{2\left\lfloor\frac{h}{N}-\frac{1}{2}\right\rfloor+1}(a)}{2\left\lfloor\frac{h}{N}-\frac{1}{2}\right\rfloor+1}\left(\psi\left(2\left\lfloor\tfrac{h}{N}-\tfrac{1}{2}\right\rfloor+1\right)+N\g-\log x\right)\nonumber\\
&\quad+\zeta'\left(-2\left\lfloor\tfrac{h}{N}-\tfrac{1}{2}\right\rfloor,a\right)\bigg\}+(-1)^{h+1}2^{2h-1}\pi^{2h}\sum_{j=1}^{\left\lfloor\frac{h}{N}\right\rfloor}\left(\frac{-1}{4\pi^2}\right)^{jN}\frac{B_{2j}(a)B_{2h-2jN}}{(2j)!(2h-2jN)!}x^{2j-1},
\end{align}
and the calculation for $S(x, a)$ remains the same exactly as in proof of Theorem \ref{dgkmgen}. 

As we now show, \eqref{dgkmord2p} can be simplified to a great extent using the following result of Koyama and Kurokawa \cite[p.~7]{koyakuro} for an even positive integer $k$ and $0<a\leq 1$:
{\allowdisplaybreaks\begin{align}\label{kaykay}
\zeta'(-k,a)&=\frac{2(-1)^{\frac{k}{2}}k!}{(2\pi)^{k+1}}\bigg\{\sum_{n=1}^{\infty}\frac{(\log n)\sin(2\pi na)}{n^{k+1}}+\left(\log(2\pi)-\psi(k+1)\right)\sum_{n=1}^{\infty}\frac{\sin(2\pi na)}{n^{k+1}}\nonumber\\
&\qquad\qquad\qquad+\frac{\pi}{2}\sum_{n=1}^{\infty}\frac{\cos(2\pi na)}{n^{k+1}}\bigg\}.
\end{align}}
Even though Koyama and Kurokawa write $\log(2\pi)+\g-\left(1+\tfrac{1}{2}+\cdots+\tfrac{1}{k}\right)$ in place of $\left(\log(2\pi)-\psi(k+1)\right)$, it is easy to see with the help of \eqref{psifunc} that they are equal. Also, even though they work with $0<a<1$, it is easy to see that the formula holds for $a=1$ as long as $k$ is even, $k>0$, and is then a well-known result, see for example, \cite[Equation (1)]{ktyham}:
\begin{equation*}
\zeta'(-k)=\frac{1}{2}(-1)^{\frac{k}{2}}(2\pi)^{-k}(k!)\zeta(k+1).
\end{equation*}
It is important to note that \eqref{kaykay} also holds for $k=0$ but only for $0<a<1$, and is then an equivalent form of the well-known Kummer formula for $\log\G(a)$ \cite[p.~4]{kummer}.

By Lemma \ref{iff}, we know that for $h>N/2$, we have $\frac{N-2h+1}{N}=-2\left\lfloor\frac{h}{N}-\frac{1}{2}\right\rfloor$ if and only if $h=\frac{N+1}{2}+Nm$, where $m\in\mathbb{N}$. Thus, we let $h=\frac{N+1}{2}+Nm, m\in\mathbb{N}$ in \eqref{dgkmord2p}. We employ \eqref{kaykay} with $k=2m, m>0,$ in the expression for the residue in \eqref{psxabef} arising due to double pole to simplify it as
\begin{align}\label{muk1}
&\frac{x^{2m}}{N(2m)!}\left\{-\frac{B_{2m+1}(a)}{2m+1}\left(\psi(2m+1)+N\g-\log x\right)+\zeta'(-2m,a)\right\}\nonumber\\
&=\frac{x^{2m}B_{2m+1}(a)}{N(2m+1)!}\left(-N\g+\log\left(\frac{x}{2\pi}\right)\right)+\frac{2(-1)^{m}x^{2m}}{N(2\pi)^{2m+1}}\left(\sum_{n=1}^{\infty}\frac{(\log n)\sin(2\pi na)}{n^{2m+1}}+\frac{\pi}{2}\sum_{n=1}^{\infty}\frac{\cos(2\pi na)}{n^{2m+1}}\right),
\end{align}
where, in the course of simplification, the series $\sum_{n=1}^{\infty}\frac{\sin(2\pi na)}{n^{2m+1}}$ is expressed in terms of Bernoulli polynomials using their Fourier expansion \cite[p.~805]{as}:
\begin{equation}\label{fou}
B_{2m+1}(a)=\frac{2(-1)^{m+1}(2m+1)!}{(2\pi)^{2m+1}}\sum_{n=1}^{\infty}\frac{\sin(2\pi na)}{n^{2m+1}}.
\end{equation}
Moreover, part of the expression for $S(x, a)$ in \eqref{sodda} can be simplified, namely,
\begin{align}\label{muk2}
&\frac{(-1)^{h+1}}{N}\left(\frac{2\pi}{x}\right)^{\frac{N-2h+1}{N}}\sum_{j=-\frac{(N-1)}{2}}^{\frac{(N-1)}{2}}\textup{exp}\left(\frac{i\pi(1-2h)j}{N}\right)\frac{(-1)^{j+\frac{N+1}{2}}}{\pi}\sum_{n=1}^{\infty}\frac{\sin(2\pi na)}{n^{\frac{2h-1}{N}}}\log\left(\tfrac{1}{\pi}A_{N,j}\left(\tfrac{n}{x}\right)\right)\nonumber\\
&=\frac{(-1)^{m+\frac{N+3}{2}}}{N}\left(\frac{x}{2\pi}\right)^{2m}\sum_{j=-\frac{(N-1)}{2}}^{\frac{(N-1)}{2}}\frac{(-1)^{2j+\frac{N+1}{2}}}{\pi}\sum_{n=1}^{\infty}\frac{\sin(2\pi na)}{n^{2m+1}}\log\left(\left(\frac{2\pi n}{x}\right)^{\frac{1}{N}}e^{\frac{i\pi j}{N}}\right)\nonumber\\
&=\frac{(-1)^{m+1}}{\pi N}\left(\frac{x}{2\pi}\right)^{2m}\sum_{j=-\frac{(N-1)}{2}}^{\frac{(N-1)}{2}}\left\{\left(\frac{1}{N}\log\left(\frac{2\pi}{x}\right)+\frac{i\pi j}{N}\right)\sum_{n=1}^{\infty}\frac{\sin(2\pi na)}{n^{2m+1}}+\frac{1}{N}\sum_{n=1}^{\infty}\frac{(\log n)\sin(2\pi na)}{n^{2m+1}}\right\}\nonumber\\
&=\frac{x^{2m}}{N(2m+1)!}\log\left(\frac{2\pi}{x}\right)B_{2m+1}(a)-\frac{(-1)^{m}}{\pi N}\left(\frac{x}{2\pi}\right)^{2m}\sum_{n=1}^{\infty}\frac{(\log n)\sin(2\pi na)}{n^{2m+1}},
\end{align}
where in the last step, we again used \eqref{fou}. Now combine \eqref{muk1} and \eqref{muk2} to deduce that
\begin{align}\label{muk3}
&\frac{x^{2m}}{N(2m)!}\left\{-\frac{B_{2m+1}(a)}{2m+1}\left(\psi(2m+1)+N\g-\log x\right)+\zeta'(-2m,a)\right\}\nonumber\\
&+\frac{(-1)^{h+1}}{N}\left(\frac{2\pi}{x}\right)^{\frac{N-2h+1}{N}}\sum_{j=-\frac{(N-1)}{2}}^{\frac{(N-1)}{2}}\textup{exp}\left(\frac{i\pi(1-2h)j}{N}\right)\frac{(-1)^{j+\frac{N+1}{2}}}{\pi}\sum_{n=1}^{\infty}\frac{\sin(2\pi na)}{n^{\frac{2h-1}{N}}}\log\left(\tfrac{1}{\pi}A_{N,j}\left(\tfrac{n}{x}\right)\right)\nonumber\\
&=-\g B_{2m+1}(a)\frac{x^{2m}}{(2m+1)!}+\frac{(-1)^{m}x^{2m}\pi}{N(2\pi)^{2m+1}}\sum_{n=1}^{\infty}\frac{\cos(2\pi na)}{n^{2m+1}}.
\end{align}
Substituting \eqref{muk3} in \eqref{dgkmord2p} and noting that $m=\left\lfloor\frac{h}{N}-\frac{1}{2}\right\rfloor=\frac{2h-1-N}{2N}$ leads us to \eqref{psxa}.
\end{proof}

\begin{proof}[Theorem \textup{\ref{ggram}}][]
Let $h=\frac{N+1}{2}+Nm, m>0$, $x=2^{N}\a$ and $\a\b^{N}=\pi^{N+1}$ in Theorem \ref{dgkmord2}. To write the sum over $j$ going from $0$ to $\left\lfloor\tfrac{h}{N}\right\rfloor$ in terms of $\a$ and $\b$, we use the fact that
\begin{align}\label{e2c}
\pi(2\pi)^{(2m+1)N-2jN}x^{2j-1}=2^{2Nm}\a^{2j+\frac{2N}{N+1}(m-j)}\b^{N+\frac{2N^2}{N+1}(m-j)}.
\end{align}
Now rearrange the terms of the resulting identity upon the aforementioned substitutions, multiply both sides of the rearranged identity by $\a^{-2Nm/(N+1)}$ , and then simplify to arrive at \eqref{zetageneqna}.
\end{proof}
Letting $N=1$ in Theorem \ref{ggram} gives the following result which can be thought of as a different one-parameter generalization, as compared to \eqref{zetageneqn}, of \eqref{zetaodd}.
\begin{theorem}\label{ggramN1}
Let $0<a\leq 1$. Let $\a,\b>0$ such that $\a\b=\pi^{2}$. Then for $m\in\mathbb{Z}, m>0$,
{\allowdisplaybreaks\begin{align*}
&\a^{-m}\left(\left(a-\frac{1}{2}\right)\zeta(2m+1)+\sum_{j=1}^{m-1}\frac{B_{2j+1}(a)}{(2j+1)!}\zeta(2m+1-2j)(2\a)^{2j}+\sum_{n=1}^{\infty}\frac{n^{-2m-1}\textup{exp}\left(-2an\a\right)}{1-\textup{exp}\left(-2n\a\right)}\right)\nonumber\\
&=(-\b)^{-m}\bigg[\frac{(-1)^{m+1}(2\pi)^{2m}B_{2m+1}(a)\g}{(2m+1)!}+\frac{1}{2}\sum_{n=1}^{\infty}\frac{\cos(2\pi na)}{n^{2m+1}}+\sum_{n=1}^{\infty}\frac{n^{-2m-1}\cos(2\pi na)}{\textup{exp}\left(2n\b\right)-1}\nonumber\\
&\quad+\frac{1}{2\pi}\sum_{n=1}^{\infty}\frac{\sin(2\pi na)}{n^{2m+1}}\left(\psi\left(\tfrac{in\beta}{\pi}\right)+\psi\left(\tfrac{-in\beta}{\pi}\right)\right)\bigg]+(-1)^{m}2^{2m}\sum_{j=0}^{m+1}\frac{(-1)^jB_{2j}(a)B_{2m-2j+2}}{(2j)!(2m-2j+2)!}\a^{j}\b^{m+1-j}.
\end{align*}}
\end{theorem}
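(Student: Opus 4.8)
The plan is to obtain Theorem \ref{ggramN1} simply by specializing Theorem \ref{ggram} to $N=1$. Since $N=1$ is an odd positive integer, Theorem \ref{ggram} is applicable; the implicit restriction inherited from Theorem \ref{dgkmord2} via the substitution $h=\tfrac{N+1}{2}+Nm$, namely $\tfrac{N-2h+1}{N}=-2\lfloor\tfrac{h}{N}-\tfrac12\rfloor\neq 0$, becomes $-2m\neq 0$ and hence holds because $m>0$. It therefore remains only to set $N=1$ in \eqref{zetageneqna} and simplify term by term.

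First I would simplify the prefactors: for $N=1$ one has $\a^{-2Nm/(N+1)}=\a^{-m}$, $\bigl(-\b^{2N/(N+1)}\bigr)^{-m}=(-\b)^{-m}$, $2^{2m(N-1)}/N=1$, $(2^N\a)^{2j}=(2\a)^{2j}$, and inside the Lambert series $(2n)^N\a=2n\a$; thus the left-hand side of \eqref{zetageneqna} becomes $\a^{-m}$ times the combination of $\zeta(2m+1)$, the finite sum $\sum_{j=1}^{m-1}\tfrac{B_{2j+1}(a)}{(2j+1)!}\zeta(2m+1-2j)(2\a)^{2j}$, and $\sum_{n\ge1}n^{-2m-1}e^{-2an\a}/(1-e^{-2n\a})$. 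Next, every sum over $j$ from $-\tfrac{N-1}{2}$ to $\tfrac{N-1}{2}$ collapses to the single term $j=0$; there $e^{i\pi j/N}=1$, so $(2n)^{1/N}\b e^{i\pi j/N}=2n\b$ and $\tfrac{\pm i\b}{2\pi}(2n)^{1/N}e^{i\pi j/N}=\tfrac{\pm in\b}{\pi}$, while $(-1)^{(N+3)/2}=1$ and, at $j=0$, $(-1)^{j+(N+3)/2}=1$. Hence the bracketed block of \eqref{zetageneqna} reduces to
\begin{align*}
(-\b)^{-m}\bigg[&\frac{(-1)^{m+1}(2\pi)^{2m}B_{2m+1}(a)\g}{(2m+1)!}+\frac12\sum_{n=1}^{\infty}\frac{\cos(2\pi na)}{n^{2m+1}}+\sum_{n=1}^{\infty}\frac{n^{-2m-1}\cos(2\pi na)}{e^{2n\b}-1}\\
&+\frac{1}{2\pi}\sum_{n=1}^{\infty}\frac{\sin(2\pi na)}{n^{2m+1}}\left(\psi\left(\tfrac{in\b}{\pi}\right)+\psi\left(\tfrac{-in\b}{\pi}\right)\right)\bigg].
\end{align*}

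Finally, for the residual polynomial term I would use, for $N=1$, that $(-1)^{m+(N+3)/2}=(-1)^m$, $2^{2Nm}=2^{2m}$, the summation index $\lfloor\tfrac{N+1}{2N}+m\rfloor=m+1$, the Bernoulli subscript $N+1+2N(m-j)=2m-2j+2$, and the exponents $\a^{2j/(N+1)}=\a^{j}$ and $\b^{N+2N^2(m-j)/(N+1)}=\b^{m+1-j}$, so this term becomes $(-1)^m2^{2m}\sum_{j=0}^{m+1}\tfrac{(-1)^jB_{2j}(a)B_{2m-2j+2}}{(2j)!(2m-2j+2)!}\a^{j}\b^{m+1-j}$. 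Assembling the three pieces gives the asserted identity. There is no genuine analytic difficulty here, since everything is an instance of the already-proved Theorem \ref{ggram}; the only thing to watch carefully is the bookkeeping of the sign factors $(-1)^{(N+3)/2}$ and $(-1)^{j+(N+3)/2}$ and of the $\a,\b$ exponents after setting $N=1$.
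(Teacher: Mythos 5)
Your proposal is correct and is exactly the paper's route: the paper obtains Theorem \ref{ggramN1} simply by setting $N=1$ in Theorem \ref{ggram}, and all of your term-by-term simplifications of the prefactors, the collapse of the $j$-sum to $j=0$, and the exponents in the residual polynomial check out.
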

We now give corollaries of Theorem \ref{ggram} when $a$ takes special values in the interval $(0,1)$. 
\subsection{Special case $a=1/2$ of Theorem \ref{ggram}}\label{splhalf} 
\begin{proof}[Theorem \textup{\ref{ggramhalf}}][]
Let $a=1/2$ in Theorem \ref{ggram}. To simplify, we use \cite[p.~4]{temme},
\begin{equation}\label{bjhalf}
B_j\left(\tfrac{1}{2}\right)=(2^{1-j}-1)B_{j}.
\end{equation}
Along with the fact that $B_{2j+1}=0$, this implies that 
\begin{equation}\label{b2jp1}
B_{2j+1}\left(\frac{1}{2}\right)=0.
\end{equation}
We also employ the identity $\sum_{n=1}^{\infty}(-1)^{n}n^{-2m-1}=(2^{-2m}-1)\zeta(2m+1)$.
These together imply \eqref{zetagenhalfa}.
\end{proof}
\begin{proof}[Corollary \textup{\ref{transzetaodd}}][]
Subtract the complete expression in square brackets in \eqref{zetagenhalfa} from its both sides, multiply both sides of the resulting identity by $\a^{\frac{2Nm}{N+1}}$, and then let $\a=\b=\pi$. The finite sum on the right side of the resulting identity then becomes a polynomial in $\pi$ with non-zero rational coefficients. Since $\pi$ is transcendental, this proves the result.
\end{proof}
Corollaries \ref{rtype} and \ref{criterionzeta} have been already proved in the introduction, hence we refrain from repeating the arguments here.
\subsection{Proof of Corollary \ref{zeta311}}\label{splfourth}
Let $E_{k}$ denote the $k^{\textup{th}}$ Euler number, defined by means of the generating function 
\begin{equation*}
\frac{1}{\cosh z}=\sum_{k=0}^{\infty}\frac{E_k}{k!}z^{k}\hspace{4mm} (|z|<\tfrac{1}{2}\pi).
\end{equation*}
Let $a=1/4$ in Theorem \ref{ggram} to obtain
{\allowdisplaybreaks\begin{align*}
&\a^{-\frac{2Nm}{N+1}}\bigg(-\frac{1}{4}\zeta(2Nm+1)-\frac{1}{4}\sum_{j=1}^{m-1}\frac{E_{2j}}{(2j)!}\zeta(2Nm+1-2jN)(2^{N-2}\a)^{2j}\nonumber\\&\qquad\quad+\sum_{n=1}^{\infty}\frac{n^{-2Nm-1}\textup{exp}\left(-\tfrac{1}{4}(2n)^{N}\a\right)}{1-\textup{exp}\left(-(2n)^{N}\a\right)}\bigg)\nonumber\\
&=\frac{\left(-\b^{\frac{2N}{N+1}}\right)^{-m}2^{2m(N-1)}}{N}\bigg[\frac{(-1)^m\pi^{2m}N\g E_{2m}}{2^{2m+2}(2m)!}+\frac{(2^{-2m}-1)}{2^{2m+2}}\zeta(2m+1)\nonumber\\
&\quad+(-1)^{\frac{N+3}{2}}\sum_{j=\frac{-(N-1)}{2}}^{\frac{N-1}{2}}(-1)^{j}\bigg\{\frac{1}{2^{2m+1}}\sum_{n=1}^{\infty}\frac{(-1)^nn^{-2m-1}}{\textup{exp}\left((4n)^{\frac{1}{N}}\b e^{\frac{i\pi j}{N}}\right)-1}\nonumber\\
&\quad+\frac{(-1)^{j+\frac{N+3}{2}}}{2\pi}\sum_{n=1}^{\infty}\frac{\sin\left(\frac{n\pi}{2}\right)}{n^{2m+1}}\left(\psi\left(\tfrac{i\beta}{2\pi}(2n)^{\frac{1}{N}} e^{\frac{i\pi j}{N}}\right)+\psi\left(\tfrac{-i\beta}{2\pi}(2n)^{\frac{1}{N}} e^{\frac{i\pi j}{N}}\right)\right)\bigg\}\bigg]\nonumber\\
&\quad+(-1)^{m+\frac{N+3}{2}}2^{2Nm}\sum_{j=0}^{\left\lfloor\frac{N+1}{2N}+m\right\rfloor}\frac{(-1)^{j+1}2^{-2j}\left(1-2^{1-2j}\right)B_{2j}B_{N+1+2N(m-j)}}{(2j)!(N+1+2N(m-j))!}\a^{\frac{2j}{N+1}}\b^{N+\frac{2N^2(m-j)}{N+1}},
\end{align*}}
since \cite[p.~26]{magobersoni}
\begin{equation*}
B_{n}\left(\tfrac{1}{4}\right)=-nE_{n-1}4^{-n}-2^{-n}(1-2^{1-n})B_n
\end{equation*}
and $E_{2n+1}=0$. Now let $\a=\b=\pi$, $m=5$ and $N=1$ in the above identity and simplify.

\section{A two-parameter generalization of the transformation formula of $\log\eta(z)$}\label{limiting}
Here we prove Theorem \ref{dgkmord2m0} which is a two-parameter generalization of the transformation formula of the logarithm of the Dedekind eta-function stated in \eqref{logdede}.
\begin{proof}[Theorem \textup{\ref{dgkmord2m0}}][]
Before we prove Theorem \ref{dgkmord2m0}, it is important to know how it differs from Theorem \ref{dgkmord2}. In Theorem \ref{dgkmord2}, the condition $\frac{N-2h+1}{N}=-2\left\lfloor\frac{h}{N}-\frac{1}{2}\right\rfloor\neq 0$ suggested that we separately consider the contribution $-\left(a-\frac{1}{2}\right)\zeta(-N+2h)$ arising due to the simple pole of $\G(s)$ at $s=0$. 

However, in Theorem \ref{dgkmord2m0}, we have the condition $\frac{N-2h+1}{N}=-2\left\lfloor\frac{h}{N}-\frac{1}{2}\right\rfloor=0$, that is, $h=\frac{N+1}{2}$, which means that the integrand $F(s)$, defined in \eqref{F(s)}, has a double order pole at $s=0$ except when $a=1/2$ as will be explained below. So we can as well use the same formula that we used in the proof of Theorem \ref{dgkmord2} to calculate the residue at the double order pole at $s=\frac{N-2h+1}{N}=-2\left\lfloor\frac{h}{N}-\frac{1}{2}\right\rfloor\neq 0$, that is \eqref{R_N-2h+1/N*}, to calculate the residue at the double order pole at $s=\frac{N-2h+1}{N}=-2\left\lfloor\frac{h}{N}-\frac{1}{2}\right\rfloor=0$ in Theorem \ref{dgkmord2m0}. But then
the term $-\left(a-\frac{1}{2}\right)\zeta(-N+2h)$ appearing in Theorem \ref{dgkmord2} does not appear in this context. Note also that \cite[p.~264, Equation (17)]{apostol-1998a} $\zeta(0,a)=\frac{1}{2}-a\neq 0$, except when $a=\frac{1}{2}$, which indeed means that we have a double order pole when $a\neq\frac{1}{2}$. Also when $a=\frac{1}{2}$, even though we get a simple pole at $s=0$, one can always apply \eqref{R_N-2h+1/N*} in this case too and get the correct residue contribution.

\emph{Taking the above thing into account}, we let $h=\frac{N+1}{2}$ in \eqref{dgkmord2p} and simplify the resultant using the facts \cite[p.~3]{temme} $B_1(a)=\left(a-\frac{1}{2}\right)$, \cite[p.~54]{temme} $\psi(1)=-\g$ and \cite[Equations (9), $(22^{a})$]{lerchform} $\zeta'(0,a)=\log\G(a)-\frac{1}{2}\log(2\pi)$. This results in \eqref{limalim}.

The variant of \eqref{limalim}, that is, \eqref{limab} can be proved by letting $x=2^{N}\a, \a\b^{N}=\pi^{N+1}$ in \eqref{limalim}, making use of \eqref{e2c} with $m=0$ and then by simplifying the resultant.
\end{proof}
\begin{proof}[Corollary \textup{\ref{dgkmord2m0kum}}][]
As mentioned in the proof of Theorem \ref{dgkmord2m0}, the term $-\left(a-\frac{1}{2}\right)\zeta(-N+2h)$ does not appear when $\frac{N-2h+1}{N}=-2\left\lfloor\frac{h}{N}-\frac{1}{2}\right\rfloor=0$. With this understanding, we let $h=\frac{N+1}{2}$ in Theorem \ref{dgkmord2} and while simplifying, we use following formula valid for $0<a<1$ \cite[p.~45, Formula \textbf{1.441.2}]{grn}:
\begin{equation*}
\sum_{n=1}^{\infty}\frac{\cos(2\pi na)}{n}=-\frac{1}{2}\log(2(1-\cos(2\pi a))).
\end{equation*}
This results in \eqref{limalimsim}.
\end{proof}

\begin{proof}[Corollary \textup{\ref{limitingahalfN1}}][]
Let $x=2^{N}\a$ and $\a\b^{N}=\pi^{N+1}$ in \eqref{limalimsim} so as to obtain
\begin{align}\label{limabsim}
&\sum_{n=1}^{\infty}\frac{\textup{exp}(-a(2n)^{N}\a)}{n(1-\textup{exp}(-(2n)^{N}\a))}-\frac{1}{N}(-1)^{\frac{N+3}{2}}\sum_{j=\frac{-(N-1)}{2}}^{\frac{N-1}{2}}(-1)^{j}\bigg\{\sum_{n=1}^{\infty}\frac{\cos(2\pi na)}{n\left(\textup{exp}\left((2n)^{\frac{1}{N}}\b e^{\frac{i\pi j}{N}}\right)-1\right)}\nonumber\\
&\quad+\frac{1}{2\pi}(-1)^{j+\frac{N+3}{2}}\sum_{n=1}^{\infty}\frac{\sin(2\pi na)}{n}\left(\psi\left(\tfrac{i\beta}{2\pi}(2n)^{\frac{1}{N}} e^{\frac{i\pi j}{N}}\right)+\psi\left(\tfrac{-i\beta}{2\pi}(2n)^{\frac{1}{N}} e^{\frac{i\pi j}{N}}\right)\right)\bigg\}\nonumber\\
&=\g\left(\frac{1}{2}-a\right)-\frac{\log\left(2\sin(\pi a)\right)}{2N}
+(-1)^{\frac{N+3}{2}}\sum_{j=0}^{\left\lfloor\frac{N+1}{2N}\right\rfloor}\frac{(-1)^jB_{2j}(a)B_{N+1-2Nj}}{(2j)!(N+1-2Nj)!}\a^{\frac{2j}{N+1}}\b^{N-\frac{2N^2j}{N+1}}.
\end{align}
Now let $a=1/2, N=1$ in \eqref{limabsim} and simplify.
\end{proof}
\begin{proof}[Corollary \textup{\ref{nesterenkotype}}][]
Let $a=1/4, N=1$ in \eqref{limabsim} and simplify. This leads to \eqref{gammalogirr}. Also, \eqref{gammalogirrpi} follows from \eqref{gammalogirr} by letting $\a=\b=\pi$.
\end{proof}
\begin{proof}[Corollary \textup{\ref{irreuler1}}][]
If $\a, \b$ and $\log 2$ are linearly independent over $\mathbb{Q}$, then the right-hand side of Corollary \ref{nesterenkotype} is irrational. That forces at least one of 
\begin{align*}
\g, \hspace{1mm}\sum_{n=1}^{\infty}\frac{e^{3 n\a/2}}{n(e^{2n\a}-1)},\hspace{1mm}\sum_{n=1}^{\infty}\frac{(-1)^n}{n\left(e^{4n\b}-1\right)},\text{and}\hspace{1mm}\frac{1}{2\pi}\sum_{n=1}^{\infty}\frac{(-1)^n}{2n-1}\left(\psi\left(\tfrac{i\b}{\pi}(2n-1)\right)+\psi\left(-\tfrac{i\b}{\pi}(2n-1)\right)\right)
\end{align*}
to be irrational.
\end{proof}
\begin{proof}[Corollary \textup{\ref{irreuler2}}][]
This follows from \eqref{gammalogirrpi} since $\pi$ and $\log 2$ are linearly independent over $\mathbb{Q}$.
\end{proof}
\begin{corollary}
\begin{align}\label{ramlagber}
\sum_{n=1}^{\infty}\frac{e^{n\pi}}{n\left(e^{2n\pi}-1\right)}-\sum_{n=1}^{\infty}\frac{(-1)^n}{n\left(e^{2n\pi}-1\right)}=-\frac{1}{2}\log 2+\frac{\pi}{8}.
\end{align}
\end{corollary}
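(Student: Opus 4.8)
The plan is to obtain \eqref{ramlagber} as the symmetric special case $\a=\b=\pi$ of Corollary \textup{\ref{limitingahalfN1}}. Indeed, the pair $\a=\b=\pi$ satisfies the constraint $\a\b=\pi^2$ required there, so we are entitled to substitute these values into
\[
\sum_{n=1}^{\infty}\frac{e^{n\a}}{n\left(e^{2n\a}-1\right)}-\sum_{n=1}^{\infty}\frac{(-1)^n}{n\left(e^{2n\b}-1\right)}=-\frac{1}{2}\log 2+\frac{\a+2\b}{24}.
\]

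With $\a=\b=\pi$, the two series on the left both acquire the argument $2n\pi$ in the denominator, yielding exactly the left-hand side of \eqref{ramlagber}, while the right-hand side becomes $-\tfrac12\log 2+\tfrac{\pi+2\pi}{24}=-\tfrac12\log 2+\tfrac{3\pi}{24}=-\tfrac12\log 2+\tfrac{\pi}{8}$. This gives the claimed identity.

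There is essentially no obstacle here: the result is a direct numerical specialization, and the only thing to check is that $\a=\b=\pi$ is admissible, which it is. (One should of course note that Corollary \textup{\ref{limitingahalfN1}} was itself derived from the $a=1/2$, $N=1$ case of Corollary \textup{\ref{dgkmord2m0kum}}, so \eqref{ramlagber} ultimately rests on Theorem \textup{\ref{dgkmord2m0}}; but for the present statement we may invoke Corollary \textup{\ref{limitingahalfN1}} as a black box.) It may be worth remarking that \eqref{ramlagber} is the additive analogue, at the self-dual point of the transformation $z\mapsto -1/z$, of the more classical Schlömilch-type evaluations recorded nearby, and that it is equivalent to the product identity $\sqrt{2}\,e^{\pi/24}\prod_{n\ge 0}(1-e^{-(2n+1)\pi}) = e^{-\pi/12}\prod_{n\ge 0}(1+e^{-2n\pi})$ evaluated at the fixed point $\a=\b=\pi$.
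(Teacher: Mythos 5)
Your proposal is correct and follows essentially the same route as the paper: the paper's proof sets $N=1$, $a=1/2$, $\a=\b=\pi$ directly in \eqref{limabsim}, and since Corollary \ref{limitingahalfN1} is exactly that specialization with $\a,\b$ still free, your step of putting $\a=\b=\pi$ into Corollary \ref{limitingahalfN1} is the same argument packaged slightly differently. The arithmetic $\frac{\a+2\b}{24}\big|_{\a=\b=\pi}=\frac{\pi}{8}$ and the admissibility check $\a\b=\pi^2$ are both right.
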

\begin{proof}
Let $N=1, a=1/2$, $\a=\b=\pi$ in \eqref{limabsim} and simplify.
\end{proof}
Equation \ref{ramlagber}, given in an equivalent form in \cite[p.~169]{berndtrocky}, is a special case of a result in Ramanujan's Notebooks \cite[Vol. I, p.~257, no. 12; Vol. II, p.~169, no. 8(ii)]{ramnote} which was rediscovered by Lagrange \cite{lagrange}. See \cite[pp.~168-169]{berndtrocky} for more details.
\section{Proof of Theorem \ref{dgkmgenh0Nby2} and its special cases}\label{0hNby2}

\begin{proof}[Theorem \textup{\ref{dgkmgenh0Nby2}}][]
Since the proof is very similar to that of Theorem \ref{dgkmgen}, we will be very brief. 

As before, \eqref{mainequality} holds, but now for Re$(s)=\l>\max\left(\frac{N-2h+1}{N},1\right)$. We choose the contour $[\lambda-iT, \lambda+iT], [\lambda+iT,-r+iT], [-r +iT, -r -iT] \ \text{and}\ [-r-iT, \lambda-iT]$, where, $r$ is  positive real number such that $0<r<\frac{1}{N}$, the reason for which will be clear soon. The poles of the integrand $F(s)$, defined in \eqref{F(s)}, that are enclosed in the contour are the simple poles at $s=0, 1$ and $\frac{N-2h+1}{N}$, the residues of whom are same as those calculated in \eqref{R_0}, \eqref{R_1} and \eqref{R_N-2h+1/N1} respectively. Thus, using Cauchy's residue theorem, letting $T\to\infty$ and noting that the integrals along the horizontal segments approach zero, and invoking \eqref{mainequality}, we see that
\begin{align}\label{cauchyThm2}
\sum_{n=1}^\infty n^{N-2h}\frac{\exp(-a n^Nx)}{1-\exp(-n^Nx)} =R_0+R_1+R_{\frac{N-2h+1}{N}}+J(x, a),
\end{align}  
where $J(x, a)$ is defined in \eqref{Integral}. 
We first prove part (i), that is, when $N$ is an odd positive integer. From \eqref{Integral} to \eqref{xmnstar}, the calculations for evaluating $J(x, a)$ remain exactly the same. Now \eqref{s1s} and the inequality $0<r<1/N$ along with the fact that Re$(s)=1+r$ imply $N-2h+1<c_1:=$Re$(s_1)<N-2h+2$. In order to apply Lemma \ref{MellTransOfCOT}, we need to again shift the line of integration from Re$(s_1)=c_1$ to Re$(s_1)=c_2$, where $0<c_2<2$. In doing so, we encounter poles of the integrand $\frac{\Gamma(s_1)}{\tan(\frac{\pi s_1}{2})}{X_{m,n, j}^*}^{-s_1}$ at $s=2, 4, \cdots, N-2h+1$. Again, the integrals along the horizontal segments approach zero as the height of the contour tends to $\infty$. Now
\begin{equation*}
\lim_{s_1\to 2k}\frac{(s_1-2k)\G(s_1)}{\tan\left(\frac{\pi s_1}{2}\right)}{X_{m,n, j}^*}^{-s_1}=\frac{2}{\pi}\G(2k){X_{m,n, j}^*}^{-2k}
\end{equation*}
along with Lemma \ref{MellTransOfCOT} and \eqref{finalE(X)} imply that
\begin{align*}
E(X_{m,n})=\frac{(-1)^{\frac{N+1}{2}}}{\pi}\sum_{j= -(N-1)}^{N-1}{\vphantom{\sum}}''i^je^{\frac{-ij\pi(2h-1)}{2N}}\left(\int_0^\infty \frac{t\cos t}{{X_{m,n,j}^*}^2+t^2}\ \mathrm{d}t+\sum_{k=1}^{\frac{N-2h+1}{2}}\Gamma(2k){X_{m,n,j}^*}^{-2k}\right)
\end{align*}
so that along with \eqref{j2xinter}, we have
\begin{align}\label{j2xanew}
J_2(x, a)&= \frac{2}{\pi Nx}(-1)^{h+\frac{N+3}{2}}(2\pi)^{2h}\left(\frac{(2\pi)^{N+1}}{x}\right)^{\frac{1-2h}{N}}\sum_{j= -(N-1)}^{N-1}{\vphantom{\sum}}''i^je^{\frac{-ij\pi(2h-1)}{2N}}\nonumber\\
&\quad\times\sum_{n=1}^{\infty}n^{\frac{1-2h}{N}}\sin(2\pi na)\sum_{m=1}^{\infty}\left(\int_{0}^{\infty}\frac{t\cos(t)}{{X_{m,n, j}^*}^{2}+t^2}\, dt+\sum_{k=1}^{\frac{N-2h+1}{2}}\Gamma(2k){X_{m,n,j}^*}^{-2k}\right).
\end{align}
Employ Theorem \ref{raabesum} using \eqref{xmnstar} and \eqref{X_{m,n}} to see that
\begin{align}\label{befsummingn}
&\sum_{m=1}^{\infty}\left(\int_{0}^{\infty}\frac{t\cos(t)}{{X_{m,n, j}^*}^{2}+t^2}\, dt+\sum_{k=1}^{\frac{N-2h+1}{2}}\Gamma(2k){X_{m,n,j}^*}^{-2k}\right)\nonumber\\
&=\frac{1}{2}\left(\log\left(\tfrac{1}{\pi}A_{N,j}\left(\tfrac{n}{x}\right)\right)-\tfrac{1}{2}\left(\psi\left(\tfrac{i}{\pi}A_{N,j}\left(\tfrac{n}{x}\right)\right)+\psi\left(-\tfrac{i}{\pi}A_{N,j}\left(\tfrac{n}{x}\right)\right)\right)+T(N, h, x, j)\right),
\end{align}
where
\begin{equation*}
T(N, h, x, j):=2\displaystyle\sum_{k=1}^{\frac{N-2h+1}{2}}\frac{\G(2k)\zeta(2k)}{\left(2\pi\left(\frac{2\pi n}{x}\right)^{1/N}e^{\frac{i\pi j}{N}}\right)^{2k}}.
\end{equation*}
Now observe that \eqref{psiasymp} implies
\begin{equation*}
\log\left(\tfrac{1}{\pi}A_{N,j}\left(\tfrac{n}{x}\right)\right)-\tfrac{1}{2}\left(\psi\left(\tfrac{i}{\pi}A_{N,j}\left(\tfrac{n}{x}\right)\right)+\psi\left(-\tfrac{i}{\pi}A_{N,j}\left(\tfrac{n}{x}\right)\right)\right)=O_{N, x}\left(n^{-2/N}\right),
\end{equation*}
and since $1\leq k\leq\frac{N-2h+1}{2}$, $T(N, h, x, j)=O_{N, x}\left(n^{-2/N}\right)$.
Thus if we multiply both sides of \eqref{befsummingn} by $n^{\frac{1-2h}{N}}\sin(2\pi na)$ and then sum over $n$, we can write the sum as
\begin{align}\label{subs}
&\sum_{n=1}^{\infty}n^{\frac{1-2h}{N}}\sin(2\pi na)\sum_{m=1}^{\infty}\left(\int_{0}^{\infty}\frac{t\cos(t)}{{X_{m,n, j}^*}^{2}+t^2}\, dt+\sum_{k=1}^{\frac{N-2h+1}{2}}\Gamma(2k){X_{m,n,j}^*}^{-2k}\right)\nonumber\\
&=\frac{1}{2}\sum_{n=1}^{\infty}\frac{\sin(2\pi na)}{n^{\frac{2h-1}{N}}}\left\{\log\left(\tfrac{1}{\pi}A_{N,j}\left(\tfrac{n}{x}\right)\right)-\tfrac{1}{2}\left(\psi\left(\tfrac{i}{\pi}A_{N,j}\left(\tfrac{n}{x}\right)\right)+\psi\left(-\tfrac{i}{\pi}A_{N,j}\left(\tfrac{n}{x}\right)\right)\right)\right\}\nonumber\\
&\quad+\frac{1}{2}\sum_{n=1}^{\infty}\frac{\sin(2\pi na)}{n^{\frac{2h-1}{N}}}T(N, h, x, j),
\end{align}
since the series $\sum_{n=1}^{\infty}\frac{\sin(2\pi na)}{n^{\frac{2h+1}{N}}}$ converges for $0<a\leq 1$ as long as $(2h+1)/N>0$, that is, $h\geq 0$, which is what we have in our hypotheses. (This also explains why we fail to obtain a transformation for our series when $h<0$.)

Now substituting \eqref{subs} in \eqref{j2xanew}, noting that the expression for $J_1(x, a)$ remains exactly as in \eqref{j11}, we deduce along with \eqref{cauchyThm2}, \eqref{R_0}, \eqref{R_1}, \eqref{R_N-2h+1/N1} and \eqref{jj1j2} that for $0<a\leq 1$,
\begin{align}\label{oddh0nby2long}
&\sum_{n=1}^{\infty}n^{N-2h}\frac{\textup{exp}(-an^{N}x)}{1-\textup{exp}(-n^{N}x)}\nonumber\\
&=-\left(a-\tfrac{1}{2}\right)\zeta(-N+2h)+\frac{\zeta(2h)}{x}+\frac{1}{N}\G\left(\tfrac{N-2h+1}{N}\right)\zeta\left(\tfrac{N-2h+1}{N},a\right)x^{-\frac{(N-2h+1)}{N}}+S(x,a)\nonumber\\
&\quad+\frac{(-1)^{h+\frac{N+3}{2}}}{\pi N}\left(\frac{2\pi}{x}\right)^{\frac{N-2h+1}{N}}\sum_{j=-\frac{(N-1)}{2}}^{\frac{(N-1)}{2}}(-1)^{j}\textup{exp}\left(\frac{i\pi(1-2h)j}{N}\right)\sum_{n=1}^{\infty}\frac{\sin(2\pi na)}{n^{\frac{2h-1}{N}}}T(N, h, x, j).
\end{align}
Thus \eqref{oddh0nby2long} leads to \eqref{oddh0Nby2} for $a=1$ with $g(N, h, 1)=-\frac{1}{2}\zeta(-N+2h)$. 

When $0<a<1$, in view of \eqref{gnha}, \eqref{oddh0Nby2} and \eqref{oddh0nby2long}, it suffices to show that
\begin{align}\label{specsimp}
&\frac{(-1)^{h+\frac{N+3}{2}}}{\pi N}\left(\frac{2\pi}{x}\right)^{\frac{N-2h+1}{N}}\sum_{j=-\frac{(N-1)}{2}}^{\frac{(N-1)}{2}}(-1)^{j}e^{\frac{i\pi(1-2h)j}{N}}\sum_{n=1}^{\infty}\frac{\sin(2\pi na)}{n^{\frac{2h-1}{N}}}T(N, h, x, j)=\left(a-\tfrac{1}{2}\right)\zeta(-N+2h).
\end{align}
Use Euler's formula \eqref{ef} along with the fact that $\zeta(1-2k)=-B_{2k}/(2k)$, or equivalently, use the functional equation for $\zeta(2k)$ to simplify $T(N, h, x, j)$ as
\begin{equation*}
T(N, h, x, j)=\sum_{k=1}^{\frac{N-2h+1}{2}}\frac{(-1)^{k}\zeta(1-2k)}{\left(\left(\frac{2\pi n}{x}\right)^{1/N}e^{\frac{i\pi j}{N}}\right)^{2k}}.
\end{equation*}
Now substitute the above representation of $T(N, h, x, j)$ in \eqref{specsimp}, separate the term corresponding to $k=\frac{N-2h+1}{2}$ on the left side and invoke \cite[p.~45, Formula \textbf{1.441.1}]{grn} $\sum_{n=1}^{\infty}\frac{\sin(2\pi na)}{n}=-\pi\left(a-\frac{1}{2}\right)$ to see that this term to be equal to $\left(a-\tfrac{1}{2}\right)\zeta(-N+2h)$. Thus we need only show that
\begin{align}\label{overjnk}
&\frac{(-1)^{h+\frac{N+3}{2}}}{\pi N}\left(\frac{2\pi}{x}\right)^{\frac{N-2h+1}{N}}\sum_{k=1}^{\frac{N-2h-1}{2}}\frac{(-1)^{k}\zeta(1-2k)}{\left(\frac{2\pi}{x}\right)^{2k/N}}\sum_{n=1}^{\infty}\frac{\sin(2\pi na)}{n^{\frac{2h+2k-1}{N}}}\sum_{j=-\frac{(N-1)}{2}}^{\frac{(N-1)}{2}}(-1)^{j}e^{\frac{i\pi(1-2h-2k)j}{N}}=0.
\end{align}
In the sum in \eqref{cc}, replace $j$ by $2j$ and then let $z=\frac{\pi}{2N}(2h+2k-1)$ so that
\begin{align}\label{overj}
\sum_{j=-\frac{(N-1)}{2}}^{\frac{(N-1)}{2}}(-1)^{j}e^{\frac{i\pi(1-2h-2k)j}{N}}=\frac{\cos\left(\frac{\pi(2h+2k-1)}{2}\right)}{\cos\left(\frac{\pi(2h+2k-1)}{2N}\right)}.
\end{align}
Now $\cos\left(\frac{\pi(2h+2k-1)}{2}\right)=0$, however, it should also be shown that $\cos\left(\frac{\pi(2h+2k-1)}{2N}\right)\neq 0$. To that end, note that $1\leq k<\frac{N-2h+1}{2}$ implies $\frac{2h+1}{N}\leq\frac{2h+2k-1}{N}<1$. Also, $h\geq 0$ implies $\frac{2h+1}{N}\geq\frac{1}{N}$. Combining, we see that $\frac{1}{N}\leq\frac{2h+2k-1}{N}<1$, so that $\cos\left(\frac{\pi(2h+2k-1)}{2N}\right)\neq 0$ for $N>1$. Thus, the sum over $j$ in \eqref{overj} equals $0$ for $N>1$ which implies \eqref{overjnk}. For $N=1$, note that $h<N/2$ along with $h\geq 0$ implies $h=0$ so that the sum over $k$ in \eqref{overjnk} is empty, and hence \eqref{overjnk} holds again. Hence \eqref{specsimp} holds and therefore \eqref{oddh0Nby2} holds with $g(N, h, a)=0$ for $0<a<1$.

We omit the proof of \eqref{evenh0Nby2} since it is exactly along the lines of the proof of Theorem \ref{dgkmgen}. By a similar argument one can see that \eqref{evenh0Nby2} holds also when $h<0$, unlike the case when $N$ is odd.
\end{proof}
\begin{proof}[Corollary \textup{\ref{hzerospl}}][]
Let $N=1$ so that $h=0$ in part (i) of Theorem \ref{dgkmgenh0Nby2}. Use the fact \cite[p.~608, Formula \textbf{25.11.12}]{olver-2010a} $\zeta(\ell,a)=\frac{(-1)^{\ell}}{(\ell-1)!}\psi^{(\ell-1)}(a)$, let $x=2\a, \a\b=\pi^2$ and simplify.
\end{proof}
\begin{proof}[Corollary \textup{\ref{befkon}}][]
Let $a=1/2$, $\a=\b=\pi$ in Corollary \ref{hzerospl} and use the fact \cite[p.~144, Formula \textbf{5.15.3}]{olver-2010a} $\psi'(1/2)=\pi^2/2$.
\end{proof}
\begin{proof}[Corollary \textup{\ref{catalan}}][]
Let $a=1/4, \a=\b=\pi$ in Corollary \ref{hzerospl} and use the fact \cite[p.~144, Formula \textbf{5.15.1}]{olver-2010a} $\psi'(1/4)=8G+\pi^2$, where $G$ is the Catalan's constant given by $G=\sum_{n=0}^{\infty}(-1)^{n}(2n+1)^{-2}$.
\end{proof}

\section{A vast generalization of Wigert's formula for $\zeta\left(\frac{1}{N}\right)$}\label{nevenasec}
Except for Theorems \ref{dgkmgen} and \ref{dgkmgenh0Nby2}, we have mostly concentrated on results for an odd positive integer $N$. In this section, we are concerned with the results for $N$ even. We begin with the proof of a two-parameter generalization of Wigert's formula \cite[pp.~8-9, Equation (5)]{wig}, \cite[Equation (1.2)]{dixitmaji1}. 
\begin{proof}[Theorem \textup{\ref{nevena}}][] 
Here $N$ is an even positive integer. We first prove the result for a non-negative integer $m$ using Theorem \ref{dgkmgen}. For $m<0$, it can be proved using Theorem \ref{dgkmgenh0Nby2}. 

Suppose $m$ is a non-negative integer. Then let $h=\frac{N}{2}+Nm$, $x=2^{N}\a$ in Theorem \ref{dgkmgen} and let $\b>0$ be defined by $\a\b^{N}=\pi^{N+1}$. After rearranging some terms, we obtain
\begin{align}\label{neveneqnap}
&\left(a-\frac{1}{2}\right)\zeta(2Nm)+\sum_{j=1}^{m}\frac{B_{2j+1}(a)}{(2j+1)!}\zeta(2N(m-j))(2^N\a)^{2j}+\sum_{n=1}^{\infty}\frac{n^{-2Nm}\textup{exp}\left(-a(2n)^{N}\a\right)}{1-\exp{\left(-(2n)^{N}\a\right)}}\nonumber\\
&=\frac{1}{N} \G\left( \frac{1-2Nm}{N}  \right) \zeta\left( \frac{1-2Nm}{N} , a \right) (2^N \a )^{\frac{2Nm -1}{N}  }+(-1)^{\frac{N}{2}+1 } \frac{1}{N} \Big( \frac{2 \pi }{2^N \a} \Big)^{\frac{1-2Nm}{N} }\nonumber\\
&\quad\times \sum_{ j = - \frac{N}{2} }^{ \frac{N}{2} - 1} e^{\frac{i\pi(2j+1)}{2N}} \exp\left(-\tfrac{i \pi}{2}(2m+1)(2j+1)  \right)\sum_{n=1}^{\infty}\frac{\cos(2\pi na)+i(-1)^{j+\frac{N}{2}+1}\sin(2\pi na)}{n^{2m +1 -\frac{1}{N}}\left(\textup{exp}\left((2n)^{\frac{1}{N}} \b  e^{\frac{i\pi(2j+1)}{2N}} \right)-1\right)}\nonumber\\
&\quad+\frac{(-1)^{\frac{N}{2}+1}}{2}(2\pi)^{N+2Nm}\sum_{j=0}^{m}\left(\frac{-1}{4\pi^2}\right)^{jN}\frac{B_{2j}(a)B_{N+2N(m-j)}}{(2j)!(N+2N(m-j))!}(2^{N}\a)^{2j-1},
\end{align}
where we have used the fact $ 2 A_{N, j + \frac{1}{2}}\left(\frac{n}{x}  \right) = (2n)^{\frac{1}{N}} \b  e^{\frac{i\pi(2j+1)}{2N}}  $.

We now simplify some of the expressions on the right-hand side. Since $\a\b^{N}=\pi^{N+1}$,
\begin{align}
\Big( \frac{\pi}{\a} \Big)^{\frac{1-2Nm}{N}   } & =  \a^{\frac{2Nm-1}{N+1}   }  \b ^{\frac{1-2Nm}{N+1} }, \label{alpha beta}\\
\pi^{N+2Nm-2Nj} \a^{2j-1}&=\a^{ \frac{2j+2Nm-1}{N+1} } \b^{ N+ \frac{2N^2 (m-j) - N }{N+1}}. \label{alpha beta polynomial}
\end{align}
We now split the sum $\sum_{ j = - \frac{N}{2} }^{ \frac{N}{2} - 1}$ as
\begin{equation*}
\sum_{ j = - \frac{N}{2} }^{ \frac{N}{2} - 1}=\sum_{ j = 0 }^{ \frac{N}{2} - 1}+\sum_{ j = - \frac{N}{2} }^{ -1}
\end{equation*}
and replace $j$ by $-1-j$ in the second sum. Then combining the corresponding terms in the resulting two finite sums on the above right-hand side, using the fact that $\exp{\left(-\tfrac{1}{2}\left(i\pi(2j+1)(2m+1)\right)\right)}$\newline
$=i(-1)^{j+m+1}$ and then simplifying, we get
{\allowdisplaybreaks\begin{align}\label{him2}
&\sum_{ j = - \frac{N}{2} }^{ \frac{N}{2} - 1} e^{\frac{i\pi(2j+1)}{2N}} \exp\left(-\tfrac{i \pi}{2}(2m+1)(2j+1)  \right)\sum_{n=1}^{\infty}\frac{\cos(2\pi na)+i(-1)^{j+\frac{N}{2}+1}\sin(2\pi na)}{n^{2m +1 -\frac{1}{N}}\left(\textup{exp}\left((2n)^{\frac{1}{N}} \b  e^{\frac{i\pi(2j+1)}{2N}} \right)-1\right)}\nonumber\\
&=-2(-1)^{m+1}\sum_{j=0}^{\frac{N}{2}-1}(-1)^j\Bigg[\sum_{n=1}^{\infty}\frac{\cos(2\pi na)}{n^{2m+1-\frac{1}{N}}}\textup{Im}\Bigg(\frac{e^{\frac{i\pi(2j+1)}{2N}}}{\exp{\left((2n)^{\frac{1}{N}}\b e^{\frac{i\pi(2j+1)}{2N}}\right)}-1}\Bigg)\nonumber\\
&\qquad\qquad\qquad\qquad\qquad\qquad+(-1)^{j+\frac{N}{2}+1}\sum_{n=1}^{\infty}\frac{\sin(2\pi na)}{n^{2m+1-\frac{1}{N}}}\textup{Re}\Bigg(\frac{e^{\frac{i\pi(2j+1)}{2N}}}{\exp{\left((2n)^{\frac{1}{N}}\b e^{\frac{i\pi(2j+1)}{2N}}\right)}-1}\Bigg)\Bigg].
\end{align}}
Now substituting \eqref{him2} in \eqref{neveneqnap}, using \eqref{alpha beta} and \eqref{alpha beta polynomial}, and then multiplying both sides of the resulting identity by $\a^{ -\left(\frac{2Nm-1}{N+1}\right) }$, we arrive at \eqref{neveneqna}. This completes the proof for $m>0$. For $m<0$, the result follows from part (ii) of Theorem \ref{dgkmgenh0Nby2}. The argument is exactly the same as above and is hence omitted.
\end{proof}
\begin{remark}
For $h\geq N/2$, $N$ even, we considered $h=\frac{N}{2}+Nm, m\in\mathbb{N}\cup\{0\}$ in the proof of Theorem \ref{nevena} given above. However, one can even consider a more general $h$ of the form $h=\frac{N}{2}+Nm+r, 0\leq r<N$ and derive identities analogous to Theorem \ref{nevena}.
\end{remark}
\begin{proof}[Corollary \textup{\ref{transzetaeven}}][]
Let $\a=\b=\pi$ and put $a = \frac{1}{2}$ in Theorem \ref{nevena}. Using \eqref{bjhalf}, \eqref{b2jp1}, and multiplying both sides of the resulting equation by $\pi^{ \frac{2Nm -1 }{N+1} }$, one obtains the following:
\begin{align}\label{N even transcendence}
& \sum_{n=1}^{\infty}\frac{n^{-2Nm}\textup{exp}\left(-\frac{1}{2}(2n)^{N}\pi \right)}{1-\exp{\left(-(2n)^{N}\pi \right)}} = \frac{ 2^{2Nm-1}}{N} \Bigg( (-1)^m \frac{ \zeta\Big(2m +1 - \frac{1}{N}  \Big) \Big( 2^{\frac{1-2Nm}{N} } -1 \Big)}{ 2^{2m +1 -\frac{1}{N }} \cos\Big( \frac{\pi}{2N}\Big) } \nonumber \\ 
&\quad-2(-1)^{\frac{N}{2}+m}2^{\frac{1-2Nm}{N}} \sum_{j=0}^{\frac{N}{2}-1}(-1)^j \sum_{n=1}^{\infty}\frac{(-1)^n }{n^{2m+1-\frac{1}{N}}} \textup{Im}\Bigg(\frac{e^{\frac{i\pi(2j+1)}{2N}}}{\exp{\left((2n)^{\frac{1}{N}}\pi e^{\frac{i\pi(2j+1)}{2N}}\right)}-1}\Bigg) \Bigg) \nonumber \\
&\quad +(-1)^{\frac{N}{2}+1}2^{2Nm-1} \pi^{N(2m+1)-1 }\sum_{j=0}^{m}\frac{(2^{1-2j}-1)B_{2j} B_{(2m+1-2j)N}}{(2j)!((2m+1-2j)N)!}\pi^{2j(1-N)},
\end{align}
where in the course of simplification we used
\begin{align*}
(-1)^m \frac{ \zeta\left(2m +1 - \frac{1}{N}  \right) \left( 2^{\frac{1-2Nm}{N} } -1 \right)}{ 2^{2m +1 -\frac{1}{N }} \cos\left( \frac{\pi}{2N}\right) } = \pi^{\frac{2Nm -1}{N}} \G\left( \frac{1-2Nm}{N}  \right) \zeta\left( \frac{1-2Nm}{N} , \frac{1}{2} \right),
\end{align*}
which follows from \eqref{zetafe1} with $s = 2m +1 - \frac{1}{N}$ and \eqref{hzetahalf}.

One can now easily check that $\cos\left( \frac{\pi}{2N} \right)$ is always an algebraic number for every $N \in \mathbb{N}$. Also the last term on the right-hand side of \eqref{N even transcendence} is always a non-zero polynomial of $\pi$ with rational coefficients. Therefore it is a transcendental number, which implies our corollary.  
\end{proof}

Even though Theorem \ref{klusch} can be proved using Theorem \ref{nevena}, we prefer to give the proof using Theorem \ref{dgkmgen} because the conditions on $\a$ and $\b$ in the former two theorems are different. We begin with an analogue of Lemma \ref{ktywigl} to be used along with the latter in the proof of Theorem \ref{klusch}. 

\begin{lemma}\label{ktywigl2}
For $a, u, v \in \mathbb{R}$, we have
\begin{align*}
2 \,\textup{Im}\left(\frac{e^{iuv}}{\exp{\left(ae^{-iu}\right)}-1}\right)= \frac{\sin(a\sin(u)+uv)-e^{-a\cos(u)}\sin(uv)}{\cosh(a\cos(u))-\cos(a\sin(u))}.
\end{align*}
\end{lemma}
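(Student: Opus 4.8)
The statement to prove is Lemma \ref{ktywigl2}, the imaginary-part analogue of Lemma \ref{ktywigl}. The plan is to proceed exactly as one would for Lemma \ref{ktywigl}, simply tracking imaginary parts instead of real parts. First I would write $ae^{-iu} = a\cos u - ia\sin u$, so that
\[
\exp\left(ae^{-iu}\right) - 1 = e^{a\cos u}\left(\cos(a\sin u) - i\sin(a\sin u)\right) - 1,
\]
whose real and imaginary parts are $e^{a\cos u}\cos(a\sin u) - 1$ and $-e^{a\cos u}\sin(a\sin u)$ respectively. The key observation is that the modulus squared of this denominator is
\[
\left|\exp\left(ae^{-iu}\right) - 1\right|^2 = e^{2a\cos u} - 2e^{a\cos u}\cos(a\sin u) + 1 = 2e^{a\cos u}\left(\cosh(a\cos u) - \cos(a\sin u)\right),
\]
which is precisely (twice $e^{a\cos u}$ times) the denominator appearing on the right-hand side of the claimed identity.

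Next I would multiply numerator and denominator of $\dfrac{e^{iuv}}{\exp(ae^{-iu})-1}$ by the conjugate $\overline{\exp(ae^{-iu})-1}$, giving
\[
\frac{e^{iuv}}{\exp(ae^{-iu})-1} = \frac{e^{iuv}\left(e^{a\cos u}\cos(a\sin u) - 1 + ie^{a\cos u}\sin(a\sin u)\right)}{2e^{a\cos u}\left(\cosh(a\cos u) - \cos(a\sin u)\right)}.
\]
Then I would extract the imaginary part of the numerator: writing $e^{iuv} = \cos(uv) + i\sin(uv)$ and expanding, the imaginary part of $e^{iuv}\left((e^{a\cos u}\cos(a\sin u) - 1) + ie^{a\cos u}\sin(a\sin u)\right)$ equals
\[
\sin(uv)\left(e^{a\cos u}\cos(a\sin u) - 1\right) + \cos(uv)\,e^{a\cos u}\sin(a\sin u).
\]
Using the angle-addition formula, $e^{a\cos u}\left(\sin(uv)\cos(a\sin u) + \cos(uv)\sin(a\sin u)\right) = e^{a\cos u}\sin(a\sin u + uv)$, so the imaginary part of the numerator simplifies to $e^{a\cos u}\sin(a\sin u + uv) - \sin(uv)$. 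Dividing by the denominator $2e^{a\cos u}\left(\cosh(a\cos u) - \cos(a\sin u)\right)$ and multiplying by $2$ yields
\[
2\,\textup{Im}\left(\frac{e^{iuv}}{\exp(ae^{-iu})-1}\right) = \frac{\sin(a\sin u + uv) - e^{-a\cos u}\sin(uv)}{\cosh(a\cos u) - \cos(a\sin u)},
\]
which is the asserted identity.

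This proof is entirely a routine computation with complex exponentials and trigonometric identities; there is no genuine obstacle. The only point requiring mild care is the algebraic manipulation of the modulus-squared denominator into the $\cosh - \cos$ form and the correct bookkeeping of signs when extracting the imaginary part of the product $e^{iuv}\cdot(\text{numerator})$ — a sign error there would flip the $\sin(uv)$ term. Everything else mirrors the proof of Lemma \ref{ktywigl} with $\cos$ replaced by $\sin$ in the appropriate places, so I would present it compactly, perhaps even noting that it follows by the same argument as Lemma \ref{ktywigl} upon taking imaginary rather than real parts.
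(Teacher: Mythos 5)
Your computation is correct, and it is exactly the routine conjugate-multiplication argument the paper has in mind: the paper itself omits the proof of Lemma \ref{ktywigl2}, remarking only that it follows the same lines as the proof of Lemma \ref{ktywigl} in \cite{dixitmaji1}, which is precisely the real-part version of your calculation. Your identification of the modulus-squared denominator as $2e^{a\cos u}\left(\cosh(a\cos u)-\cos(a\sin u)\right)$ and the angle-addition step are both verified.
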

We omit the proof since it can be proved along similar lines as the proof of Lemma \ref{ktywigl} given in \cite{dixitmaji1}.
\begin{proof}[Theorem \textup{\ref{klusch}}][]
Let $h=\frac{N}{2}$ in Theorem \ref{dgkmgen}. This gives
\begin{align*}
& \sum_{n=1}^{\infty} \frac{\exp(-a n^{N} x)}{ 1 - \exp(-n^N x )}  = \frac{1}{2}\Big( a - \frac{1}{2} \Big) + \frac{\zeta(N)}{x} + \frac{1}{N} \Gamma\Big( \frac{1}{N}  \Big) \zeta\Big( \frac{1}{N}, a  \Big) x^{-\frac{1}{N} }  \nonumber \\
& + \frac{(-1)^{\frac{N}{2} + 1 }}{N} \left( \frac{2 \pi }{x} \right)^{\frac{1}{N} } \sum_{j=-\frac{N}{2}}^{\frac{N}{2}-1}e^{\frac{i\pi(1-2h)\left(j+\frac{1}{2}\right)}{N}}
\sum_{n=1}^{\infty}\frac{\cos(2\pi na)+i(-1)^{j+\frac{N}{2}+1}\sin(2\pi na)}{n^{1-\frac{1}{N}}\left(\textup{exp}\left(2A_{N,j+\frac{1}{2}}\left(\frac{n}{x}\right)\right)-1\right)}.
\end{align*}
Wigert's formula \cite[pp.~8-9, Equation (5)]{wig} (see also \cite[Equation (1.2)]{dixitmaji1}) is a special case of the above formula when $a=1$.

Now let $N=2$ and simplify so as to obtain
\begin{align}\label{invisible}
\sum_{n=1}^{\infty} \frac{\exp(-a n^{2} x)}{ 1 - \exp(-n^2 x )}
&= \frac{1}{2}\left( a - \frac{1}{2} \right) + \frac{\pi^2}{6 x} + \frac{1}{2} \sqrt{\frac{\pi}{x}} \zeta\left( \frac{1}{2}, a \right)\nonumber\\
&\quad+ \sqrt{\frac{2 \pi }{x}}\Bigg[ \sum_{n=1}^{\infty} \frac{\cos(2 \pi n a )}{\sqrt{n}} \mathrm{Re} \Bigg( \frac{\exp(i\pi/4)}{\exp\Big( ( 2\pi)^{\frac{3}{2} } \sqrt{\frac{n}{x}} e^{-\frac{i \pi}{4} }  \Big) - 1 } \Bigg)\nonumber\\
&\qquad\qquad\quad+ \sum_{n=1}^{\infty} \frac{\sin(2 \pi n a )}{\sqrt{n}} \mathrm{Im} \Bigg( \frac{\exp(i\pi/4)}{ \exp\Big( ( 2\pi)^{\frac{3}{2} } \sqrt{\frac{n}{x}} e^{-\frac{i \pi}{4} }  \Big) - 1 } \Bigg) \Bigg]. 
\end{align}
Letting $a=1$ and using Lemma \ref{ktywigl}, we obtain a formula of Ramanujan \cite{ramnote}, \cite[p.~314]{bcbramsecnote}, \cite[p.~332]{lnb}:
\begin{align*}
\sum_{n=1}^{\infty} \frac{1}{\exp( n^2 x) -1 }  &= \frac{1}{4} + \frac{\pi^2}{6 x} + \frac{1}{2} \sqrt{\frac{\pi}{x}} \zeta\left( \frac{1}{2} \right) \nonumber\\
&\quad+  \sqrt{\frac{ \pi }{2x}}\sum_{n =1}^{\infty} \frac{1}{\sqrt{n}} \Bigg(\frac{ \cos\left(2 \pi^{\frac{3}{2}}\sqrt{\frac{n}{x}}+\frac{\pi}{4}  \right)-e^{-2\pi^{\frac{3}{2}}\sqrt{\frac{n}{x}}}\cos\left(\frac{\pi}{4}\right)   }{ \cosh\left(2 \pi^{\frac{3}{2}} \sqrt{\frac{n}{x}}  \right)-  \cos\left(2 \pi^{\frac{3}{2}}\sqrt{\frac{n}{x}} \right)  }  \Bigg).
\end{align*}
For $0<a<1$, one is able to further simplify \eqref{invisible}. To that end, keeping in mind that \eqref{hformula} holds also for Re$(s)<1$ in this case, we let $s=1/2$ in it to obtain
\begin{equation}\label{hformulahalf}
\zeta\left(\frac{1}{2},a\right)=\sum_{n=1}^{\infty}\frac{\cos(2\pi na)}{\sqrt{n}}+\sum_{n=1}^{\infty}\frac{\sin(2\pi na)}{\sqrt{n}}.
\end{equation}
Invoking \eqref{hformulahalf} in \eqref{invisible} and then simplifying using Lemmas \ref{ktywigl} and \ref{ktywigl2} leads to
\begin{align*}
 \sum_{n=1}^{\infty} \frac{\exp(-a n^{2} x)}{ 1 - \exp(-n^2 x )}  &= \frac{1}{2}\Big( a - \frac{1}{2} \Big) + \frac{\pi^2}{6 x}\nonumber \\
 &\quad+\frac{1}{2}\sqrt{\frac{\pi }{x}}\Bigg[ \sum_{n=1}^{\infty} \frac{\cos( 2\pi n a) }{\sqrt{n}} \Bigg(\frac{ \sinh\Big(2 \pi^{\frac{3}{2}}\sqrt{\frac{n}{x}}  \Big)-\sin\Big(2 \pi^{\frac{3}{2}}\sqrt{\frac{n}{x}}  \Big)   }{ \cosh\Big(2 \pi^{\frac{3}{2}} \sqrt{\frac{n}{x}}  \Big)-  \cos\Big(2 \pi^{\frac{3}{2}}\sqrt{\frac{n}{x}} \Big)  } \Bigg) \\
& \qquad\qquad\quad + \sum_{n=1}^{\infty} \frac{\sin( 2\pi n a) }{\sqrt{n}} \Bigg(\frac{ \sinh\Big(2 \pi^{\frac{3}{2}}\sqrt{\frac{n}{x}}  \Big) + \sin\Big(2 \pi^{\frac{3}{2}}\sqrt{\frac{n}{x}}  \Big)   }{ \cosh\Big(2 \pi^{\frac{3}{2}} \sqrt{\frac{n}{x}}  \Big)-  \cos\Big(2 \pi^{\frac{3}{2}}\sqrt{\frac{n}{x}} \Big)  } \Bigg)\Bigg].
 \end{align*}
Finally, let $x=\a$ and let $\b=4\pi^3/\a$ to arrive at \eqref{kluscheqn}.

Now \eqref{kluscheqnhalf} simply follows by letting $a=1/2$ in \eqref{kluscheqn}.
 \end{proof}

\section{Concluding remarks}\label{con}
First of all, we would like to mention that all of our results involving $x$, or $\a$ and $\b$, can be extended by analytic continuation to complex values of $x, \a$, and $\b$ such that Re$(x)>0$, Re$(\a)>0$ and Re$(\b)>0$. 

It is clear from \cite{dixitmaji1} as well as from the above work that in order to understand the arithmetical nature of Euler's constant, the values of the Riemann zeta function at odd positive integers as well as at rational arguments, further study of the generalized Lambert series considered here is absolutely essential. We refer the reader to two recent papers \cite{masser} and \cite{boxalljones} for some quantitative results on rational values of the Riemann zeta function.

When $N\in2\mathbb{N}$, we were able to transform the series $\sum_{n=1}^{\infty}n^{N-2h}\frac{\textup{exp}(-an^{N}x)}{1-\textup{exp}(-n^{N}x)}$, $0<a\leq 1$, for \emph{any} integer $h$. However, for $N$ odd and positive, we could do so only for $h\geq 0$. Thus it remains to be seen if there exists a transformation of this series when $N$ is odd and $h<0$. If done, this might give us a complete generalization of \eqref{zetageneqn}, that is of \cite[Theorem 1.2]{dixitmaji1}. 

As mentioned in Remark \ref{bc} after Theorem \ref{dgkmgen}, Kanemitsu, Tanigawa and Yoshimoto \cite[Theorem 2.1]{ktyacta} obtained a formula for Hurwitz zeta function at rational arguments, that is, $\zeta\left(\frac{b}{c},a\right)$, where $b$ is a negative odd integer and $c$ is a positive even integer. The only other case which remains to be seen is when $b$ and $c$ are both odd since the the case when they are both even can be reduced to one of the three cases. 

Let $\chi$ denote the primitive Dirichlet character modulo $q$. Using the identity \cite[p.~71, Equation (16)]{dav} $L(s, \chi)=q^{-s}\sum_{n=1}^{q}\chi(n)\zeta(s,n/q)$ and Theorem \ref{dgkmgen}, we can obtain a representation for $L\left(\frac{N-2h+1}{N}, \chi\right)$. Similarly, working with Theorems \ref{dgkmgen}, \ref{dgkmord2} and the identity obtained by differentiating the above identity with respect to $s$, one can obtain a representation for $L'\left(-2j, \chi\right), j\in\mathbb{N}$. These representations may be useful in computing these quantities numerically. 

In \cite{dixitmaji1}, it was shown that any two odd zeta values of the form $\zeta(4k+3)$ are related to each other by means of the relation that each such odd zeta value obeys with $\zeta(3)$ as governed by the case $a=1$ of Theorem \ref{ggram}, that is, \eqref{zetageneqn}. Also, while it was shown that such a relation is not possible for \emph{every} pair of the form $\left(\zeta(4k_1+1),  \zeta(4k_2+1)\right)$, through \eqref{zetageneqn}, it does exist for \emph{some} such pairs. However, \eqref{zetageneqn} has a limitation in that no two odd zeta values, one of which is of the form $\zeta(4k_1+1)$ and another $\zeta(4k_2+3)$, are related through it. This is partially overcome through our generalization of \eqref{zetageneqn}, that is, Theorem \ref{ggram}, in that now it is possible to have a relation between two odd zeta values, one of the form $\zeta(4k+3)$ and another of the form $\zeta(8k+5)$. This prompts to ask if there exists a transformation which would relate two odd zeta values, one of which is of the form $\zeta(4k+3)$ and another of the form $\zeta(8k+1)$.

\begin{center}
\begin{table}[h]
\caption{Left and right-hand sides of Theorem \ref{dgkmgen} (with series truncated up to the first $10^5$ terms) }
\label{t1}
\renewcommand{\arraystretch}{1}
{\small
\begin{tabular}{|l|l|l|l|l|l|}
\hline
$N$ & $h$ &$a$ & $x$ & Left-hand side & Right-hand side \\
\hline
$2$ & $2$ & $1$ & $1.2345$ & $ 0.412204713295378 $  &  $ 0.412204713295378 $\\
\hline
$3$ & $3$ & $ \frac{1}{2}$ &  $2.3565$ &  $0.340045844295895 $ & $0.340045844295895 $ \\
\hline
$4$ & $5$ & $\frac{1}{3}$ & $\pi$ & $0.366769348622027$ & $ 0.366769348624188 $ \\
\hline
$5$ & $9$ & $ \frac{2}{7}$ & $\sqrt{2}$ & $0.882042733561192$ & $0.882042733560249$ \\
\hline
$6$ & $4$ & $\frac{1}{\sqrt{2}}$ & $2^{\sqrt{3}}$ &  $0.099037277331145 $ & $0.099037277329 +  \,2.5998536522\times 10^{-19} i $ \\
\hline
$7$ & $5$ & $\frac{3}{5}$ & $1+\sqrt{5}$ & $0.149340139836146 $ & $ 0.149340139821542 $ \\
\hline
\end{tabular}}

\end{table}
\end{center}

\begin{center}
\begin{table}[h]
\caption{Left and right-hand sides of Theorem \ref{dgkmord2} (with series truncated up to the first $10^5$ terms) }
\label{t2}
\renewcommand{\arraystretch}{1}
{\small
\begin{tabular}{|l|l|l|l|l|l|}
\hline
$N$ & $h$ & $a$ & $x$ & Left-hand side & Right-hand side \\
\hline
$1$ & $2$ & $\frac{1}{10}$ & $3.317$ & $ 0.8297473488759233 $ & $ 0.8297473488759262 $ \\
\hline 
$3$ & $5$ & $\frac{1}{1+\sqrt{3}} $ & $\sqrt{5}$ & $0.4939094866586267 $ & $0.4939094866586265 - 2.1455813429 \times 10^{-22} i$  \\
\hline
$5$ & $8$ & $\frac{2}{9} $ & $\sqrt{2} + \sqrt{3}  $& $ 0.5193356374630188 $ & $0.5193356374630185 - 1.6093950728 \times 10^{-18} i $ \\
\hline
$7$ &  $11$& $\sqrt{2}-1$ & $\pi + 0.1234$  & $0.2688885270333226   $ & $ 0.2688885270333224$ \\
\hline
  $9$  &  $ 14 $   & $\frac{1}{4}$  & $ \pi^{\sqrt{2}} $        &  $ 0.2849538075110331 $  & $ 0.2849538075110331 - 4.7986747033 \times 10^{-18} i $      \\
 \hline 
\end{tabular}}

\end{table}
\end{center}

\begin{center}
\begin{table}[h]
\caption{ Odd zeta values related through Theorem \ref{ggram} }
\label{t4}
\renewcommand{\arraystretch}{1}
{\small
\begin{tabular}{|l|l|l|l|}
\hline
$N$ & $m$ & $\zeta(2Nm + 1 -2j N), \quad 0 \leq j \leq m-1  $ \\
\hline

 $1$   &    $5$      & $\zeta(3), \zeta(5), \zeta(7), \zeta(9), \zeta(11)  $                              \\
    \hline 
    $1$ &  $100$  & $\zeta(3), \zeta(5), \zeta(7),\zeta(9),\ldots ,\zeta(201)$ \\
    \hline
  $3$   &    $7$    &    $ \zeta(7), \zeta(13),\zeta(19), \zeta(25),\ldots, \zeta(43) $                            \\
    \hline 
 $5$    &   $15$     &       $ \zeta(11)                             , \zeta(21),\zeta(31),\zeta(41), \ldots, \zeta(151) $                          \\
    \hline 
  $7$   &    $50$     &    $  \zeta(15),\zeta(29),\zeta(43), \zeta(57),\ldots, \zeta(701)$                        \\
    \hline 
\end{tabular}}

\end{table}
\end{center}

\begin{center}
\begin{table}[h]
\caption{Left and right-hand sides of Theorem \ref{dgkmord2m0} (with series truncated up to the first $10^5$ terms) }
\label{t3}
\renewcommand{\arraystretch}{1}
{\small
\begin{tabular}{|l|l|l|l|l|l|}
\hline
$N$  & $a$ & $x$ & Left-hand side & Right-hand side \\
\hline

  $1$    & $\frac{5}{6} $   & $3.987$    &         $0.03741091204936647 $        &               $0.03741091204936687$ \\
    \hline 
$ 3 $      &  $1$  & $\pi + \sqrt{2}  $    &  $0.01061757521903389$     &   $ 0.01061757521903386 + 4.23476435512 \times 10^{-20} i $     \\
    \hline 
 $ 5 $     & $\frac{7}{11} $   &  $23.317$   &                $ 3.596656780667 \times 10^{-7}$ &       $ 3.596662150884 \times 10^{-7}$         \\
    \hline 
   $7$    & $\frac{1}{\sqrt{7}}  $   & $e$     &  $ 0.3832192774947001 $    &  $ 0.3832192773449392 -    2.865123974\times 10^{-19} i $ \\
    \hline 
   $9$      &  $\frac{3}{11} $  &  $1.2852 $  &                $ 0.9736312065003231 $ &      $  0.973631195916481-3.083952846\times 10^{-18} i  $      \\
    \hline 
    $11$    &  $\sqrt{3} -1  $  & $10.2854  $    &       0.000537059726103    &   0.0005369572144785  \\
    \hline
\end{tabular}}

\end{table}
\end{center}

\section*{Acknowledgements}
The authors thank Bruce C. Berndt, Ram Murty, Yuri V. Nesterenko, Purusottam Rath, Michel Waldschmidt and Wadim Zudilin for interesting discussions. The first author's research is supported by the SERB-DST grant RES/SERB/MA/P0213/1617/0021 whereas the third author is a SERB National Post Doctoral Fellow (NPDF) supported by the fellowship PDF/2017/000370. Both sincerely thank SERB-DST for the support.


\end{document}